\def\argmin{\mbox{argmin}}
\def\a{\alpha}
\def\b{\beta}
\def\R{\mathbb R}
\def\dd{\Delta}
\def\d{\delta}
\def\B{{\cal B}}
\def\mcX{\mathcal{X}}
\def\E{{\mathbb E}}
\def\mbS{{\mathbb S}}
\def\M{{\cal M}}
\def\P{{\mathbb P}}
\def\l{\lambda}
\def\cS{\mathcal{S}}
\def\labda1{\lambda_1}
\def\labda2{\lambda_2}
\def\bma{\bm\alpha}
\def\bmb{\bm\beta}
\def\e{\varepsilon}
\def\f{\phi}
\def\argmin{\mbox{argmin}}
\def\comment#1{\relax}
\def\=in{\mathop{\rm =}}
\title{Score estimation in the monotone single index mode}
\def\a{\gamma}
\def\P{{\mathbb P}}
\def\a{\alpha}
\def\bmS{\bm S}
\def\bmSigma{\bmSigma}
\def\bmV{\bm V}
\def\bmX{\bm X}
\definecolor{pink}{RGB}{255,0,255}
\definecolor{green2}{RGB}{0,153,0}
\definecolor{grey}{RGB}{128,128,128}
\definecolor{red2}{RGB}{204,0,0}
\author[1, 2]{Fadoua Balabdaoui} 
\author[3]{Piet Groeneboom}
\author[4]{Kim Hendrickx}
\affil[1]{Universit{\'e} Paris-Dauphine, PSL Research University, Paris, 75016, France}
\affil[2]{Seminar f{\"u}r Statistik, ETH Z{\"u}rich, 8092, Z{\"u}rich, Schweiz}
\affil[3]{Delft University of Technology, Van Mourik Broekmanweg 6, 2628 XE Delft, The Netherlands}
\affil[4]{Hasselt University, I-BioStat, Agoralaan, BA 3590 Diepenbeek, Belgium}
\runningauthor{Balabdaoui et al.}
\begin{document}

\maketitle

\begin{abstract}
		We consider estimation in the single index model where the link function is monotone. For this model a profile least squares estimator has been proposed to estimate the unknown link function and index.  
Although it is natural to propose this procedure, it is still unknown whether it produces index estimates which converge at the parametric rate.  We show that this holds if we solve a score equation corresponding to this least squares problem. Using a Lagrangian formulation, we show how one can solve this score equation without any reparametrization. This makes it easy to solve the score equations in high dimensions.
We also compare our method with the Effective Dimension Reduction (EDR) and the Penalized Least Squares Estimator (PLSE) methods, both available on CRAN as  R packages, and compare with link-free methods, where the covariates are ellipticallly symmetric.

% Please include a maximum of seven keywords
\keywords{monotone link functions, nonparametric least squares estimates, semi-parametric models, single index regression model}
\end{abstract}

\section{Introduction}
Single index models are flexible models used in regression analysis of the type $\E(Y|\bm X) =\psi_0(\bma_0^T \bm X)$, where $\psi_0$ is an unknown link function and $\bma_0$ is an unknown regression parameter. By lowering the dimensionality of the classical linear regression problem, determined by the number of covariates, to a univariate $\bma_0^T \bm X$ index, single index models do not suffer from the ``curse of dimensionality''. They also provide an advantage over the generalized linear regression models by overcoming the risk of misspecifying the link function $\psi_0$. To ensure identifiability of the single index model, one typically assumes that the Euclidean norm $\|\bma_0\|$ equals one with the first non-zero element of $\bma_0$ being positive.  
\\

Several estimation approaches have been considered in the literature of single index models. These methods can be classified into two groups: M-estimators and direct estimators. In the first approach, one considers a non-parametric regression estimate for the infinite dimensional link function $\psi_0$ and then estimates $\bma_0$ by minimizing a certain criterion function, where $\psi_0$ is replaced by its estimate. Examples of this type are the semi-parametric least squares estimators of \cite{ichimura1993} and \cite{hardle:93} and the pseudo-maximum likelihood estimator of \cite{delecroix2003}, all using kernel regression estimates for the unknown link functions. An example of an M-estimator that does not depend on an estimate of the link function $\psi_0$ is Han's maximum rank correlation estimator \citep{han:87}.
\\

Direct estimators, such as the average derivative estimator of \cite{hardle1989} or the slicing regression method proposed in \cite{duan1991}, avoid solving an optimization problem and are often computationally more attractive than M-estimators. 
\\

In this paper we focus on estimating the regression parameter $\bma_0$ under the constraint that $\psi_0$ is monotone. Shape constrained inference arises naturally in a variety of fields. For example in economics where a concavity restriction is assumed in utility theory to indicate the exhibition of risk conversion in economic behavior. Convex optimization problems also appear frequently and often allow for straightforward computation and optimization.  The single index model with convex link has been studied in \cite{Kuchibhotla_patra_sen:17}, where the authors consider estimation of an efficient penalized least squares estimator.  An efficient estimate for the single index with smooth link function is proposed in \cite{Kuchibhotla_patra:16}.
\\

A special case of the monotone singe index model is the widely used econometric binary choice model where interest is in estimating a choice probability based on a binary response variable $Y$ and one or more covariates $\bmX$. Whether or not the outcome is zero or one depends on an underlying utility score, i.e. $Y=1$ if $\bma_0^T\bmX \ge \varepsilon$, where $\varepsilon$ is an unobserved disturbance term with unknown distribution function $F_0$. The binary choice model therefore belongs to the class of monotone single index models since $\E(Y | \bmX) = F_0(\bma_0^T\bmX)$. Estimation of the regression parameters in the binary choice model is among others considered in \cite{cosslett:87}, \cite{cosslett:07} and \cite{klein_spady:93}.
\\

\cite{balabdaoui2016} considered a global least squares estimator for the pair ($\bma_0,\psi_0$) in the general single index model under monotonicity of the function $\psi_0$. They derived an $n^{1/3}$ convergence rate, but the asymptotic limiting distribution for their estimator of $\bma_0$ has not been derived. A conjecture is made in \cite{tanaka2008} that this rate is too slow. In this paper, we will give simulation results on the asymptotic variance of the least squares estimator and investigate its rate of convergence numerically.
\\

Recently,  \cite{kim_piet:18} developed several score estimators for the current status linear regression model $Y = \bmb_0^T \bm Z + \varepsilon$,  where the distribution function $F_0$ of $\varepsilon$ is left unspecified. Instead of observing the response $Y$, a censoring variable $T$ and censoring indicator $\dd=1_{Y\le T}$ are observed. This model is a special case of the monotone single index model and can be formulated as $\E(\dd |T, \bm Z) =  F_0(T - \bmb_0^T \bm Z) = F_0( \bma_0^T \bm X)$ where $ \bma_0 = (1,-\bmb_0)^T$ and $\bm X = (T, \bm Z)^T$. The estimators in \cite{kim_piet:18} are obtained by the root of a score function involving the maximum likelihood estimator (MLE) of the distribution function for fixed $\bmb$. The authors prove $\sqrt n$-consistency and asymptotic normality of their estimators and show that under certain smoothness assumptions, the limiting variance of a score estimator is arbitrarily close to the efficient variance. Their result is remarkable since it is the first time in the current status regression model that a $\sqrt n$-consistent estimate for $\bmb_0$ is proposed based on the MLE for $F_0$ which only converges at $n^{-1/3}$-rate to the true distribution function $F_0$.  
\\

We consider extending the estimators in \cite{kim_piet:18} to the more general single index regression problem and propose two different score equations involving the least squares estimator (LSE) $\hat \psi_{n\bma}$ of the unknown link function $\psi_0$ which minimizes
\begin{align}
\label{def:sum_of_squares_LSE}
	\frac{1}{n} \sum_{i=1}^n \Big \{  Y_i - \psi(\bma^T\bm X_i)  \Big \}^2,
\end{align}
over all monotone increasing functions $\psi$ for fixed $\bma$. We establish an $n^{1/3}/\log n$-rate for the estimator $\hat \psi_{n\bma}$ and propose a single index score estimator of $\bma_0$ that converges at the parametric $\sqrt n$-rate to the true regression parameter $\bma_0$.

\section{The single-index model with monotone link}
\label{section:model}
Consider the following regression model  
\begin{eqnarray}\label{Model}
Y = \psi_0(\bma_0^T\bm X)  + \e,
\end{eqnarray}
where $Y $ is a one-dimensional random variable, $\bm X = (X_1,\ldots, X_d)^T$ is a $d$-dimensional random vector with distribution $G$ and $\e$ is a one-dimensional random variable such that $\E[\e | \bm X] = 0$ $G$-almost surely. The function $\psi_0$ is a monotone link function in $\mathcal{M}$, where $\mathcal{M}$ is the set of monotone increasing functions defined on $\R$ and $\bma_0$ is a vector of regression parameters belonging to the $d-1$ dimensional sphere $\cS_{d-1}:=\{ \bma \in \R^d : \| \bma \| = 1\}$,  where $\|\,  \cdot \|$ denotes the Euclidean norm in $\R^d$.

\section{The least squares estimator (LSE) for the link function $\psi$}
\label{section:LSE}
Let $(\bm X_1, Y_1), \ldots, (\bm X_n, Y_n)$ denote $n$ random variables which are i.i.d. like $(\bm X, Y)$ in (\ref{Model}), i.e. $\E(Y|\bm X) = \psi_0(\bma_0^T\bm X)$ $G$-almost surely and consider the  sum of squared errors
\begin{align*}
S_n(\psi, \bma)= \frac{1}{n} \sum_{i=1}^n \Big \{  Y_i - \psi(\bma^T\bm X_i)  \Big \}^2,
\end{align*} 
which can be computed for any pair $(\psi,\bma) \in \mathcal{M}\times \cS_{d-1}$.
For a fixed $\bma$, order the values $\bma^T \bm X_1,   \ldots, \bma^T \bm X_n$ in increasing order and arrange $Y_1, \ldots, Y_n$ accordingly. As ties are not excluded, let  $m= m_{\bma}$ be the number of distinct projections among $Z_i = \bma^T \bm X_i$ and $Z^{\bma}_1 <  \ldots < Z^{\bma}_m$   the corresponding ordered values.   For $i=1, \ldots, m$, let   
$$n^{\bma} _i  = \sum_{j=1}^n  {1}_{\{\bma^T \bm X_j = Z^{\bma}_i\}}\quad \text{ and } \quad Y^{\bma}_i  =   \sum_{j=1}^n Y_j  {1}_{\{\bma^T \bm X_j = Z^{\bma}_i\}}  /n^{\bma}_i.$$
Then, well-known results from  isotonic regression theory imply that the functional $\psi\mapsto S_n( \bma,\psi)$ is minimized by the left derivative of the greatest convex minorant of the cumulative sum diagram  
\begin{align*}
%\label{def:cusum-diagram}
\Big \{(0,0), \Big(\sum_{j=1}^i  n^{\bma}_j, \sum_{j=1}^i  n^{\bma}_j Y^{\bma}_j  \Big), i=1, \ldots, m \Big \}.
\end{align*}
See for example Theorem 1.1 in \cite{b4:72} or  Theorem 1.2.1 in \cite{rwd:88}.
By strict convexity of  $\psi \mapsto S_n(\psi, \bma)$, the minimizer is unique at the distinct projections.  We denote by  $\hat \psi_{n\bma}$  the monotone function which takes the values of this minimizer at the distinct projections and is a stepwise and right-continuous function outside the set of those projections. 
\\

In Section \ref{section:score-estimators} we illustrate how to derive score estimators, based on solving a score equation derived from the sum of squares $S_n$. We propose two different techniques to ensure that the estimator has length one. In the first approach, we consider a parametrization $\mbS: \R^{d-1} \mapsto \cS_{d-1}$ of the unit sphere and solve a set of $d-1$ score equations. In the second approach we add a Lagrange penalty to the sum of squared errors and differentiate the corresponding minimization criterion w.r.t the $d$ components of $\bma$.

\section{Score estimators for the regression parameter $\a$}
\label{section:score-estimators}

\subsection{The score estimator on the unit sphere}
\label{subsection:score-estimator1}
Consider the problem of minimizing
\begin{align}
\label{sum-of-squares0}
\frac1n\sum_{i=1}^n\left\{Y_i -\hat\psi_{n\bm\a}( \bma^T\bm X_i)\right\}^2,
\end{align}
over all $\bma \in \cS_{d-1}$ , where $\hat\psi_{n\bm\a}$ is the LSE of $\psi_{\bma}$. 
Let $\mbS$ be a local parametrization mapping $\R^{d-1}$ to the sphere $\cS_{d-1}$, i.e., for each $\bma \in \B(\bma_0,\d_0)$ on the sphere $\cS_{d-1}$, there exists a unique vector $\bmb \in \R^{d-1}$ such that
$$\bma = \mbS\left( \bmb\right).$$
The minimization problem given in (\ref{sum-of-squares0}) is equivalent to minimizing 
\begin{align}
\label{sum-of-squares}
\frac1n\sum_{i=1}^n\left\{Y_i -\hat\psi_{n\bm\a}\left( \mbS(\bmb)^T\bm X_i\right)\right\}^2,
\end{align}
over all $\bmb$ where $\hat\psi_{n\bm\a}$ is the LSE of the link function with $\bma = \mbS(\bmb)$. Analogously to the treatment of the score approach in the current status regression model proposed by \cite{kim_piet:18}, we consider the derivative of (\ref{sum-of-squares}) w.r.t. $\bmb$, where we ignore the non-differentiability of the LSE $\hat\psi_{n\bm\a}$. This leads to the set of $d-1$ equations,
\begin{align}
\label{score1}
\frac1n\sum_{i=1}^n \left(\bm J_{\mbS}(\bmb)\right)^T\bm X_i\left\{Y_i -\hat\psi_{n\bm\a}\left(\mbS(\bmb)^T\bm X_i\right)\right\} = \bm0,
\end{align}
where $\bm J_{\mbS}$ is the Jacobian of the map $\mbS$ and where $\bm 0 \in \R^{d-1}$ is the vector of zeros.
Just as in the analogous case of the ``simple score equation'' in \cite{kim_piet:18}, we cannot hope to solve equation (\ref{score1}) exactly due to the discrete nature of the score function in (\ref{score1}). Instead, we define the solution in terms of a ``zero-crossing'' of the above equation. The following definition is taken from \cite{kim_piet:18}.

\begin{definition}[zero-crossing]
	\label{zero-crossing}
	{
		We say that $\bmb_*$ is a crossing of zero of a real-valued function $\zeta: {\cal A}\mapsto \R: \bmb \mapsto \zeta(\bmb)$ if each open neighborhood of $\bmb_*$ contains points $\bmb_{1},\bmb_{2} \in {\cal A}$ such that $ \bar{\zeta}(\bmb_{1})\bar{\zeta}(\bmb_{2}) \le 0$, where $\bar\zeta$ is the closure of the image of the function (so contains its limit points). 
		We say that an $m$-dimensional function $\zeta:{\cal A}\mapsto \R^m: \bm\b \mapsto \zeta(\bm\b) = (\zeta_1(\bm\b),\ldots\zeta_m(\bm\b))'$ has a crossing of zero at a point $\bm\b_*$, if $\bm\b_*$ is a crossing of zero of each component $\zeta_j: {\cal A} \mapsto \R, j=1\ldots,m$.
	}
\end{definition}  

Our simple score estimator $\hat \bma_n$ (SSE) is defined by,
\begin{align}
\label{def:estimator1}
\hat\bma_n := \mbS(\hat \bmb_n),
\end{align}
where $\hat \bmb_n$ is a zero crossing of the function
\begin{align}
\label{def:phi_n}
\f_n(\bmb) :=\int \left(\bm J_{\mbS}(\bmb)\right)^T\bm x\left\{y -\hat\psi_{n\bm\a}\left(\mbS(\bmb)^T\bm x\right)\right\}\,d\P_n(\bm x,y),
\end{align}
and $\P_n$ denotes the empirical probability measure of $(\bm X_1, Y_1), \ldots, (\bm X_n, Y_n)$. The probability measure of $(\bm X, Y)$ will be denoted by $P_0$ in the remainder of the paper. For another formulation, directly in terms of $\bm\alpha$, without reparametrization, see the Lagrangian formulation in Section \ref{subsect:score-Lagrange} below.
\\

The SSE is based on a simplified version of the derivative of the sum of squared errors $S_n$ w.r.t. the components of $\bmb$, where we ignored the non-differentiability of the discrete LSE $\hat \psi_{n \bma}$. As a consequence, the limiting variance of this SSE, given in Section \ref{section:Asymptotics}, is not the efficient variance for the single index model. We can improve the SSE and extend this simplified score approach by incorporating an estimate of the derivative of the link function to obtain an efficient estimator of $\bma_0$. 
\\

Let $\hat \psi_{n \bma}$ denote again the LSE of the link function for fixed $\bma$ defined in Section \ref{section:LSE} and define the estimate $\tilde \psi_{nh,\bma}'$ by
\begin{align*}
\tilde\psi_{nh,\bm\a}'(u)=\frac1h\int K\left(\frac{u-x}h\right)\,d \hat\psi_{n\bm\a}(x),
\end{align*}
where $h$ is a chosen bandwidth. Here  $d \hat\psi_{n\bma}$ represents the jumps of the discrete function $\hat \psi_{n\bma}$ and $K$ is one of the usual symmetric twice differentiable kernels with compact support $[-1,1]$, used in density estimation.
The estimator $\tilde \bma_n$ is given by
\begin{align}
\label{def:estimator-efficient}
\tilde \bma_n := \mbS(\tilde \bmb_n),
\end{align}
where $\tilde \bmb_n$ is a zero crossing of $\xi_{nh}$ (see Definition \ref{zero-crossing}) defined by
\begin{align}
\label{def:xi_nh}
\xi_{nh}(\bmb) :=\int \left(\bm J_{\mbS}(\bmb)\right)^T \bm x \, \tilde\psi_{nh,\bm\a}'\bigl(\mbS(\bmb)^T\bm x\bigr) \left\{y-\hat\psi_{n\bm\a}\bigl(\mbS(\bmb)^T\bm x\bigr)\right\}\,d\P_n(\bm x,y).	
\end{align}
Again another formulation, directly in terms of $\bm\alpha$ is given in Section \ref{subsect:score-Lagrange} below. A picture of the estimates $\hat\psi_{n,\hat\bma_n}$ and $\tilde\psi'_{n,\hat\bma_n}$ for $n=1000$, $d=3$ and a sample for the model used in the simulation study of Table \ref{table:simulation1b} below, is shown in Figure \ref{fig:psi_and_psi'}. For the derivative a local linear extension of the function is used at the boundary for points with a distance to the boundary smaller than the bandwidth. Note that we only need one bandwidth choice for the derivative and that this is only needed for the ESE and not for the SSE.

\begin{center}
	[Figure \ref{fig:psi_and_psi'} here]
\end{center}

In the remainder of this section we illustrate how the score estimators can be obtained in practice using a local coordinate system representing the unit sphere. An example of such a parametrization is the spherical coordinate system $\mbS: [0,\pi]^{(d-2)}\times [0,2\pi]\mapsto \cS_{d-1}:$
\begin{align*}
(\b_1,\b_2,\ldots,\b_{d-1}) \mapsto
& (\cos(\b_1), \sin(\b_1)\cos(\b_2),\sin(\b_1)\sin(\b_2)\cos(\b_3),\ldots,\\ &\sin(\b_1)\ldots\sin(\b_{d-2})\cos(\b_{d-1}), \sin(\b_1)\ldots\sin(\b_{d-2})\sin(\b_{d-1}))^T.
\end{align*}
The map parameterizing the positive half of the sphere $\mbS: \{(\b_1,\b_2,\ldots,\b_{d-1})\in [0,1]^{(d-1)}: \|\bmb \|\le1\} \mapsto \cS_{d-1}:$
\begin{align*}
(\b_1,\b_2,\ldots,\b_{d-1}) \mapsto
& \left(\b_1,\b_2,\ldots,\b_{d-1}, \sqrt{1-\b_1^2-\ldots -\b_{d-1}^2}\right)^T,
\end{align*}
is another example that can be used provided $\a_d$ is positive. Prior knowledge about the position of $\bma_0$ can be derived from an initial estimate such as the LSE proposed in \cite{balabdaoui2016}.
We illustrate the set of equations for the SSE corresponding to (\ref{score1}) for dimension $d=3$ and consider the parametrization
\begin{align}
\label{parametrization-dim3}
\cS_3= \{(\a_1,\a_2,\a_3) = (\cos(\b_{1})\sin(\b_{2}),\sin(\b_{1})\sin(\b_{2}),\cos(\b_{2})) : 0\le \b_{1}\le 2\pi, 0\le \b_{2}\le \pi\}\subset \R^2.
\end{align}
The SSE can be obtained by solving the problem
\begin{align}
\label{score-dimension3}
\left \{
\begin{array}{l}
s_1(\b_1,\b_2) = \frac1n\sum_{i=1}^n (-\sin(\b_{1})\sin(\b_{2})X_{i1} + \cos(\b_{1})\sin(\b_{2})X_{i2} )\left\{Y_i -\hat\psi_{n\bm\a}(\bma^T\bm X_i)\right\} = 0, \\
s_2(\b_1,\b_2)=\frac1n\sum_{i=1}^n (\cos(\b_{1})\cos(\b_{2})X_{i1} + \sin(\b_1)\cos(\b_{2})X_{i2} - \sin(\b_{2})X_{i3})\left\{Y_i -\hat\psi_{n\bm\a}(\bma^T\bm X_i)\right\} = 0.
\end{array}
\right.
\end{align}

For the parameterizations discussed in this manuscript we have that for each map $\mbS$ and each parameter vector $\bmb$, 
$$\left(\mbS(\bmb) \right)^T \mbS(\bmb) =  1.$$ 
Taking derivatives w.r.t. $\bmb$, we get 
$$\left(\mbS(\bmb) \right)^T\bm J_{\mbS}(\bmb) = \bm 0^T, $$
so that the columns of $\bm J_{\mbS}(\bmb)$ belong to the space
$$\{\bma \}^\perp \equiv \left\{\mbS(\bmb)\right\}^\perp \equiv \left\{\bm z \in \R^d: \bma^T z= 0  \right\}\equiv \left\{\bm z \in \R^d: \left(\mbS(\bmb)  \right)^T z= 0  \right\}.$$
\\
Note that for $\mbS(\bmb) = \left( \cos(\b_{1})\sin(\b_{2}),\sin(\b_{1})\sin(\b_{2}),\cos(\b_{2})\right)^T $,  
\begin{align}
\label{bmJ_s}
\bm J_{\mbS}(\bmb) =
\begin{bmatrix}
-\sin(\b_{1})\sin(\b_{2}) & \cos(\b_{1})\cos(\b_{2}) 
\\[0.3em]
\cos(\b_{1})\sin(\b_{2}) &\sin(\b_{1})\cos(\b_{2}) \\[0.3em]
0 & - \sin(\b_{2}) 
\end{bmatrix},
\end{align}
such that we indeed have
\begin{align}
\label{property-StJ}
\mbS(\bmb) ^T\bm J_{\mbS}(\bmb) = \left( 0,0\right),
\end{align}
for all $\bmb$. 
This again implies that the columns of $\bm J_{\mbS}(\bmb) $ are perpendicular to the vector $\bma =\mbS(\bmb)$. Note moreover that the columns are linearly independent and hence form a basis for $\{\bma\}^\perp$.
\\

It is shown in Lemma 1 of \cite{Kuchibhotla_patra_sen:17} that it is possible to construct a set of ``local parametrization matrices'' $H_{\bma}$ for each $\bma \in \B(\bma_0,\d_0)$ with $\|\bma\| = 1$ satisfying
\begin{align*}
\bma^T H_{\bma} = \bm 0^T \quad \text{ and }\quad \left(H_{\bma}\right)^T  H_{\bma} = \bm I_{d-1}
\end{align*}
Their matrix $\left(H_{\bma} \right)^T$ corresponds to the Moore-Penrose pseudo-inverse of the matrix $H_{\bma}$ and is the analogue of our matrix $\left(\bm J_{\mbS}(\bmb)\right)^T$  in the proof of asymptotic normality of their estimator.  We will however show that the orthonormality assumption is not needed in the proofs.
\\

\subsection{The score estimator with Lagrange penalty}
\label{subsect:score-Lagrange}

Instead of tackling the fact that our parameter space is essentially of dimension $d-1$ by the parametrization $\bma = \mbS(\bmb)$ which locally maps $\R^{d-1}$ into the sphere $\cS_{d-1}$, one can  introduce the restriction $\|\bma\|=1$ via a Lagrangian term. We then consider the problem of minimizing
\begin{align}
\label{Lagrangian-sum-of-squares}
\frac1n\sum_{i=1}^n\left\{Y_i -\hat\psi_{n\bm\a}( \bma^T\bm X_i)\right\}^2+\l\left\{\|\bm\a\|^2-1\right\},
\end{align}
where $\hat\psi_{n\bm\a}$ is the LSE defined in Section \ref{section:LSE} and $\l$ is a Lagrange parameter which we add to the sum of squared errors to deal with the identifiability of the single-index model. 

Analogously to the treatment given in Section \ref{subsection:score-estimator1} for the SSE, we consider the derivative of (\ref{Lagrangian-sum-of-squares}) w.r.t. $\bma$, where we ignore the non-differentiability of the LSE $\hat\psi_{n\bm\a}$. This leads to the equation,
\begin{align}
\label{lagrange-score1}
-\frac1n\sum_{i=1}^n\bm X_i\left\{Y_i-\hat\psi_{n\bm\a}(\bm\a^T\bm X_i)\right\}+\l\bma=0.
\end{align}
Here $\l$ has to satisfy
\begin{align}
\label{definition_lambda}
\l=\l\bma^T\bma=\frac1n\sum_{i=1}^n\bm\a^T\bm X_{i}\left\{Y_i-\hat\psi_{n\bm\a}(\bm\a^T\bm X_i)\right\}.
\end{align}
Plugging in the above expression for $\l$ in (\ref{lagrange-score1}), we consider the simple score equation
\begin{align}
\label{score-equation}
\bm 0 &=\frac1n\sum_{i=1}^n\bm X_i\left\{Y_i -\hat\psi_{n\bm\a}(\bm\a^T\bm X_i)\right\} - \bm\a^T\left (\frac1n\sum_{i=1}^n\bm X_i \left\{Y_i -\hat\psi_{n\bm\a}(\bm\a^T\bm X_i)\right\} \right) \bm\alpha\nonumber\\
&=\frac1n\left(\bm I-{\bm\a}\bma^T\right)\sum_{i=1}^n\bm X_i \left\{Y_i -\hat\psi_{n\bm\a}(\bm\a^T\bm X_i)\right\},
\end{align}
where $\bm I$ is the $d\times d$ identity matrix. The same procedure can be derived for the ESE defined in (\ref{def:estimator-efficient}) and we define the simple and efficient score estimators of the regression parameter $\bma_0$ in model (\ref{Model}), referred to as the score estimators using a Lagrange penalty, by zero crossing of the corresponding  score functions 
\begin{align}
\label{score-equation_Lagrange1}
\bma \mapsto \frac1n\left(\bm I-{\bm\a}\bma^T\right)\sum_{i=1}^n\bm X_i \left\{Y_i -\hat\psi_{n\bm\a}(\bm\a^T\bm X_i)\right\},
\end{align}
and
\begin{align}
\label{score-equation_Lagrange2}
\bma \mapsto \frac1n\left(\bm I-{\bm\a}\bma^T\right)\sum_{i=1}^n\bm X_i \tilde \psi_{nh,\bm\a}'\bigl(\bm\a^T\bm X_i\bigr)  \left\{Y_i -\hat\psi_{n\bm\a}(\bm\a^T\bm X_i)\right\},
\end{align}
respectively.

The Lagrange approach has the advantage that we do not have to deal with the reparametrization $\bma = \mbS(\bmb)$, but has the disadvantage that we cannot assume that $\hat\bma_n$ has exactly norm $1$ because we again have to deal with crossings of zero instead of exact equality to zero.  One way to circumvent this problem is to normalize the solution of the right-hand side of (\ref{score-equation_Lagrange1}) or (\ref{score-equation_Lagrange2}) at the end of the iterations. This technique gave approximately the same solutions as the approach via reparametrization.

In Section \ref{section:Asymptotics} we will only derive the limiting behavior of the score estimators using the parametrization of the unit sphere, but we conjecture that both estimators, using the parametrization or the Lagrange penalty, have the same asymptotic properties. A conjecture that is further motivated by a simulation study presented in Section \ref{section:simulations}.  Since the Lagrange approach avoids the mapping into the parameter space (which depends on the dimension $d$), this technique can easily be adapted for different dimensions and is favored over the parametrization score approach from a practical point of view, especially if the dimension is large. Examples of {\tt R} scripts, using {\tt Rcpp}, of simulation runs with this method are given in \cite{github:18}.

\section{Linear estimates if the covariates have an elliptically symmetric distribution}
\label{sec:basic_estimate}
It is well-known that if $\bm X$ has an elliptically symmetric distribution there exist link-free ordinary linear regression estimates of $\bma_0$ which are $\sqrt{n}$-convergent and have an asymptotic normal distribution, see \cite{duan1991}. The following estimator of this type is defined in \cite{balabdaoui2016} for the monotone single index model. Let $\hat\bma_n$ be defined by
\begin{align}
\label{LS_BDJ}
\hat{\bm\alpha}_n=\argmin_{\bm\a\in\R^d}\sum_{i=1}^n\left\{Y_i-\bm\a^T(\bm X_i-\bar{\bm X}_n)\right\}^2,
\end{align}
with $\bar{\bm X}_n =1/n \sum_{i=1}^{n}\bm X_i$ being the sample mean of the covariate vector. The estimate of the regression parameter $\bma_0$ in model (\ref{Model}) is now given by $\tilde{\bm\a}_n=\hat{\bm\alpha}_n/\|\hat{\bm\alpha}_n\|$. Note that $\hat \bma_n$ in (\ref{LS_BDJ}) is the estimator one would use if the link function is known to be linear and one would not make the restriction that the estimator has norm 1. The following result is proved in \cite{balabdaoui2016}.

\begin{theorem}
\label{Th:H-LFLSE}
Let $(\bmX_1,Y_1),\dots,(\bm X_n,Y_n)$ be an i.i.d.\ sample from $(\bmX,Y)$ such that $E(Y|\bmX)=\psi_0(\bma_0\bmX)$ almost surely, where $\psi_0$ is non-decreasing and $\bma_0^T\bma_0=1$. Suppose that $\bm X$ has an elliptically symmetric distribution with finite mean $\bm\mu\in\R^d$ and a positive definite covariance matrix $\bm\Sigma$. Assume, moreover, that $\E\|Y\bmX\|<\infty$ and that there exists a nonempty interval $[a,b]$ on which $\psi_0$ is strictly increasing. Then, as $n\to\infty$, the estimator $\tilde{\bma}_n=\hat{\bm\alpha}_n/\|\hat{\bm\alpha}_n\|$, where $\hat\bma_n$ is defined by (\ref{LS_BDJ}), converges in probability to $\bma_0$. If, moreover, $\E Y^2\|\bmX\|^2<\infty$, $\sqrt{n}\{\tilde\bma_n-\bma_0\}$ converges in distribution to a normal distribution with mean $\bm 0$ and covariance matrix
\begin{align*}
\frac1{c^2}\left(\bm I-\bm\a_0\bm\a_0^T\right)\bm\Sigma^{-1}\bm \Gamma\bm\Sigma^{-1}\left(\bm I-\bm\a_0\bm\a_0^T\right),\qquad
c=\text{\rm Cov}\left( \psi_0(\bma_0^T\bm X), \bma_0^T\bmX  \right)/\bma_0^T\bm \Sigma\bma_0,
\end{align*}
where $\bm \Gamma$ is the covariance matrix of $\left\{Y-\E Y -(\bmX-\bm\mu)^T\bm\Sigma^{-1}\text{\rm cov}(\bmX,Y)\right\}(\bmX-\bm\mu)$.
\end{theorem}

\begin{remark}
{ To avoid unnecessarily heavy notation, we denote in this section and the corresponding Appendix \ref{supA:LFLSE} the covariance matrix of $\bmX$ by $\bm\Sigma$ and the corresponding sample covariance matrix by $\bm S_n$, but note that $\bm\Sigma$ and $\bm S_n$ have a different meaning in the rest of the paper (see, e.g., (\ref{def_Sigma}) and (\ref{def_bmS})).
}
\end{remark}

Instead of first calculating the estimate $\hat\bma_n$ in (\ref{LS_BDJ}) and then dividing by the norm of this estimate to get norm 1, one can also consider the estimate $\hat{\bm\a}_{n,\bmS_n norm1}$, defined by
\begin{align}
\label{LS_norm1}
\hat{\bm\a}_{n,\bmS_n norm1}=\argmin_{\bma\in\R^d,\,\bm\a^T\bm S_n\bma=1}\sum_{i=1}^n\left\{Y_i-\bm\a^T(\bm X_i-\bar{\bm X}_n)\right\}^2,
\end{align}
which is more in line with the estimators of our paper, where we compute the estimators under the condition that the norm is equal to 1. Here $\bmS_n$, defined by
\begin{align*}
\bmS_n=\frac1n\sum_{i=1}^n\left\{\bmX_i-\bar\bmX_n\right\}\left\{\bmX_i-\bar\bmX_n\right\}^T,
\end{align*}
estimates the covariance $\bm\Sigma$, and the ordinary inner product of $\bm x$ and $\bm y$ is replaced by $\bm x^T\bmS_n\bm y$. Instead of the restriction $\bma_0^T\bma_0=1$, we now use the restriction $\bma_0^T\bm\Sigma\bma_0=1$ in the underlying model, which is estimated by the restriction $\bma^T\bmS_n\bma=1$ in the sample.\\

 We will call this estimator the {\it link-free least squares estimator} (LFLSE). Since the estimator discussed above first solves another minimization problem (without a restriction on the norm), and then makes the (ordinary) norm equal to 1 by dividing by the norm, we call this the {\it hybrid link-free least squares estimator} (H-LFLSE).
The two estimators are not the same, even if $\bm\Sigma$ is a multiple of the identity and if we use the ordinary norm for the second estimator.\\

Suppose now that the true index $\bma_0$ satisfies $\bma^T_0\bm\Sigma\bma_0 =1$.  Note that such normalization of the true parameter $\bma_0$ is always possible  since it does not alter the direction of monotonicity of the link function nor the identifiability of the model. Then, following the Lagrange approach of Section  \ref{subsect:score-Lagrange}, we can define the estimator $\hat{\bm\a}_{n,\bmS_n norm1}$ as the minimizer of
\begin{align*}
\frac1n\sum_{i=1}^n\left\{Y_i-\bm\a^T(\bm X_i-\bar{\bm X}_n)\right\}^2 + \l \{\bm\a^T\bmS_n\bma - 1\},
\end{align*}
for $\bma \in \R^d$ and for suitably chosen $\l\ge0$.
This time, the optimization criterion does not depend on the LSE $\hat \psi_{n \bma}$ of the link function $\psi_0$ and we do no longer have the crossing of zero difficulty. 
Note that in this case (\ref{lagrange-score1}) is replaced by
\begin{align}
\label{lagrange-score1a}
-\frac1n\sum_{i=1}^n\bm X_i\left\{Y_i-\bm\a^T\left(\bm X_i-\bar{\bm X}_n\right)\right\}+\l\bmS_n\bma=\bm0.
\end{align}
Here $\l$ has to satisfy
\begin{align}
\label{definition_lambda_LFLSE}
\l=\l\bma^T\bmS_n\bma=\frac1n\sum_{i=1}^n\bm\a^T\left(\bm X_i-\bar{\bm X}_n\right)\left\{Y_i-\bm\a^T\left(\bm X_i-\bar{\bm X}_n\right)\right\}.
\end{align}
Since therefore
\begin{align*}
\l\bmS_n\bma&=\frac1n\sum_{i=1}^n\bm\a^T\left(\bm X_i-\bar{\bm X}_n\right)\left\{Y_i-\bm\a^T\left(\bm X_i-\bar{\bm X}_n\right)\right\}\bmS_n\bma\\
&=\frac1n\bmS_n\bma\bma^T\sum_{i=1}^n\bm\a^T\left(\bm X_i-\bar{\bm X}_n\right)\left\{Y_i-\bm\a^T\left(\bm X_i-\bar{\bm X}_n\right)\right\},
\end{align*}
we get the equation
\begin{align}
\label{Lagrange_norm1}
\frac1n\left(\bm I-\bmS_n\bma\bma^T\right)\sum_{i=1}^n\left(\bm X_i -\bar{\bm X}_n\right)\left\{Y_i-\bm\a^T\left(\bm X_i -\bar{\bm X}_n\right)\right\}=\bm0,
\end{align}
where, if more solutions are found, the one giving the smallest criterion is chosen.
For this estimator we have the following result.
\begin{theorem}
\label{Th:LFLSE}
Let $(\bmX_1,Y_1),\dots,(\bm X_n,Y_n)$ be an i.i.d.\ sample from $(\bmX,Y)$ such that $E(Y|\bmX)=\psi_0(\bma_0\bmX)$ almost surely, where $\psi_0$ is non-decreasing. Suppose that $\bm X$ has an elliptically symmetric distribution with finite mean $\mu\in\R^d$ and a positive definite covariance matrix $\bm\Sigma$ satisfying $\bma_0^T\bm\Sigma\bma_0=1$. Assume, moreover, that $\E|Y|<\infty$ and $\E\|\bmX\|^2<\infty$ and that there exists a nonempty interval $[a,b]$ on which $\psi_0$ is strictly increasing. Then, as $n\to\infty$, the estimator $\hat{\bm\a}_{n,\bmS_n norm1}$, defined by (\ref{LS_norm1}), converges in probability to $\bma_0$. If, moreover, $\E Y^2<\infty$ and $\E\|\bmX\|^4<\infty$, then $\sqrt{n}\{\hat{\bm\a}_{n,\bmS_n norm1}-\bma_0\}$ converges in distribution to a normal distribution with mean $\bm 0$ and covariance matrix which can be computed from relation (\ref{asymptotic_relation_elliptic_sym}) given in Appendix \ref{supA:LFLSE} in the Supplementary Material.
\end{theorem}

The proof of Theorem \ref{Th:LFLSE} is given in the Supplementary Material.
As an example of how one can compute the asymptotic covariance matrix, we also compute in the Supplementary Material, the asymptotic covariance matrix for the simulation setting corresponding to Table \ref{table:simulation2b}, given in Section \ref{subsec:simulation1}. The solution of equation (\ref{Lagrange_norm1}) was done by a {\tt{C++}} program, using Broyden's algorithm. It is very fast and produces a norm $\{\hat{\bm\a}_{n,\bmS_n norm1}^T\bm S_n\hat{\bm\a}_{n,\bmS_n norm1}\}^{1/2}$ of the solution which is equal to 1 in 10 decimals, without any need of renormalization, but just by solving in $\R^d$. This illustrates the soundness of the Lagrange inspired method of estimating the parameter $\bma_0$ by solving (\ref{Lagrange_norm1}) in $\R^d$ as an alternative to shifting to a lower dimensional parametrization. Since equation  (\ref{score-equation}) for the score estimates of Section \ref{subsect:score-Lagrange} is discontinuous and cannot be solved exactly, we use a derivative free optimization algorithm proposed by \cite{hooke1961} instead of Broyden's algorithm to obtain the score estimates as zero crossings of equation (\ref{score-equation}). More information on the computation of the score estimates in Section \ref{section:score-estimators} is given in Section \ref{section:simulations}. {\tt R} scripts for simulations with the estimator of Theorem \ref{Th:LFLSE}, using the derivative free optimization algorithm, are also made available in \cite{github:18}.

\begin{remark}
\label{remark_spher_sym}
{For the situation where the covariance matrix of $\bm X$ is assumed to be a multiple of the identity (so $\bm X$ has a {\it spherically} symmetric distribution), we also have a consistent estimate of $\bma_0$ if we define $\hat{\bm\a}_{n,norm1}$ by
\begin{align}
\label{LS_norm1a}
\hat{\bm\a}_{n,norm1}=\argmin_{\bma\in\R^d,\,\bm\a^T\bma=1}\sum_{i=1}^n\left\{Y_i-\bm\a^T(\bm X_i-\bar{\bm X}_n)\right\}^2,
\end{align}
under the condition that $\bma_0^T\bma_0=1$. In this case the limiting distribution of $\sqrt{n}\{\hat{\bm\a}_{n, norm1}-\bma_0\}$ is degenerate,  as is the case for the score estimators of Section \ref{section:score-estimators} (see Theorems \ref{theorem:asymptotics}-\ref{theorem:asymptotics-efficient} and Remark \ref{remark:degenerate} in Section \ref{section:Asymptotics}). The derivation of the consistency and normal limit distribution proceeds along the same lines as the proof of Theorem \ref{Th:LFLSE}, but is somewhat easier since we do not have to deal with the behavior of $\bmS_n$. However, this estimate will be inconsistent for the situation that $\bm\Sigma$ is not a multiple of the identity. We give the asymptotic covariance matrix in Appendix \ref{supA:LFLSE} for the simulation setting of Table \ref{table:simulation2b}. Remarkably, the asymptotic variances are bigger than for the estimate $\hat{\bm\a}_{n,\bmS_n norm1}$ of Theorem \ref{Th:LFLSE} in this situation.
}
\end{remark}

\section{Asymptotic behavior of the score estimators}
\label{section:Asymptotics}
In this section we first give results on the behavior of the LSE $\hat \psi_{n \bma}$ of the monotone link function and next derive the limiting distribution of the SSE and the ESE introduced in Section \ref{subsection:score-estimator1}. The results stated in this Section are all proved in the Supplementary Material of this article. 

\begin{proposition}\label{prop:maximizer-population}
	Let the function $\psi_{\bm\a}$ be defined by
	\begin{eqnarray}
	\label{def_psi_alpha}
	\psi_{\bm\a}(u)  := \E\big[\psi_0(\bma_0^T\bm X) |  \bma^T\bm X = u \big].
	\end{eqnarray}
	Suppose that  
	\begin{description}
		\item A1. The space $\mathcal{X}$ is convex, with a nonempty interior. There exists also $R > 0$ such that $\mathcal{X}  \subset \mathcal{B}(0, R)$.
		
		\item A2. There exists $K_0 > 0$ such that the true link function $\psi_0$ satisfies  $|\psi_0(u)| \le K_0$ for all $u$ in $\{\bma^T\bm  x, \bm x \in \mathcal{X},\bma \in \cS_{d-1}   \}$. 

         \item A3.  There exists $\delta_0 > 0$ such that the function $\psi_{\bma}$ defined in (\ref{def_psi_alpha}) is monotone increasing on $\mathcal{I}_{\bma}  :=  \{\bma^T\bm  x, \bm x \in \mathcal{X}   \}  $   for all  $\bma  \in \B(\bma_0, \delta_0):= \{\bma : \|\bma -\bma_0\| \le \delta_0 \}$. 		
	\end{description}

	Then,  the functional $L_{\bma}$ given by,
	\begin{eqnarray}
	\label{Distpsipsi0}
	\psi \mapsto L_{\bma}(\psi):=\int_{\mathcal{X}}  \Big(  \psi_0(\bma_0^T\bm x) -\psi(\bma^T\bm x)   \Big)^2 dG(\bm x) ,
	\end{eqnarray}
	admits a minimizer ${\psi}^{\bma}$,  over the set of monotone increasing functions defined on $\R$, denoted by $\M$,  such that $\psi^{\bma}$ is uniquely given  by the function ($\psi_{\bm\a}$) in (\ref{def_psi_alpha}) on $\mathcal{I}_{\bma}  = \Big\{ \bma^T \bm x: \bm x \in \mathcal{X}   \Big \} $. 
\end{proposition}

\begin{proposition}
	\label{prop:L_2-psi-psi_n-alpha}
	Under Assumptions A1-A3 and Assumptions
	\begin{description}	
		
    \item A4. Let $a_0$ and $b_0$ denote the infimum and supremum of the interval $\mathcal{I}_{\bma_0}= \big \{ \bma_0^T  \bm x,  \ \bm x \in \mathcal{X}  \big \}$.  Then, the true link function $\psi_0$ is continuously differentiable on $(a_0 - \delta_0 R, b_0 + \delta_0  R)$, where $R$ is the same radius of assumption A1 above. % and there exists $C > 0$ such that $\psi'_0  \ge C $ on $(a_0 - \delta_0 R, b_0 + \delta_0  R)$.
		
		\item A5. The distribution of $\bm X$ admits a density $g$, which is differentiable on $\mathcal{X}$. Also, there exist  positive constants $\underline{c}_0 $,  $\bar{c}_0$,  $\underline{c}_1 $ and   $\bar{c}_1 $ such that $\underline{c}_0 \le g \le \bar{c}_0$ and $\underline{c}_1 \le \partial g/\partial x_i  \le \bar{c}_1$  on $\mcX$  for all $i=1, \ldots, d$.
		
		\item A6.  There exist $c_0 > 0$ and $M_0 > 0$ such that $\E\Big[\vert Y  \vert^m \ | \bm X=\bm x \Big] \le m! M_0^{m-2} c_0$  for all integers $m \ge 2$ and $\bm x \in \mcX$  $G$-almost surely.  		
	\end{description}
	 we have,
	\begin{align*}
	\sup_{\bma \in \B(\bma_0,\delta_0)} \int\left\{\hat \psi_{n\bma}(\bma^T\bm x)-\psi_{\bm\a}(\bma^T\bm x)\right\}^2\,dG(\bm x)=O_p\left((\log n) ^2n^{-2/3}\right).		
	\end{align*}	
\end{proposition}

\paragraph{Discussion of the Assumptions A1-A6}  Before presenting our main theorem, we would like to first comment on the different assumptions made so far. Convexity in Assumption A1 is satisfied by a wide range of distributions, and hence is not very restrictive. It implies that the support of the linear predictor $\bma^T \bm X$ is an interval for all $\bma \in \B(\bma_0,\delta_0)$, which makes things easier to visualize. This can be however generalized by assuming that $\mathcal{X}$ is the union of convex sets; a very related  assumption was made in \cite{hardle:93}. Boundedness in Assumption A1 can be relaxed and replaced by sub-Gaussianity of the distribution of $\bm X$; see Remark \ref{RemarkBounded}.

In Assumption A2 we only  impose boundedness on the true regression function on $\{\bma_0^T \bm x, \bm x \in \mathcal{X}\}$, whereas other estimation procedures require boundedness on the second derivative of $\psi_0$, as done for example in  \cite{hardle:93} and \cite{hristache01}. 

Assumption A3 is made to enable deriving the explicit limit of  the LSE $\hat \psi_{n\bma}$  for all $\bma \in \B(\bma_0,\delta_0)$.	
In the Supplementary Material it is shown that Assumption A3 is plausible by proving that for $\bma$ in a neighborhood of $\bma_0$ the derivative of the function $\psi_{\bma}$ is indeed strictly positive provided the derivative of the true link function $\psi_0$ stays away from zero; i.e., there exists $C > 0$ such that $\psi'_0 \ge C $ on $\{\bma_0^T \bm x, \bm x \in \mathcal{X}\}$. However, it can be shown that the latter condition  can be made without loss of generality. A proof can be found in the Supplementary Material. The idea is to artificially add to both sides of the  regression model a function of $\bma_0^T\bm X$ with a strictly positive derivative without violating the remaining assumptions. Based on several numerical experiments it seems that Assumption $A3$ remains plausible in practice even if $\bma$ is not not necessarily in the neighborhood of $\bma_0$.  In Figure \ref{fig:psi_alpha} we compare the true link function $\psi_0$ with the function $\psi_{\bma}$ for the model $E(Y| \bm X) = \psi_0(\a_{01}X_1 + \a_{02}X_1)$, where $X_1, X_2 \stackrel{i.i.d}{\sim} U[0,1]$, $\psi_0(\bm x)= x^3$ and $\a_{01} = \a_{02} =  1/\sqrt 2$ for $\a_{1} = 1/2, \a_{2} =  \sqrt 3/2$. Figure \ref{fig:psi_alpha} shows how the function $\psi_{\bma}$ defined in (\ref{def_psi_alpha}) inherits the monotonicity of the true link function $\psi_0$. 

\begin{center}
	[Figure \ref{fig:psi_alpha}  here]
\end{center}

The Assumptions A4 and A5, and sometimes stronger versions therereof, are made in many references on single index models. Here, they are mainly needed to be able to control the conditional expectation of  $\bm X$ or $\bma^T_0 \bm X$ given $\bma^T \bm X$ when $\bma$ is in a small neighborhood  of $\bma_0$.  Finally,  Assumption A6 is needed to show that $\max_{1\le i\le n} |Y_i| = O_p(\log n)$. As noted in \cite{balabdaoui2016}, such an assumption is satisfied in the special case where the conditional distribution of $Y|\bm X =\bm x$ belongs to an exponential family.

\begin{remark}
\label{RemarkBounded}
{
Boundedness in A1 can be relaxed if we assume that $\bm X$ has a sub-Gaussian distribution. We recall that $\bm X $ is sub-Gaussian if there exists $\sigma > 0$ such that for all $\bm u \in \mathcal{S}_{d-1}$ and $t \in \RR$ 
	\begin{eqnarray*}
		P(\bm u^T (\bm X  -  E(\bm X)) > t)  \le \exp(-t^2/(2\sigma^2))  \ \ \ \textrm{and} \ \  P(\bm u^T (\bm X  -  E(\bm X)) <- t)  \le \exp(-t^2/(2\sigma^2)).
	\end{eqnarray*}
	Following similar arguments as in \cite{balabdaoui2016} in the proof of Proposition \ref{prop:L_2-psi-psi_n-alpha} given in the Supplementary Material, we can easily show that sub-Gaussianity of $\bm X$ implies that  
	$$\sup_{\bma \in \B(\bma_0,\delta_0)} \int\left\{\hat \psi_{n\bma}(\bma^T\bm x)-\psi_{\bm\a}(\bma^T\bm x)\right\}^2\,dG(\bm x)  =  O_p\left((\log n) ^{5/2} n^{-2/3}\right).$$  
	The larger power in the logarithmic factor, in comparison with the power $2$ obtained in Proposition \ref{prop:L_2-psi-psi_n-alpha},  does not affect the conclusion about the asymptotic behavior of the score estimator since the proof still works for any uniform convergence rate of the form $(\log n)^\gamma n^{-2/3}$.

An important special case of sub-Gaussian distributions is that of normal distributions. Since these are also elliptically symmetric (spherically symmetric if they are centered with a diagonal covariance matrix) then it is known that an ordinary least squares estimator can consistently estimate the true direction of $\bma_0$, provided that $cov(\psi_0(\bma_0 \bm X), \bma^T_0 X) \ne 0$.  The latter condition can be easily shown to hold true when $\psi_0$ is monotone and non-flat. This result, which goes back to \cite{brillinger83}, can be extended to any elliptically symmetric distribution, a fact that has been exploited by \cite{duan1991} in inverse regression.  In Section \ref{sec:basic_estimate}, we discussed this simple estimator after normalization in \cite{balabdaoui2016} and introduced a new estimator for $\bma_0$. Both estimators are asymptotically normal.
}
\end{remark}

\medskip

\begin{remark}
{
We would like to note that in our formulation of the model, we do assume that the predictor $\bm X$ and error $\varepsilon$ are independent; the assumption has also been made in e.g.  \cite{duan1991}, \cite{sherman-cavanagh98}, \cite{hristache01}, where the $\e$ is moreover assumed to be normally distributed with mean $0$ and some finite variance (not depending on $\bm X$).   Such an assumption is unfortunately violated by many statistical models.  For illustration, take the logistic regression with a Bernoulli response $Y$ and success probability $\pi(\bm X) = \exp(\bma_0^T \bm X)/ (1+ \exp(\bma_0^T \bm X))$. Then, the conditional variance $\text{Var}(Y | \bm X) = \pi(\bm X) (1- \pi(\bm X)) = \exp(\bm{ \alpha_0}^T \bm X) / (1+ \exp(\bm{\alpha_0}^T X))^2$, which depends on $\bm X$.
}
\end{remark}

Proposition \ref{prop:L_2-psi-psi_n-alpha} is used to derive the following result on the asymptotic distribution of the SSE defined in (\ref{def:estimator1}) in Section \ref{subsection:score-estimator1}.

\begin{theorem}
	\label{theorem:asymptotics}
	Let Assumptions A1-A6 be satisfied and assume that
	\begin{description}
		\item A7. For all $\bmb \ne \bmb_0$ such that $\mbS\left(\bmb\right) \in \B(\bma_0, \delta_0)$, the random variable 
		\begin{align*}
		\text{\rm Cov}\Big[(\bmb_0-\bmb)^T\bm J_{\mbS}(\bmb)^T\bm X,  \psi_0\big(\mbS(\bmb_0)\big)  | \ \mbS(\bmb)^T\bm X \Big],
		\end{align*}
		is not equal to $0$ almost surely.  
		\item A8. The functions $\bm J_{\mbS}^{ij}(\bmb)$, where $\bm J_{\mbS}^{ij}(\bmb)$ denotes the $i\times j$ entry of $ \bm J_{\mbS}(\bmb)$ for $i=1,\ldots, d$ and $j=1,\ldots,d-1$ are $d-1$ times continuously differentiable on $\mathcal{C} := \{\bmb \in \R^{d-1}:\mbS\left(\bmb\right) \in \B(\bma_0, \delta_0) \}$ and there exists $M > 0$ satisfying 
		\begin{eqnarray}\label{DiffAssump}
		\max_{k. \le d-1} \sup_{\bmb \in \mathcal{C} } \vert D^{k} \bm J_{\mbS}^{ij}(\bmb)  \vert  \le M
		\end{eqnarray}
		where $k = (k_1, \ldots, k_d)$  with $k_j$ an integer $\in \{0, \ldots, d-1 \}$, $k. = \sum_{i=1}^{d-1} k_i$ and 
		\begin{eqnarray*}
			D^k s(\bmb)\equiv \frac{\partial^{k.} s(\bmb)}{\partial \beta_{k_1} \ldots  \partial \beta_{k_d}}.
		\end{eqnarray*}
		We also assume that $\mathcal{C}$ is a convex and bounded set in $\R^{d-1}$ with a nonempty interior.
		\item A9. $\left(\bm J_{\mbS}(\bmb_0)\right)^T \E\Bigl[\psi_0'(\bma_0^T\bm X)\,\text{  Cov}(\bm X|\bma_0^T\bm X)\Bigr]  \left(\bm J_{\mbS}(\bmb_0)\right)$  is nonsingular.  \end{description}
	 
	 Let  $\hat \bma_n$ be defined by (\ref{def:estimator1}). Then:
	\begin{enumerate}
		\item[(i)]
		\text{[Existence of a root]} A crossing of zero $\hat\bmb_n$ of $\f_n(\bmb)$ exists with probability tending to one.
		\item[(ii)]
		\text{[Consistency]}
		\begin{align*}
		\hat\bma_n \stackrel{p}{\rightarrow}\bma_0,\qquad n\to\infty.
		\end{align*}
		\item[(iii)]
		\text{[Asymptotic normality]}	
		Define the matrices,
		\begin{align}
		\label{def_A}
		\bm A:=\E\Bigl[\psi_0'(\bma_0^T\bm X)\,\text{\rm Cov}(\bm X|\bm\a_0^T\bm X)\Bigr],
		\end{align}
		and
		\begin{align}
		\label{def_Sigma}
		\bm \Sigma:=\E\left[\left\{Y -\psi_0(\bm\a_0^T\bm X)\right\}^2\,\left\{\bm X -\E(\bm X|\bm\a_0^T\bm X) \right\}\left\{\bm X -\E(\bm X|\bm\a_0^T\bm X) \right\}^T\right].
		\end{align}
		Then 
		\begin{align*}
		\sqrt n (\hat \bma_n - \bma_0) \to_d N_{d}\left(\bm 0,  \bm A^- \bm \Sigma \bm A^-\right),
		\end{align*}
		where $\bm A^{-}$ is the Moore-Penrose inverse of $\bm A$.
	\end{enumerate}
\end{theorem}

It can be easily seen from expression (\ref{bmJ_s}) for the matrix $\bm J_{\mbS}$ that the spherical coordinate system satisfies Assumption A8. In Section \ref{section:simulations}, we calculate the matrix specified in Assumptions A9 for the simulation model considered in this section and show that this assumption is indeed satisfied in the corresponding model.

\begin{remark}
\label{remark:degenerate}
{
Note that $\bma_0^T\bm A = \bm 0$ and that the normal distribution $N_{d}\left(\bm 0,  \bm A^- \bm \Sigma \bm A^-\right)$ is concentrated on the $(d-1)$-dimensional subspace, orthogonal to $\bma_0$ and is therefore degenerate, as is also clear from its covariance matrix $\bm A^- \bm \Sigma \bm A^-$, which is a matrix of rank $d-1$.
}
\end{remark}

For the ESE, we designed the function $\xi_{nh}$ by representing the sum of squares
\begin{align*}
\frac1n\sum_{i=1}^n\left\{Y_i -\psi_{\bma}( \bma^T\bm X_i)\right\}^2,
\end{align*}
in a local coordinate system with $d-1$ unknown parameters $\bmb = (\b_1,\ldots, \b_{d-1})^T$ followed by differentiation of the reparametrized sum of squares w.r.t. $\bmb$ where we also consider differentiation of the function $\psi_{\bma}$. 

\begin{theorem}
	\label{theorem:asymptotics-efficient}
	Let Assumptions A1-A8 be satisfied. Furthermore  assume that the following conditions hold:
	\begin{description}
		\item A10. The function $\psi_{\bma}$ is two times continuously differentiable on ${\cal I}_{\bma}$ for all $\bma$.
		\item A11. $ \left(\bm J_{\mbS}(\bmb)\right)^T \E\Bigl[\psi_0'(\bm\a_0^T\bm X)^2\,\text{ Cov}(\bm X|\bm\a_0^T\bm X)\Bigr]\bm J_{\mbS}(\bmb)$ is nonsingular. 
	\end{description}
	 Let $\tilde \bma_n$ be defined by (\ref{def:estimator-efficient}) and suppose $h \asymp n^{-1/7}$.	Then:
	\begin{enumerate}
		\item[(i)]
		\text{[Existence of a root]} A crossing of zero $\tilde\bmb_n$ of $\xi_{nh}(\bmb)$ exists with probability tending to one.
		\item[(ii)]
		\text{[Consistency]}
		\begin{align*}
		\tilde \bma_n \stackrel{p}{\rightarrow}\bma_0,\qquad n\to\infty.
		\end{align*}
		\item[(iii)]
		
		\text{[Asymptotic normality]}
		Define the matrices,
		\begin{align}
		\label{def:I1}
		\tilde {\bm A}:=\E\Bigl[\psi_0'(\bm\a_0^T\bm X)^2\,\text{ Cov}(\bm X|\bm\a_0^T\bm X)\Bigr],
		\end{align}
		and
		\begin{align}
		\label{def:I2}
		\tilde {\bm \Sigma}:=\E\left[\left\{Y -\psi_0(\bm\a_0^T\bm X)\right\}^2\psi_0'(\bm\a_0^T\bm X)^2\left\{\bm X -\E(\bm X|\bm\a_0^T\bm X) \right\}\left\{\bm X -\E(\bm X|\bm\a_0^T\bm X) \right\}^T\right].
		\end{align}
		Then 
		\begin{align*}
		\sqrt n (\tilde \bma_n - \bma_0) \to_d N_{d}\left(\bm 0,  \tilde {\bm A}^- \tilde {\bm \Sigma} \tilde {\bm A}^-\right),
		\end{align*}
		where $\tilde {\bm A}^{-}$ is the Moore-Penrose inverse of $\tilde {\bm A}$.	
		
	\end{enumerate}
\end{theorem}

\begin{remark}
{
		The asymptotic variance of the estimator $\tilde \bma_n$ is similar to that obtained for the ``efficient'' estimates proposed in \cite{xia2006} and in \cite{Kuchibhotla_patra:16}. 
		The efficient score function for the semi-parametric single index model is 
		\begin{align*}
		\tilde \ell_{\bma_0,\psi_0}(\bm x, y) =  \frac{y - \psi\left(\bma_0^T\bm x\right)}{\sigma^2(\bm x)}\psi'\left(\bma_0^T\bm x\right) \left\{\bm x -\frac{\E\left\{\sigma^{-2}(\bm X)\bm X | \bma_0^T\bm X =\bma_0^T\bm x \right\}}{\E\left\{\sigma^{-2}(\bm X)| \bma_0^T\bm X=\bma_0^T\bm x\right\}} \right\}. 
		\end{align*}
		More details on the efficiency calculations can be found in e.g. \cite{vaart:98}, chapter 25 for a general description of the efficient score functions and in \cite{delecroix2003} or \cite{Kuchibhotla_patra:16} for the efficient score in the single index model. 
		
		In a homoscedastic model with var$(Y|\bm X=\bm x) = \sigma^2$, where $\sigma^2$ is independent of covariates $\bm x$, the asymptotic variance equals $\sigma^2 \tilde {\bm A}^-$ which is the same as the inverse of $\E(\tilde \ell_{\bma_0,\psi_0}(\bm X, Y)\tilde \ell_{\bma_0,\psi_0}(\bm X, Y)^T)$. This indeed shows that our estimate defined in (\ref{def:estimator-efficient}) is efficient in the homoscedastic model. As also explained in Remark 2 of \cite{Kuchibhotla_patra:16}, our estimator has also a high relative efficiency with respect to the optimal semi parametric efficiency bound if the constant variance assumption provides a good approximation to the truth.
}
\end{remark}

\subsection{The asymptotic relation for the score estimators}
\label{subsec:asymptotic-relation}
To obtain the asymptotic normality result of the SSE $\hat \bma_n$ given in Theorem \ref{theorem:asymptotics}, we prove in the Supplementary Material that the following asymptotic relationship holds for $\hat \bmb_n$:
\begin{align*}
\bm B\left(\hat\bmb_n-\bmb_0\right)&=\int \left(\bm J_{\mbS}(\bmb_0)\right)^T\left\{\bm x-\E(\bm X| \mbS(\bmb_0)^T\bm X =  \mbS(\bmb_0)^T\bm x)\right\}\left\{y-\psi_0\bigl(\mbS(\bmb_0)^T\bm x\bigr)\right\}\,d\bigl(\P_n-P_0\bigr)(\bm x, y) \nonumber\\
&\qquad +o_p\left(n^{-1/2}\right).
\end{align*}
where 
\begin{align}
\label{def_B}
\bm B= \left(\bm J_{\mbS}(\bmb_0)\right)^T \E\Bigl[\psi_0'(\mbS(\bmb_0)^T\bm X)\,\text{ Cov}(\bm X|\mbS(\bmb_0)^T\bm X)\Bigr]  \left(\bm J_{\mbS}(\bmb_0)\right) = \left(\bm J_{\mbS}(\bmb_0)\right)^T \bm A \, \bm J_{\mbS}(\bmb_0),
\end{align}
in $\R^{(d-1)\times(d-1)}$. We assume in Assumption A9 that $\bm B$ is invertible so that
\begin{align*}
&\sqrt n \left(\hat\bmb_n-\bmb_0\right) \nonumber\\
&=  \sqrt n\bm B^{-1}\int \left(\bm J_{\mbS}(\bmb_0)\right)^T\left\{\bm x-\E(\bm X| \mbS(\bmb_0)^T\bm X =  \mbS(\bmb_0)^T\bm x)\right\}\left\{y-\psi_0\bigl(\mbS(\bmb_0)^T\bm x\bigr)\right\}\,d\bigl(\P_n-P_0\bigr)(\bm x, y)+o_p(1)\nonumber\\
&\qquad \to_d N(\bm 0,\bm  \Pi),
\end{align*}
where 
\begin{align}
\label{def_Pi}
\bm \Pi = \bm B^{-1} \left(\bm J_{\mbS}(\bmb_0)\right)^T \bm \Sigma \, \bm J_{\mbS}(\bmb_0) \, \bm B^{-1} \in \R^{(d-1)\times(d-1)}.
\end{align}
The limit distribution of the single index score estimator $\hat \bma_n$ defined in (\ref{def:estimator1}) now follows by an application of the delta-method and we conclude that
\begin{align*}
\sqrt n (\hat \bma_n -\bma_0) &=\sqrt n \left(\mbS\big(\hat \bmb_n\big) -\mbS\left(\bmb_0\right)\right) =  J_{\mbS}(\bmb_0) \sqrt n (\hat \bmb_n -\bmb_0)+ o_p(1)\\
&\to_d  N_{d}\left(\bm 0,  J_{\mbS}(\bmb_0)\bm  \Pi \left(J_{\mbS}(\bmb_0)\right)^T\right)
=N_{d}\left(\bm 0,  {\bm A}^-{\bm \Sigma}{\bm A}^-\right),
\end{align*}
where the last equality follows from the following lemma.
\medskip

\begin{lemma}
	\label{lemma:Moore-Penrose}
	Let the matrix $\bm A$ be defined  by (\ref{def_A}) and let $\bm A^-$ be the Moore-Penrose inverse of $\bm A$. Then
	\begin{align*}
	{\bm A}^-=\bm J_{\mbS}(\bmb_0) \left\{\left(\bm J_{\mbS}(\bmb_0)\right)^T{\bm A}\bm J_{\mbS}(\bmb_0)\right\}^{-1}\left(\bm J_{\mbS}(\bmb_0)\right)^T =\bm J_{\mbS}(\bmb_0) \bm B^{-1}\left(\bm J_{\mbS}(\bmb_0)\right)^T .
	\end{align*}
\end{lemma}
The proof of Lemma \ref{lemma:Moore-Penrose} is given the Supplementary Material. 
\\

The asymptotic variance of the ESE can be obtained similarly to the derivations of the asymptotic limiting distribution for the SSE. First the asymptotic variance is expressed in terms of the parametrization $\mbS$ as in (\ref{def_Pi}) and next, similar to Lemma \ref{lemma:Moore-Penrose}, equivalence to the expression $\tilde {\bm A}^-\tilde {\bm \Sigma} \tilde {\bm A}^-$ given in Theorem \ref{theorem:asymptotics-efficient} is proved.
 
\section{Computation and finite sample behavior of the score estimators}
\label{section:simulations}
In this section we investigate the applicability and the performance of the score estimates of Section \ref{section:score-estimators} in practice. We first describe the optimization algorithm used to obtain the score estimates and next include two simulation studies and compare the score estimates with alternative estimates for the monotone single index model. In the first simulation setting, given in Section \ref{subsec:simulation1}, we illustrate that the variance of the score estimates converges to the asymptotic variances derived in Section \ref{section:Asymptotics}. We also compare the score estimates with the maximum rank correlation estimate (MRCE) proposed by \cite{han:87}, and the Effective Dimension Reduction Estimate (EDRE), proposed in \cite{hristache01} and the Penalized Least Squares Estimate, proposed in \cite{Kuchibhotla_patra_sen:17}.  To compute the latter two estimates, we used the R packages {\tt{EDR}} and {\tt{simest}}, respectively. As for the alternative $\sqrt n$-consistent ``link-free" estimates of Section \ref{sec:basic_estimate}, the MRCE also does not depend on an estimate of the link function $\psi_0$. Finally, we discuss the convergence of the variances for the least squares estimate (LSE) minimizing the sum of squared errors $S_n$ defined in (\ref{def:sum_of_squares_LSE})  w.r.t. $(\psi, \bma)$, for which the asymptotic distribution is still an open problem.
	\\
	
In the second simulation setting in Section \ref{subsec:simulation2}, we investigate the quality of the score estimates if the dimension of the parameter space increases and investigate the finite sample behavior of different efficient estimates in the single index model. 
\\

By the discontinuous nature of the score functions given in Section \ref{section:score-estimators}, we introduced the concept of a zero-crossing in Definition \ref{zero-crossing}. It is not possible to solve the score equations exactly and we therefore search the crossing of zero, by minimizing the sum of squared component score functions over all possible values of $\bmb$ (parametrization approach of Section \ref{subsection:score-estimator1}), respectively $\bma$ (Lagrange approach of Section \ref{subsect:score-Lagrange}).  Note that the crossing of zero of the score function is equivalent to the minimizer of the sum of squared component scores so that the minimization procedure is justified. Due to the non-convex nature of the optimization function, standard optimization approaches based on a convex loss function cannot be used to obtain the score estimates. 
\\

We use a derivative free optimization algorithm proposed by \cite{hooke1961} to obtain the score estimates. The method is a pattern-search optimization method that does not require the objective function to be continuous. The algorithm starts from an initial estimate of the minimum and looks for a better nearby point using a set of $2d$ equal step sizes along the coordinate axes in each direction, first making a step in the direction of the previous move. For the object function we take the sum of the squared values of the component functions, which achieves a minimum at a crossing of zero. If in no direction an improvement is found, the step size is halved, and a new search for improvement is done, with the reduced step sizes. This is repeated until the step size has reached a prespecified minimum. A very clear exposition of the method is given in \cite{torczon:97}, Section 4.3. In this paper also convergence proofs for the optimization algorithm are presented. The optimization algorithm depends on a starting value for the regression parameters. In our simulations we used the true parameter values as starting values. In practice, we propose to search over a random grid of starting values on the unit sphere and select the estimate that results the smallest prediction error $\sum_{i=1}^{n}\{Y_i- \hat \psi_{n \hat\bma_n}(\hat{\bma_n}^T\bmX_i)\}^2$ among all different initial searches as a final starting value to obtain the score estimate.

\subsection{Simulation 1: The asymptotic properties of the score estimators}
\label{subsec:simulation1}
In this section, we illustrate the asymptotic properties of the SSE and the ESE given in Section \ref{section:Asymptotics} in the model
\begin{align*}
Y = \psi_0(\bma_0^T \bm X) + \varepsilon, \quad, \psi_0(\bm x)= x^3, \quad \a_{01} = \a_{02} =\a_{03} = 1/\sqrt 3, \quad  \quad X_1,X_2, X_3 \stackrel{i.i.d}{\sim} U[1,2], \quad  \varepsilon \sim N(0,1),
\end{align*}
where $\varepsilon$ is independent of the covariate vector $\bm X = (X_1,X_2, X_3)^T$. For this model, we have
$$\bm A = \frac{17}{15}\begin{bmatrix}
2 & -1 & -1
\\[0.3em]
-1 & 2 & -1 \\[0.3em]
-1 & -1 & 2
\end{bmatrix}
\quad \text{,} \quad
\bm \Sigma = \frac{1}{36}\begin{bmatrix}
2 & -1 & -1
\\[0.3em]
-1 & 2 & -1 \\[0.3em]
-1 & -1 & 2
\end{bmatrix}
\quad \text{ and} \quad
\tilde {\bm A} = \tilde {\bm \Sigma} =\frac{89953}{7560}\begin{bmatrix}
2 & -1 & -1
\\[0.3em]
-1 & 2 & -1 \\[0.3em]
-1 & -1 & 2
\end{bmatrix},
$$
where the matrices $\bm A, \bm \Sigma,\tilde {\bm A}$ and $\tilde {\bm \Sigma}$ are defined in (\ref{def_A}), (\ref{def_Sigma}), (\ref{def:I1}) and (\ref{def:I2}) respectively. Note that the rank of the matrices is equal to $d-1 = 2$. For this model, using the spherical coordinate system in three dimension introduced in (\ref{parametrization-dim3}), we get for the matrix specified in Assumption A9 that
$$
\left(\bm J_{\mbS}(\bmb_0)\right)^T \E\Bigl[\psi_0'(\bma_0^T\bm X)\,\text{Cov}(\bm X|\bma_0^T\bm X)\Bigr]  \left(\bm J_{\mbS}(\bmb_0)\right) = 	\begin{bmatrix}
\frac{17} {15} &0		\\[0.3em]
0  &\frac{17} {10} 
\end{bmatrix},
$$
which illustrates that the nonsingularity in Assumption A9 is indeed satisfied in this simulation setup. The same holds for the matrix given in Assumption A10.

The asymptotic variance of $\hat \bma_n$ resp. $\tilde \bma_n$  defined in Theorem \ref{theorem:asymptotics} resp. Theorem \ref{theorem:asymptotics-efficient} is equal to
\begin{align}
\label{matrix-simulation-normal}
\bm A^- \bm \Sigma  \bm A^-=
\frac{25}{2601}\begin{bmatrix}
2 & -1 & -1
\\[0.3em]
-1 & 2 & -1 \\[0.3em]
-1 & -1 & 2
\end{bmatrix}
\quad 
\text{ resp.  }\quad 
\tilde {\bm A}^-\tilde {\bm \Sigma} \tilde {\bm A}^- = \tilde {\bm A}^-= 
\frac{840}{89953}\begin{bmatrix}
2 & -1 & -1
\\[0.3em]
-1 & 2 & -1 \\[0.3em]
-1 & -1 & 2
\end{bmatrix}.
\end{align}
\\

We compare the estimates with the least squares estimates minimizing the sum of squared errors $S_n$ defined in (\ref{def:sum_of_squares_LSE})  w.r.t. $(\psi, \bma)$ and with the MRCE proposed by \cite{han:87}. This estimator is defined by the maximizer of
\begin{align*}
H_n(\bma):=\frac1{n(n-1)} \sum_{i\ne j}\{Y_i > Y_j\}\{\bma^T\bmX_i > \bma^T\bmX_j \},
\end{align*}
over all $\bma \in \cS_{d-1}$. The MRCE is proved to be a $\sqrt n$-consistent and asymptotically normal estimator of the regression parameter in the monotone single index model by \cite{sherman:93}, who gave an expression for the asymptotic covariance matrix in an (implicit) $(d-1)$-dimensional representation in his Theorem 4 on p.\ 133. If, in accordance with the parametrization methods of our paper, we turn this into an expression in terms of our $d$-dimensional representation, we obtain as the asymptotic covariance matrix of the MRCE $\bmV^-\bm S\bmV^-$
where
\begin{align}
\label{def_bmS}
\bm S=\E\left[\left\{\bm X -\E(\bm X|\bm\a_0^T\bm X) \right\}\left\{\bm X -\E(\bm X|\bm\a_0^T\bm X) \right\}^T
S(Y,\bma_0^T\bmX)^2g_0(\bma_0^T\bmX)^2\right],
\end{align}
and $\bmV^-$ is the Moore-Penrose inverse of
\begin{align*}
\bm V=\E\left[\left\{\bm X -\E(\bm X|\bm\a_0^T\bm X) \right\}\left\{\bm X -\E(\bm X|\bm\a_0^T\bm X) \right\}^T
S_2(Y,\bma_0^T\bmX)g_0(\bma_0^T\bmX)^2\right],
\end{align*}
and $g_0$ is the density of $\bm\a_0^T\bm X$ and $S$ and $S_2$ are defined by
\begin{align*}
S(y,u)=E\left[1_{\{y>Y\}}-1_{\{y<Y\}}\bigm| \bma_0^T\bmX=u\right],\qquad S_2(y,u)=\frac{\partial}{\partial u}S(y,u).
\end{align*}
It is clear from our simulations that the factor $2$ in front of $\bm V$ in (20) of \cite{sherman:93} cannot be correct and indeed \cite{sherman-cavanagh98} have a note on p.\ 361 of their paper, attributed to Myoung-Jae Lee that this factor $2$ should not be there.
\\
To obtain the LSE and MRCE under the identifiability restriction $\|\bma_0 \| = 1$, we also consider the parametrization of the unit sphere and first rewrite the optimization function
 \begin{align}
\tilde S_n(\bmb) :=  \frac1n \sum_{i=1}^n \Big \{  Y_i -\hat \psi_{n\mbS(\bmb)}(\mbS(\bmb)^T\bm X_i)  \Big \}^2,
 \end{align}
for the LSE and
\begin{align}
\tilde H_n(\bmb){:=}\frac1{n(n-1)} \sum_{i\ne j}\{Y_i > Y_j\}\{\mbS(\bmb)^T\bmX_i > \mbS(\bmb)^T\bmX_j \},
\end{align}
for the MRCE in terms of the $(d-1) = 2$ dimensional vector $\bmb$ using the spherical coordinate system in three dimensions. Next we use the optimization algorithm by \cite{hooke1961}, discussed above, to minimize $\tilde S_n$ respectively maximize $\tilde H_n$ w.r.t. $\bmb$ to end up with a LSE respectively MRCE of the regression parameter that has length one and hence satisfies our identifiability restriction.
\\

To illustrate the link-free least squares estimates H-LFLSE and LFLSE in Section \ref{sec:basic_estimate}, we also consider normally distributed covariates $\bm X$, $X_i \stackrel{i.i.d}{\sim}  N(0,1)$ for $i = 1,\ldots,d$. Since the asymptotic results for the score estimates of section \ref{section:score-estimators} are proven under the assumption of bounded covariates only, this simulation provides further insight in the convergence of the variances of our score estimates in a model where not all Assumptions given in Section \ref{section:Asymptotics} are satisfied. Since the LFLSE does not depend on the behavior of the LSE $\hat \psi_{n \bma}$ and no longer suffer from the crossing of zero difficulties, we used Broyden's method for solving nonlinear equations in higher dimensions to obtain the LFLSE, which is very fast and of quasi Newton type.
\\

For sample sizes $n=100,500,1000,2000,5000$ and $n=10000$ we generated $N= 5000$ datasets and show, in Tables \ref{table:simulation1b} and \ref{table:simulation2b}, the mean and $n$ times the covariance of the estimates. Tables \ref{table:simulation1b} and \ref{table:simulation2b} also show the asymptotic values to which the results for the SSE and the ESE should converge based on Theorem \ref{theorem:asymptotics} and Theorem \ref{theorem:asymptotics-efficient} respectively. For the limiting variance of the MRCE, we used the description above. The asymptotic distributions of the H-LFLSE and LFLSE, on the other hand, are only derived for the normally distributed and not the uniformly distributed covariate setting, since the latter setting does not satisfy the condition of elliptic symmetry. The variances to which the H-LFLSE and the LFLSE should converge are given in Section \ref{sec:basic_estimate}. The limiting distribution of the LSE is still unknown and therefore no asymptotic results are provided for the LSE in Tables \ref{table:simulation1b} and \ref{table:simulation2b}.
\\

\begin{center}
[Tables \ref{table:simulation1b} and \ref{table:simulation2b}] 
 \end{center}

For the two simulation studies, the results shown in Tables \ref{table:simulation1b} and \ref{table:simulation2b} show convergence of $n$ times the variance-covariance matrices towards the asymptotic values. The performance of the ESE is slightly better than the performance of the SSE; the difference between the asymptotic limiting variances is smaller in the model with uniform$[1,2]$ covariates $X_i$  than the difference in the model with standard normal covariates $X_i$. Although the model with standard normal covariates violates Assumptions A1, A2 and A4 given in Section \ref{section:Asymptotics}, our proposed score estimates perform reasonably well. We added In Table \ref{table:simulation1b} the values for the estimators EDRE (``Effective Dimension Reduction Estimate'') and PLSE (``Penalized Least Squares Estimate'') that are further studied in Subsection \ref{subsec:simulation2}.
\\

Figure \ref{fig:Lagrange-sim} illustrates the similarity for $n\cdot$var$(\hat\a_{3n})$ between the the score estimates obtained with either the parametrization or Lagrange approach. Similar results are obtained for the other variances reported in Table \ref{table:simulation1b}, which supports the conjecture that the asymptotic properties of the score estimates with Lagrange penalty term (Section \ref{subsect:score-Lagrange}) are equivalent to the asymptotic results presented in Section \ref{section:Asymptotics} for the estimates obtained via a parametrization of the unit sphere (Section \ref{subsection:score-estimator1}).
\\

\begin{center}
	[Figure \ref{fig:Lagrange-sim}  here]
\end{center}

The performance of the link-free estimates MRCE, H-LFLSE and LFLSE is considerably worse than the performances of our proposed score estimates in all simulation settings. In the model with standard normal covariates, the variances of these link-free estimates are remarkably larger than the variances of the score estimates and the LSE.
%Also, the asymptotic variances of the MRCE in these models are considerably larger than the variances of the score estimators.
This might be caused by the fact that these estimates are not based on an estimate of the link function and hence do not take information about this link function into account. It is clear from our experiments that the link-free estimates are for sure not the most preferred estimates to use in the monotone single index model, even if the conditions for their use are satisfied.
\\

Estimation of the link function $\hat \psi_{n \bma}$ is very straightforward. Since the number of jump points of the LSE $\hat \psi_{n \bma}$ is of the order $n^{1/3}$, estimation of the smooth derivate estimate $\tilde \psi_{nh \bma}'$  only requires one additional summation over these  $O(n^{1/3})$ jump points for each of the $n$ observations. The computation time for the score estimates is relatively fast (a more in depth study of the computation time is given in Section \ref{subsec:simulation2}). Although the MRCE does not depend on an estimate of the link function, a double sum is needed for the calculation of the criterion function $\tilde H_n$ which increases the computation time considerably when the sample size is large. Since the H-LFLSE only depends on an ordinary least squares algorithm  and since we can use Broyden's algorithm for the LFLSE, these estimates do not require a hard optimization algorithm and the computation of the LFLSE and the H-LFLSE requires less than a second in all our simulations.
\\

The behavior of the LSE is rather remarkable. Table \ref{table:simulation1b} suggests an increase of $n$ times the covariance matrix, whereas  Table \ref{table:simulation2b} suggests a decrease. 
The results presented in Table \ref{table:simulation2b} show that the performance of the LSE is better than the performance of the SSE when $X_i \sim N(0,1)$. For the model with uniform covariates, summarized in Table \ref{table:simulation1b}, our proposed score estimates are better than the LSE. The variances for the LSE presented in Tables \ref{table:simulation1b} and \ref{table:simulation2b} suggest that the rate of convergence for the LSE is faster than the cube-root $n$ rate proved in \cite{balabdaoui2016}.

\subsection{Simulation 2: Further comparisons and the behavior of the estimators if the covariates have a higher dimension}
\label{subsec:simulation2}
In this section we illustrate the applicability of the score estimates given in Section \ref{subsect:score-Lagrange} when the dimension of the covariate space increases. We also compare our estimates with the  Effective Dimension Reduction Estimate (EDRE) (see \cite{hristache01}) and the Smooth Penalized Least Squares estimate (PLSE) (see \cite{Kuchibhotla_patra:16}). Here we use again the \verb|R| packages \verb|EDR| and \verb|simest| available on CRAN, just as in the  previous simulation. We use the Lagrange approach to the computation of the SSE and ESE. The computation relies on \verb|C++| programs which are used in \verb|R| (via \verb|Rcpp|) scripts, see \cite{github:18}. The following results can be reproduced by running the R scripts, given there.
\\

We consider the model of Table \ref{table:simulation2b} more generally:
\begin{align*}
Y = \psi_0(\bma_0^T \bm X) + \varepsilon, \quad, \psi_0(\bm x)= x^3, \quad \bma =d^{-1/2}(1,\dots,1)^T, \quad  \quad X_i\stackrel{i.i.d}{\sim} N(0,1), \quad  \varepsilon \sim N(0,1),
\end{align*}
where (dimension) $d = 5,10,15,25$ (the case $d=3$ was considered in Table \ref{table:simulation2b}). The estimation error is measured via $\sqrt{n/d}\,\|\hat\bma_n-\bma_0\|_2$, where $\|\cdot\|_2$ is the Euclidean norm. The results are compared with what the asymptotic distribution of the efficient estimate would give. The asymptotic distribution of efficient estimates of $\bma_0$ is, for general $d\ge2$,  given by a degenerate normal distribution with mean zero and covariance matrix
\begin{align}
\label{Sigma_d}
\bm\Sigma_d=\frac1{27d}\left\{d\bm I-\bm1\cdot\bm1^T\right\},
\end{align}
where $\bm I$ is the $d\times d$ identity matrix and $\bm1$ is the column vector with $d$ components, equal to $1$.
The special case $d=3$ was used in Table \ref{table:simulation2b}. Results of these experiments are given in Figures \ref{boxplots1} to \ref{boxplots3}. Figures \ref{boxplots1} and \ref{boxplots2} give the behavior of the $L_2$-distance for different dimensions and sample sizes $n=100$ and $n=1000$, respectively. Figure \ref{boxplots3} gives the computing time in seconds for sample size $n=1000$. Clearly EDR needs the longest computing time.
\\

\begin{center}
[Figures \ref{boxplots1} to \ref{boxplots3} here]
\end{center}

All algorithms depend on starting values for the regression parameter. For the EDRE and SSE we do not have to specify tuning parameters, although for the EDRE there are if fact tuning parameters, hidden in the package EDR. Some of the algorithms have more need for a reasonable starting value than others. For example, one can start SSE and LSE at a starting value having a larger distance to the real value of $\bma_0$ than others, such as the ESE. One can solve this isssue for example by starting the ESE algorithm from the value obtained by the LSE, which is itself started from arbitrary starting values, such as $(1,0,\dots,0)$ or from a value, found by a preliminary search on starting values of the LSE algorithm, using the sum of the squared errors as criterion (see the remark on this issue just before section \ref{subsec:simulation1} above). Clearly more research for this selection procedure is necessary.\\

The bandwidth for the computation of the estimate of the derivative $\psi_0'$ of $\psi_0$ in the algorithm for the ESE is set equal to $h =\tfrac12\tilde c n^{-1/7}$ where $\tilde c$ equals the range $\bma^T\bmX_1,\ldots,\bma^T\bmX_n$ and $\bma$ is the current estimate of $\bma_0$ during the iterations. This choice gave satisfactory results in all our experiments.  We do not discuss bandwidth selection procedures in this manuscript, but  note that the bootstrap techniques discussed in \cite{kim_piet:18} and \cite{kim_piet:17EJS} for the current status model, can also be investigated further to select the bandwidth of the ESE in the monotone single index model in practice. For the PLSE (generalized) cross-validation can be used to select the smoothing parameter. In our experiments; we took the smoothness penalty for the PLSE equal to $0.1$ (after some preliminary experimentation). The EDR method uses an average derivative  estimate (derivative w.r.t.\ the covariate $X$) as starting value, but computing this estimate is done within the package.
\\

The results in Figures \ref{boxplots1} to \ref{boxplots3} suggest that the asymptotically efficient estimates ESE and PLSE have the best behavior. The results for the EDRE deteriorate significantly with increasing dimension, both in $L_2$-error and computing time. The LSE has remarkably good behavior and there is certainly the suggestion that its rate of convergence is faster than $n^{1/3}$ for the present model.\\

\section{Summary}
\label{sec:discussion}
In this paper we introduce estimates for the regression parameter in the monotone single index model. Our estimates are obtained via the zero-crossing of an unsmooth score equation derived from the sum of squared errors and depend on the behavior of the LSE of the underlying monotone link function. We prove $\sqrt n$-consistency and asymptotic normality of our estimates and therefore, for the first time, define estimates that depend on the cube-root-$n$ consistent LSE of the link function which still converge at the parametric rate to the true regression parameter in the monotone single index model.  By introducing a score approach similar to the M-approach for the profile LSE, where simultaneous minimization is over $\bm\alpha$ and the link function, we avoid the difficulties that arise when analyzing the limiting behavior of the profile LSE. This novel result in the field of shape constrained statistics will hopefully help us to further understand the behavior of the profile LSE in the monotone single index model, for which the limiting distribution is still unknown.
\\

We consider two different score estimates, one very simple one that does not require any smoothing technique and one efficient estimate that is based on a smooth estimate of the derivative of the link function. This derivative estimate depends again only on the LSE of the link function, but, in contrast with the LSE itself, also on a kernel which is integrated w.r.t.\ the jumps of the LSE. We use two techniques to ensure that the norm of the regression parameter estimate is one. The first approach uses a parametrization of the unit sphere in $d-1$ dimensions. In the second method, motivated by the Lagrange approach, we directly solve an equation in dimension $d$ for the parameter $\bm\a$ and divide by the norm at the end of the iterations.\\

Since our score functions depend on the piecewise constant LSE of the link function, we obtain unsmooth score functions that might not have an exact root. We therefore work with zero-crossings instead of exact zeros and prove that there indeed always exists a value for the regression parameter where the score functions cross zero.
\\

We compare our score estimates with link-free estimates of the single index parameters that avoid estimation of the link function. To that end, we also introduce a link-free least squares estimate, conditioned to have norm one and derive the asymptotic variance of this link-free estimate for the situation where we have elliptically symmetric distributions for the covariates $\bm X$ (like  the normal $\bm X$ in the simulation setting of Table \ref{table:simulation2b}). Since this estimate no longer depends on the LSE of the link function, the crossing of zero issues disappear and we can solve the corresponding score equation exactly. This also  illustrates the applicability of the method motivated by the Lagrangian formulation, which avoids the reparametrization. Our simulations clearly show that our score estimates have a better behavior than the link-free estimates, even if the conditions for application of the latter methods are fulfilled.\\

A numerical comparison between the score estimate and other estimates for the single index model reveals that our score estimates perform well in higher dimensions.  Our computer experiments moreover point out that the Lagrange score approach  can easily be used in higher dimensions.

\begin{figure}[!ht]
	\centering
	\begin{subfigure}{0.45\linewidth}
		\includegraphics[width=0.95\textwidth]{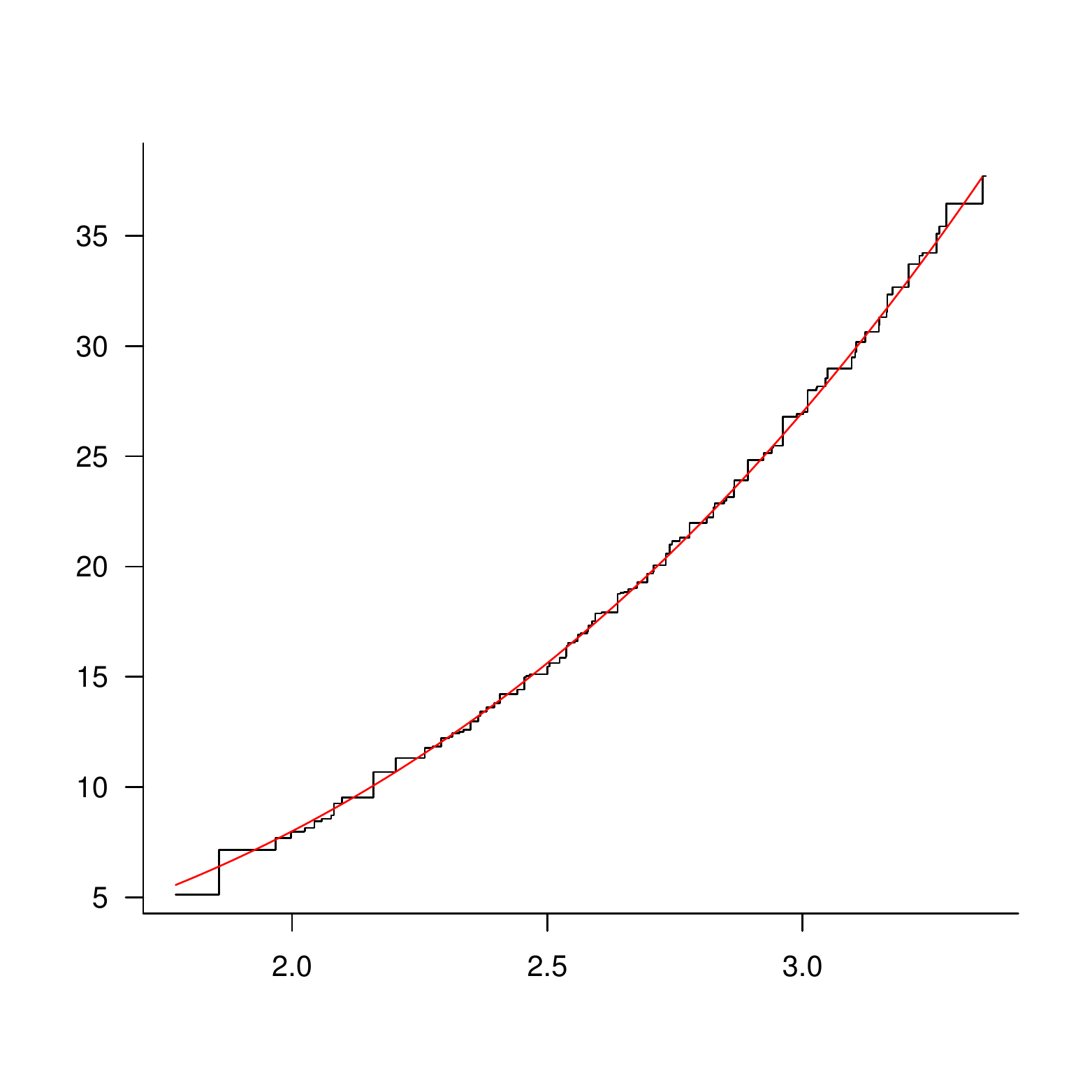}
		\caption{$\hat\psi_{n,\hat\bma_n}$}
	\end{subfigure}
	\begin{subfigure}{0.45\linewidth}
	\includegraphics[width=0.95\textwidth]{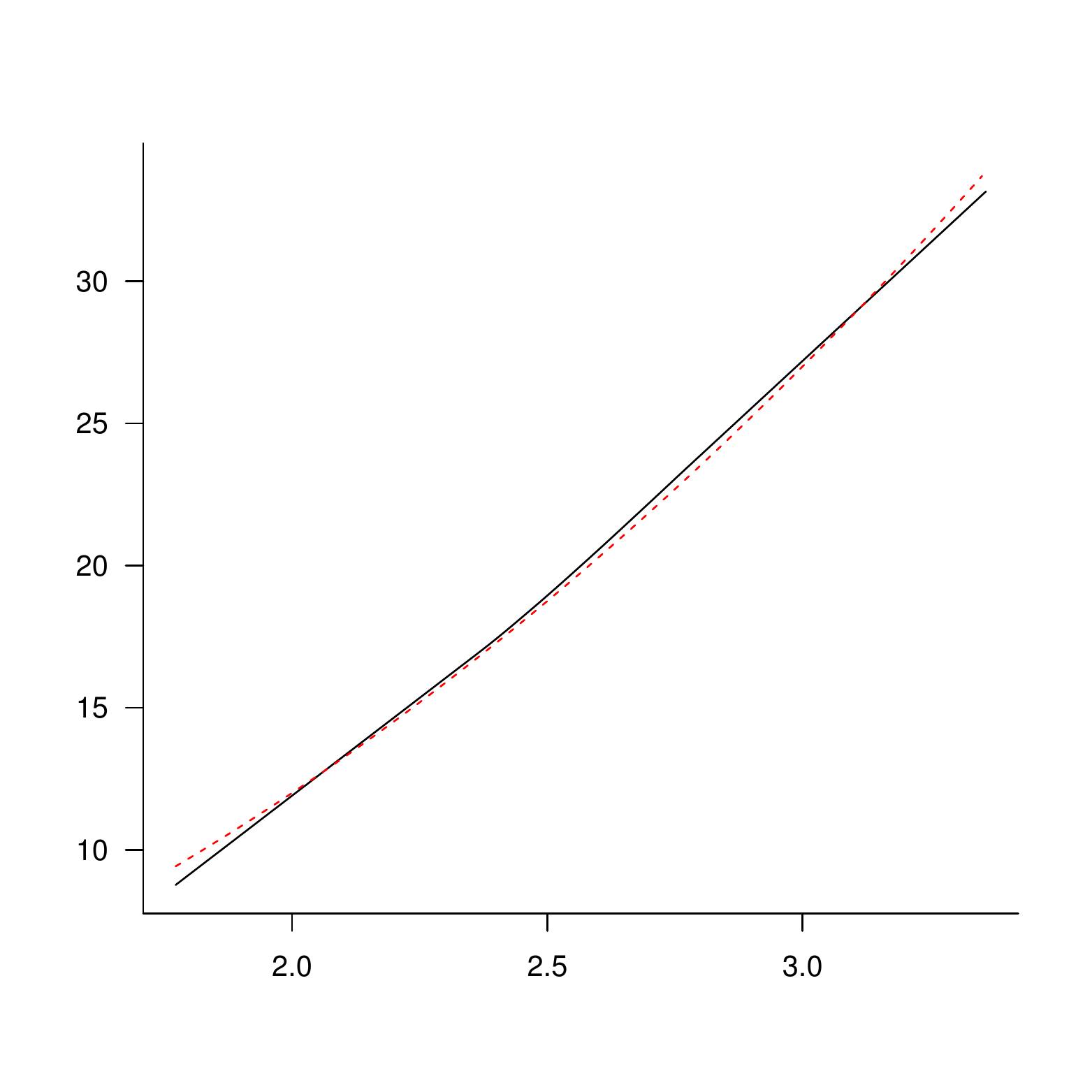}
	\caption{$\tilde\psi'_{n,\hat\bma_n}$}
	\end{subfigure}
	\caption{The estimates  $\hat\psi_{n,\hat\bma_n}$ and $\tilde\psi'_{n,\hat\bma_n}$ of $\psi_0$ and $\psi'_0$, respectively, for $n=1000$, $d=3$ and a sample for the model used in the simulation study of Table \ref{table:simulation1b}. The red curves are $\psi_0$ and $\psi_0'$. The bandwidth chosen in the estimate $\tilde\psi'_{n,\hat\bma_n}$ was equal to the range of the values $\bmX^T\hat\bma_n$ times $n^{-1/7}$.}
	\label{fig:psi_and_psi'}
\end{figure}

\begin{figure}[!ht]
	\centering
	\includegraphics[width=0.35\textwidth]{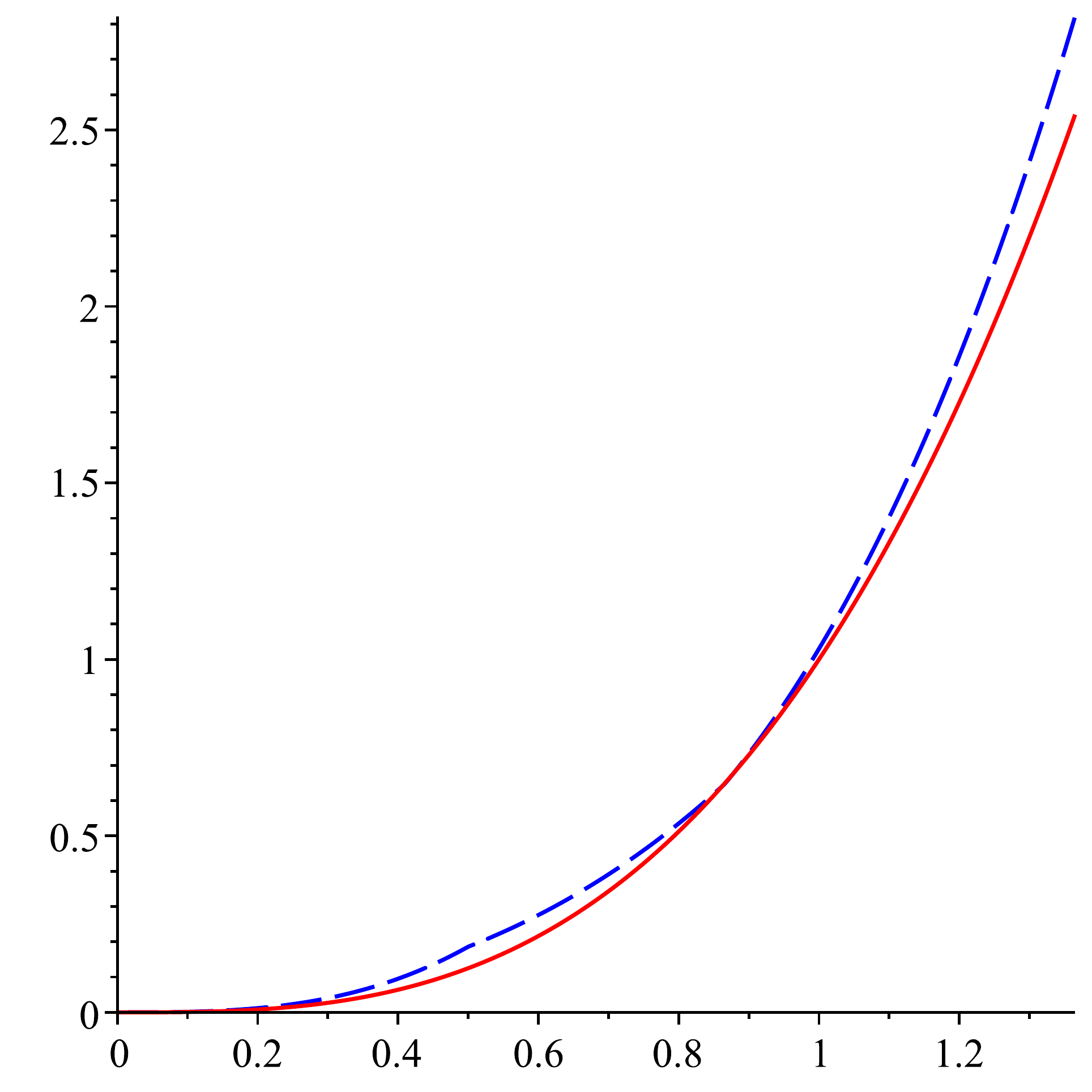}
	\caption{ The real $\psi_0$ (red, solid) and the function $\psi_{\bma}$ (blue, dashed) for  $\psi_0(x)= x^3$,  $\a_{01} = \a_{02} =  1/\sqrt 2$ and $\a_{1} = 1/2, \a_{2} =  \sqrt 3/2$, with $X_1, X_2 \stackrel{i.i.d}{\sim} U[0,1]$. }
	\label{fig:psi_alpha}
\end{figure}

\begin{figure}[!ht]
	\centering
	\begin{subfigure}{0.45\linewidth}
		\includegraphics[width=0.95\textwidth]{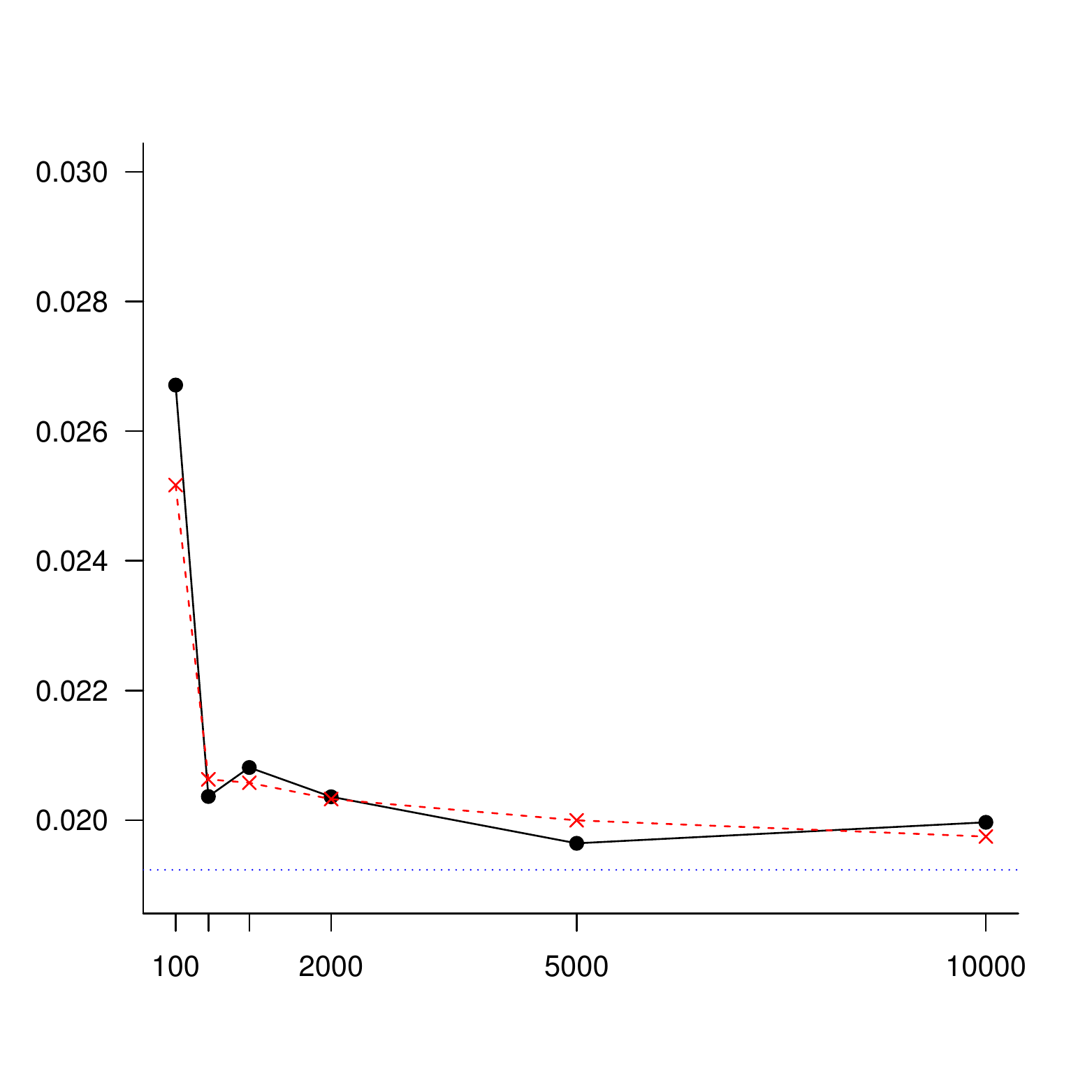}
		\caption{SSE}
	\end{subfigure}
	\begin{subfigure}{0.45\linewidth}
	\includegraphics[width=0.95\textwidth]{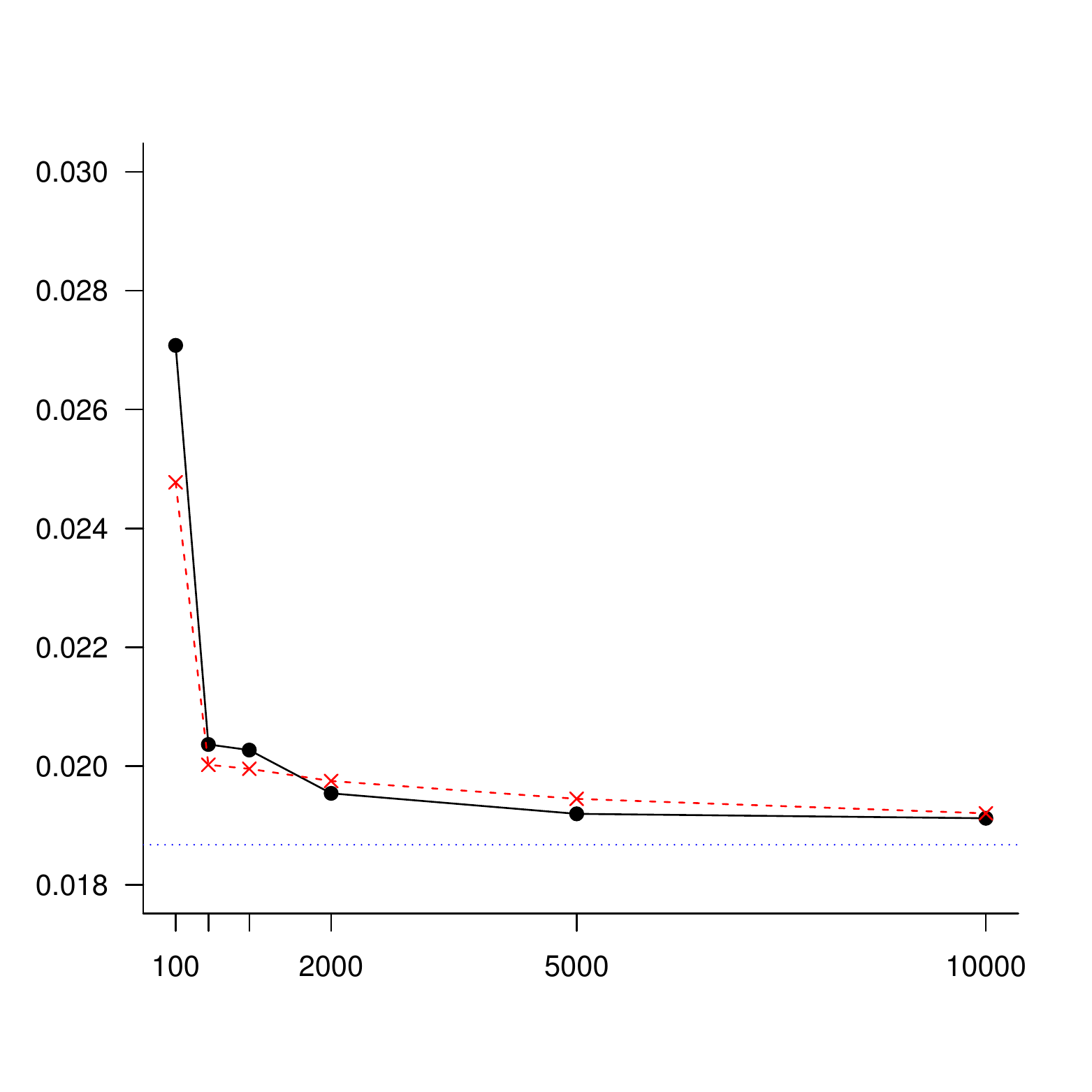}
	\caption{ESE}
	\end{subfigure}
	\caption{Simulation model  ($ X_i\sim  U[1,2], d=3$): $n\mapsto n\cdot$var$(\hat\a_{3n})$ for (a) the SSE and (b) the ESE using the parametrization approach (red,dashed, $\times$) Lagrange approach (black, solid,  $\bullet$). The blue, dotted line indicates the asymptotic variance of the score estimates. }
	\label{fig:Lagrange-sim}
\end{figure}

\begin{figure}[!h]
	\centering
	\begin{subfigure}{0.485\linewidth}
		\includegraphics[width=0.95\textwidth]{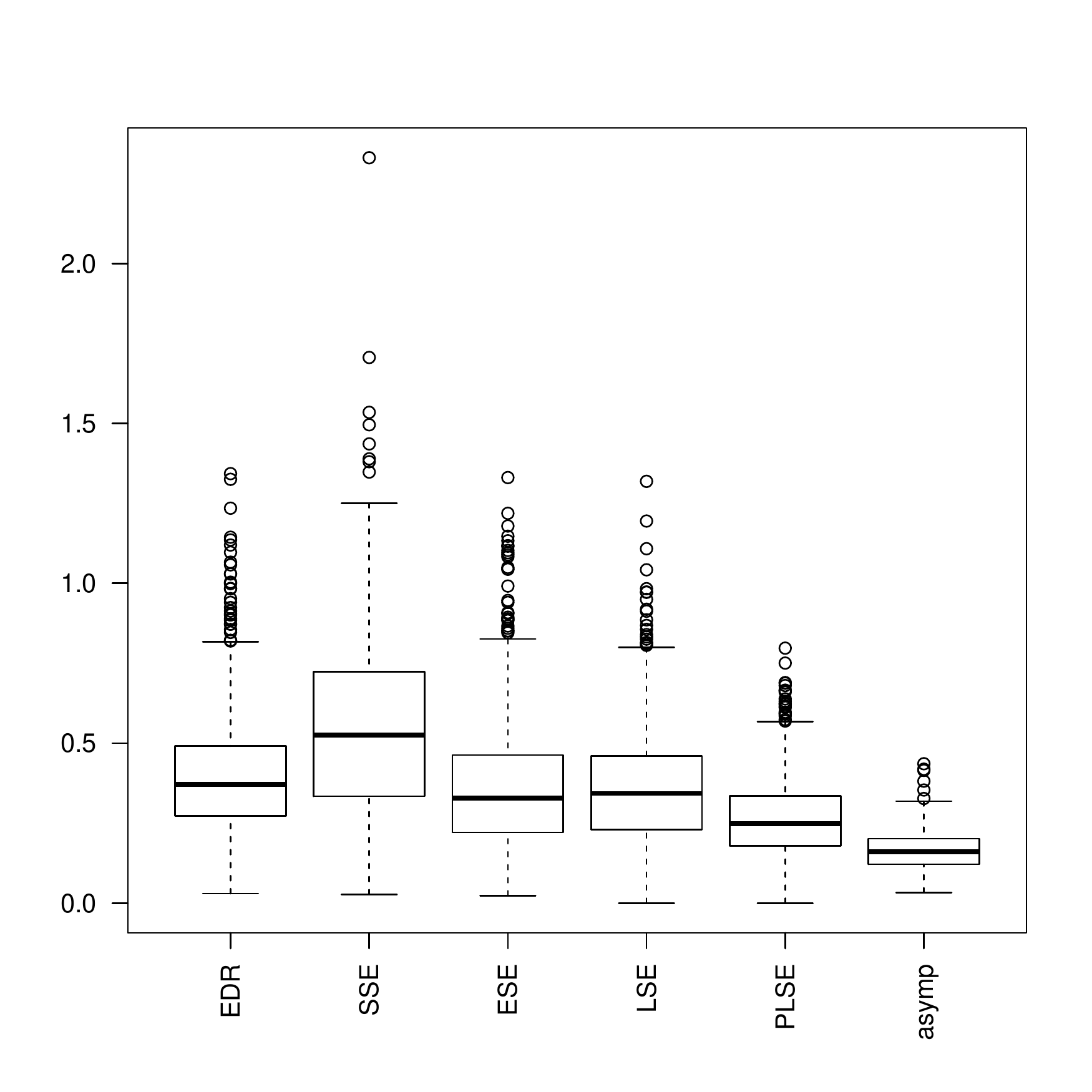}
		\caption{$d = 5$}
	\end{subfigure}
	\begin{subfigure}{0.485\linewidth}
	\includegraphics[width=0.95\textwidth]{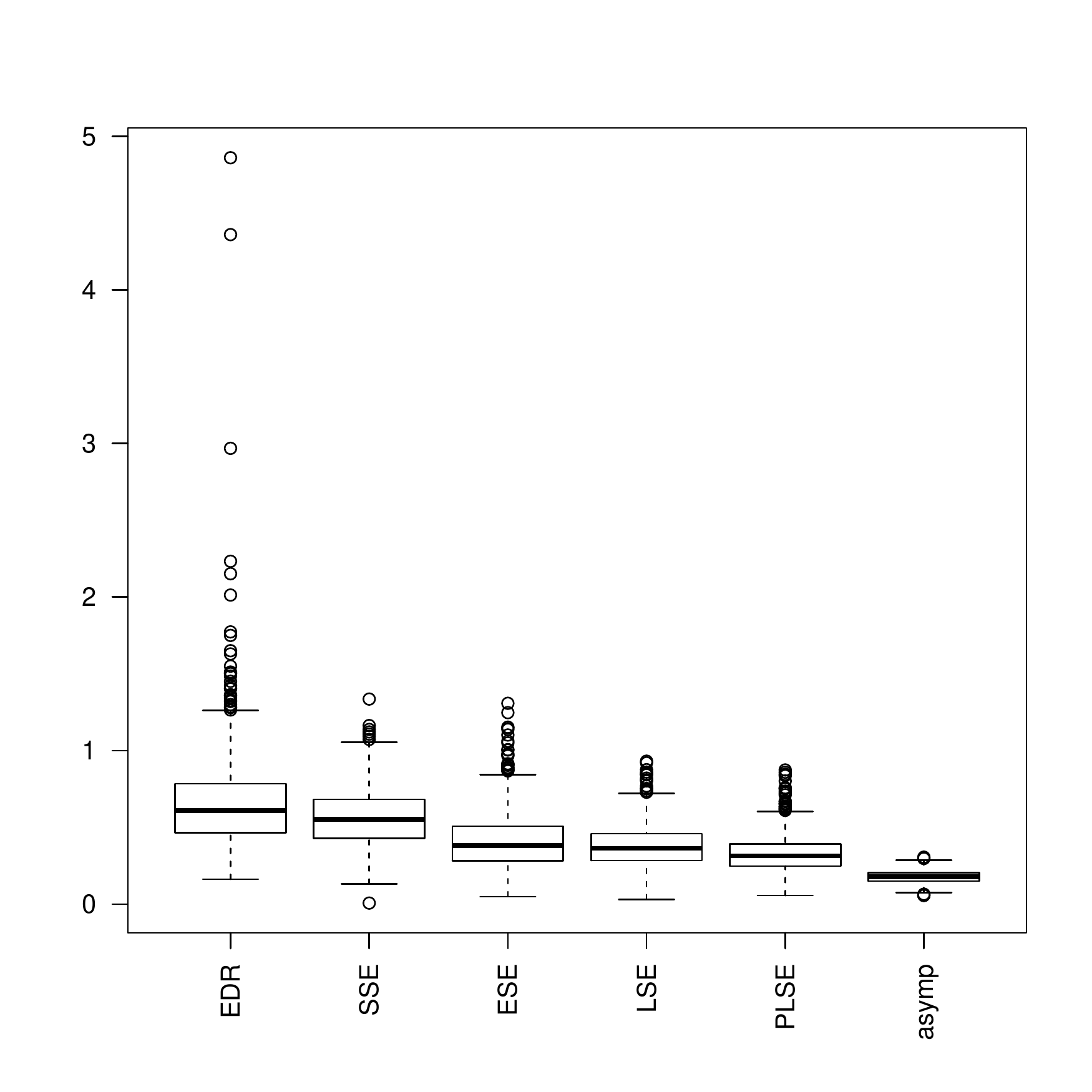}
	\caption{$d = 10$}
\end{subfigure}\\
	\begin{subfigure}{0.485\linewidth}
		\includegraphics[width=0.95\textwidth]{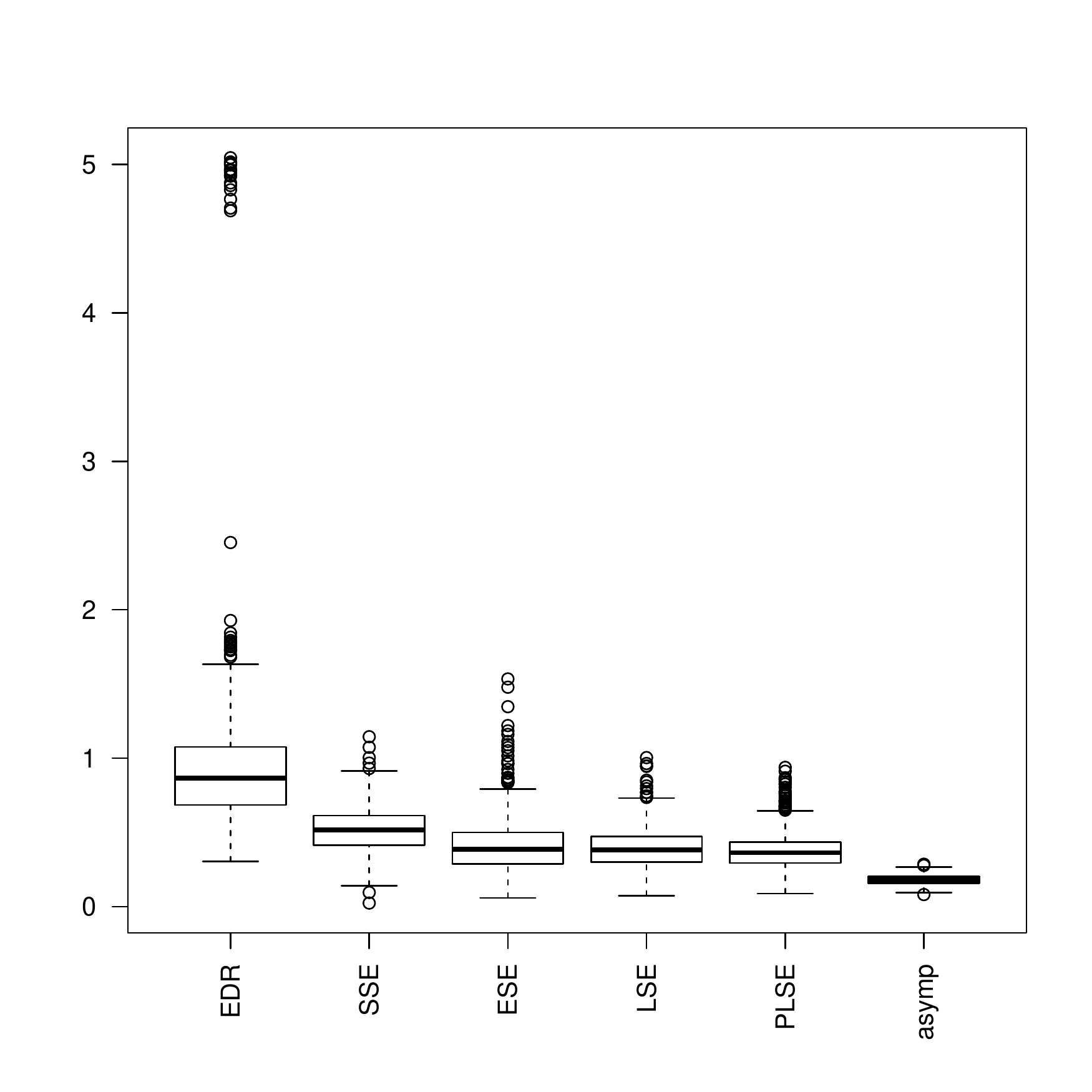}
		\caption{$d=15$}
	\end{subfigure}
\begin{subfigure}{0.485\linewidth}
	\includegraphics[width=0.95\textwidth]{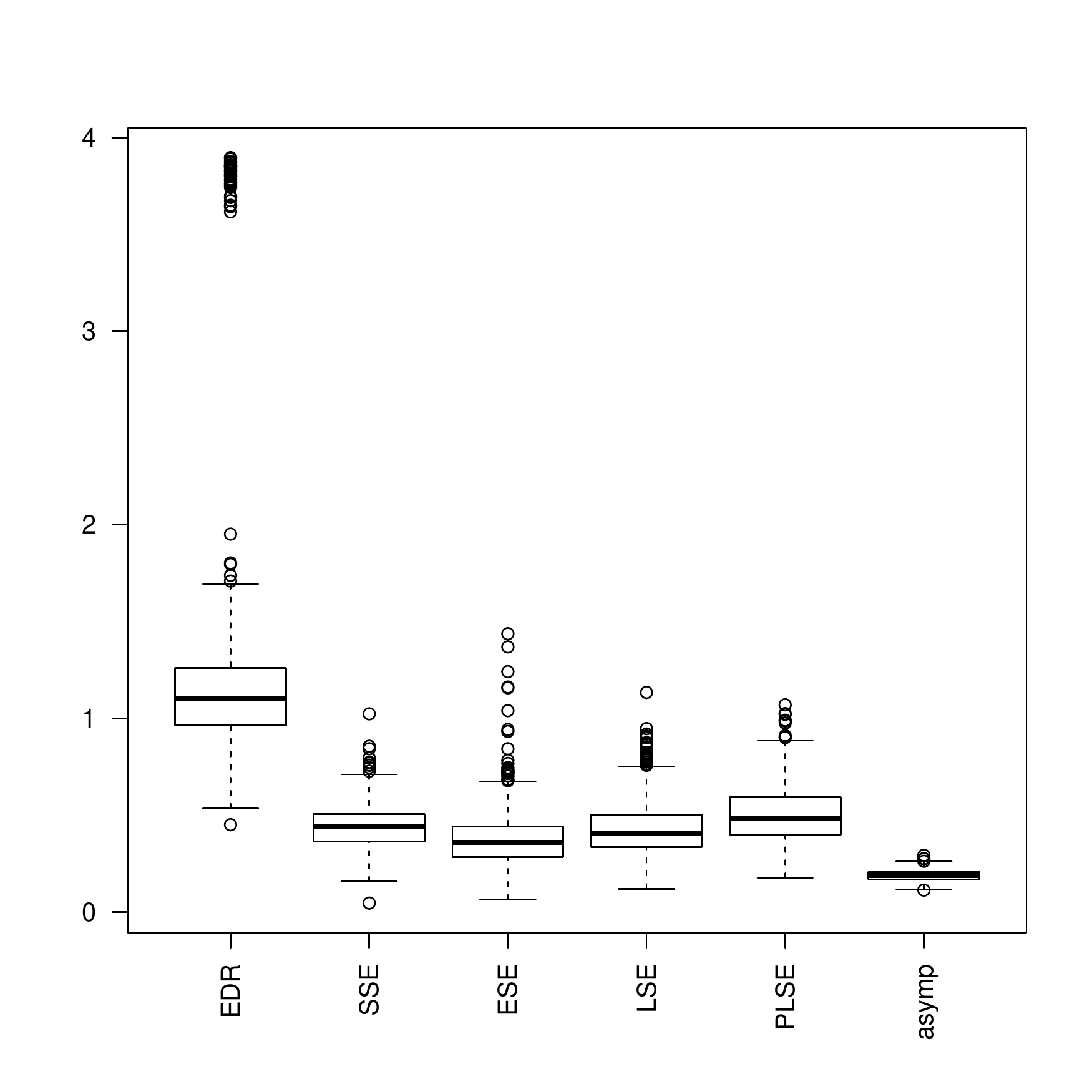}
	\caption{$d = 25$}
\end{subfigure}
	\caption{Boxplots of $\sqrt{n/d}\,\|\hat\bma_n-\bma_0\|_2$ for $n=100$ and for (a) $d = 5$, (b) $d=10$, (c) $d=15$ and (d) $d=25$ and 1000 replications for the EDR, SSE, ESE, LSE and PLSE.
	The algorithms for SSE, ESE, LSE and PLSE were started at $\bma_0$. The asymptotic distribution is generated via $1000$ draws from the degenerate limiting normal distribution for the efficient estimates, with mean zero and covariance matrix $\bm\Sigma_d$, defined by (\ref{Sigma_d}).}
\label{boxplots1}
\end{figure}

\begin{figure}[!h]
	\centering
	\begin{subfigure}{0.485\linewidth}
		\includegraphics[width=0.95\textwidth]{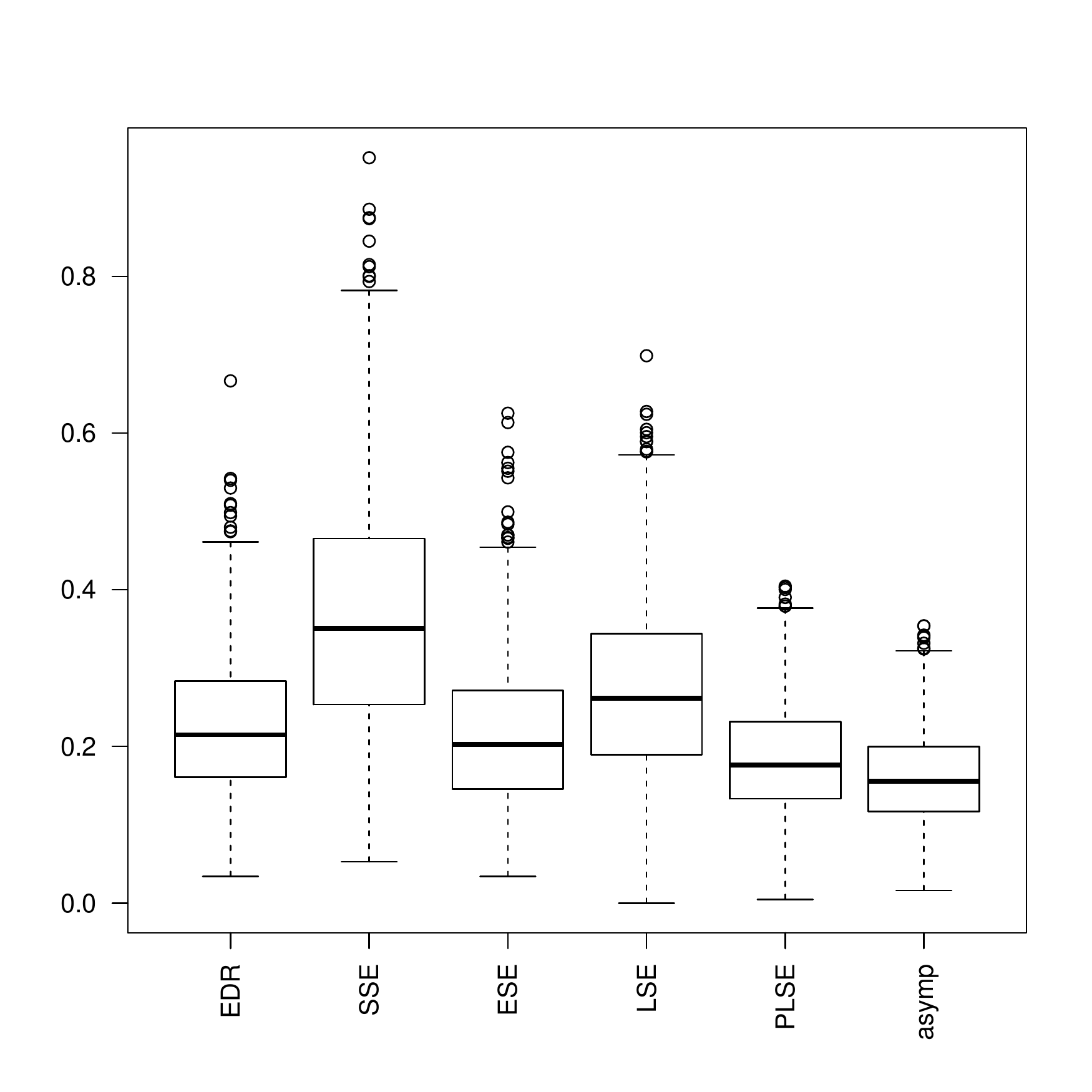}
		\caption{$d = 5$}
	\end{subfigure}
	\begin{subfigure}{0.485\linewidth}
	\includegraphics[width=0.95\textwidth]{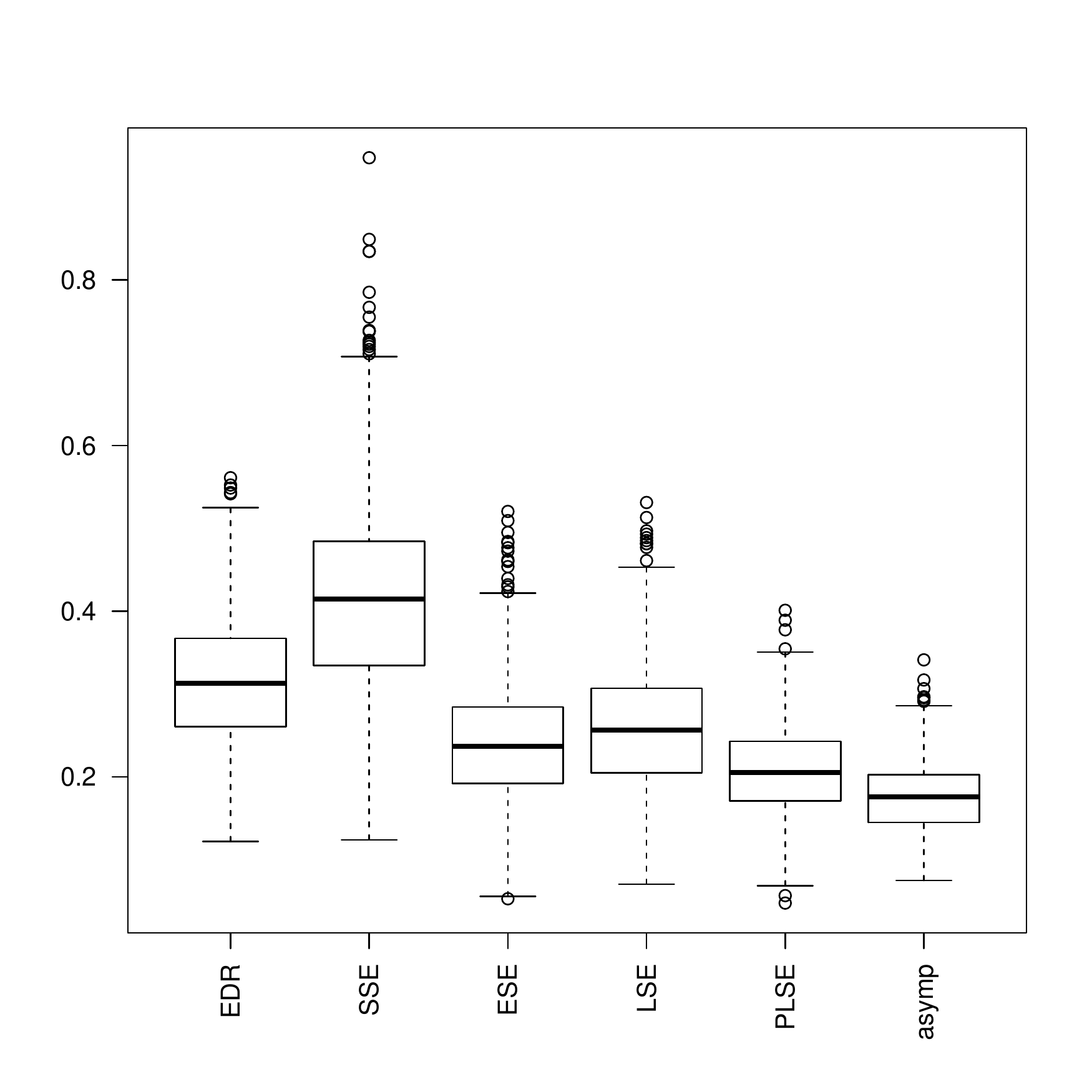}
	\caption{$d = 10$}
\end{subfigure}\\
	\begin{subfigure}{0.485\linewidth}
		\includegraphics[width=0.95\textwidth]{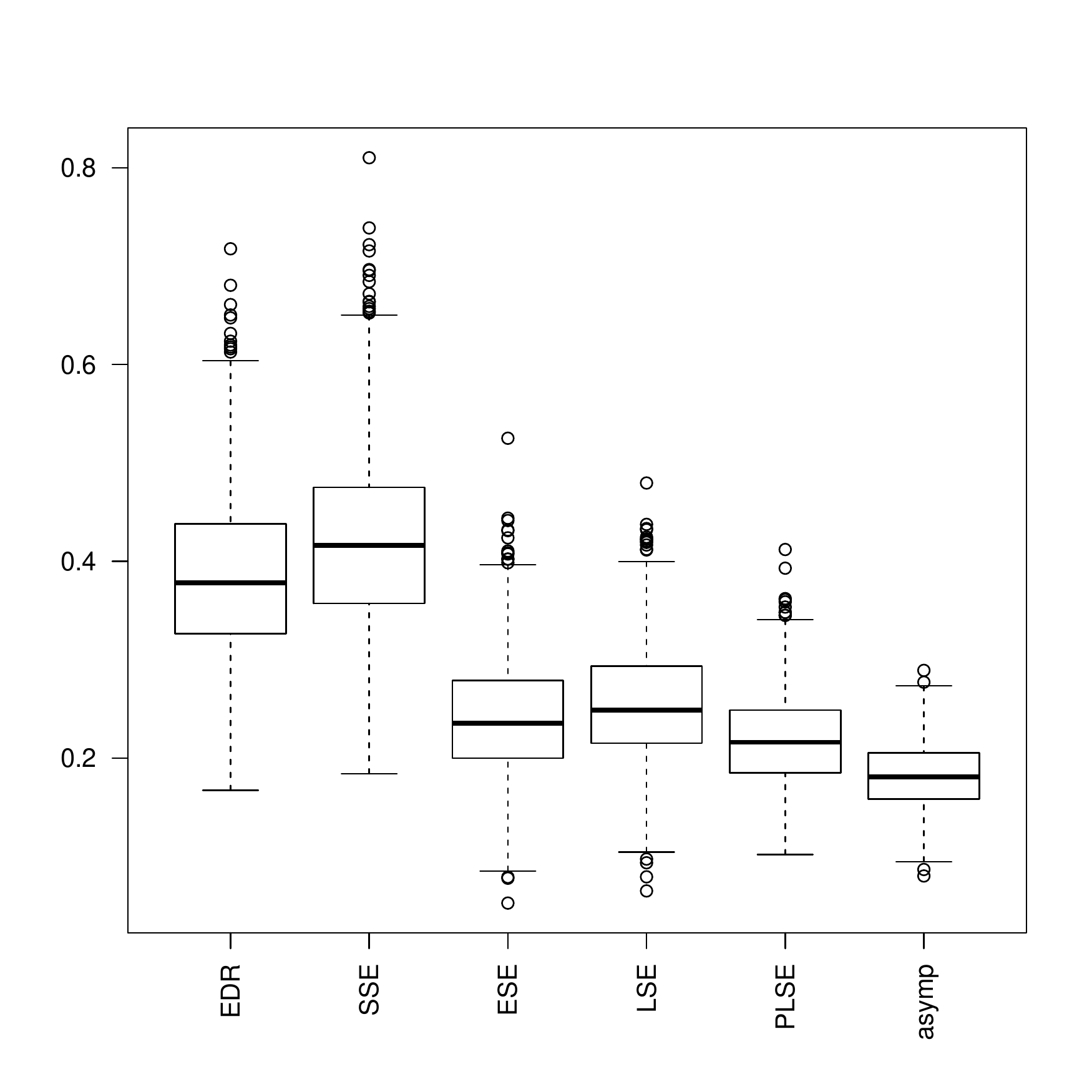}
		\caption{$d=15$}
	\end{subfigure}
\begin{subfigure}{0.485\linewidth}
	\includegraphics[width=0.95\textwidth]{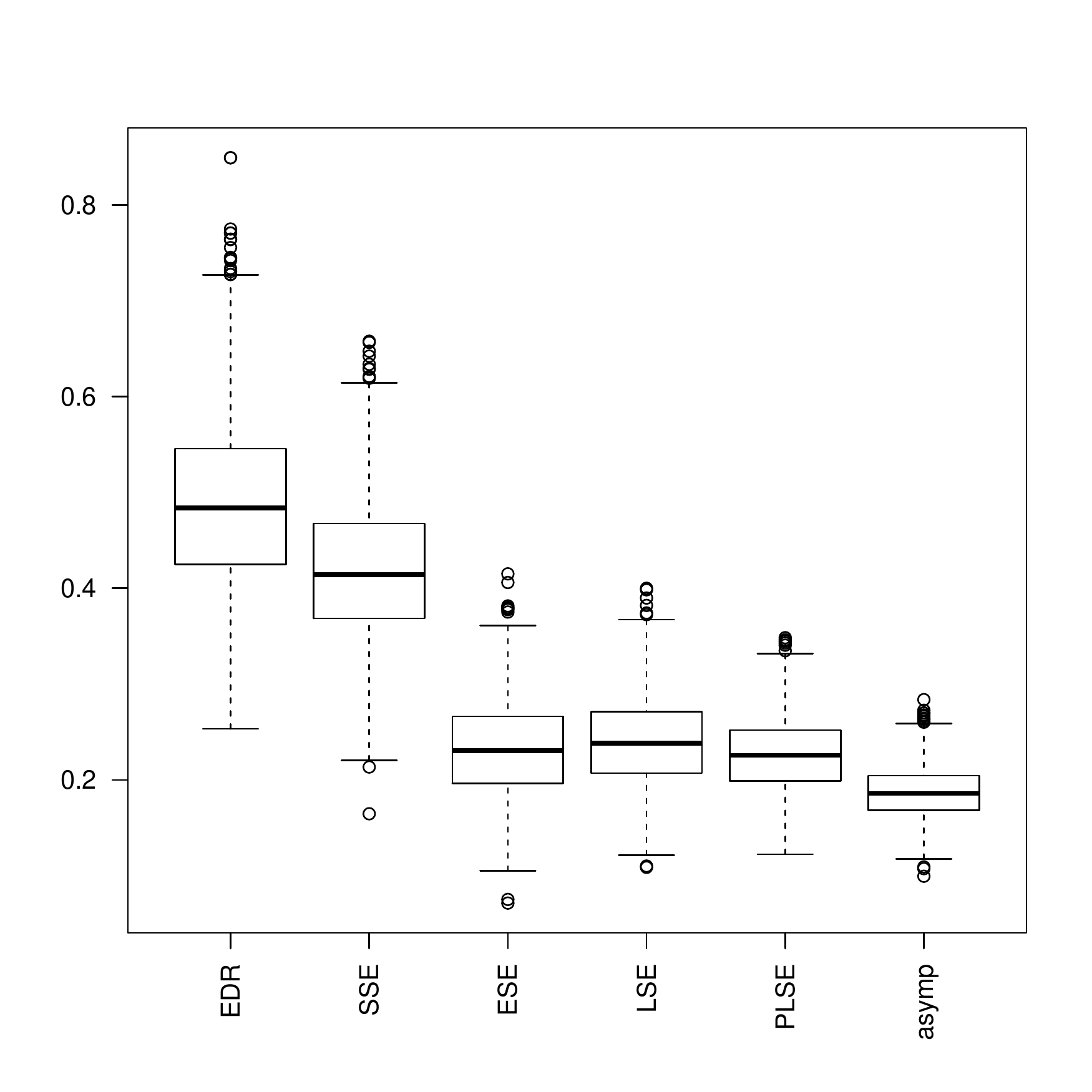}
	\caption{$d = 25$}
\end{subfigure}
	\caption{Boxplots of $\sqrt{n/d}\,\|\hat\bma_n-\bma_0\|_2$ for $n=1000$ and for (a) $d = 5$, (b) $d=10$, (c) $d=15$ and (d) $d=25$ and $1000$ replications for the EDRE, SSE, ESE, LSE and PLSE.
	The algorithms for SSE, ESE, LSE and PLSE were started at $\bma_0$. The asymptotic distribution is generated via $1000$ draws from the degenerate limiting normal distribution for the efficient estimates, with mean zero and covariance matrix $\bm\Sigma_d$, defined by (\ref{Sigma_d}).}
\label{boxplots2}
\end{figure}

\begin{figure}[!h]
	\centering
	\begin{subfigure}{0.485\linewidth}
		\includegraphics[width=0.95\textwidth]{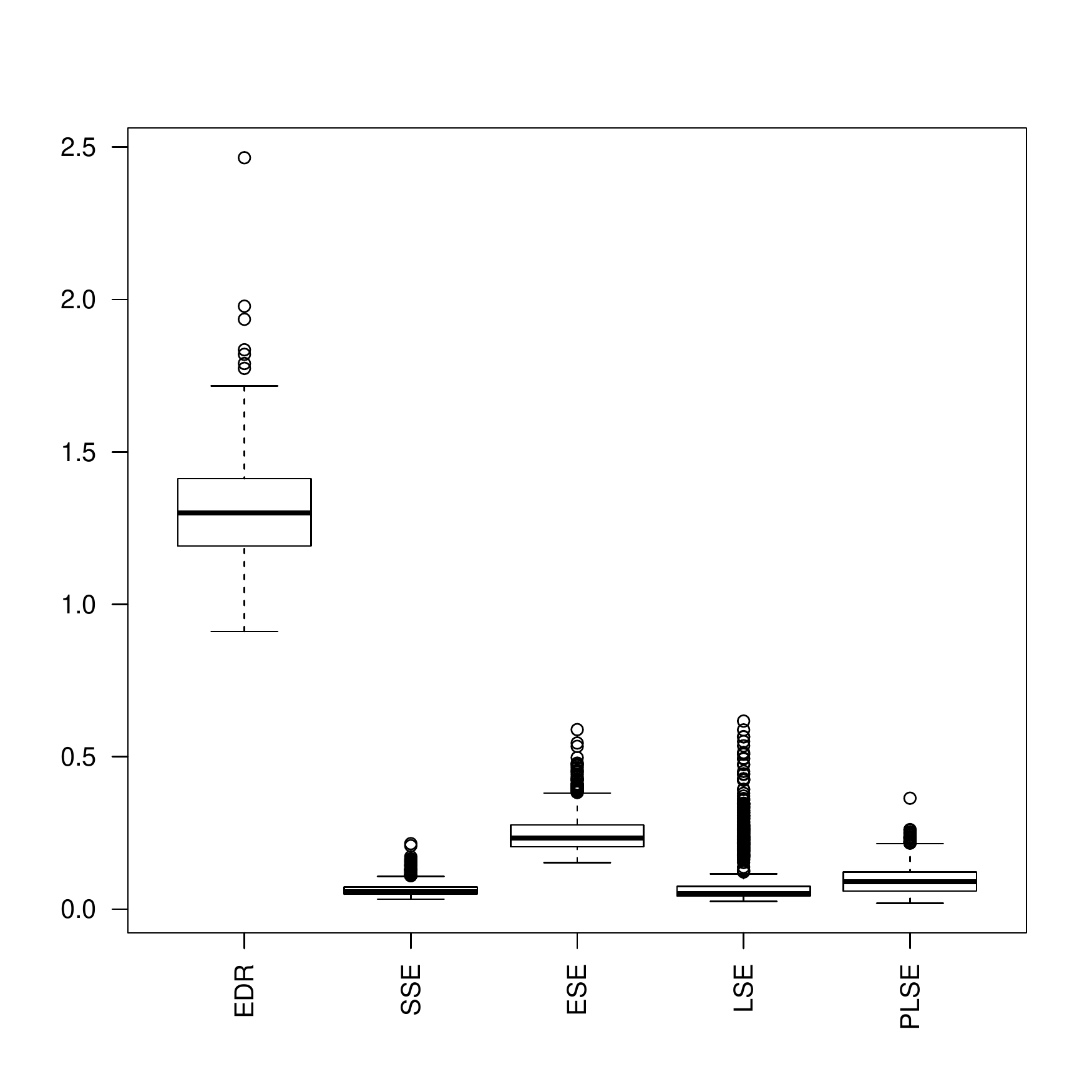}
		\caption{$d = 5$}
	\end{subfigure}
	\begin{subfigure}{0.485\linewidth}
	\includegraphics[width=0.95\textwidth]{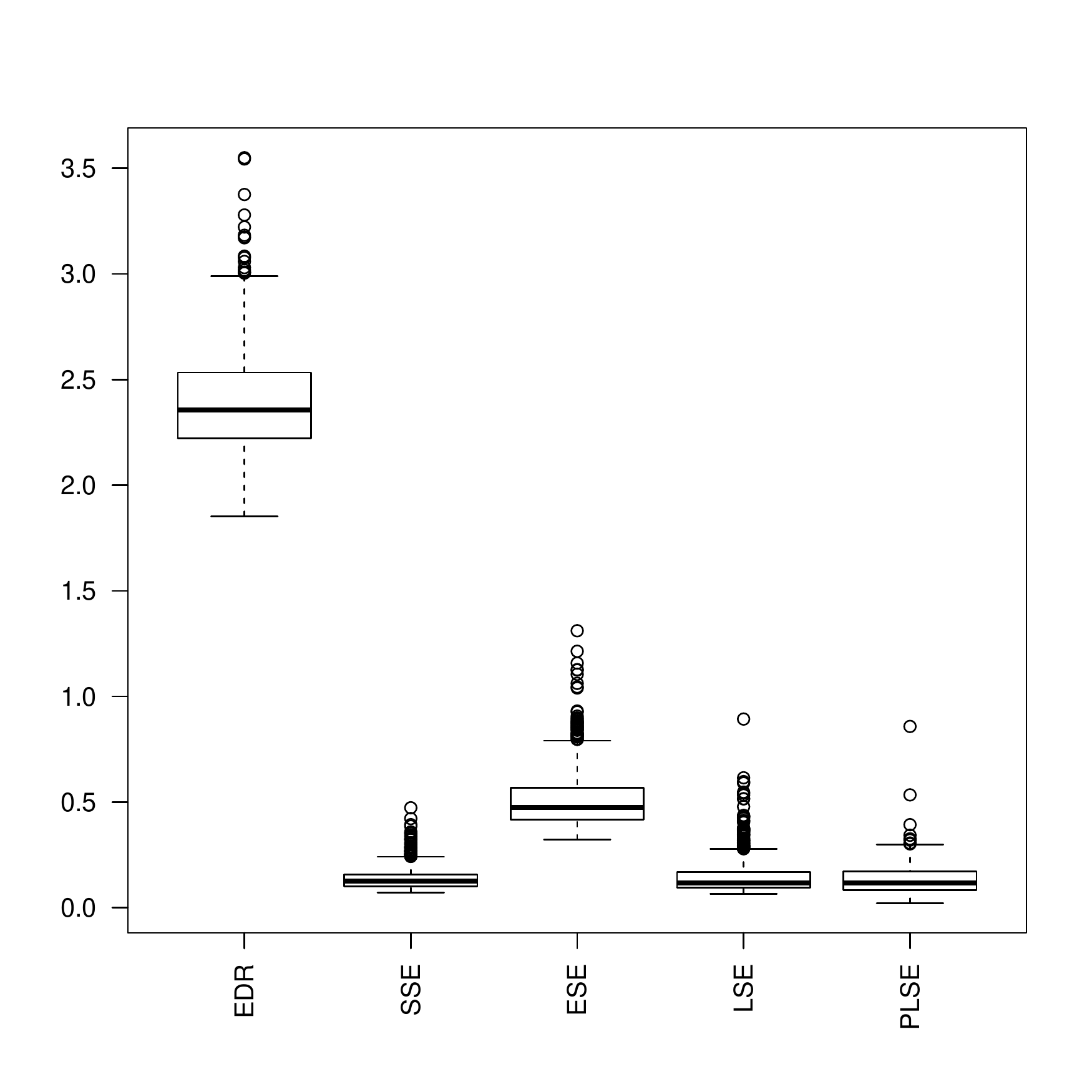}
	\caption{$d = 10$}
\end{subfigure}\\
	\begin{subfigure}{0.485\linewidth}
		\includegraphics[width=0.95\textwidth]{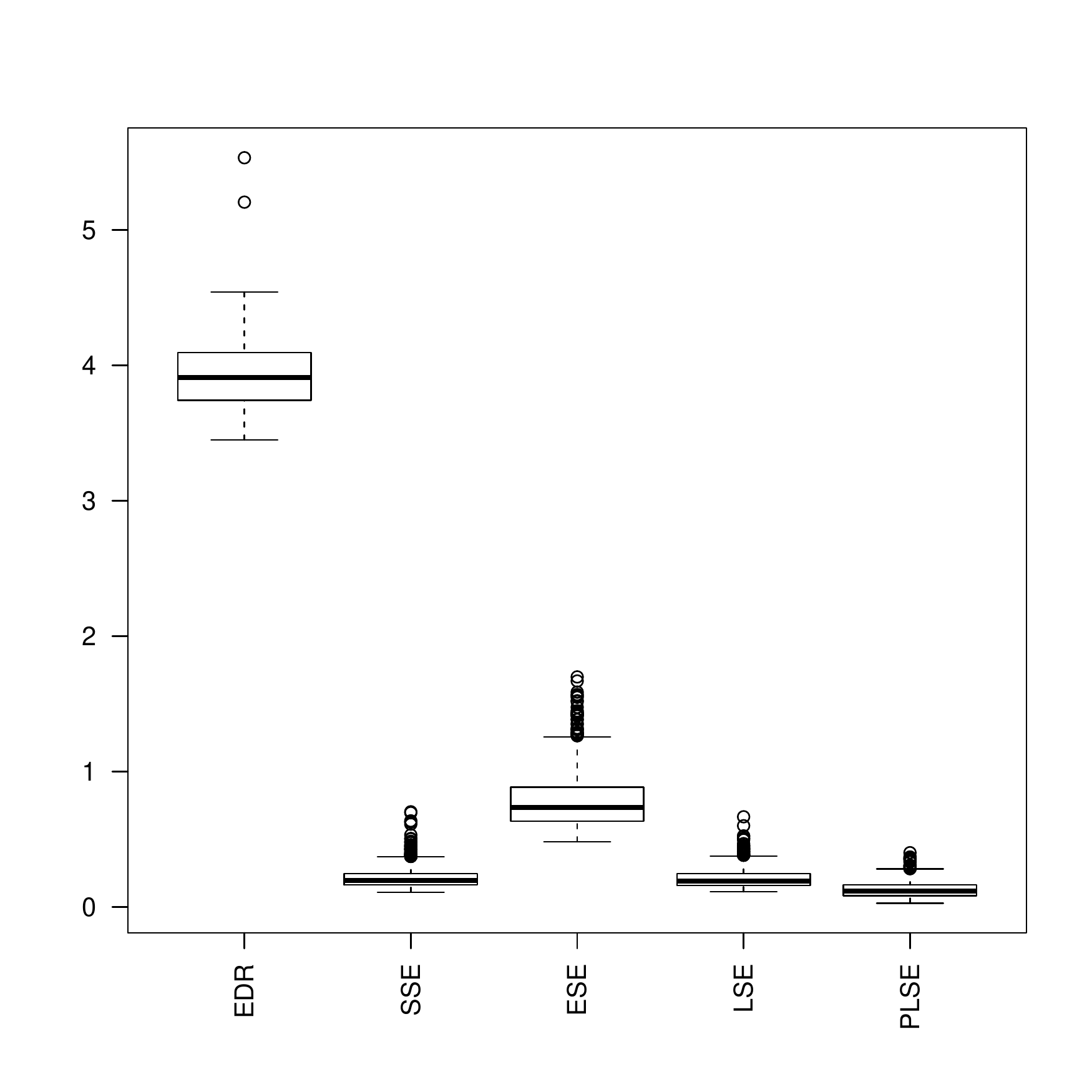}
		\caption{$d=15$}
	\end{subfigure}
\begin{subfigure}{0.485\linewidth}
	\includegraphics[width=0.95\textwidth]{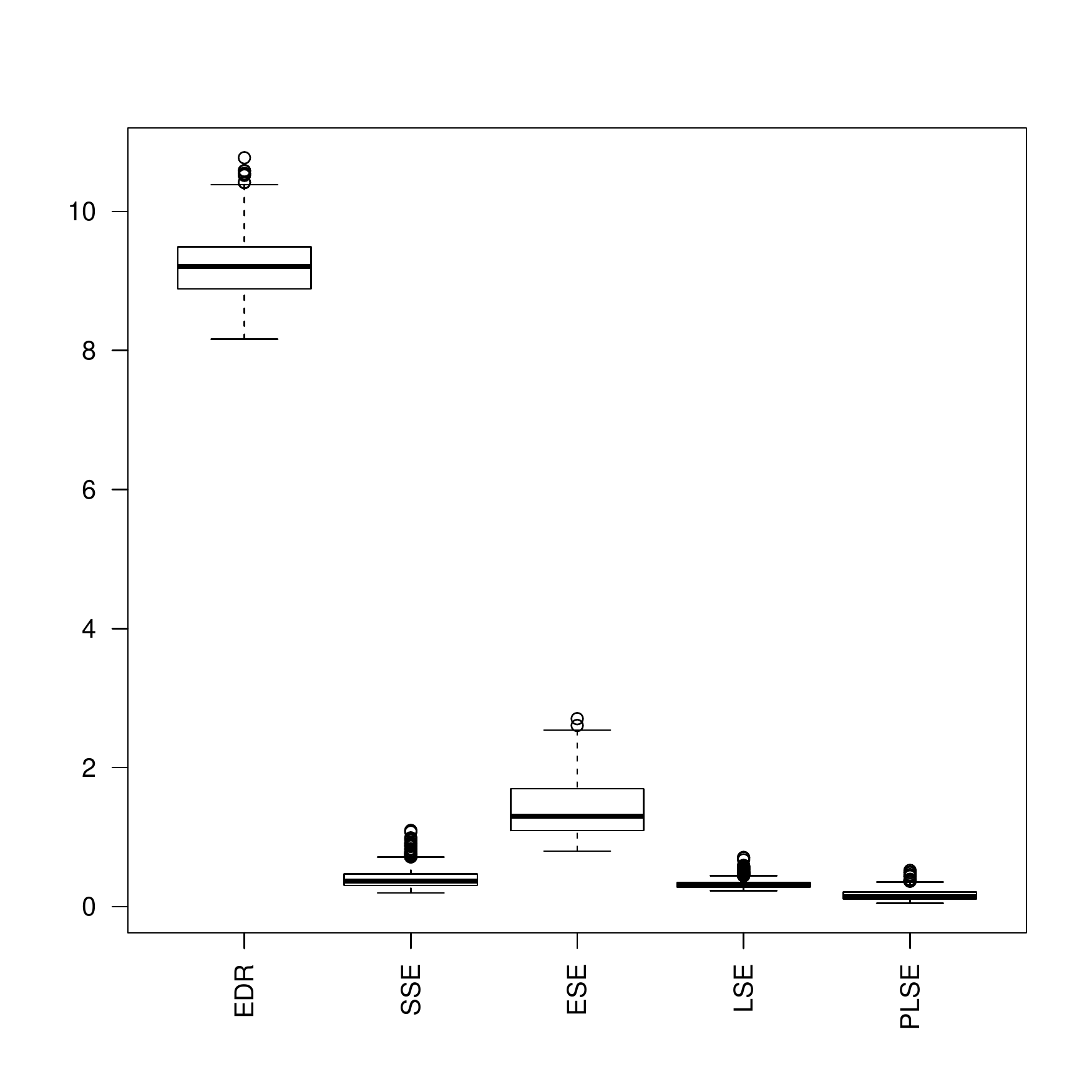}
	\caption{$d = 25$}
\end{subfigure}
%\begin{subfigure}{0.242\linewidth}
%	\includegraphics[width=0.95\textwidth]{Timed10n500.pdf}
%	\caption{$d=25, n=500$}
%\end{subfigure}
	\caption{Boxplots of computing times in seconds for $n=1000$ and for (a) $d = 5$, (b) $d=10$, (c) $d=15$ and (d) $d=25$ and $1000$ replications for the EDRE, SSE, ESE, LSE and PLSE.}
\label{boxplots3}
\end{figure}

\begin{table}[!ht]
	\centering
	\caption{ Simulation 1, model  ($ X_i\sim  U[1,2], d=3$): The mean value ($\hat\mu_i$ =  mean($\hat\a_{in}), i=1,2,3$) and $n$ times the variance-covariance ($\hat\sigma_{ij} = n\cdot$cov$(\hat\a_{in},\hat\a_{jn})$,$i,j=1,2,3$) of the simple score estimate (SSE), the efficient score estimate (ESE), the least squares estimate (LSE), the maximum rank correlation estimate (MRCE), the penalized least squares estimate (PLSE) and the effective dimension reductiosn estimate (EDRE), for different sample sizes $n$ with  $X_i\sim  U[1,2]$. The line, preceded by $\infty$, gives the asymptotic values.}
	\label{table:simulation1b}
	\vspace{0.5cm}
	\scalebox{0.6}{
		\begin{tabular}{|lr|ccc|cccccc |}
			\hline
			Method & $n$ & $\hat\mu_1$ & $\hat\mu_2$ & $\hat\mu_3$ & $\hat \sigma_{11}$& $\hat\sigma_{22}$& $\hat\sigma_{33}$& $\hat\sigma_{12}$&$\hat\sigma_{13}$&$\hat\sigma_{23}$\\
			\hline
			&&&&&&&&&&\\
			SSE& 100 &0.5770 & 0.5768 & 0.5775 & 0.0260 & 0.0265 & 0.0252 &-0.0137& -0.0124 &-0.0128\\
			&500 &0.5771 & 0.5774 & 0.5775 & 0.0209 & 0.0214 & 0.0207& -0.0100& -0.0100 &-0.0106 \\
			&1000&0.5771 & 0.5773 & 0.5775 & 0.0204 & 0.0209 & 0.0206& -0.0104 &-0.0101 &-0.0105  \\
			&2000&0.5772 & 0.5773 & 0.5775 & 0.0201 & 0.0205 & 0.0203& -0.0101 &-0.0100 &-0.0103 \\
			&5000&0.5773 & 0.5774 & 0.5774 & 0.019 & 0.0198 & 0.0200& -0.0097 &-0.0099& -0.0101\\
			&10000&  0.5773 & 0.5774 & 0.5774 & 0.0192 & 0.0197 & 0.0197 &-0.0096& -0.0096 & -0.0101\\
			\hline
			&$\infty$&0.5774 & 0.5774 & 0.5774 & 0.0192  & 0.0192 & 0.0192  &-0.0096  &-0.0096  &-0.0096 \\
			&&&&&&&&&&\\
			&&&&&&&&&&\\
			ESE&100 &0.5761 & 0.5770 & 0.5783 & 0.0256 & 0.0265 & 0.0248& -0.0136& -0.0119& -0.0129  \\
			&500 &0.5767 & 0.5774 & 0.5779 & 0.0204 & 0.0208 & 0.0200& -0.0106& -0.0098& -0.0103 \\
			&1000&0.5769 & 0.5774 & 0.5778 & 0.0199 & 0.0203 & 0.0200& -0.01001& -0.0099& -0.0102\\
			&2000&0.5771 & 0.5778 & 0.5777 & 0.0195 & 0.0199 & 0.0197&-0.0098& -0.0097& -0.0101 \\
			&5000&0.5772 & 0.5774 & 0.5775 & 0.0191 & 0.0193 & 0.0194 &-0.0094& -0.0096& -0.0098 \\
			&10000&0.5773 & 0.5774 & 0.5774 & 0.0187 & 0.0192 & 0.0192 &-0.0093& -0.0094 &-0.0098\\
			\hline
			&$\infty$&0.5774 & 0.5774 & 0.5774 &0.0187 &0.0187 &0.0187 & -0.0093 & -0.0093 & -0.0093\\
			&&&&&&&&&&\\
			&&&&&&&&&&\\
			LSE&100 &0.5769 & 0.5772& 0.5767 & 0.0467 & 0.0474 & 0.0460& -0.0240 &-0.0226 &-0.0234  \\
			&500 &0.5773 & 0.5773& 0.5772 & 0.0478 & 0.0480 & 0.0474& -0.0243 &-0.0237&-0.0237	  \\
			&1000&0.5773 & 0.5773& 0.5773 & 0.0496 & 0.0500 & 0.0496& -0.0250 &-0.0246& -0.0250  \\
			&2000& 0.5774 & 0.5772&  0.5773 & 0.0504 & 0.0517 & 0.0517& -0.0252 &-0.0252& -0.0265\\
			&5000&0.5774 & 0.5773&  0.5773 & 0.0549 & 0.0553 & 0.0541& -0.0280 &-0.0268& -0.0273 \\
			&10000&0.5773 & 0.5774 & 0.5773 & 0.0583 & 0.0579&  0.0587& -0.0287 &-0.0295& -0.0291\\
			\hline
			&$\infty$&0.5774 & 0.5774 & 0.5774 & ? & ?& ?& ?& ?& ?\\
			&&&&&&&&&&\\
			&&&&&&&&&&\\
			MRCE&100&  0.5770&  0.5770  &0.5769&  0.0465&  0.0463&  0.0448& -0.0241& -0.0224&-0.0223\\
			&500 & 0.5773 & 0.5774  &0.5773 & 0.0171&  0.0167&  0.0170& -0.0084& -0.0087& -0.0082\\
			&1000& 0.5773  &0.57741 &0.5773 & 0.0343&  0.0333&  0.0339& -0.0168& -0.0174& -0.0165\\
			&2000 &0.5773  &0.5773  &0.5773 & 0.0302&  0.0303&  0.0316& -0.0145& -0.0157& -0.0158\\
			&5000 &0.5774 &0.5773  &0.5773 & 0.0288&  0.0288&  0.0292& -0.0142 &-0.0146& -0.0146\\
			&10000 &0.5774 & 0.5774&  0.5773 & 0.0266&  0.0276&  0.0277& -0.0133& -0.0134& -0.0143	\\
			\hline
			&$\infty$&0.5774 & 0.5774 & 0.57740 &0.0214 &0.0214 &0.0214 & -0.0107  & -0.0107  & -0.0107 \\		
			&&&&&&&&&&\\
			&&&&&&&&&&\\
			EDRE&100&0.5772&0.5771&0.5772&0.0215&0.0201&0.0208&-0.0105&-0.0111&-0.0096\\
			&500&0.5773&0.5774&0.5772&0.0198&0.0195&0.0195&-0.0099&-0.0099&-0.009\\
			&1000&0.5771&0.5777&0.5771&0.0208&0.0212&0.0207&-0.0107&-0.0101&-0.0106\\
			&2000&0.5772&0.5774&0.5774&0.0222&0.0225&0.0209&-0.0119&-0.0103&-0.0106\\
			&5000&0.5773&0.5774&0.5773&0.0218&0.0236&0.0240&-0.0107&-0.0111&-0.0129\\
			&10000&0.5772&0.5774&0.5774&0.0239&0.0246&0.0249&-0.0118&-0.0121&-0.0128\\
			\hline
			&$\infty$&0.5774 & 0.5774 & 0.5774 &? &?&?&?&?&?\\	
			&&&&&&&&&&\\
			&&&&&&&&&&\\
			PLSE&100&0.5772&0.5771&0.5772&0.0215&0.0201&0.0208&-0.0105&-0.0111&-0.0096\\
			&500&0.5774&0.5774&0.5771&0.0198&0.0194&0.0198&-0.0097&-0.0101&-0.0097\\
			&1000&0.5772&0.5777&0.5771&0.0206&0.0214&0.0211&-0.0105&-0.0101&-0.0110\\
			&2000&0.5773&0.5773&0.5774&0.0233&0.0235&0.0217&-0.0125&-0.0107&-0.0110\\
			&5000&0.5774&0.5773&0.5773&0.0268&0.0287&0.0297&-0.0129&-0.0139&-0.0158\\
			&10000&0.5769&0.5776&0.5776&0.0517&0.0489&0.0566&-0.0219&-0.0296&-0.027\\			
			\hline
			&$\infty$&0.5774 & 0.5774 & 0.5774 &0.0187 &0.0187 &0.0187 & -0.0093 & -0.0093 & -0.0093\\
		&&&&&&&&&&\\
		&&&&&&&&&&\\	
		\hline		
		\end{tabular}}
\end{table}

\begin{table}[!ht]
	\centering
	\caption{ Simulation 1, model  ($X_i\sim  N(0,1), d=3$): The mean value ($\hat\mu_i$ =  mean($\hat \a_{in}), i=1,2,3$) and $n$ times the variance-covariance ($\hat\sigma_{ij} = n\cdot$cov$(\hat\a_{in},\hat\a_{jn})$,$i,j=1,2,3$) of the simple score estimate (SSE), the efficient score estimate (ESE), the least squares estimate (LSE), the maximum rank correlation estimate (MRCE), the hybrid link-free least squares estimate (H-LFLSE) and the link-free least squares estimate (LFLSE), for different sample sizes $n$ with  $X_i\sim  N(0,1)$. The line, preceded by $\infty$, gives the asymptotic values.}
	\label{table:simulation2b}
	\vspace{0.5cm}
	\scalebox{0.6}{
		\begin{tabular}{|lr|ccc|cccccc |}
			\hline
			Method&$n$ & $\hat\mu_1$ & $\hat\mu_2$ & $\hat\mu_3$ & $\hat \sigma_{11}$& $\hat\sigma_{22}$& $\hat\sigma_{33}$& $\hat\sigma_{12}$&$\hat\sigma_{13}$&$\hat\sigma_{23}$\\
			\hline
			&&&&&&&&&&\\
			SSE&100 & 0.5710 & 0.5756 & 0.5780 & 0.2638 & 0.3093 & 0.2828 & -0.1445 & -0.1141& -0.1657 \\
			&500 & 0.5757 & 0.5771 & 0.5785 & 0.1414 & 0.1612 & 0.1498 & -0.0761 & -0.0641 & -0.0856 \\
			&1000& 0.5764 & 0.5772 & 0.5781 & 0.1234 & 0.1248 & 0.1213 & -0.0631 & -0.0600 & -0.0617 \\
			&2000& 0.5768 & 0.5771 & 0.5779 & 0.1044 & 0.1049 & 0.1037 & -0.0527 & -0.0517 & -0.0522\\
			&5000& 0.5770 & 0.5773 & 0.5776 & 0.0936 & 0.0972 & 0.0939 & -0.0484 & -0.0452 & -0.0488\\
			&10000&0.5771 & 0.5774 & 0.5775 & 0.0878 & 0.0926 & 0.0896 & -0.0454 & -0.0424 & -0.0473\\
			\hline
			&$\infty$&0.5774 & 0.5774 & 0.5774 & 0.0741 & 0.0741 & 0.0741 & -0.0370 & -0.0370 & -0.0370\\
			&&&&&&&&&&\\
			&&&&&&&&&&\\
			ESE&100 & 0.5718 & 0.5770 & 0.5799 & 0.1233 & 0.1410 & 0.1218 & -0.0701 & -0.0495 & -0.0719\\
			&500 & 0.5758 & 0.5775 & 0.5785 & 0.0565 & 0.0591 & 0.0513 & -0.0321 & -0.0233 & -0.0278\\
			&1000& 0.5764 & 0.5774 & 0.5781 & 0.0433 & 0.0432 & 0.0418 & -0.0223 & -0.0209 & -0.0210\\
			&2000& 0.5768 & 0.57730 & 0.5779 & 0.0366 & 0.0362 & 0.0365 & -0.0181 & -0.0184 & -0.0181\\
			&5000& 0.5770 & 0.5774 & 0.5776 & 0.0304 & 0.0321 & 0.0320 & -0.0152 & -0.0152 & -0.0168\\
			&10000&0.5771 & 0.5774 & 0.5775 & 0.0296 & 0.0297 & 0.0303 & -0.0145 & -0.0151 & -0.0152\\
			\hline
			&$\infty$&0.5774 & 0.5774 & 0.5774 & 0.0247 & 0.0247 & 0.0247 & -0.0123 & -0.0123 & -0.0123\\
			&&&&&&&&&&\\
			&&&&&&&&&&\\
			LSE&100 & 0.5751 & 0.5748 & 0.5776 & 0.1737 & 0.1749 & 0.1731 &-0.0858& -0.0869& -0.0866\\
			&500 & 0.5768 & 0.57737 & 0.5773 & 0.1072 & 0.1046 & 0.1069 &-0.0523& -0.0545& -0.0523\\
			&1000& 0.5770 & 0.5774 & 0.5774 & 0.1011 & 0.0982 & 0.1004 &-0.0494 &-0.0516 &-0.0489 \\
			&2000& 0.5773 & 0.5774 & 0.5772 & 0.0921 & 0.0914 & 0.0895 & -0.0470& -0.0451 &-0.0444\\
			&5000& 0.5773 & 0.5775 & 0.5772 & 0.0904 & 0.0887 & 0.0899 & -0.0447 & -0.0457 &-0.0441 \\
			&10000& 0.5773 & 0.5775 & 0.5773 & 0.0890&  0.0852 & 0.0898 &-0.0421& -0.0467 &-0.0431\\
			\hline
			&$\infty$&0.5774 & 0.5774 & 0.5774 & ? & ?& ? & ? & ? &? \\
			&&&&&&&&&&\\
			&&&&&&&&&&\\
			MRCE&100 &0.5687&  0.5707 & 0.5718 & 0.7942 & 0.7960&  0.8022& -0.3899 &-0.3841& -0.3878\\
			&500 &0.5766 & 0.5766  &0.5762 & 0.5132 & 0.5209 & 0.5243 &-0.2537 &-0.2573 &-0.2659\\
			&1000 &0.5774 & 0.5767  &0.5767 & 0.4875 & 0.4826 & 0.4753 &-0.2467 &-0.2408 &-0.2343\\
			&2000 &0.5772 & 0.5770  &0.5773 & 0.4363  &0.4347 & 0.4365 &-0.2173 &-0.2189 &-0.2172\\
			&5000 &0.5773 & 0.5773  &0.5773 & 0.4169  &0.4303 & 0.4268 &-0.2102 &-0.2068 &-0.2198\\
			&10000 &0.5772 & 0.5774 & 0.5773 & 0.3985 & 0.4182 & 0.4109 &-0.2029 &-0.1956 &-0.2153\\
			\hline
			&$\infty$&0.5774 & 0.5774 & 0.5774 & 0.3583 & 0.3583& 0.3583 & -0.1791 & -0.1791 & -0.1791\\
			&&&&&&&&&&\\
			&&&&&&&&&&\\
			H-LFLSE & 100 & 0.5733 & 0.5725 & 0.5737 & 0.4727 & 0.4710 & 0.4960 &-0.2213 &-0.2477& -0.2429 \\
			& 500 & 0.5762 & 0.5770  &0.5762 & 0.5117  &0.5031  &0.5192& -0.2468 &-0.2623& -0.2560 \\
			& 1000 & 0.5769&  0.5771 & 0.5767 & 0.5205 & 0.5130 & 0.5080 &-0.2634 &-0.2564& -0.2500  \\
			& 2000 & 0.5771&  0.5773 & 0.5770 & 0.5193 & 0.5284 & 0.5190 &-0.2640 &-0.2541& -0.2648 \\
			& 5000 & 0.5773  &0.5773 & 0.5772 & 0.5099 & 0.5291 & 0.5135 &-0.2629& -0.2467 &-0.2665  \\
			& 10000 & 0.5773 & 0.5774 & 0.5772 & 0.5267 & 0.5194&  0.5211 &-0.2626 &-0.2641 &-0.2569 \\
			\hline
			&$\infty$&0.5774 & 0.5774 & 0.5774 & 0.5185 & 0.5185 & 0.5185 & -0.2593 & -0.2593 & -0.2593\\
		&&&&&&&&&&\\
		&&&&&&&&&&\\			
		LFLSE & 100&0.5788&0.5808&0.5798&0.6789&0.6717&0.6580&-0.0775&-0.0597&-0.0763\\
		&500&0.5786&0.5782&0.5775&0.6640&0.6975&0.6432&-0.0913&-0.0551&-0.0903\\
		&1000&0.5780&0.5775&0.5776&0.6763&0.6945&0.6467&-0.0857&-0.0775&-0.0838\\
		&2000&0.5776&0.5774&0.5777&0.7053&0.6825&0.6771&-0.0975&-0.0866&-0.0850\\
		&5000&0.5777&0.5773&0.5773&0.6957&0.6998&0.6659&-0.0848&-0.0735&-0.1040\\
		&10000&0.5774&0.5773&0.5774&0.6812&0.6716&0.6893&-0.0925&--0.0889&-0.0872\\
		\hline
		&$\infty$&0.5774 & 0.5774 & 0.5774 & 0.6852 &0.6852 & 0.6852 & -0.0926 & -0.0926 & -0.0926\\
		&&&&&&&&&&\\
		\hline
		\end{tabular}}
	\end{table}
\end{document}

% --- supplement: supplement.tex ---

\maketitle

%\appendix
\newpage
\appendix
%\maketitle
\begin{center}
	\Large{\textbf{Supplementary Material}}
\end{center}
\setcounter{page}{1}
\numberwithin{equation}{section}
\setcounter{equation}{0}

\section{Preliminaries}
\label{preliminaries}
In this supplementary material, we first prove Theorem \ref{Th:LFLSE} stated in Section \ref{sec:basic_estimate} for the link-free least squares estimator when the covariate is elliptically symmetric. We next give the proofs of the results stated in Section \ref{section:Asymptotics} for the score estimators using a parametrization of the unit sphere, introduced in Section \ref{subsection:score-estimator1}. The results for the simple and efficient score estimators together with additional technical lemmas needed for proving our main results are given in different sections in this supplement. We will write ``Lemma X in Appendix Y" when we refer to these results in the remainder of the Appendix.
Entropy results are used in our proofs. Before we start the proofs, we first introduce some notations and definitions used in the remainder of the Appendix.

We will denote the $L_2$-norm of a function $f$ defined on $\mathcal{X}$  with respect to some probability measure $\P$ by $\Vert \cdot \Vert_{\P}$ ; i.e.,
\begin{eqnarray*}
\Vert f \Vert_{\P} =  \P(f^2)^{1/2} =  \left(\int_{\mcX}  f^2(x)  d\P(x)\right)^{1/2} .
\end{eqnarray*}
Also, we will denote by $\|\cdot \|_{B, \P} $ the Bernstein norm of a function $f$  defined on $\mathcal{X} \times \R$ which is given by 
\begin{eqnarray*}
\Vert f \Vert_{\P, B}  =  \bigg(2\P\Big(e^{\vert f \vert}  - \vert f \vert - 1  \Big)\bigg)^{1/2}.
\end{eqnarray*}
For both norms,  $\P$ will be taken to be  $P_0$, the true joint probability measure of the $(\bm X, Y)$.  Note that when $f$ is only a function of $\bm x \in \mathcal{X}$ then 
\begin{eqnarray*}
	\Vert f \Vert_{P_0}  =  \left(\int_{\mcX}  f^2(\bm x)  dG(\bm x)\right)^{1/2}  = \left(\int_{\mcX}  f^2(\bm x)  g(\bm x) d\bm x\right)^{1/2},
\end{eqnarray*}
by Assumption A5.

For a class of functions ${\cal F}$ on ${\cal R}$ equipped with a norm $\| \cdot\|$, we let the bracketing number $N_{B}(\zeta, {\cal F}, \Vert \cdot\Vert)$ denote the minimal number $N$ for which there exists pairs of functions $\{[g_j^L,g_j^U], j=1,\ldots, N\}$ such that $\|g_j^U-g_j^L\|\le \zeta$ for all $j=1,\ldots,N$ and such that for each $g \in {\cal F}$ there is a $j\in \{1,\ldots,N\}$ such that $g_j^L\le g \le g_j^U$. The $\zeta-$entropy with bracketing of ${\cal F}$ is defined as $H_{B}(\zeta, {\cal F}, \| \cdot\|) = \log(N_{B}(\zeta, {\cal F},\| \cdot\|))$.

Results on entropy calculations used in proving our main results are given in the Appendix \ref{supD:entropy}.  
Our proofs use inequalities for empirical processes described in Lemma 3.4.2 and Lemma 3.4.3 of \cite{vdvwe:96}.

\textbf{Lemma 3.4.2 (\cite{vdvwe:96})}
Let ${\cal F}$ be a class of measurable functions such that $\|f \|_{\P}\le \delta$ and $\|f \|_{\infty} \le M$ for every $f$ in ${\cal F}$. Then
\begin{align*}
\E_{\P} \Big[\Vert \mathbb{G}_n \Vert_{{\cal F}} \Big] &\lesssim   J_n(\delta, {\cal F}, \|\cdot \|_{\P})  \left( 1 +\frac{J_n(\delta, {\cal F}, \|\cdot \|_{\P})}{\sqrt{n} \delta^2 }M  \right),
\end{align*}
where $\mathbb{G}_n  =\sqrt n (\P_n-\P)$ and 
\begin{align*}
J_n(\delta, {\cal F}, \|\cdot \|) = \int_{0}^{\delta}\sqrt{1 + H_B(\epsilon, {\cal F}, \|\cdot \|)}d\epsilon
\end{align*}

\textbf{Lemma 3.4.3 (\cite{vdvwe:96})}
Let ${\cal F}$ be a class of measurable functions such that $\|f \|_{\P,B}\le \delta$ for every $f$ in ${\cal F}$. Then
\begin{align*}
\E_{\P} \Big[\Vert \mathbb{G}_n \Vert_{{\cal F}} \Big] &\lesssim   J_n(\delta, {\cal F}, \|\cdot \|_{\P,B})  \left( 1 +\frac{J_n(\delta, {\cal F}, \|\cdot \|_{\P,B})}{\sqrt{n} \delta^2 } \right),
\end{align*}

In the sequel, and whenever the $\epsilon$-bracketing entropy of some class $\mathcal{F}$ with respect to some norm $\Vert\cdot \Vert $ is bounded above by $ C \epsilon^{-1}$ for some constant $C > 0$ (which may depend on $n$), we will write for all $e> \epsilon$
\begin{eqnarray}\label{Jn}
J_n(e)  = \int_0^e (1+  C / \epsilon)^{1/2} d\epsilon
\end{eqnarray}  
Moreover, we will use the inequality 
\begin{eqnarray}\label{IneqJn}
J_n(e)   \le e + 2 C^{1/2} e^{1/2}
\end{eqnarray}
which is an immediate consequence of the fact that $\sqrt{x+y } \le \sqrt{x} + \sqrt{y}$ for all $x, y \ge 0$.

\section{The link-free estimator when the covariate is elliptically symmetric}
\label{supA:LFLSE}
In this section we prove Theorem \ref{Th:LFLSE}.\\
\begin{proof}
	Since $\hat{\bm\a}_{n,\bmS_n norm1}^T\bmS_n\hat{\bm\a}_{n,\bmS_n norm1}=1$, we get for each converging subsequence  $(\hat{\bm\a}_{n_k,\bmS_{n_k} norm1})$ of
	the sequence $(\hat{\bm\a}_{n,\bmS_n norm1})$, and taking $\bma=\hat{\bm\a}_{n_k,\bm S_{n_k} norm1}$ in (\ref{Lagrange_norm1}), convergence to:
	\begin{align}
	\label{limit_eq}
	\left(\bm I-\bm\Sigma\tilde{\bma}\tilde\bma^T\right)\left\{\text{Cov}(\bmX,Y)-\bm\Sigma\tilde\bma\right\}=0.
	\end{align}
	where $\tilde\bma$ is the limit of the subsequence $(\hat{\bm\a}_{n_k,\bmS_{n_k} norm1})$ and therefore satisfies $\tilde{\bma}^T\bm\Sigma\tilde\bma=1$. Arguing as in \cite{balabdaoui2016} in their proof of Theorem 3.1, we have:
	\begin{align*}
	\text{Cov}(\bmX,Y)=\E\left\{\psi_0(\bma_0^T\bmX)\E\left\{\bmX-\bm\mu|\bm\a_0^T\bmX\right\}\right\},
	\end{align*}
	where $\E\left\{\bmX-\bm\mu|\bm\a_0^T\bmX\right\}$ is linear in $\bma_0^T\bmX$ (because of the elliptic symmetry of $\bmX$), and where $\bm\m=\E\bmX$. It follows that
	\begin{align*}
	\text{Cov}(\bmX,Y)=\left(\bma_0^T\bm\Sigma\bma_0\right)^{-1}\text{Cov}\left\{\psi_0(\bma_0^T\bmX),\bma_0^T\bmX\right\}\bm\Sigma\bma_0
	=\text{Cov}\left\{\psi_0(\bma_0^T\bmX),\bma_0^T\bmX\right\}\bm\Sigma\bma_0,
	\end{align*}
	where we use $\bma_0^T\bm\Sigma\bma_0=1$ in the last equality. Hence (\ref{limit_eq}) turns into
	\begin{align*}
	c\left(\bm I-\bm\Sigma\tilde{\bma}\tilde\bma^T\right)\bm\Sigma \bma_0=
	\left(\bm I-\bm\Sigma\tilde{\bma}\tilde\bma^T\right)\bm\Sigma\tilde\bma=\bm0,
	\qquad c=\text{Cov}\left\{\psi_0(\bma_0^T\bmX),\bma_0^T\bmX\right\}.
	\end{align*}
	Assuming $c\ne0$, using the condition of $\psi_0$ being strictly increasing on a subinterval, we get:
	\begin{align}
	\label{orthogonality_elliptic}
	\bm\Sigma\bma_0=\left(\tilde\bma^T\bm\Sigma\bma_0\right)\bm\Sigma\tilde\bma.
	\end{align}
	Premultiplying on the left with $\bma_0^T$, we get:
	\begin{align*}
	\bma_0^T\bm\Sigma\bma_0=1=\left(\tilde\bma^T\bm\Sigma\bma_0\right)^2,
	\end{align*}
	implying $\tilde\bma^T\bm\Sigma\bma_0=\pm1$, and hence $\bm\Sigma\tilde\bma=\pm\bm\Sigma\bma_0$, which leads to $\tilde\bma=\pm\bma_0$.
	So, restricting the estimators to a sufficiently small neighborhood of $\bma_0$, excluding $-\bma_0$, we get that all convergent subsequences of $(\hat{\bm\a}_{n,\bmS_n norm1})$ converge to $\bma_0$, and hence $(\hat{\bm\a}_{n,\bmS_n norm1})$ itself converges (in distribution) to $\bma_0$.
	
	For the remaining part of the proof, we first note that we just showed that $\bma_0$ satisfies (\ref{limit_eq}).
	We now write (\ref{Lagrange_norm1}) in the following form:
	\begin{align*}
	&\frac1n\left(\bm I-\bmS_n{\bm\a}\bma^T\right)\sum_{i=1}^n\left(\bm X_i -\bar{\bm X}_n\right)\left\{Y_i-\bm\a^T\left(\bm X_i -\bar{\bm X}_n\right)\right\}\\
	&=\frac1n\left(\bm I-\bmS_n{\bm\a}\bma^T\right)\sum_{i=1}^n\left(\bm X_i -\bar{\bm X}_n\right)\left\{Y_i-\bm\a_0^T\left(\bm X_i -\bar{\bm X}_n\right)\right\}\\
	&\qquad-\frac1n\left(\bm I-\bmS_n\bm\a\bma^T\right)\sum_{i=1}^n\left(\bm X_i -\bar{\bm X}_n\right)\left\{\bm\a-\bm\a_0\right\}^T\left(\bm X_i -\bar{\bm X}_n\right)\\
	&=\frac1n\left(\bm I-\bmS_n{\bm\a_0}\bma_0^T\right)\sum_{i=1}^n\left(\bm X_i -\bar{\bm X}_n\right)\left\{Y_i-\bm\a_0^T\left(\bm X_i -\bar{\bm X}_n\right)\right\}\\
	&\qquad-\frac1n\left\{\bmS_n\left(\bma-\bma_0\right)\bma^T+\bmS_n\bma_0\left(\bm\a-\bma_0\right)^T\right\}\sum_{i=1}^n\left(\bm X_i -\bar{\bm X}_n\right)\left\{Y_i-\bm\a_0^T\left(\bm X_i -\bar{\bm X}_n\right)\right\}\\
	&\qquad-\frac1n\left(\bm I-\bmS_n\bm\a\bma^T\right)\sum_{i=1}^n\left(\bm X_i -\bar{\bm X}_n\right)\left(\bm X_i -\bar{\bm X}_n\right)^T\left\{\bm\a-\bm\a_0\right\}
	\end{align*}
	\begin{align*}
	&=\frac1n\left(\bm I-\bm\Sigma{\bm\a_0}\bma_0^T\right)\sum_{i=1}^n\left(\bm X_i -\bar{\bm X}_n\right)\left\{Y_i-\bm\a_0^T\left(\bm X_i -\bar{\bm X}_n\right)\right\}\\
	&\qquad-\frac1n\left\{\bmS_n-\bm\Sigma\right\}\bm\a_0\bma_0^T\sum_{i=1}^n\left(\bm X_i -\bar{\bm X}_n\right)\left\{Y_i-\bm\a_0^T\left(\bm X_i -\bar{\bm X}_n\right)\right\}\\
	&\qquad-\frac1n\left\{\bmS_n\left(\bma-\bma_0\right)\bma^T+\bmS_n\bma_0\left(\bm\a-\bma_0\right)^T\right\}\sum_{i=1}^n\left(\bm X_i -\bar{\bm X}_n\right)\left\{Y_i-\bm\a_0^T\left(\bm X_i -\bar{\bm X}_n\right)\right\}\\
	&\qquad-\frac1n\left(\bm I-\bmS_n\bma\bma^T\right)\sum_{i=1}^n\left(\bm X_i -\bar{\bm X}_n\right)\left(\bm X_i -\bar{\bm X}_n\right)^T\left\{\bm\a-\bm\a_0\right\}\\
	&=\bm0.
	\end{align*}
	Define
	\begin{align}
	\label{def_Z_n}
	\bmZ_n=\sqrt{n}\left\{\hat{\bm\a}_{n,\bmS_n norm1}-\bma_0\right\}
	\end{align}
	Then, multiplying the terms in the preceding relation with $\sqrt{n}$, we get:
	\begin{align*}
	&\left\{\left(I-\bm\Sigma\bma_0\bma_0^T\right)\bm\Sigma\bmZ_n+\bm\Sigma\left( \bmZ_n\bma_0^T+\bma_0 \bmZ_n^T\right)\left\{\text{Cov}(\bmX,Y)-\bm\Sigma\bma_0\right\}\right\}\left\{1+o_p(1)\right\}\\
	&=\frac1{\sqrt{n}}\left(\bm I-\bm\Sigma{\bm\a_0}\bma_0^T\right)\sum_{i=1}^n\left(\bm X_i -\bar{\bm X}_n\right)\left\{Y_i-\bm\a_0^T\left(\bm X_i -\bar{\bm X}_n\right)\right\}\\
	&\qquad-\sqrt{n}\left\{\bmS_n-\bm\Sigma\right\}\bm\a_0\bma_0^T\left\{\text{Cov}(\bmX,Y)-\bm\Sigma\bma_0\right\}\left\{1+o_p(1)\right\}.
	\end{align*}
	So the asymptotic equation becomes:
	\begin{align}
	\label{asymptotic_relation_elliptic_sym}
	&\left(I-\bm\Sigma\bma_0\bma_0^T\right)\bm\Sigma\bmZ_n+c'\bm\Sigma\left( \bmZ_n\bma_0^T+\bma_0 \bmZ_n^T\right)\bm\Sigma\bma_0\nonumber\\
	&=\frac1{\sqrt{n}}\left(\bm I-\bm\Sigma{\bm\a_0}\bma_0^T\right)\sum_{i=1}^n\left(\bm X_i -\bar{\bm X}_n\right)\left\{Y_i-\bm\a_0^T\left(\bm X_i -\bar{\bm X}_n\right)\right\}\nonumber\\
	&\qquad\qquad-\c'\sqrt{n}\left\{\bmS_n-\bm\Sigma\right\}\bm\a_0+o_p(1),\qquad\qquad
	c'=\text{Cov}\left(\psi_0(\bma_0^T),\bma_0^T\bm X\right)-1.
	\end{align}
	Since the right-hand side tends to a normally distributed random vector, the covariance matrix of the (normal) limit distribution of $\bmZ_n$ can be deduced from this relation.
	Note that multiplying this relation with $\bma_0^T$ yields:
	\begin{align}
	\label{asymptotic_relation_elliptic_sym2}
	2\bma_0^T\bm\Sigma\bmZ_n=- \sqrt{n}\bma_0^T\left\{\bmS_n-\bm\Sigma\right\}\bm\a_0+o_p(1),
	\end{align}
	showing that the limit distribution of $\bmZ_n$  has a nondegenerate component in the direction of $\Sigma\bma_0$ if $c'\ne0$.
\end{proof}

We compute the covariance matrix of the limit distribution for the simulation setting of Table \ref{table:simulation2b}. In this case $\bm\Sigma=\bm I$, so (\ref{asymptotic_relation_elliptic_sym}) boils down to
\begin{align}
\label{asymptotic_relation_elliptic_sym_simulation}
&\left(I-\bma_0\bma_0^T\right)\bmZ_n+c'\left( \bmZ_n\bma_0^T+\bma_0 \bmZ_n^T\right)\bma_0\nonumber\\
&=\frac1{\sqrt{n}}\left(\bm I-{\bm\a_0}\bma_0^T\right)\sum_{i=1}^n\left(\bm X_i -\bar{\bm X}_n\right)\left\{Y_i-\bm\a_0^T\left(\bm X_i -\bar{\bm X}_n\right)\right\}\nonumber\\
&\qquad\qquad-c'\sqrt{n}\left\{\bmS_n-\bm I\right\}\bm\a_0+o_p(1),\qquad\qquad
c'=\text{Cov}\left(\psi_0(\bma_0^T),\bma_0^T\bm X\right)-1=2.
\end{align}

We can rewrite the left-hand side of (\ref{asymptotic_relation_elliptic_sym_simulation}) in the following way:
\begin{align}
\label{asymptotic_relation_left}
&\left(I-\bma_0\bma_0^T\right)\bmZ_n+2 \left(\bmZ_n\bma_0^T+\bma_0 \bmZ_n^T\right)\bma_0
= \bmZ_n  -  \bma_0\bma_0^T  \bmZ_n   + 2 \bmZ_n + 2 \bma_0 \bmZ_n^T  \bma_0\nonumber\\
& =3 \bmZ_n + \bma_0\bma_0^T\bmZ_n\nonumber\\
&=3\left(I-\bma_0\bma_0^T\right)\bmZ_n+4\bma_0\bma_0^T\bmZ_n.
\end{align}
We write the dominant term of the right-hand side of (\ref{asymptotic_relation_elliptic_sym_simulation}) in a similar decomposition:
\begin{align*}
&\frac1{\sqrt{n}}\left(\bm I-{\bm\a_0}\bma_0^T\right)\sum_{i=1}^n\left(\bm X_i -\bar{\bm X}_n\right)\left\{Y_i-\bm\a_0^T\left(\bm X_i -\bar{\bm X}_n\right)\right\}
-2\sqrt{n}\left\{\bmS_n-\bm I\right\}\bm\a_0\\
&=\left(\bm I-{\bm\a_0}\bma_0^T\right)\left\{\frac1{\sqrt{n}}\sum_{i=1}^n\left(\bm X_i -\bar{\bm X}_n\right)\left\{Y_i-\bm\a_0^T\left(\bm X_i -\bar{\bm X}_n\right)\right\}-2\sqrt{n}\left\{\bmS_n-\bm I\right\}\bm\a_0\right\}\\
&=\left(\bm I-{\bm\a_0}\bma_0^T\right)\left\{\frac1{\sqrt{n}}\sum_{i=1}^n\left(\bm X_i -\bar{\bm X}_n\right)\left\{Y_i-3\bm\a_0^T\left(\bm X_i -\bar{\bm X}_n\right)\right\}\right\}
-2\bma_0\bma_0^T\sqrt{n}\left\{\bmS_n-\bm I\right\}\bm\a_0
\end{align*}
Moreover, we write:
\begin{align*}
\bm Z_n=\bm U_n+\bm V_n,
\end{align*}
where $\bm V_n=(\bma_0^T\bm Z_n) \bma_0$, and $\bm U_n=\bm Z_n-\bm V_n$. Then
\begin{align*}
\bma_0^T\bmU_n=\bma_0^T\bmZ_n-\bma_0^T(\bma_0^T\bmZ_n)\bma_0=\bma_0^T\bmZ_n-\bma_0^T\bma_0(\bma_0^T\bmZ_n)=0,
\end{align*}
so we get from (\ref{asymptotic_relation_left}) that the left side of (\ref{asymptotic_relation_elliptic_sym_simulation}) is given by:
\begin{align*}
3\bmU_n+4\bm V_n.  
\end{align*}
So we get, omitting the $o_p(1)$ term on the right of  (\ref{asymptotic_relation_elliptic_sym_simulation}) now, the equations
\begin{align}
\label{Z_n-equation1}
\bmU_n=\frac13\left(\bm I-{\bm\a_0}\bma_0^T\right)\left\{\frac1{\sqrt{n}}\sum_{i=1}^n\left(\bm X_i -\bar{\bm X}_n\right)\left\{Y_i-3\bm\a_0^T\left(\bm X_i -\bar{\bm X}_n\right)\right\}\right\},
\end{align}
and, defining $W_n=\bma_0^T\bmZ_n$,
\begin{align*}
4\bmV_n=4W_n\bma_0=4\bma_0\bma_0^T\bm V_n
=-2\bma_0\bma_0^T\sqrt{n}\left\{\bmS_n-\bm I\right\}\bm\a_0,
\end{align*}
implying, using $\bma_0\ne\bm0$:
\begin{align}
\label{Z_n-equation2}
W_n=-\frac12\bma_0^T\sqrt{n}\left\{\bmS_n-\bm I\right\}\bma_0.
\end{align}
So
\begin{align}
\bm Z_n=\bmU_n+W_n\bma_0,
\end{align}
where $\bmU_n$ satisfies equation (\ref{Z_n-equation1}) and $W_n$ satisfies (\ref{Z_n-equation2}).

The random vector $\bmU_n$ converges in distribution to a normal random vector $\bmU$ with mean $\bm0$ and covariance matrix
\begin{align*}
\text{Cov}(\bmU)=\text{Cov}\left(\frac13\left(\bm I-{\bm\a_0}\bma_0^T\right)\left\{\left(\bma_0^T\bmX)^3+\e-3\bma_0^T\bmX\right)\bmX\right\}\right),
\end{align*}
which can be computed to be:
\begin{align*}
\frac7{27}\begin{bmatrix}
\phantom{-}2  &-1 & -1\\[0.3em]
-1 & \phantom{-}2 & -1 \\[0.3em]
-1 & -1 & \phantom{-}2
\end{bmatrix}
,
\end{align*}
and $\bmV_n=W_n\bma_0$ converges in distribution to a vector $\bmV$ with covariance matrix:
\begin{align*}
\frac16\begin{bmatrix}
1 &1 &1\\[0.3em]
1 & 1 & 1 \\[0.3em]
1 & 1 & 1
\end{bmatrix}
.
\end{align*}
Finally, the pair $(\bmU_n,\bmV_n)$ converges to $(\bmU,\bmV)$, where $\bm U$ and $\bmV$ satisfy $\E\,\bmU\bmV^T=\bm0$.
It now follows that the asymptotic covariance matrix of $\bmZ_n=\bmU_n+\bmV_n$ is given by:
\begin{align*}
\frac7{27}\begin{bmatrix}
\phantom{-}2  &-1 & -1\\[0.3em]
-1 & \phantom{-}2 & -1 \\[0.3em]
-1 & -1 & \phantom{-}2
\end{bmatrix}
+
\frac16\begin{bmatrix}
1 &1 &1\\[0.3em]
1 & 1 & 1 \\[0.3em]
1 & 1 & 1
\end{bmatrix}
=
\frac1{45}\begin{bmatrix}
37  &-5 & -5\\[0.3em]
-5 & 37 & -5 \\[0.3em]
-5 & -5 & 37
\end{bmatrix}
\approx
\begin{bmatrix}
\phantom{-}0.68518  &-0.09259 & -0.09259\\[0.3em]
-0.09259 & \phantom{-}0.68518 & -0.09259 \\[0.3em]
-0.09259 & -0.09259 & \phantom{-}0.68518
\end{bmatrix}
.
\end{align*}

Similar computations yield that the asymptotic covariance matrix of $\sqrt{n}\left\{\hat{\bm\a}_{n,norm1}-\bm\a_0\right\}$ , where $\hat{\bm\a}_{n,norm1}$ is defined by (\ref{LS_norm1a}) in Remark \ref{remark_spher_sym}, is given by the covariance matrix of
\begin{align*}
\frac13\left(\bm I-{\bm\a_0}\bma_0^T\right)\bm X\left\{Y-\bma_0^T\bm X\right).
\end{align*}
which is given by:
\begin{align}
\label{asymp_covar-spher-sym}
\frac1{27}\begin{bmatrix}
22 & -11 &-11
\\[0.3em]
-11  &22  &-11  \\[0.3em]
-11  &-11  &22
\end{bmatrix}
\approx
\begin{bmatrix}
\phantom{-}0.814815 & -0.407407 &-0.407407
\\[0.3em]
-0.407407  &\phantom{-}0.814815  &-0.407407  \\[0.3em]
-0.407407  &-0.407407  &\phantom{-}0.814815
\end{bmatrix}
.
\end{align}
Note that the asymptotic variances are larger than those of $\hat{\bm\a}_{n,\bmS_n norm1}$.

\section{The least squares estimator (LSE) of the link function $\psi$}
\label{supA:LSE}
In this section we first prove Proposition \ref{prop:maximizer-population}. We next show in Lemma \ref{lemma: maxorderLSE} that the LSE  $\hat \psi_{n\bma}$ is of order $O_p(\log n)$ uniformly in $\B(\bma_0,\d_0)$. This result is used in the proof of Proposition \ref{prop:L_2-psi-psi_n-alpha}, given at the end of this section.

\begin{proof}[Proof of Proposition \ref{prop:maximizer-population}]
	Note that we can write
	\begin{align*}
	L_{\bma}(\psi) &=  \E\Big[ \Big( \psi_0(\bma_0^T\bm X)-\psi(\bma^T\bm X)  \Big)^2 \Big]. 
	\end{align*}
	Thus, 
	\begin{align*}
	\E\Big[ \Big(\psi_0(\bma_0^T\bm X)- \E(\psi_0(\bma_0^T\bm X)  \   | \ \bma^T\bm X) \Big)^2 \Big]  = \min_{\psi \in \mathcal{M}'} 	L_{\bma}(\psi) ,
	\end{align*}
	with $\mathcal{M}'$ the set of all bounded Borel-measurable function defined on $\mathcal{I}_{\bma}$, since the conditional expectation  $\E(\psi_0(\bma_0^T\bm X)  \   | \ \bma^T\bm X)$ is the least squares projection of $\psi_0(\bma_0^T\bm X)$ on the $\sigma$-algebra $\sigma\{\bma^T\bmX\}$ for the underlying probability measure.

	 Therefore, if the minimizing function $u \mapsto \E(\psi_0(\bma_0^T\bm X)  \   | \ \bma^T\bm X =u) $ is monotone increasing on $\mathcal{I}_{\bma}$, then this implies that it necessarily minimizes $L_{\bma}$ over $\mathcal{M}$. Furthermore, such a minimizer is unique by strict convexity of $L_{\bma}$.
\end{proof}  
\begin{lemma}\label{lemma: maxorderLSE}
	$$\max_{\bma \in \mathcal{B}(\bma_0, \delta_0)} \sup_{\bm x \in \mathcal{X}} \Big \vert \hat\psi_{n \bma}(\bma^T\bm x)\Big \vert = O_p(\log n ).$$
\end{lemma}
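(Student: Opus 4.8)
The plan is to bound the least squares estimator $\hat\psi_{n\bma}$ uniformly over $\bma\in\mathcal{B}(\bma_0,\delta_0)$ and $\bm x\in\mathcal{X}$ by controlling its extreme values, which for a monotone function are attained at the endpoints of the index interval $\mathcal{I}_{\bma}$. First I would use the characterization of the LSE via its defining least squares problem: since $\hat\psi_{n\bma}$ minimizes $\tfrac1n\sum_{i=1}^n(Y_i-\psi(\bma^T\bm X_i))^2$ over monotone functions, its value at any design point is a weighted average (isotonic projection) of the responses $Y_i$ over a suitable block of indices, by the max-min / pool-adjacent-violators representation. Consequently $\max_i \hat\psi_{n\bma}(\bma^T\bm X_i)$ is bounded above by $\max_i Y_i$ and $\min_i \hat\psi_{n\bma}(\bma^T\bm X_i)$ is bounded below by $\min_i Y_i$, so the supremum over $\bm x$ is controlled by $\max_{1\le i\le n}|Y_i|$, \emph{uniformly in $\bma$}.

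The key reduction is therefore that
\begin{align*}
\max_{\bma\in\mathcal{B}(\bma_0,\delta_0)}\ \sup_{\bm x\in\mathcal{X}}\bigl|\hat\psi_{n\bma}(\bma^T\bm x)\bigr|
\ \le\ \max_{1\le i\le n}|Y_i|,
\end{align*}
so the whole statement collapses to showing $\max_{1\le i\le n}|Y_i|=O_p(\log n)$. Here I would invoke the model $Y=\psi_0(\bma_0^T\bm X)+\epsilon$ together with the boundedness of $\psi_0$ on the compact index range (so $\psi_0(\bma_0^T\bm X)$ is uniformly bounded) and the tail assumptions on the error $\epsilon$. Under the sub-exponential (or sub-Gaussian) tail conditions that such single-index papers impose on $\epsilon$, a standard union bound gives $\P(\max_{i\le n}|\epsilon_i|>C\log n)\le n\,\P(|\epsilon_1|>C\log n)\to 0$ for $C$ large, which yields $\max_{i\le n}|\epsilon_i|=O_p(\log n)$ and hence $\max_{i\le n}|Y_i|=O_p(\log n)$.

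The one technical subtlety I would be careful about is the passage from the design-point bound to the supremum over all $\bm x\in\mathcal{X}$. Because $\hat\psi_{n\bma}$ is defined as a monotone function, its value at a point outside the design is fixed only up to the usual convention (typically taken as piecewise constant and right- or left-continuous, or flat beyond the extreme design points); in every standard convention the monotone interpolant never exceeds its largest jump value nor falls below its smallest, so $\sup_{\bm x}|\hat\psi_{n\bma}(\bma^T\bm x)|=\max_i|\hat\psi_{n\bma}(\bma^T\bm X_i)|$, and the bound transfers. The main obstacle, I expect, is making the uniformity in $\bma$ fully rigorous: one must argue that the envelope $\max_i|Y_i|$ does not depend on $\bma$, which follows precisely because the isotonic-regression max-min formula expresses $\hat\psi_{n\bma}$ at each point as a convex combination of the $Y_i$'s regardless of how the indices $\bma^T\bm X_i$ are ordered. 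Once that order-free averaging bound is established, the remaining tail estimate on $\max_i|\epsilon_i|$ is routine.
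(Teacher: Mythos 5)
Your proposal is correct and follows essentially the same route as the paper: the min-max (isotonic projection) representation of the LSE bounds $\hat\psi_{n\bma}$ between $\min_{1\le i\le n} Y_i$ and $\max_{1\le i\le n} Y_i$ uniformly in $\bma$ and $\bm x$, and the exponential-moment assumption on the response (Assumption A7 in the paper, your sub-exponential tail condition on $\epsilon$ plus boundedness of $\psi_0$) then gives $\max_{1\le i\le n}|Y_i| = O_p(\log n)$ via the standard union bound. The paper's proof is exactly this two-step argument, citing the analogous lemma in \cite{balabdaoui2016} for the tail estimate.
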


\begin{proof}
	The proof of this lemma is similar to that of Lemma 4.4 of  \cite{balabdaoui2016}.   For a fixed $\bma$  it follows from the min-max formula of an isotonic regression that we have for all $\bm x \in \mathcal{X}$
	\begin{eqnarray*}
		\min_{ 1\le k \le  n}   \frac{\sum_{i=1}^k Y^{\bma}_i}{k} \le \hat\psi_{n \bma}(\bma^T\bm  x)   \le   \max_{ 1\le k \le  n}   \frac{\sum_{i=k}^n Y^{\bma}_i}{n-k+1}.
	\end{eqnarray*}
	Hence,
	\begin{eqnarray*}
		\min_{1 \le i \le n}  Y_i  \le \hat\psi_{n \bma}(\bma^T\bm  x)   \le \max_{1 \le i \le n}  Y_i
	\end{eqnarray*}
	and this in turn implies that
	\begin{eqnarray*}
		\max_{\bma \in \mathcal{B}(\bma_0, \delta_0)} \sup_{\bm x \in \mathcal{X}} \Big \vert \hat\psi_{n \bma}(\bma^T\bm x)\Big \vert \le \max_{1 \le i \le n}  \vert Y_i  \vert.
	\end{eqnarray*}
	Using similar arguments as in \cite{balabdaoui2016},  we use Assumption A7 to show that $ \max_{1 \le i \le n}  \vert Y_i  \vert = O_p(\log n)$, which completes the proof.
\end{proof}

\medskip

\begin{proof}[Proof of Proposition \ref{prop:L_2-psi-psi_n-alpha}]
	By the definition of the LSE of the unknown monotone link function, $\hat \psi_{n \bma}$  maximizes the map $\psi \mapsto \mathbb{M}_n$ over $\M$ where,
	\begin{align}
	\label{def:M_n}
	\mathbb{M}_n(\psi, \bma) =  \int_{\mathcal{X} \times \R}  \Big(  2 y \psi(\bma^T\bm x)  - \psi^2(\bma^T\bm x)  \Big)  d \P_n(\bm x, y).
	\end{align}
	Moreover, $\psi_{\bma}$ maximizes the map  $\psi \mapsto \mathbb{M}$ over $\M$, where
	\begin{align}
	\label{def:M}
	\mathbb{M}(\psi, \bma) =  \int_{\mathcal{X} \times \R}  \Big(  2 y \psi(\bma^T\bm x)  - \psi^2(\bma^T\bm x)  \Big)  d P_0(\bm x, y).
	\end{align}
	Define the function $f_{\psi,\bma}$ by,
	\begin{align*}
	f_{\psi,\bma}(\bm x, y) = 2 y \psi(\bma^T\bm x)  - \psi^2(\bma^T\bm x)
	\end{align*}
	Note that by definition of the LSE as the maximizer of (\ref{def:M_n}), we have
	\begin{align*}
	\int_{\mathcal{X} \times \R}  \Big(  f_{\hat\psi_{n \bma},\bma}(\bm x, y) - f_{\psi_{\bma},\bma}(\bm x, y)  \Big)  d\P_n(\bm x, y) \ge 0.
	\end{align*}
	Moreover for all $\bma \in \B(\bma_0, \delta_0)  $ and $\psi \in \M$, we have,
	\begin{align*}
	\int_{\mathcal{X} \times \R}  \Big(  f_{\psi,\bma}(\bm x, y) - f_{\psi_{\bma},\bma}(\bm x, y)  \Big)  dP_0(\bm x, y) = -d^2_{\bma}(\psi, \psi_{\bm\a}),
	\end{align*}
	where,  for any $\bma \in \B(\bma_0,\delta_0)$ and for any two elements  $\psi_1$ and $\psi_2$ in $\mathcal{M}$, we define the squared distance 
	\begin{align*}
	d^2_{\bma}(\psi_1, \psi_2)  = \int_{\mathcal{X}}  \Big( \psi_2(\bma^T\bm x) - \psi_1(\bma^T\bm x)  \Big)^2 g(\bm x) d\bm x.
	\end{align*}
	This can be seen as follows:
	\begin{align*}
	& \int_{\mathcal{X} \times \R}  \Big(  f_{\psi,\bma}(\bm x, y) - f_{\psi_{\bma},\bma}(\bm x, y)  \Big)  dP_0(\bm x, y) \\
	&= \int_{\mathcal{X} \times \R}  \Big(  2 \psi_{\bma}(\bma^T\bm x) (\psi(\bma^T\bm x) -\psi_{\bma}(\bma^T\bm x))   - \psi^2(\bma^T\bm x) + \psi_{\bma}^2(\bma^T\bm x)  \Big)  d P_0(\bm x, y)\\
	&= -\int_{\mathcal{X}}  \Big( \psi(\bma^T\bm x) - \psi_{\bma}(\bma^T\bm x)  \Big)^2 g(\bm x) d\bm x
	= -d^2_{\bma}(\psi, \psi_{\bm\a}),
	\end{align*}
	where we use that $\E\{Y|\bma^T\bm X = u\} = \psi_{\bma}(u)$.
	This implies that, for all $\bma \in \B(\bma_0, \delta_0)  $ and $\psi \in \M$, we have,
	\begin{align*}
	\int_{\mathcal{X} \times \R}  \Big(  f_{\psi,\bma}(\bm x, y) - f_{\psi_{\bma},\bma}(\bm x, y)  \Big)  d(\P_n-P_0)(\bm x, y) \ge d^2_{\bma}(\psi, \psi_{\bm\a}).
	\end{align*}
	We write,
	\begin{align*}
	&\P\left\{ \sup_{\bma\in \B(\bma_0,\delta_0) }   d_{\bma}(\hat\psi_{n \bma}, \psi_{\bma})\ge\ee\right\}\\
	&\le\P\left\{\sup_{\bma\in \B(\bma_0,\delta_0),d_{\bma}(\hat\psi_{n \bma}, \psi_{\bma})\ge\ee}\left\{
	\int_{\mathcal{X} \times \R}  \Big(  f_{\hat\psi_{n \bma},\bma}(\bm x, y) - f_{\psi_{\bma},\bma}(\bm x, y)  \Big)  d(\P_n-P_0)(\bm x, y) - d^2_{\bma}(\hat\psi_{n \bma}, \psi_{\bma})\right\}\ge0,\right.\\
	&\left. \hspace{9.5cm}\sup_{\bma\in \B(\bma_0,\delta_0) }   d_{\bma}(\hat\psi_{n \bma}, \psi_{\bma})\ge\ee\right\}\\
	&\le\P\left\{\sup_{\bma\in \B(\bma_0,\delta_0), \psi \in \M_{RK}, d_{\bma}(\psi, \psi_{\bma})\ge\ee}\left\{
	\int_{\mathcal{X} \times \R}  \Big(  f_{\psi,\bma}(\bm x, y) - f_{\psi_{\bma},\bma}(\bm x, y)  \Big)  d(\P_n-P_0)(\bm x, y) - d^2_{\bma}(\psi_{\bma}, \psi)\right\}\ge0,\right.\\
	&\left. \hspace{1.50cm} \max_{\bma \in \mathcal{B}(\bma_0, \delta_0)} \sup_{\bm x \in \mathcal{X}} \Big \vert \hat\psi_{n \bma}(\bma^T\bm x)\Big \vert  \le K\right\} 
	+ \P\left\{\max_{\bma \in \mathcal{B}(\bma_0, \delta_0)}\sup_{\bm x \in \mathcal{X}} \Big \vert \hat\psi_{n \bma}(\bma^T\bm x)\Big \vert  > K \right\}.\\
	\end{align*}
	Fix $\nu > 0$. Since 
	$$\max_{\bma \in \mathcal{B}(\bma_0, \delta_0)} \sup_{\bm x \in \mathcal{X}} \Big \vert \hat\psi_{n \bma}(\bma^T\bm x)\Big \vert = O_p(\log n ), $$
	by Lemma \ref{lemma: maxorderLSE},  
	we can find $K_1 > 0$ large enough such that  
	$$ \P\left\{\max_{\bma \in \mathcal{B}(\bma_0, \delta_0)}\sup_{\bm x \in \mathcal{X}} \Big \vert \hat\psi_{n \bma}(\bma^T\bm x)\Big \vert  > K_1 \log n \right\} < \nu/2.$$
	Define 
	\begin{eqnarray}\label{MRK}
	\mathcal{M}_{RK}  =  \Big \{ \psi  \  \text{monotone non-decreasing on $[-R,R]$ and bounded by $K$} \Big\},
	\end{eqnarray}
	and consider the related class 
	\begin{align}
	\label{FRK}
	{\cal F}_{RK} &= \bigg \{f(\bm x, y)  =  2y \Big(\psi (\bma^T\bm x)  -  \psi_{\bma} (\bma^T\bm x)  \Big)  -  \psi(\bma^T\bm x)^2  +   \psi_{\bma}(\bma^T\bm x)^2 , \nonumber\\
	& \hspace{4cm}  (\bma, \psi) \in \mathcal{B}(\bma_0, \delta_0)  \times  \M_{RK}  \ \text{and} \  (\bm x, y) \in \mcX \times \R \bigg\},
	\end{align}
	and for some $v > 0$
	\begin{align}
	\label{FRKv}
	{\cal F}_{RKv}: & =    \bigg \{f \in  {\cal F}_{RK}:  d_{\bma}(\psi, \psi_{\bma})  \le v  \  \text{for all $\bma \in \mathcal{B}(\bma_0, \delta_0)$} \bigg\}.  
	\end{align}
	Note now that the class ${\cal F}_{RKv}$ is included in the class $\mathcal{H}_{RC\delta }$ defined in Lemma  \ref{lemma: BernEntropyGeneral} of Appendix \ref{supD:entropy}  with 
	$C=2 K^2$ and $\delta = 2K v$. This holds true provided that $K_0 \le K$, and $K \ge 1$ which we can assume for $n$ large enough since $K$ will be chosen to be of order $\log n$.  To see the claimed inclusion, it is enough to show that if $m$ is a nondecreasing function $[-R, R]$ then $m^2$ can be written as the difference of two monotone functions. This is true because $m^2 =  m^2 \mathbb{I}_{m \ge 0}   -  (-m^2) \mathbb{I}_{m < 0}$,  and  $m^2$ and $-m^2$ are nondecreasing on the subsets $\{m \ge 0\}$  and $\{m < 0\}$ respectively. When restricting attention to the event that $\hat{\psi}_{n \bma}$ is bounded by $K$  for $n$ large enough, we can consider only monotone functions $\psi \in \mathcal{M}_{RK}$. Using the expression of $\psi_{\bma}$ the latter is bounded by $K_0 \le K$.  On the other hand, for any function $f  \in {\cal F}_{RKv}$, there exist nondecreasing monotone functions $f_1$ and $f_2$ such that $\psi^2 -   \psi_{\bma}^2  = f_2 - f_1 $,  such that $\Vert f_1\Vert_\infty, \Vert f_2 \Vert_\infty \le  K^2  + K^2_0  \le 2 K^2$. Using that $K \ge 1$ implies that $  \Vert 2 \psi \Vert_\infty , \Vert2  \psi_{\bma}  \Vert_\infty \le  2  K \le 2 K^2$.  To finish, note that for any $\bma$ we have that $\int_{\mathcal{X}}  \left(\psi (\bma^T\bm x)  -  \psi_{\bma} (\bma^T\bm x)\right)^2 dG(\bm x) \le v^2  $ we also have that 
	\begin{eqnarray*}
		\int_{\mathcal{X}}  \left(\psi^2(\bma^T\bm x)  -  \psi^2_{\bma} (\bma^T\bm x)\right)^2 dG(\bm x)  & \le   & (2K)^2 \int_{\mathcal{X}}  \left(\psi (\bma^T\bm x)  -  \psi_{\bma} (\bma^T\bm x)\right)^2 dG(\bm x) \\
		\le 4 K^2  v^2. 
	\end{eqnarray*}
	The calculation above implies that we can take $\delta = 2K v $.   Using the result of Lemma \ref{lemma: BernEntropyGeneral} in Appendix \ref{supD:entropy}, it follows that the related class  $\widetilde{\mathcal{F}}_{RKv}  = \tilde{D}^{-1} {\cal F}_{RKv}$   with $\tilde{D}  =  16 M_0 C = 32 M_0 K^2 $  and a  given  $v > 0$  satisfies 
	\begin{eqnarray*}
		H_B\big(\epsilon, \widetilde{\mathcal{F}}_{RKv},  \Vert \cdot \Vert_{B, P_0}  \big)  &\le  & H_B\big(\epsilon, \widetilde{\mathcal{H}}_{RKv},  \Vert \cdot \Vert_{B, P_0}  \big) \\
		& \le   & \frac{A}{\epsilon} 
	\end{eqnarray*}
	for some constant $A > 0$ (depending only on $d$ and the other parameters of the problem), and that for all $\tilde{f} \in \widetilde{F}_{RKv}  $ we have  $\Vert \tilde{f} \Vert_{B, P_0} \le  \tilde{D}^{-1} \delta   = (32 M_0  K^2)^{-1}  2 K v  = (16 M_0)^{-1}  K^{-1} v   \equiv A_0 K^{-1} v$.   It follows from Lemma 3.4.3 of \cite{vdvwe:96} that 
	\begin{eqnarray*}
		\E\Big[\Vert \mathbb{G}_n \Vert_{\widetilde{F}_{RKv}} \Big] &\lesssim   & J_n(A_0 K^{-1} v),  \left( 1 + K^2 \frac{J_n(A_0 K^{-1} v)}{\sqrt{n} A^2_0 v^2 }  \right), \  \  \text{ where $J_n$ is defined in (\ref{Jn})} \\
		& \le &  A_0 K^{-1} v   +  2  A^{1/2}_0  K^{-1/2} v^{1/2}  A^{1/2} , \ \  \text{using the inequality in (\ref{IneqJn})}  \\
		& \le & B_0 K^{-1/2}  (v + v^{1/2})
	\end{eqnarray*}
	for some constant $B_0 > 0$, where we used the fact that $K^{-1/2}\ge K^{-1}$.  Therefore, 
	\begin{eqnarray*}
		\E \Big[\Vert \mathbb{G}_n \Vert_{\widetilde{F}_{RKv}} \Big] &\lesssim   &  B_0 K^{-1/2}  (v + v^{1/2})   \left( 1 + K^2 \frac{B_0 K^{-1/2}  (v + v^{1/2})}{\sqrt{n} A^2_0 v^2 }  \right) \\ 
		& \le &  C_0 K^{-1/2}  (v + v^{1/2})   \left(  1+  C_0 K^{3/2}  \frac{1 + v^{1/2}}{\sqrt n v^{3/2}}  \right).
	\end{eqnarray*}
	Using the definition of the class $\widetilde{F}_{RKv}$, the preceding display implies that 
	\begin{eqnarray}\label{ControlExp}
	\E \Big[\Vert \mathbb{G}_n \Vert_{\mathcal{F}_{RKv}} \Big] &\lesssim   & C_0  K^{3/2}  (v + v^{1/2})   \left(  1+  C_0 K^{3/2}  \frac{1 + v^{1/2}}{\sqrt{n} v^{3/2}}  \right).
	\end{eqnarray}
	Now with $K =K_1\log n,$ we have that
	\begin{align} 
	\label{proof:L2-a}
	&\P\left\{ \sup_{\bma\in \B(\bma_0,\delta_0) }   d_{\bma}(\hat\psi_{n \bma}, \psi_{\bma})\ge\ee\right\}\nonumber\\
	&\le\P\left\{\sup_{\substack{\bma\in \B(\bma_0,\delta_0), \psi \in \M_{RK} ,\\d_{\bma}(\psi, \psi_{\bma})\ge\ee}}\left\{
	\int_{\mathcal{X} \times \R}  \Big(  f_{\psi,\bma}(\bm x, y) - f_{\psi_{\bma},\bma}(\bm x, y)  \Big)  d(\P_n-P_0)(\bm x, y) - d^2_{\bma}(\psi, \psi_{\bma})\right\}\ge0,\right.\nonumber\\
	&\left. \hspace{8.50cm} \max_{\bma \in \mathcal{B}(\bma_0, \delta_0)} \sup_{\bm x \in \mathcal{X}} \Big \vert \hat\psi_{n \bma}(\bma^T\bm x)\Big \vert  \le K\right\} 
	+ \nu/2 
	\end{align}
	\begin{align*}
	&\le \sum_{s=0}^\infty\P\left\{\sup_{\substack{\bma\in \B(\bma_0,\delta_0), \psi \in \M_{RK},\\ 2^{s}\ee\le d_{\bma}(\psi,\psi_{\bma})\le 2^{s+1}\ee}}
	\sqrt n \int_{\mathcal{X} \times \R}  \Big(  f_{\psi,\bma}(\bm x, y) - f_{\psi_{\bma},\bma}(\bm x, y)  \Big)  d(\P_n-P_0)(\bm x, y)\ge \sqrt n 2^{2s}\ee^2,\right\}
	+ \nu/2 \nonumber
	\\
	&\le \sum_{s=0}^\infty\P\left\{\sup_{h \in {\cal F}_{RK 2^{s+1}\ee} }
	\sqrt n \int_{\mathcal{X} \times \R}   h(\bm x, y)   d(\P_n-P_0)(\bm x, y)\ge \sqrt n 2^{2s}\ee^2\right\}
	+ \nu/2.\nonumber
	\end{align*}
	We now show that there exists a constant $C > 0$ such that with $\ee =M(\log n)n^{-1/3}$,
	\begin{align}
	\label{expectation-bound}
	&\E\left\{ \sup_{h \in {\cal F}_{RK2^{s+1}\ee} }
	\sqrt n \left|\int_{\mathcal{X} \times \R}   h(\bm x, y)   d(\P_n-P_0)(\bm x, y)\right|\right\}  \le C M^{1/2} (\log n)^{2}   n^{-1/6}   2^{(s+1)/2}\nonumber\\
	\end{align}	
	An application of Markov's inequality, together with (\ref{proof:L2-a}), then yields, with $\ee =M(\log n)n^{-1/3}$
	\begin{eqnarray*}
		\P\left\{ \sup_{\bma\in \B(\bma_0,\delta_0) }   d_{\bma}(\hat\psi_{n \bma}, \psi_{\bma})\ge\ee\right\}&\le  & \sum_{s=0}^\infty \frac{C M^{1/2} (\log n)^{2}   n^{-1/6}   2^{(s+1)/2}}{\sqrt n  2^{2s}\ee^2} +\nu/2 \\
		& =   & \sum_{s=0}^\infty  \frac{C  (\log n)^{2} n^{-1/6}   2^{(s+1)/2}}{\sqrt n 2^{2s} M^{3/2} (\log n)^{2} n^{-2/3}} +\nu/2 =  \frac{2^{1/2}C} {M^{3/2} } \sum_{s=0}^\infty  \frac{1}{2^{3s/2}} +\nu/2  \le \nu,
	\end{eqnarray*}
	for $M$ sufficiently large.  The result of Proposition \ref{prop:L_2-psi-psi_n-alpha} hence follows by showing (\ref{expectation-bound}). Using the obtained bound in (\ref{ControlExp}) with $v=2^{s+1} \ee$ and using that $ 2^{s+1} \ge 1, s \ge 0$ and $\ee \le 1$ for $n$ large enough we get some some constant $D_0 > 0$
	\begin{eqnarray*}
		&&	\E\left\{ \sup_{h \in {\cal F}_{RK2^{s+1}\ee} }
		\sqrt n \left|\int_{\mathcal{X} \times \R}   h(\bm x, y)   d(\P_n-P_0)(\bm x, y)\right|\right\} \\
		&& \lesssim (\log n)^{3/2} M^{1/2} 2^{(s+1)/2} (\log n)^{1/2} n^{-1/6}   \left(  1+  D_0  (\log n)^{3/2} \frac{1+ M^{1/2} 2^{(s+1)/2} (\log n)^{1/2} n^{-1/6}}{\sqrt n M^{3/2} 2^{3(s+1)/2} (\log n)^{3/2} n^{-1/2}}  \right)\\ 
		&& =  (\log n)^{2}  n^{-1/6}   M^{1/2} 2^{(s+1)/2} \left(  1+  D'_0  \frac{1+ M^{1/2} 2^{(s+1)/2} (\log n)^{1/2} n^{-1/6}}{ 2^{3(s+1)/2}  }  \right), \ \ \text{with $D'_0 = D_0 M^{-3/2}$} \\
		&& \le 2  (\log n)^{2}   n^{-1/6} M^{1/2}   2^{(s+1)/2}, \ \ \text{for $ s \ge 0 $ and $n$ large enough}.
	\end{eqnarray*}	
	This proves the desired result:
	$$\sup_{\bma\in \B(\bma_0,\delta_0) }   d^2_{\bma}(\hat\psi_{n \bma}, \psi_{\bma}) = \sup_{\bma \in \B(\bma_0,\delta_0) }\int\left\{\hat \psi_{n\bma}(\bma^T\bm x)-\psi_{\bm\a}(\bma^T\bm x)\right\}^2\,dG(\bm x)=O_p\left((\log n) ^2n^{-2/3}\right).$$

\end{proof}

\section{Asymptotic behavior of the simple score estimator}
\label{supB:sse}
In this section we prove Theorem $\ref{theorem:asymptotics}$ given in Section \ref{section:Asymptotics}. The proof is decomposed into three parts: In Section \ref{subsec:Appendix-existence1} we first prove the existence of a crossing of zero of $\f_n$ defined in (\ref{def:phi_n}). The proof of consistency and asymptotic normality of $\hat \bma_n$ are given in Section \ref{subsec:Appendix-consistency1} and Section \ref{subsec:Appendix-normal1}. 
\subsection{Proof of existence of a crossing of zero}
\label{subsec:Appendix-existence1}
Let $\f$ be the population version of $\f_n$ defined by
\begin{align}
\label{def:phi-population}
\f(\bmb) :=\int \left(\bm J_{\mbS}(\bmb)\right)^T\bm x\left\{y -\psi_{\bm\a}\left(\mbS(\bmb)^T\bm x\right)\right\}\,dP_0(\bm x,y),
\end{align}
where $\psi_{\bma}$ is defined by
\begin{align*}
%\label{def_psi_alpha}
\psi_{\bm\a}(u)  := \E\left[\psi_0\left(\bma_0^T\bm X\right) |  \bma^T\bm X = u \right] \equiv \E\left[\psi_0\left(\mbS(\bmb_0)^T\bm X\right) |  \mbS(\bmb)^T\bm X = u \right].
\end{align*}
We have the following result:
\begin{proposition}
	\label{prop:score-connection}
	$$ \f_n(\bmb)  =\f(\bmb)  +o_p(1),$$
	uniformly in $\bmb \in \mathcal{C} := \left\{\bmb \in \R^{d-1}: \mbS(\bmb)\in \B(\bma_0,\d_0)\right\}$.
\end{proposition}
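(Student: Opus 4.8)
The plan is to center both $\f_n$ and $\f$ at the common population link $\psi_{\bma}$ and split the difference into three pieces. Recalling that $\f_n$ is the empirical analogue of $\f$, with $P_0$ replaced by the empirical measure $\P_n$ and the population link $\psi_{\bma}$ replaced by the least squares estimator $\hat\psi_{n\bma}$, adding and subtracting $\psi_{\bma}$ gives
\begin{align*}
\f_n(\bmb)-\f(\bmb) &= (\P_n-P_0)\Big[\big(\bm J_{\mbS}(\bmb)\big)^T\bm x\big\{y-\psi_{\bma}(\mbS(\bmb)^T\bm x)\big\}\Big]\\
&\quad+(\P_n-P_0)\Big[\big(\bm J_{\mbS}(\bmb)\big)^T\bm x\big\{\psi_{\bma}(\mbS(\bmb)^T\bm x)-\hat\psi_{n\bma}(\mbS(\bmb)^T\bm x)\big\}\Big]\\
&\quad+P_0\Big[\big(\bm J_{\mbS}(\bmb)\big)^T\bm x\big\{\psi_{\bma}(\mbS(\bmb)^T\bm x)-\hat\psi_{n\bma}(\mbS(\bmb)^T\bm x)\big\}\Big],
\end{align*}
which I label (I), (II) and (III). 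It suffices to show that each term is $o_p(1)$ uniformly over $\bmb\in\mathcal{C}$.

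Term (III) is handled deterministically. Since $\mbS$ and its Jacobian $\bm J_{\mbS}$ are continuous on the compact set $\mathcal{C}$ and $\bm x$ ranges over the compact $\mathcal{X}$, the factor $(\bm J_{\mbS}(\bmb))^T\bm x$ is uniformly bounded, so Cauchy--Schwarz yields $\sup_{\bmb\in\mathcal{C}}|(\text{III})|\le C\sup_{\bma\in\B(\bma_0,\delta_0)}d_{\bma}(\hat\psi_{n\bma},\psi_{\bma})$, which is $O_p((\log n)n^{-1/3})=o_p(1)$ by Proposition \ref{prop:L_2-psi-psi_n-alpha}. Term (I) is an empirical process indexed only by the finite-dimensional parameter $\bmb$, through the smooth maps $\bmb\mapsto\mbS(\bmb)$, $\bmb\mapsto\bm J_{\mbS}(\bmb)$ and $\bmb\mapsto\psi_{\mbS(\bmb)}$. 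Its integrand has an envelope of the form $C(1+|y|)$, which is $P_0$-integrable by the tail control on $Y$ in Assumption A7 (and $\psi_{\bma}$ is bounded by $K_0$), and it is continuous in $\bmb$ on the compact $\mathcal{C}$; hence this is a $P_0$-Glivenko--Cantelli class and $\sup_{\bmb\in\mathcal{C}}|(\text{I})|\to 0$ in probability. A finite bracketing-entropy bound for this parametric class together with Lemma 3.4.2 of \cite{vdvwe:96} gives the same conclusion.

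Term (II) is the delicate one and I expect it to be the main obstacle, since it is the only place where the random, infinite-dimensional nuisance $\hat\psi_{n\bma}$ enters an empirical process. I would first restrict to the high-probability event on which $\hat\psi_{n\bma}$ is bounded by $K=K_1\log n$ uniformly in $\bma$, available by Lemma \ref{lemma: maxorderLSE}, and on which $d_{\bma}(\hat\psi_{n\bma},\psi_{\bma})\le v$ with $v=O((\log n)n^{-1/3})$, available by Proposition \ref{prop:L_2-psi-psi_n-alpha}. On this event the integrand of (II) lies in the deterministic class of functions $(\bm x,y)\mapsto(\bm J_{\mbS}(\bmb))^T\bm x\{\psi(\mbS(\bmb)^T\bm x)-\psi_{\bma}(\mbS(\bmb)^T\bm x)\}$ indexed by $\bmb\in\mathcal{C}$, $\psi\in\M_{RK}$ and $d_{\bma}(\psi,\psi_{\bma})\le v$. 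This class does not involve $y$, hence is uniformly bounded by a multiple of $K$, and multiplying the bounded, smooth factor $(\bm J_{\mbS}(\bmb))^T\bm x$ into the monotone class does not change the order of the bracketing entropy, so the estimate of Lemma \ref{lemma: BernEntropyGeneral} in Appendix \ref{supD:entropy} still gives an entropy of order $1/\epsilon$ up to the $K$-dependent scaling. Applying the maximal inequality of Lemma 3.4.2 of \cite{vdvwe:96} with $\delta$ of order $Kv$ and sup-norm bound of order $K$, and dividing by $\sqrt{n}$ to pass from $\mathbb{G}_n$ to $\P_n-P_0$, produces a bound of order $K^{a}v^{b}n^{-1/2}$ for suitable powers $a,b>0$; inserting $K=O(\log n)$ and $v=O((\log n)n^{-1/3})$ then gives $\sup_{\bmb\in\mathcal{C}}|(\text{II})|=o_p(1)$. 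Since only $o_p(1)$, and not a sharp rate, is required, the bookkeeping of the slowly growing factor $K=O(\log n)$ against the $n^{-1/2}$ coming from $\mathbb{G}_n/\sqrt{n}$ leaves ample room, so no peeling argument is needed.
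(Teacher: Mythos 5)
Your proposal is correct and takes essentially the same approach as the paper: the identical three-term decomposition, with (III) controlled by Cauchy--Schwarz and Proposition \ref{prop:L_2-psi-psi_n-alpha}, and (II) handled by restricting to the high-probability event supplied by Lemma \ref{lemma: maxorderLSE} and Proposition \ref{prop:L_2-psi-psi_n-alpha} and then combining a bracketing-entropy bound of order $1/\epsilon$ (up to $K$-dependent scaling) with a maximal inequality from \cite{vdvwe:96}. One small caveat: your fallback remark that Lemma 3.4.2 also disposes of term (I) does not quite work, since that lemma requires a uniform sup-norm bound while the integrand of (I) contains the unbounded $y$ (the paper instead uses the Bernstein norm and Lemma 3.4.3 there to get the stronger rate $O_p(n^{-1/2})$); your primary Glivenko--Cantelli argument for (I) is valid, and only $o_p(1)$ is needed for this proposition.
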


\begin{proof}
	For any $\bmb \in \mathcal{C}$, we write,
	\begin{align}
	\label{decomposition:phi_n}
	\f_n(\bmb)&= \int \left(\bm J_{\mbS}(\bmb)\right)^T\bm x\left\{y -\psi_{\bm\a}\left(\mbS(\bmb)^T\bm x\right)\right\}\,d\P_n(\bm x,y) \nonumber\\
	&\qquad + \int \left(\bm J_{\mbS}(\bmb)\right)^T\bm x\left\{\psi_{\bm\a}\left(\mbS(\bmb)^T\bm x\right) -\hat \psi_{n\bm\a}\left(\mbS(\bmb)^T\bm x\right)\right\}\,d\P_n(\bm x,y) \nonumber
	\\
	&=\f(\bmb) + \int \left(\bm J_{\mbS}(\bmb)\right)^T\bm x\left\{y -\psi_{\bm\a}\left(\mbS(\bmb)^T\bm x\right)\right\}\,d(\P_n-P_0)(\bm x,y) \nonumber\\
	&\qquad + \int \left(\bm J_{\mbS}(\bmb)\right)^T\bm x\left\{\psi_{\bm\a}\left(\mbS(\bmb)^T\bm x\right) -\hat \psi_{n\bm\a}\left(\mbS(\bmb)^T\bm x\right)\right\}\,d(\P_n-P_0)(\bm x,y) \nonumber
	\\
	&\qquad + \int \left(\bm J_{\mbS}(\bmb)\right)^T\bm x\left\{\psi_{\bm\a}\left(\mbS(\bmb)^T\bm x\right) -\hat \psi_{n\bm\a}\left(\mbS(\bmb)^T\bm x\right)\right\}\,dP_0(\bm x,y) \nonumber
	\\
	&= 	\f(\bmb)+ I + II + III.
	\end{align}
	To find the rate of convergence of the term $I$ in (\ref{decomposition:phi_n})  we will use Lemma \ref{lemma: EntropyNew} in Appendix \ref{supD:entropy}.  Note first that for $ 1 \le i \le d-1$ the $i$-th component of the vector $\left(\bm J_{\mbS}(\bmb)\right)^T\bm x $ can be written as $s(\bmb)_{i1} x_1 + \ldots +  s(\bmb)_{id} x_d$, where by Assumption A8 the functions $s_{ij}$   are assumed to be uniformly bounded with partial derivatives that are also uniformly bounded on the bounded convex set $\mathcal{C}$ to which $\bmb $ belongs. If  $B_1$  is  the same constant found in Lemma \ref{lemma: EntropyNew} in Appendix \ref{supD:entropy} then the $\epsilon$-bracketing entropy is bounded above by $B_1 K_0 /\epsilon$. Applying  Lemma 3.4.3 of \cite{vdvwe:96}, Markov's inequality and Lemma \ref{lemma: EntropyNew} in Appendix \ref{supD:entropy} to each of the empirical processes corresponding to the term  $s(\bmb)_{ij} x_j$ for $1 \le j \le d$ yields  (with $D=8 M R K_0$, and $M$ is a constant bounding the sum of $s(\bmb)_{ij}$ and their partial derivatives) for $A > 0$
	\begin{align*}
	&P\Big( \vert I \vert \ge   An^{-1/2} \Big) \le \frac{ D}{A}  J_n(B_2) \left (1  +  \frac{J_n(B_2)}{\sqrt n B^2_2} \right), \  \text{where $J_n$ is defined in (\ref{Jn}) and $B_2$ is the same } \\
	& \hspace{4.5cm} \text{ constant of Lemma \ref{lemma: EntropyNew} in Appendix \ref{supD:entropy} } \\
	&\qquad \le  \frac{ D}{  A}  B_3 \left (1  +  \frac{B_3}{\sqrt n B^2_2} \right) , \  \ \text{using the inequality in (\ref{IneqJn}) with $B_3 = B_2 + 2  B^{1/2}_1  K^{1/2}_0 B^{1/2}_2$ } \\
	&\qquad \asymp  \frac{1}{A}.  
	\end{align*} 	
	This implies that $ I = O_p(n^{-1/2})$. For the last term $III$ in (\ref{decomposition:phi_n}) we get by an application of the Cauchy-Schwarz inequality and by Proposition \ref{prop:L_2-psi-psi_n-alpha} that this term is $O_p(n^{-1/3}\log n)$, i.e.
	\begin{align*}
	III&\le  \left(\int \left\|\left(\bm J_{\mbS}(\bmb)\right)^T\bm x\right\|_2^2 dG(\bm x)\right)^{1/2}\left(\int \left\{\psi_{\bm\a}\left(\mbS(\bmb)^T\bm x\right) -\hat \psi_{n\bm\a}\left(\mbS(\bmb)^T\bm x\right)\right\}^2\,dG(\bm x)\right)^{1/2}\\
	&= O_p(n^{-1/3}\log n),
	\end{align*}
	where we also use that $\left(\bm J_{\mbS}(\bmb)\right)^T\bm x$ is bounded in $L_2$ norm, a straightforward implication of Assumption A1 (boundedness of $\mcX$ and Assumption A8  (uniform boundedness of the components of the matrix $\bm J_{\mbS} $). 
	The result now follows by showing that the term $II$ is $o_p(1)$.   Consider the class of functions 
	\begin{eqnarray*}
		{\cal G}_{jRKv} &= & \bigg \{g(\bm x, y)  =  s(\bmb) x_j \left\{\psi_{\bma}\bigl(\bma^T\bm x\bigr)-\psi(\bma^T\bm x\bigr)\right\} , \nonumber\\
		&& \hspace{1.8cm} \text{such that} \ (\bma,\bmb, \psi) \in \cS_{d-1}  \times \mathcal{C} \times   \M_{RK}  \ \text{and} \  (\bm x, y) \in \mcX \times \in \R, \\
		& & \hspace{1.8cm} \text{and}  \ \sup_{\bma \in \mathcal{B}(\bma_0, \delta_0)} d_{\bma}(\psi_{\bma}, \psi)  \le v   \bigg\}
	\end{eqnarray*}
	with $s$ a function satisfying (\ref{DiffAssump}).  Then, ${\cal G}_{jRKv}   \subset \mathcal{Q}_{jRK}  - \mathcal{Q}_{jRK}$, where  $\mathcal{Q}_{jRC}$ is the same class defined in (\ref{QjRC}).  Here, we choose $K$ large enough such that $K \ge K_0$. If follows from (\ref{EntropyP0}) in the proof of Lemma \ref{lemma: EntropyNew} in Appendix \ref{supD:entropy}, that  (at the cost of increasing the constant $L$ in (\ref{EntropyP0}))
	\begin{eqnarray*}
		H_B\Big(\epsilon,  \tilde{\cal G}_{jRKv}, \Vert \cdot \Vert_{P_0}\Big)  \le \frac{L K}{\epsilon},
	\end{eqnarray*}
	where $\tilde{\cal G}_{jRKv} = (16M_0K)^{-1} {\cal G}_{jRKv}$. Also, we have for all $g \in {\cal G}_{jRKv}$
	\begin{eqnarray*}
		\Vert g \Vert_{P_0}  \le  M R v.
	\end{eqnarray*}
	Fix $\nu > 0$ and let $s_{ij}$ be the $i \times j$ entry of $ \bm J_{\mbS}(\bmb)$  for $ 1\le i \le d-1$ and $1\le j  \le d$. Also, let
	\begin{eqnarray*}
		II_{ij}=  \int_{\mcX \times \R}  s_{ij}(\bmb) x_j \left\{\psi_{\bm\a}\left(\mbS(\bmb)^T\bm x\right) -\hat \psi_{n\bm\a}\left(\mbS(\bmb)^T\bm x\right)\right\}\,d(\P_n-P_0)(\bm x,y).
	\end{eqnarray*}
	Using Lemma \ref{lemma: maxorderLSE} and Proposition  \ref{prop:L_2-psi-psi_n-alpha} there exists some constant $K_1 > 0$ large enough (and independent of $n$ ) such that with $K = K_1 \log n$  and $v = K_1 \log n \ n^{-1/3}$  we have that for $A > 0$
	\begin{align*}
	&P\left( \vert II_{ij}  \vert \ge A n^{-1/2} \right)  \\
	\qquad &=  P\left( \vert II_{ij}  \vert \ge A n^{-1/2} , \sup_{\bma \in \mathcal{B}(\bma_0, \delta_0) }\sup_{\bm x \in \mcX} \big \vert \hat{\psi}_{n\bma} (\bma^T \bm x)\big \vert \le K, \sup_{\bma \in \mathcal{B}(\bma_0, \delta_0) }d_{\bma}(\psi_{\bma}, \psi)  \le v  \right)  + \nu/2 \\
	\qquad& \lesssim    \frac{K}{A}  J_n (MRv)\left (1  +  \frac{J_n(MRv)}{\sqrt n  M^2 R^2 v^2} \right)     + \nu/2, \ \ \text{where $J_n$ is defined in (\ref{Jn})}  \\
	& \le  \frac{K}{A}   \left(MRv + 2 (MRL)^{1/2}  K^{1/2}  v^{1/2}\right) \left(1 +  \frac{MRv + 2 (MRL)^{1/2}  K^{1/2}  v^{1/2}}{\sqrt n  M^2 R^2 v^2}   \right)  + \nu/2, \  \ \text{using the inequality in (\ref{IneqJn})} \\
	& \le \frac{\tilde{M}}{A} (\log n)^2  \ n^{-1/6}  \left(1 +  \frac{1}{\log n M^2 R^2 }  \right)  + \nu/2, \  \ \text{for some constant $\tilde{M} > 0$}  \\
	& \le \nu
	\end{align*}
	for $n$ large enough. We conclude that $II_{ij}  = o_p(n^{-1/2})$ which in turn implies that $II = o_p(n^{-1/2})$.
	
\end{proof}
\medskip
\begin{proof}[Proof of Theorem \ref{theorem:asymptotics} (Existence)]
	Using Proposition \ref{prop:score-connection} we get, analogously to the development in \cite{kim_piet:18}, the relation
	\begin{align}
	\label{expansion_phi_n}
	\f_n(\bm\a)=\f'(\bm\b_0)(\bmb-\bmb_0)+R_n(\bmb),
	\end{align}
	where $R_n(\bm\a)=o_p(1)+o(\bmb-\bmb_0)$, uniformly in $\bmb \in \mathcal{C}$  and where $\f'$ is the derivative of $\f$ defined in (\ref{def:phi-population}).
	Using Lemma \ref{lemma:derivative_psi_a} in Appendix \ref{supE:AuxiliaryResults}, we get that the derivative of $\f$ at $\bmb_0$ is given by the matrix 
	\begin{align*}
	\f'(\bmb_0)=\left(\bm J_{\mbS}(\bmb_0)\right)^T\E\left[\psi_0'\left(\mbS(\bmb_0)^T\bm x\right)\,\text{Cov}(\bm X|\mbS(\bmb_0)^T\bm X)\right]\bm J_{\mbS}(\bmb_0) = \left(\bm J_{\mbS}(\bmb_0)\right)^T\bm A\bm J_{\mbS}(\bmb_0) = \bm B,
	\end{align*}
	where $\bm A$ and $\bm B$ are defined in (\ref{def_A}) and (\ref{def_B}) respectively.
	We now define, for $h>0$, the functions
	\begin{align*}
	\tilde R_{n,h}(\bmb)=\frac1{h^{d-1}}\int K_h(u_1-\b_{01})\dots K_h(u_{d-1}-\b_{d-1})\,R_n(u_1,\dots,u_{d-1})\,du_1\dots\,du_{d-1},
	\end{align*}	
	where 
	\begin{align*}
	K_h(\bm x)=h^{-1}K(x/h),\qquad x\in\R,
	\end{align*}
	letting $K$ be one of the usual smooth kernels with support $[-1,1]$.
	
	Furthermore, we define:
	\begin{align*}
	\tilde \f_{n,h}(\bmb)= \f'(\bmb_0)(\bmb-\bmb_0) +\tilde R_{nh}(\bmb).
	\end{align*}
	Clearly:
	\begin{align*}
	\lim_{h\downarrow0}\tilde \f_{n,h}(\bmb)=\f_n(\bmb)\qquad\text{and}\qquad \lim_{h\downarrow0}\tilde R_{nh}(\bmb)=R_n(\bmb),
	\end{align*}
	for each continuity point $\bmb$ of $\f_n$.
	
	We now reparametrize, defining
	\begin{align*}
	\bm\g=\f'(\bmb_0)\bmb,\qquad \bm \g_0=\f'(\bmb_0)\bmb_0.
	\end{align*}
	This gives:
	\begin{align*}
	\f'(\bmb_0)(\bmb-\bmb_0) +\tilde R_{nh}(\bmb)=\bm\g -\bm \g_0+\tilde R_{nh}\left(\bm B^{-1}\bm  \g\right),
	\end{align*}
	By (\ref{expansion_phi_n}), the mapping
	\begin{align*}
	\bm\g\mapsto \bm\g_0-R_n\left(\bm B^{-1}\bm\g\right),
	\end{align*}
	maps, for each $\eta>0$, the ball $B_{\eta}(\bm\b_0)=\{\bmb:\|\bm\b-\bm\b_0\}\le\eta\}$ into $B_{\eta/2}(\bm\b_0)=\{\bm\b:\|\bm\b-\bm\b_0\}\le\eta/2\}$ for all large $n$, with probability tending to one, where $\|\cdot\|$ denotes the Euclidean norm, implying that the {\it continuous} map
	\begin{align*}
	\bm\g\mapsto \bm\g_0-\tilde R_{nh}\left(\bm B^{-1}\bm\g\right),
	\end{align*}
	maps $B_{\eta}(\bm\g_0)=\{\bm\g:\|\bm\g-\bm\g_0\|_2\le\eta\}$ into itself for all large $n$ and small $h$. So for large $n$ and small $h$
	there is, by Brouwer's fixed point theorem, a point $\bm\g_{nh}$ such that
	\begin{align*}
	\bm\g_{nh}=\bm\g_0-\tilde R_{nh}\left(\bm B^{-1}\bm\g_{nh}\right).
	\end{align*}
	Defining $\bm\b_{nh}=\bm B^{-1}\bm\g_{nh}$, we get:
	\begin{align}
	\label{Brouwer_relation}
	\tilde\f_{n,h}(\bmb_{nh})=\f'(\bmb_0)(\bmb_{nh}-\bmb_0) +\tilde R_{nh}(\bmb_{nh})= \bm 0.
	\end{align}
	By compactness, $(\bmb_{n,1/k})_{k=1}^{\infty}$ must have a subsequence $(\bmb_{n,1/k_i})$ with a limit $\tilde{\bmb}_n$, as $i\to\infty$.
	
	Suppose that the $j$th component $\f_{nj}$ of $\f_n$ does not have a crossing of zero at $\tilde \bmb_n$. Since $\f_{nj}$ only has finitely many jump discontinuities, since there can only be discontinuities at a changing of ordering of the values $\bma^T\bm X_i$, there must be a closed ball $B_{\d}(\tilde\bmb_n)=\{\bmb:\|\bmb-\tilde\bmb_n\|\le\d\}$ of $\tilde\bmb_n$ such that $\{\bar{\f}_{nj}(\bmb):\bmb\in B_{\d}(\tilde\bmb_n)\}$ has a constant sign in the closed ball $B_{\d}$,
	say $\bar{\f}_{nj}(\bmb)>0$ for $\bmb\in \bar{B}_{\d}(\tilde\bmb_n)$. Again using that $\f_{nj}$ only has finitely many jump discontinuities, this means that
	\begin{align*}
	\bar{\f}_{n,j}(\bmb)\ge c>0,\qquad\text{for all }\bmb\in \bar{B}_{\d}(\tilde\bmb_n).
	\end{align*}
	This means that the $j$th component $\tilde \f_{n,h,j}$ of $\tilde \f_{n,h}$ satisfies
	\begin{align*}
	&\tilde \f_{n,h,j}(\bmb)=\left[ \f'(\bmb_0)(\bmb-\bmb_0)\right]_j+\tilde R_{nh,j}(\bmb)\\
	&=\frac1{h^{d-1}}\int \left\{\left[\f'(\bmb_0)(\bmb-\bmb_0)\right]_j+R_{nj}(u_1,\dots,u_{d-1})\right\}K_h(u_1-\b_{01})\dots K_h(u_{d-1}-\b_{d-1})\,du_1\dots\,du_{d-1}\\
	&\ge
	\frac1{h^{d-1}}\int \left\{\left[\f'(\bmb_0)(\bm u-\bmb_0)\right]_j+R_{nj}(u_1,\dots,u_{d-1})\right\}K_h(u_1-\b_1)\dots K_h(u_{d-1}-\b_{d-1})\,du_1\dots\,du_{d-1}
	-c/2\\
	&\ge c\,\frac1{h^{d-1}}\int K_h(u_1-\b_1)\dots K_h(u_d-\b_d)\,du_1\dots\,du_d-c/2\\
	&=c/2,
	\end{align*}
	for $\bmb\in B_{\d/2}(\tilde\bmb_n)$ and sufficiently small $h$, contradicting (\ref{Brouwer_relation}), since $\bmb_{nh}$, for $h=1/k_i$, belongs to
	$B_{\d/2}(\tilde\bmb_n)$ for large $k_i$.
\end{proof}

\subsection{Proof of consistency of the simple score estimator}
\label{subsec:Appendix-consistency1}
\begin{proof}
	Since $\hat\bmb_n$ is contained in the compact set $\mathcal{C}$, the sequence $(\hat\bmb_n)$ has a subsequence $(\hat\bmb_{n_k}=\hat\bmb_{n_k}(\omega))$, converging to an element $\bmb_*$. Let $\bma_{n_k} = \mbS(\hat \bmb_{n_k})$. If $\hat\bmb_{n_k}=\hat\bmb_{n_k}(\omega)\longrightarrow \bmb_*$, we get by continuity of the map $\mbS$ that $\bma_{n_k} \to \bma_* = \mbS(\bmb_*)$. By Proposition \ref{prop:L_2-psi-psi_n-alpha} and the fact that we solve the score equation in a ball $\B(\bma_0,\d_0)$, we also have
	\begin{align*}
	\hat \psi_{n_k,\hat\bma_{n_k}}\left( \mbS(\bmb_{n_k})^T\bm x\right)\longrightarrow  \psi_{\bma_*}(\mbS(\bmb_*)^T\bm x),
	\end{align*}
	where $\psi_{\bma}$ is defined in (\ref{def_psi_alpha}). By Proposition \ref{prop:score-connection} and the fact that in the limit, the crossing of zero becomes a root of the continuous limiting function, we get,
	\begin{align}
	&\lim_{k\to\infty}\f_{n_k}(\bmb_{n_k}) = \f(\bmb_{*}) = \bm  0
	\end{align}
	where,
	\begin{align*}
	%\label{proof:cov-term1}
	\f(\bmb_{*})& = \int \bm J_{\mbS}(\bmb_*)^T\bm x\left\{y -\psi_{\bma_*}\left(\mbS(\bmb_*)^T\bm x\right)\right\}\,dP_0(\bm x,y)\nonumber\\
	& = \int \bm J_{\mbS}(\bmb_*)^T \bm x\left\{\psi_0\left(\mbS(\bmb_0)^T\bm x\right) -\psi_{\bma_*}\left(\mbS(\bmb_*)^T\bm x\right)\right\}\,dG(\bm x)\nonumber\\
	& = \int\bm J_{\mbS}(\bmb_*)^T \bm x\left[\psi_0\left(\mbS(\bmb_0)^T\bm x\right) -\E\left\{ \psi_0\left(\mbS(\bmb_0)^T\bm x\right)|\mbS(\bmb_*)^T\bm X=\mbS(\bmb_*)^T\bm x\right\}\right]\,dG(\bm x)\nonumber\\
	&=\E \left[\text{ Cov}\left[\bm J_{\mbS}(\bmb_*)^T\bm X  ,  \psi_0\left(\mbS(\bmb_0)^T\bmx\right)|\mbS(\bmb_*)^T\bm X\right]\right]
	\end{align*}	
	We next conclude that, 
	\begin{align*}
	0 &= (\bmb_0 - \bmb_*)^T\f(\bmb_{*}) \\
	&= \E \left[\text{ Cov}\left[(\bmb_0-\bmb_*)^T\bm J_{\mbS}(\bmb_*)^T\bm X  ,  \psi_0\left(\mbS(\bmb_*)^T\bm X + (\mbS(\bmb_0)-\mbS(\bmb_*))^T\bm X\right)|\mbS(\bmb_*)^T\bm X \right]\right]
	\end{align*}
	which can only happen if $\bmb_*$ corresponds to $\pm\bma_0$, where we use the positivity of the random variable $\text{ Cov}((\bma_0 - \bma)^T\bm X, \\\psi_0(\bma^T\bm X) | \bma^T\bm X)$ shown in Lemma \ref{lemma:positive-lagrange-covariance} in Appendix \ref{supE:AuxiliaryResults} .	Note that,
	\begin{align*}
	&\text{ Cov}\left[(\bmb_0-\bmb)^T\bm J_{\mbS}(\bmb)^T\bm X  ,  \psi_0\left(\mbS(\bmb)^T\bm X + (\mbS(\bmb_0)-\mbS(\bmb))^T\bm X \right)|\mbS(\bmb)^T\bm X= u \right]\\
	&= \text{ Cov}\left[ (\mbS(\bmb_0)-\mbS(\bmb)+ o(\bmb-\bmb_0))^T\bm X  ,  \psi_0\left(\mbS(\bmb)^T\bm X + (\mbS(\bmb_0)-\mbS(\bmb))^T\bm X\right)|\mbS(\bmb)^T\bm X = u \right] \\
	&=\text{ Cov}\left[(\bma_0-\bma)^T\bm X  ,  \psi_0(\bma^T\bm X + (\bma_0-\bma)^T\bm X)|\bma^T\bm X =u \right] + o(\bmb-\bmb_0)
	\end{align*}
	where the first term in the expression above is positive for all $\bma \in \B(\bma_0,\d_0)$ by Lemma \ref{lemma:positive-lagrange-covariance} in Appendix \ref{supE:AuxiliaryResults}, and that the covariance is zero if $\bma$ is a multiple of $\bma_0$. Since we also have the condition $\|\bma\|=1$ for solutions of the equation, the only multiple of $\bma_0$ which should be considered is therefore $-\bma_0$. But we can disregard this possibility by taking $\d_0$ sufficiently small in $\bma \in \B(\bma_0,\d_0)$.
\end{proof}

\subsection{Proof of asymptotic normality of the simple score estimator}
\label{subsec:Appendix-normal1}
We define $\f_n$ at $\hat\bmb_n$ by putting
\begin{align}
\label{estimator_crossing_def}
\f_n(\hat\bmb_n)=\bm 0.
\end{align}
Note that, with this definition, we use the representation of the components as a convex combination of the left and right limit at $\hat\bmb_n$: 
\begin{align}
\label{convex_comb_def}
\f_{n,j}(\hat\bmb_n)=\g_j\f_{n,j}(\hat\bmb_n-)+(1-\g_j)\f_{n,j}(\hat\bmb_n+)=0,
\end{align}
where $\hat\bmb_n+$ and $\hat\bmb_n-$ denote, respectively, the upper and lower limits at $\bmb_n$, taken for each component of $\bmb_n$, where $\f_{n,j}$ denotes the $j$th component of $\f_{n}$ and where we can choose $\g_j\in[0,1]$ in such a way that (\ref{convex_comb_def}) holds since we have a crossing of zero componentwise. Note that this does not change the location of the crossing of zero. Since the following asymptotic representations are also valid for one-sided limits as used in (\ref{convex_comb_def}) we can use  (\ref{estimator_crossing_def})  and assume $\f_{n}(\hat\bmb_n)=\bm 0$.

We now  first give an outline of the proof. We show:
\begin{align}
\label{asymptotic-relation}
&\bm J_{\mbS}(\bmb_0)^T \int\left\{\bm x-\E(\bm X|\mbS(\bmb_0)^T\bm  x)\right\}\left\{y-\psi_{\hat\bma_n}\left(\mbS(\hat\bmb_n)^T\bm  x\right)\right\}\,dP_0(\bm x, y)
+o_p\left(\hat\bmb_n-\bmb_0\right)\nonumber\\
&=-\bm J_{\mbS}(\bmb_0)^T\int\left\{\bm x-\E(\bm X|\mbS(\bmb_0)^T\bm  x)\right\}\left\{y-\psi_0\left(\mbS(\bmb_0)^T\bm x\right)\right\}\,d\bigl(\P_n-P_0\bigr)(\bm x, y) \nonumber\\
&\qquad+o_p\left(n^{-1/2}\right),
\end{align}
where from now on we will use the notation $\E(\bm X |\mbS(\bmb)^T\bm  x )$ to denote $\E(\bm X |\mbS(\bmb)^T\bm  X = \mbS(\bmb)^T\bm  x)$ for all $\bmb \in \mathcal{C}$ and $\bmx \in \mathcal{X}$.

Since $\hat \bmb_n \to_p \bmb_0$ and since the function $\bmb \to \psi_{\mbS(\bmb)}(\mbS(\bmb)^T\bm  x)$ has derivative  $\psi_0'(\mbS(\bmb_0)^T\bm  x)\bm J_{\mbS}(\bmb_0)^T \bigl( \bm x - \E( \bm X |\mbS(\bmb_0)^T\bm  x)\bigr)$ at $\bmb  = \bmb_0$ for all $\bm x \in \cal X$ (See Lemma \ref{lemma:derivative_psi_a} in Appendix \ref{supE:AuxiliaryResults}), and since
$$
\bm J_{\mbS}(\bmb_0)^T \int\left\{\bm x-\E(\bm X|\mbS(\bmb_0)^T\bm  x)\right\}\left\{y-\psi_{\bma_0}\left(\mbS(\bmb_0)^T\bm  x\right)\right\}\,dP_0(\bm x, y)=0,
$$
we get from (\ref{asymptotic-relation}):
\begin{align*}
&\bm B \left(\hat\bmb_n-\bmb_0\right)+o_p\left(\hat\bmb_n-\bmb_0\right)=\left\{1+o_p(1)\right\}\bm B \left(\hat\bmb_n-\bmb_0\right)\\
&=\bm J_{\mbS}(\bmb_0)^T\int\left\{\bm x-\E(\bm X|\mbS(\bmb_0)^T\bm  x)\right\}\left\{y-\psi_0\left(\mbS(\bmb_0)^T\bm x\right)\right\}\,d\bigl(\P_n-P_0\bigr)(\bm x, y)+o_p\left(n^{-1/2}\right),
\end{align*} 
where $\bm B$, defined in $(\ref{def_B})$, is nonsingular. This implies:
\begin{align}
\label{order_beta_n-beta_0}
\hat\bmb_n-\bmb_0=O_p(n^{-1/2}),
\end{align}
and
\begin{align*}
&\hat\bmb_n-\bmb_0
=\bm B ^{-1}\bm J_{\mbS}(\bmb_0)^T\int\left\{\bm x-\E(\bm X|\mbS(\bmb_0)^T\bm  x)\right\}\left\{y-\psi_0\left(\mbS(\bmb_0)^T\bm x\right)\right\}\,d\bigl(\P_n-P_0\bigr)(\bm x, y)+o_p\left(n^{-1/2}\right).
\end{align*} 
Finally, since
\begin{align*}
&\sqrt n\bm B ^{-1}\bm J_{\mbS}(\bmb_0)^T\int\left\{\bm x-\E(\bm X|\mbS(\bmb_0)^T\bm  x)\right\}\left\{y-\psi_0\left(\mbS(\bmb_0)^T\bm x\right)\right\}\,d\bigl(\P_n-P_0\bigr)(\bm x, y)
\to_d N_{d}\left(\bm 0,  \bm  \Pi \right),
\end{align*}
where $\bm \Pi$ is defined in (\ref{def_Pi}),
we conclude that,
\begin{align*}
\sqrt n (\hat \bmb_n -\bmb_0) &  \to_d  N_{d}\left(\bm 0,  \bm  \Pi \right),
\end{align*}

The asymptotic normality of the estimator $\hat\bma_n$ then follows by noting that,
\begin{align*}
\sqrt n (\hat \bma_n -\bma_0) &=  \bm J_{\mbS}(\bmb_0) \sqrt n (\hat \bmb_n -\bmb_0)+ o_p\left( \sqrt n (\hat \bmb_n -\bmb_0)\right) \to_d  N_{d}\left(\bm 0, \bm J_{\mbS}(\bmb_0)\bm  \Pi \left(\bm J_{\mbS}(\bmb_0)\right)^T\right).
\end{align*}

To prove (\ref{asymptotic-relation}) we first define the piecewise constant function $\bar E_{n, \bmb}$
\begin{eqnarray*}
	\bar E_{n, \bmb}(u)  =  \left \{
	\begin{array}{lll}
		\E\left[\bm X| \mbS(\bmb)^T\bm X= \t_{i,\bmb}\right] \ \  \ \ \ \ \ \  \ \text{ if $\psi_{\bma}(u)  > \hat\psi_{n\bma}(\tau_i)$  \ for all $u \in (\tau_i, \tau_{i+1})$}, \\
		\E\left[\bm X| \mbS(\bmb)^T\bm X= s\right] \ \ \ \  \ \  \  \ \ \ \  \ \text{ if $\psi_{\bma}(s)  = \hat\psi_{n\bma}(s)$ \ for some $s \in (\tau_i, \tau_{i+1})$}, \\
		\E\left[\bm X| \mbS(\bmb)^T\bm X= \t_{i+1,\bmb}\right]\ \ \ \ \ \ \ \text{if $\psi_{\bma}(u) < \hat\psi_{n\bma}(\tau_i)$  \ for all $u \in (\tau_i, \tau_{i+1})$},
	\end{array}
	\right.
\end{eqnarray*}
where the $\t_{i,\bmb}$ denote the sequence of jump points of the monotone LSE  $\hat\psi_{n\bma} = \hat\psi_{n\mbS(\bmb)}$. We then have
\begin{align}
\label{proof1:constant-integral-0}
\int \bar E_{n, \hat \bmb_n}\left(\mbS(\hat \bmb_n)^T\bm x \right)\left\{y-\hat \psi_{n\hat \bma_n}\left(\mbS(\hat \bmb_n)^T\bm x\right)\right\}\,d\P_n(\bm x, y) = \bm 0.
\end{align}
This follows from the fact that $\hat\psi_{n\bma}$, i.e. the minimizer of the quadratic criterion $\int_{\mcX  \times \R}  \left(  y -  \psi(\bma^T \bm x)  \right)^2 d\P_n(\bm x, y)$ over monotone functions $\psi  \in \mathcal{M}$, is the left derivative of the greatest convex minorant of the cumulative sum diagram $\{(0,0), \Big(i, \sum_{j=1}^i  Y_i^\alpha  \Big), i=1, \ldots, n \Big \}$. (See also \cite{piet_geurt:14}, p.332).   By Lemma \ref{lemma:DerBoundedBelow} in Appendix \ref{supE:AuxiliaryResults} we also know that $\psi_{\bma}'$ stays away from zero for all $\bma = \mbS(\bmb)$ in a neighborhood of $\bma_0=\mbS(\bmb_0)$. Using the same techniques as in \cite{piet_geurt:14}, we can find a constant $C>0$ such that for all $i = 1, \ldots, d $ and $u \in \mathcal{I}_{\bma}$,
\begin{align}
\label{diff:rho-rhobar}
\left| \E\left( X_i| \mbS(\bmb)^T\bm X =u\right) - \bar E_{ni, \bmb}(u)\right | \le C\left|\psi_{\bma}(u) -\hat \psi_{n\bma}(u)\right|
\end{align}
where $\bar E_{ni, \bmb}$ denotes the $i$th component of $E_{n, \bmb}$.

In the sequel, we will use $\bm J_{\mbS}(\hat\bmb_n)   = O_p(1)$, an immediate consequence of consistency of $\hat \bma_n$ and Assumption A8.    Now, as a consequence of (\ref{proof1:constant-integral-0}), we can write
\begin{align}
\label{proof1-relation1}
\f_n(\hat\bmb_n) &= \bm J_{\mbS}(\hat\bmb_n) ^T \int \left\{\bm x -\E\left(\bm X|\mbS(\hat \bmb_n)^T\bm x\right)\right\}\left\{y-\hat \psi_{n\hat \bma_n}\left(\mbS(\hat \bmb_n)^T\bm x\right)\right\}\,d\P_n(\bm x, y) \nonumber\\
& + \bm J_{\mbS}(\hat\bmb_n) ^T \int\left\{ \E\left(\bm X|\mbS(\hat \bmb_n)^T\bm x\right) - \bar E_{n, \hat \bmb_n}\left(\mbS(\hat \bmb_n)^T\bm  x\right) \right\}\left\{y-\hat \psi_{n\hat \bma_n}\left(\mbS(\hat \bmb_n)^T\bm x\right)\right\}\,d\P_n(\bm x, y) \nonumber\\
& =\bm J_{\mbS}(\hat\bmb_n) ^T  \big(I + II\big).
\end{align}
The term $II$ can be written as
\begin{align}
\label{proof1-relation1b}
&II =\int\left\{ \E\left(\bm X|\mbS(\hat \bmb_n)^T\bm x\right) - \bar E_{n, \hat \bmb_n}\left(\mbS(\hat \bmb_n)^T\bm  x\right) \right\}\left\{y-\hat \psi_{n\hat \bma_n}\left(\mbS(\hat \bmb_n)^T\bm x\right)\right\}\,d(\P_n-P_0)(\bm x, y) \nonumber\\
&\qquad + \int\left\{ \E\left(\bm X|\mbS(\hat \bmb_n)^T\bm x\right) - \bar E_{n, \hat \bmb_n}\left(\mbS(\hat \bmb_n)^T\bm  x\right) \right\}\left\{y-\psi_{\hat \bma_n}\left(\mbS(\hat \bmb_n)^T\bm x\right)\right\}\,dP_0(\bm x, y) \nonumber
\\
&\qquad + \int\left\{ \E\left(\bm X|\mbS(\hat \bmb_n)^T\bm x\right) - \bar E_{n, \hat \bmb_n}\left(\mbS(\hat \bmb_n)^T\bm  x\right) \right\}
\left\{\hat \psi_{n\hat \bma_n}\left(\mbS(\hat \bmb_n)^T\bm x\right)-\psi_{\hat \bma_n}\left(\mbS(\hat \bmb_n)^T\bm x\right)\right\}\,dP_0(\bm x, y) \nonumber
\\
& = II_a + II_b +II_c.
\end{align}
We first note that by Lemma \ref{lemma:BoundedVar} in Appendix \ref{supE:AuxiliaryResults}, the functions $ u \mapsto  \E( X_i| \mbS( \bmb)^T\bm X =u)$ are uniformly bounded by $R$  for all $\bmb \in \mathcal{C}$ and  $i \in \{1, \ldots, d \}$. Also, they admit  a bounded variation, with a total variation that is uniformly bounded for all $\bmb \in \mathcal{C}$ and  $i \in \{1, \ldots, d \}$.  By definition of $\bar E_{n, \bmb}$ its $i$-th component, $\bar E_{ni, \bmb}$ is also uniformly bounded by $R$ and has a finite total variation which cannot exceed the total variation of $ u \mapsto  \E( X_i| \mbS( \bmb)^T\bm X =u)$.  Using Lemma \ref{lemma:BoundedVar2} of Appendix \ref{supE:AuxiliaryResults} , we can find two monotone functions   $f_1$ and $f_2$ such that $u \mapsto \E\left(\bm X|\mbS(\bmb)^T\bm X  = u\right) - \bar E_{n, \bmb}(u) = f_2(u)  -   f_1( u)$ with $f_1, f_2  \in \mathcal{M}_{RC_1}$ for some constant $C_1 > 0$. Also,  we know that $ \hat \psi_{n\hat \bma_n}  \in \mathcal{M}_{R K}$ with $K = K_1 \log n$  with increasing probability as $n \to \infty$ provided that $K_1 > 0$ is chosen large enough.  Noting that for any bounded increasing functions $f_1, f_2, f_3$  we have that $(f_2 - f_1) f_3$ is again bounded and has a bounded variation, it follows that the class  of functions, $\mathcal{F}_a$ say,  involved in term $II_a$ is included in $\mathcal{H}_{RK'v}$   defined  in Lemma \ref{lemma: BernEntropyGeneral} in Appendix \ref{supD:entropy} . Here, the constant $K'   = K_2 \log n$  for some large enough constant $K_2 > 0$, and  $v = C_2  (\log n)^2 n^{-1/3}$ for some constant $C_2 > 0$ using (\ref{diff:rho-rhobar})  and Proposition \ref{prop:L_2-psi-psi_n-alpha}.   Using Lemma \ref{lemma: BernEntropyGeneral} in Appendix \ref{supD:entropy}, we can show that (when the event $ \hat \psi_{n\hat \bma_n}  \in \mathcal{M}_{R K}$ occurs) 
\begin{eqnarray*}
	H_B\big(\epsilon,\widetilde{\mathcal{F}}_a, \Vert \cdot \Vert_{B,P_0} \big)   \le \frac{B_1}{\epsilon},  \ \ \text{for some constant $B_1> 0$,}
\end{eqnarray*}
with $\widetilde{\mathcal{F}}_a  = \tilde{D}^{-1}\mathcal{F}_a$ with $\tilde{D}  \asymp K'  = K_2 \log n$.  Also, for any element $\tilde{f}  = \tilde{D}^{-1}  f \in \widetilde{\mathcal{F}}$ we have that 
\begin{eqnarray*}
	\Vert \tilde{f} \Vert_{B, P_0}  \le  B_2 \tilde D^{-1} v =  C_2  (\log n) n^{-1/3}  = \delta_n,  \ \ \text{for some constant $C_2 > 0$}.
\end{eqnarray*}	
Let $II_{a, i}$ be the term corresponding to $i$-th component of $\bm X$. Using Markov's inequality we have for a fixed $A > 0$, $\nu > 0$  and $n$ large enough that 
\begin{align*}
P&\left( \vert II_{a, i}  \vert \ge    A n^{-1/2}  \right)   =  P\left( \vert II_{a, i}  \vert \ge    A n^{-1/2} , \sup_{\bma \in \mathcal{B}(\bma_0, \delta_0) }\sup_{\bm x \in \mcX} \big \vert \hat{\psi}_{n\bma} (\bma^T \bm x)\big \vert \le K \right)  + \nu/2 \\
& \lesssim     \frac{\tilde{D}}{A}  J_n(\delta_n) \left (1  +  \frac{J_n(\delta_n)}{\sqrt n \delta^2_n} \right)     + \nu/2,   \ \ \text{where $J_n$ is defined in (\ref{Jn})}  \\ 
& \lesssim  \frac{\log n}{A}  B_2 \delta^{1/2}_n  \left(1 +  \frac{B_2}{\sqrt n \delta^{3/2}_n}  \right) + \nu/2  , \  \ \text{for some constant $B_2 > 0$, using the inequality in (\ref{IneqJn})}  \\
& \lesssim  \frac{1}{A}  (\log n)^{3/2}n^{-1/6}  \left(1 +  \frac{B_3}{(\log n)^{3/2}}  \right)  + \nu/2, \ \ \text{with  $B_3 = B_2 C^{-3/2}_2$}  \\
& \le \nu
\end{align*}
for $n$ large enough.   We conclude that $II_{a, i}  = o_p(n^{-1/2})$ which in turn implies that 
$$ II_a = o_p(n^{-1/2}).$$
We turn  now to $II_b$. 
Using Lemma \ref{lemma:derivative_psi_a} in Appendix \ref{supE:AuxiliaryResults} and a Taylor expansion of $\bmb \mapsto \psi_{\bma}\left(\mbS(\bmb)^T\bm x\right)$ we get,
\begin{align}
\label{taylor-expansion}
\psi_{\bma}\left(\mbS(\bmb)^T\bm x\right) &= \psi_0\left(\mbS(\bmb_0)^T\bm x\right) + (\bmb - \bmb_0)^T\left[\bm J_{\mbS}(\bmb_0)^T\left( \bm x - \E(\bm X |\mbS(\bmb_0)^T\bm X =\mbS(\bmb_0)^T\bm x )\right)\psi_0'\left(\mbS(\bmb_0)^T\bm x\right)\right]\nonumber\\
& \qquad+ o(\bmb - \bmb_0),
\end{align}
so that

\begin{align*}
II_b&= \bm J_{\mbS}(\hat\bmb_n)^T \int\left\{ \E\left(\bm X|\mbS(\hat \bmb_n)^T\bm x\right) - \bar E_{n, \hat \bmb_n}\left(\mbS(\hat \bmb_n)^T\bm  x\right) \right\}\left\{\psi_0\left(\mbS( \bmb_0)^T\bm x\right)- \psi_{\hat \bma_n}\left(\mbS(\hat \bmb_n)^T\bm x\right)\right\}\,dP_0(\bm x, y)\nonumber\\
& =-\left\{1+o_p(1) \right\} (\hat\bmb_n - \bmb_0)^T\bm J_{\mbS}(\bmb_0)^T \int\left\{ \E\left(\bm X|\mbS(\bmb_0)^T\bm x\right) - \bar E_{n, \hat \bmb_n}\left(\mbS(\hat \bmb_n)^T\bm  x\right) \right\}\\
&\qquad\qquad\qquad\qquad\qquad\cdot\left[\bm J_{\mbS}(\bmb_0)^T\left( \bm x - \E(\bm X |\mbS(\bmb_0)^T\bm X =\mbS(\bmb_0)^T\bm x )\right)\psi_0'\left(\mbS(\bmb_0)^T\bm x\right)\right]
\,dP_0(\bm x, y)\\
&=o_p\left(\hat\bmb_n - \bmb_0\right),
\end{align*} 
using (\ref{diff:rho-rhobar}) and the consistency of $\hat \bmb_n$.  

We next consider the term $II_c$.  Using uniform boundedness of $\bm J_{\mbS}$ on $\mathcal{C}$ and the inequality in (\ref{diff:rho-rhobar}) it follows that  
\begin{eqnarray*}
	\Vert II_c \Vert &\lesssim  &  \int \left\{\psi_{\hat \bma_n}\left(\hat \bma_n^T\bm x\right)- \hat \psi_{n\hat \bma_n}\left(\hat \bma_n^T\bm x\right)\right\}^2dG(\bm x)  \\
	& = & O_p((\log n)^2 n^{-2/3})  = o_p(n^{-1/2})
\end{eqnarray*}
uniformly in $\beta \in \mathcal{C}$.  
We conclude that (\ref{proof1-relation1}) can be written as
\begin{align}
\label{proof1-relation2}
\f_n(\hat\bmb_n) &= \bm J_{\mbS}(\hat\bmb_n)^T \int \left\{\bm x -\E\left(\bm X|\mbS(\hat \bmb_n)^T\bm x\right)\right\}\left\{y-\hat \psi_{n\hat \bma_n}\left(\mbS(\hat \bmb_n)^T\bm x\right)\right\}\,d\P_n(\bm x, y) \nonumber
\\
&\qquad+ o_p\left(n^{-1/2}+ (\hat \bmb_n- \bmb_0)\right) \nonumber\\
&= \bm J_{\mbS}(\hat\bmb_n) ^T \int \left\{\bm x -\E\left(\bm X|\mbS(\hat \bmb_n)^T\bm x\right)\right\}\left\{y- \psi_{\hat \bma_n}\left(\mbS(\hat \bmb_n)^T\bm x\right)\right\}\,d\P_n(\bm x, y)  \nonumber\\
&\qquad +  \bm J_{\mbS}(\hat\bmb_n) ^T \int \left\{\bm x -\E\left(\bm X|\mbS(\hat \bmb_n)^T\bm x\right)\right\}\left\{\psi_{\hat \bma_n}\left(\mbS(\hat \bmb_n)^T\bm x\right)-\hat \psi_{n\hat \bma_n}\left(\mbS(\hat \bmb_n)^T\bm x\right)\right\}\,d\P_n(\bm x, y) \nonumber\\
&\qquad +  o_p\left(n^{-1/2}+ (\hat \bmb_n- \bmb_0)\right) \nonumber\\
& = I_a + I_b + o_p\left(n^{-1/2}\right), 
\end{align}

We show below that $I_b  = o_p\left(n^{-1/2} + (\hat \bma_n -\bma_0)\right)$ 
such that the limiting distribution of the score estimator follows from the analysis of the term $I_a$  which can be rewritten as
\begin{align}
\label{proof1-relation2_Ia}
I_a &= \bm J_{\mbS}(\hat\bmb_n) ^T \int \left\{\bm x -\E\left(\bm X|\mbS(\hat \bmb_n)^T\bm x\right)\right\}\left\{y-  \psi_{\hat \bma_n}\left(\mbS(\hat \bmb_n)^T\bm x\right)\right\}\,d(\P_n-P_0)(\bm x, y) \nonumber\\
&\qquad+ \bm J_{\mbS}(\hat\bmb_n) ^T \int \left\{\bm x -\E\left(\bm X|\mbS(\hat \bmb_n)^T\bm x\right)\right\}\left\{y- \psi_{\hat \bma_n}\left(\mbS(\hat \bmb_n)^T\bm x\right)\right\}\,dP_0(\bm x, y)
\end{align}
where we recall that $ \psi_{\bma}(u) =  \E\left(\psi_0(\bma^T \bm X | \bma^T X  = u \right)$.   For the second term on the right-hand side of (\ref{proof1-relation2_Ia}) we have by (\ref{taylor-expansion})
\begin{align}
\label{proof1-relation2_Ia1}
&\bm J_{\mbS}(\hat\bmb_n) ^T \int \left\{\bm x -\E\left(\bm X|\mbS(\hat \bmb_n)^T\bm x\right)\right\}\left\{y- \psi_{\hat \bma_n}\left(\mbS(\hat \bmb_n)^T\bm x\right)\right\}\,dP_0(\bm x, y)\nonumber\\
&= -\Biggl\{ \bm J_{\mbS}(\bmb_0) ^T \int \psi_0'\left(\mbS(\bmb_0)^T\bm x\right)\left\{\bm x -\E\left(\bm X|\mbS(\bmb_0)^T\bm x\right)\right\}\left\{\bm x -\E\left(\bm X|\mbS(\bmb_0)^T\bm x\right)\right\}^T\,dP_0(\bm x, y)\nonumber\\
&\qquad\qquad\qquad\qquad\qquad\qquad\qquad\qquad\qquad\qquad\qquad\qquad\qquad\qquad\qquad\qquad \quad \times \bm J_{\mbS}(\bmb_0)\Biggr\}(\hat \bmb_n -\bmb_0)\nonumber\\
&\quad +o_p(\hat \bmb_n- \bmb_0).
\end{align}
For the first term on the right-hand side of (\ref{proof1-relation2_Ia}) we have that
\begin{align}
\label{proof1-relation2_Ia2}
&\bm J_{\mbS}(\hat\bmb_n) ^T \int \left\{\bm x -\E\left(\bm X|\mbS(\hat \bmb_n)^T\bm x\right)\right\}\left\{y- \psi_{\hat \bma_n}\left(\mbS(\hat \bmb_n)^T\bm x\right)\right\}\,d(\P_n-P_0)(\bm x, y) \nonumber\\
&= \bm J_{\mbS}(\bmb_0) ^T \int \left\{\bm x -\E\left(\bm X|\mbS(\bmb_0)^T\bm x\right)\right\}\left\{y- \psi_0\left(\mbS(\bmb_0)^T\bm x\right)\right\}\,d(\P_n-P_0)(\bm x, y) +o_p(n^{-1/2}) + o_p(\hat \bmb_n - \bmb_0).
\end{align}
Indeed,  since  this amounts to showing that 
\begin{align}
A&= \bm \left(J_{\mbS}(\hat\bmb_n)  - J_{\mbS}(\bmb_0) \right)^T \int \left\{\bm x -\E\left(\bm X|\mbS(\hat \bmb_n)^T\bm x\right)\right\}\left\{y- \psi_{\hat \bma_n}\left(\mbS(\hat \bmb_n)^T\bm x\right)\right\}\,d(\P_n-P_0)(\bm x, y)\nonumber \\
&= o_p(\hat \bmb_n - \bmb_0), \label{A}  \\
B&= \int \bigg(  \E\left(\bm X|\mbS(\hat \bmb_n)^T\bm x\right)   - \E\left(\bm X|\mbS(\bmb_0)^T\bm x\right) \bigg)  (y - \psi_0(\bma_0^T \bm x))  d(\P_n - P_0)(\bm x, y)  = o_p(n^{-1/2} ) \label{B}
\end{align}
and 
\begin{align}
&C= \int \bigg(  \bm x  - \E\left(\bm X|\mbS(\hat \bmb_n)^T\bm x\right) \bigg)  \bigg(\psi_0(\bma_0^T \bm x)   - \psi_{\hat \bma_n}\left(\mbS(\hat \bmb_n)^T\bm x\right)\bigg)   d(\P_n - P_0)(\bm x, y)  
=o_p(n^{-1/2}). \label{C}
\end{align}
We start by proving (\ref{A}). Using again that $u \mapsto \E\left(X_i|\mbS(\hat \bmb_n)^T\bm X = u \right)$ is a bounded function with a uniformly bounded total variation, and that $x_i$ is a fixed (and deterministic) function,  we can show that the class of functions involved in $A$, $\mathcal{F}_A$ say, satisfies $\mathcal{F}_A \subset x_i\mathcal{H}_{RC_1 v}  +  \mathcal{H}_{RC_1 v}  $ with $v$ and $C_1$ are some constants that are independent of $n$ (since $\psi_{\bma}$, $\bm X$ and $u \mapsto E[\bm X | \bma^T X = u]$ are all bounded by constants independent of $n$).  
Now it follows by Lemma \ref{lemma: BernEntropyGeneral} in Appendix \ref{supD:entropy} that $H_B\big(\epsilon, \widetilde{\mathcal{H}}_{RC_1v}, \Vert \cdot \Vert_{B, P_0} \big) \lesssim 1/\epsilon$  with $\widetilde{\mathcal{H}}_{RC_1v} = (16M_0 C_1)^{-1} \mathcal{H}_{RC_1v} $  and $\Vert \tilde{h}  \Vert_{B, P_0}  \lesssim    C_2$ for some constant $C_2 > 0$ that is independent of $n$ for all $\tilde{h} \in \widetilde{\mathcal{H}}_{RC_1v} $. Hence, using  arguments similar to those of the proof of $II_a = o_p(n^{-1/2})$ we can  show that 
\begin{eqnarray*}
	\int \left\{\bm x -\E\left(\bm X|\mbS(\hat \bmb_n)^T\bm x\right)\right\}\left\{y- \psi_{\hat \bma_n}\left(\mbS(\hat \bmb_n)^T\bm x\right)\right\}\,d(\P_n-P_0)(\bm x, y) = O_p(n^{-1/2}).
\end{eqnarray*}
Using a Taylor expansion of $\bm J_{\mbS}(\bmb)$ around $\bmb_0$ gives the desired rate in (\ref{A}).   

Now we turn to  term $B$ in (\ref{B}).  Fix $\nu > 0$ and $i \in \{1, \ldots, d \}$. Using consistency of $\hat \bmb_n$ and Lemma \ref{lem: continuity} in Appendix \ref{supE:AuxiliaryResults}, then for all $\eta> 0$  there exists $n$ large enough such that  
\begin{eqnarray*}
	\Big \vert \E\left( X_i|\mbS(\hat \bmb_n)^T\bm x\right)   - \E\left(X_i|\mbS(\bmb_0)^T\bm x\right)  \Big \vert \le \eta,
\end{eqnarray*}
with probability at least $1- \nu/2$.  Thus, for $L > 0$ we have that for $n$ large enough
\begin{eqnarray*}
	&&P(|B_i|> L  n^{-1/2})\\
	&& = P\left( \Big \vert \int \bigg(  \E\left(X_i|\mbS(\hat \bmb_n)^T\bm x\right)   - \E\left(X_i|\mbS(\bmb_0)^T\bm x\right) \bigg)  (y - \psi_0(\bma_0^T \bm x))  d(\P_n - P_0)(\bm x, y) \Big \vert  > L  n^{-1/2}\right) \\
	&& \le \nu/2  +  \frac{1}{L}  E[\Vert \mathbb{G}_n\Vert_{\mathcal{F'}} ], \ \ \text{where $\mathcal{F'} $ is defined in (\ref{F'})} \\
	&& \le \nu/2  + \frac{C_1}{L}  \eta \  \ \text{for some constant $C_1 > 0$,}
\end{eqnarray*}
where $B_i$ denotes the $i$th component of $B$ defined in (\ref{B}) and  where we have used the result of Lemma \ref{CorBernsteinfixed}.  Choosing $\eta$ such that $\eta \le \nu L C^{-1}_1/2$ gives the claimed rate of convergence in (\ref{B}).

To establish the convergence rate of $C$, we first note that, for $ i \in \{1, \ldots, d \}$, we have that $\bm x \mapsto \E( X_i|\mbS(\hat \bmb_n)^T\bm X=\mbS( \hat \bmb_n)^T \bm x) \psi_{\hat \bma_n}(\mbS( \hat \bmb_n)^T \bm x)$ belongs to the class $\mathcal{G}_{RC_1} - \mathcal{G}_{RC_1}$ for some constant $C_1 > 0$ where $\mathcal{G}_{RK}$ was defined in (\ref{GRK}). This follows from using again the fact that the function $u \mapsto E\left( X_i|\mbS( \bmb)^T\bm X  = u\right)$ is uniformly bounded and has a uniform total variation for all $\beta \in \mathcal{C}$, that $\psi_{\bma}$ is a bounded monotone function, and the fact that $(f_1  - f_2)f_3$ is a bounded function with bounded total variation for any increasing and bounded functions $f_1, f_2$ and $f_3$, where we again use Lemma \ref{lemma:BoundedVar2} in Appendix \ref{supE:AuxiliaryResults} to write the function $u \mapsto E\left( X_i|\mbS( \bmb)^T\bm X  = u\right)$ as the difference $f_1-f_2$. Note now that both $\bm x  \mapsto x_i$ and $\bm x \mapsto \psi_0(\mbS( \bmb_0)^T \bm x)$ are fixed and bounded functions, and that the order bracketing entropy of a class does not get altered after multiplication its members by such functions (similarly for addition). It follows from Lemma \ref{lemma:EntropyDiff} and Lemma \ref{lemma:EntropyGRK} in Appendix \ref{supD:entropy}, that the  $\epsilon$-bracketing entropy of the class of functions involved in term $C$
with respect to $\Vert \cdot \Vert_{P_0}$  is bounded above by $B/ \epsilon$ for some constant $B$.

Furthermore, using consistency of $\hat \bma_n$ and Lemma \ref{lem: continuity} of Appendix \ref{supE:AuxiliaryResults}, we can find for any fixed $\nu > 0$ an $\eta > 0$ such that $\sup_{\bmx}| \psi_0(\bma_0^T\bmx)  - \psi_{\hat \bma_n}(\hat \bma_n^T\bmx) | \le\eta$ with probability at least $1-\nu/2$ for $n$ large enough. Hence, at the cost of increasing the constant $B$, both the $\Vert \cdot \Vert_{\infty}$ and $\Vert \cdot \Vert_{P_0}$ norms of the functions of the class involved in term $C$ are bounded above by $B \eta$.  Using Markov's inequality and Lemma 3.4.2 of \cite{vdvwe:96} it follows that for all $L > 0$
\begin{align*}
&P(|C_{i}|> L  n^{-1/2})\\
&=P\left(\left \vert \int \bigg(  x_i  - \E\left(\bm X_i|\mbS(\hat \bmb_n)^T\bm x\right) \bigg)  \bigg(\psi_0(\mbS( \bmb_0)^T \bm x)  -  \psi_{\hat \bma_n}\left(\mbS(\hat \bmb_n)^T\bm x\right)  \bigg)   d(\P_n - P_0)(\bm x, y)  \right \vert \ge L n^{-1/2} \right) \\
& \le \nu/2 + \frac{1}{L}  J_n(B \eta) \left(1 +   B\eta \frac{J_n(B\eta)}{\sqrt n  B^2 \eta^2}\right)  \le  \nu/2   +  \frac{1}{L} \left(B_1\eta^{1/2} +    \frac{B_1}{B} \frac{1}{\sqrt n} \right)  \le \nu
\end{align*}
taking $\eta$ small enough  and $n$ large enough.  We conclude that $C = o_p(n^{-1/2})$.  Now we come back to term $I_b$ given by
\begin{align*}
%\label{term:I_b}
I_b &=  \bm J_{\mbS}(\hat\bmb_n) ^T \int \left\{\bm x -\E\left(\bm X|\mbS(\hat \bmb_n)^T\bm x\right)\right\}\left\{\psi_{\hat \bma_n}\left(\mbS(\hat \bmb_n)^T\bm x\right)-\hat \psi_{n\hat \bma_n}\left(\mbS(\hat \bmb_n)^T\bm x\right)\right\}\,d\P_n(\bm x, y). %\nonumber\\
%&= o_p(n^{-1/2}),
\end{align*}
Note first that 
\begin{align}
\label{term:I_bprime}
I_b &=  \bm J_{\mbS}(\hat\bmb_n) ^T \int \left\{\bm x -\E\left(\bm X|\mbS(\hat \bmb_n)^T\bm x\right)\right\}\left\{\psi_{\hat \bma_n}\left(\mbS(\hat \bmb_n)^T\bm x\right)-\hat \psi_{n\hat \bma_n}\left(\mbS(\hat \bmb_n)^T\bm x\right)\right\}\,d(\P_n- P_0)(\bm x, y) \nonumber \\
& =  \bm J_{\mbS}(\hat\bmb_n) ^T  I'_b,
\end{align}
since 
\begin{eqnarray*}
	&& \int \left\{\bm x -\E\left(\bm X|\mbS(\hat \bmb_n)^T\bm x\right)\right\}\left\{\psi_{\hat \bma_n}\left(\mbS(\hat \bmb_n)^T\bm x\right)-\hat \psi_{n\hat \bma_n}\left(\mbS(\hat \bmb_n)^T\bm x\right)\right\} dP_0(\bm x, y)  \\
	&& =  \E\left[\left(\bm X - \E\left(\bm X|\mbS(\hat \bmb_n)^T\bm X\right)\right) \left(\psi_{\hat \bma_n}\left(\mbS(\hat \bmb_n)^T\bm X\right)-\hat \psi_{n\hat \bma_n}\left(\mbS(\hat \bmb_n)^T\bm X \right) \right) \right] \\
	&& = \E\left[ \E \left[\left(\bm X - \E\left(\bm X|\mbS(\hat \bmb_n)^T\bm X\right)\right) | \mbS(\hat \bmb_n)^T\bm X\right] \left(\psi_{\hat \bma_n}\left(\mbS(\hat \bmb_n)^T\bm X\right)-\hat \psi_{n\hat \bma_n}\left(\mbS(\hat \bmb_n)^T\bm X \right) \right) \right] \\
	&& =\bm 0.
\end{eqnarray*}
Let $\mathcal{F}_b$ denote the class of functions involved in term $I'_b$ defined in (\ref{term:I_bprime}), where in the definition of this class we consider the event where $\hat \psi_{n \hat\bma_n}$ is bounded.  Given the arguments used recurrently above we can directly state that the $\epsilon$-bracketing entropy of this class is no larger than $A_1 \log n/\epsilon$ for some constant $A_1 > 0$ with increasing probability. Also, the $\Vert \cdot \Vert_{\infty}$ and $\Vert \cdot \Vert_{P_0}$ norms of the members of the class $\mathcal{F}_b$ are respectively bounded above with increasing probability by $A_1 \log n $ and $A_1 \log n  \ n^{-1/3}  = \eta_n$ at the cost of taking a larger $A_1$. For a fixed $\nu > 0$ and $L > 0$ we have for $ i \in \{1, \ldots, d \}$,using Lemma 3.4.2 of \cite{vdvwe:96}, 
\begin{align*}
&P \left( \left \vert \int \left\{\bm x_i -\E\left(\bm X_i|\mbS(\hat \bmb_n)^T\bm x\right)\right\}\left\{\psi_{\hat \bma_n}\left(\mbS(\hat \bmb_n)^T\bm x\right)-\hat \psi_{n\hat \bma_n}\left(\mbS(\hat \bmb_n)^T\bm x\right)\right\}\,d(\P_n- P_0)(\bm x, y)  \right \vert > L n^{-1/2} \right) \\
&\le \nu/2 +  \frac{A_2}{L} (\log n)^{1/2} \eta^{1/2}_{n} \left( 1+  \frac{A_2   (\log n)^{1/2} \eta^{1/2}_{n}}{\sqrt{n} \eta^{2}_n}(\log n)\right), \  \ \text{for some constant $A_2 > 0$}  \\
& \le \nu/2 +  \frac{A_2}{L} (\log n)^{1/2} \eta^{1/2}_{n} \left( 1+  \frac{A_2   (\log n)^{3/2}}{\sqrt{n} \eta^{3/2}_n} \right), \  \ \text{for some constant $A_2 > 0$}  \\
&  \lesssim \nu/2 +  \frac{A_2}{L}  (\log n)  n^{-1/6}  \left(1 +   \frac{A_2}{A_1^{3/2}}  \right)  \le \nu ,
\end{align*}
for $n$ large enough.    
This implies that $I_b = o_p(n^{-1/2})$. We conclude by (\ref{proof1-relation2_Ia1}), (\ref{proof1-relation2_Ia2}), (\ref{A}), (\ref{B}), (\ref{C}) and Definition (\ref{estimator_crossing_def}) that,
\begin{align*}
\bm B\left(\hat\bmb_n-\bmb_0\right)&=\int \left(\bm J_{\mbS}(\bmb_0)\right)^T\left\{\bm x-\E(\bm X|\mbS(\bmb_0)^T\bm x)\right\}\left\{y-\psi_0\bigl(\mbS(\bmb_0)^T\bm x\bigr)\right\}\,d\bigl(\P_n-P_0\bigr)(\bm x, y) \nonumber\\
&\qquad +o_p\left(n^{-1/2}+   \| \hat\bmb_n-\bmb_0  \|  \right),
\end{align*}
where 
\begin{align*}
\bm B= \left(\bm J_{\mbS}(\bmb_0)\right)^T \E\Bigl[\psi_0'(\mbS(\bmb_0)^T\bm X)\,\text{ Cov}(\bm X|\mbS(\bmb_0)^T\bm X)\Bigr]  \left(\bm J_{\mbS}(\bmb_0)\right).
\end{align*}
We get,
\begin{align*}
\sqrt n \left(\hat\bmb_n-\bmb_0\right)&= \sqrt n\bm B^{-1}\int \left(\bm J_{\mbS}(\bmb_0)\right)^T\left\{\bm x-\E(\bm X|\mbS(\bmb_0)^T\bm x)\right\}\left\{y-\psi_0\bigl(\mbS(\bmb_0)^T\bm x\bigr)\right\}\,d\bigl(\P_n-P_0\bigr)(\bm x, y) \nonumber\\
&\qquad +o_p\left(1+\sqrt n \|\hat\bmb_n-\bmb_0\|\right)\nonumber\\
\to_d N(\bm 0,\bm  \Pi),
\end{align*}
where 
\begin{align*}
%\label{def_Pi}
\bm \Pi = \bm B^{-1} \left(\bm J_{\mbS}(\bmb_0)\right)^T \bm \Sigma \, \bm J_{\mbS}(\bmb_0) \, \bm B^{-1} \in \R^{(d-1)\times(d-1)}.
\end{align*}
The asymptotic limiting distribution of the single index score estimator $\hat \bma_n$ now follows by an application of the Delta method and we conclude that
\begin{align*}
\sqrt n (\hat \bma_n -\bma_0) & =  J_{\mbS}(\bmb_0) \sqrt n (\hat \bmb_n -\bmb_0)+ o_p\left( \sqrt n (\hat \bmb_n -\bmb_0)\right) \to_d  N_{d}\left(\bm 0,  J_{\mbS}(\bmb_0)\bm  \Pi \left(J_{\mbS}(\bmb_0)\right)^T\right).
\end{align*}
Finally, the result of Theorem \ref{theorem:asymptotics} follows by Lemma \ref{lemma:Moore-Penrose}. This completes the proof.

\section{Asymptotic distribution of the efficient score estimator}
\label{supC:ese}
In this section we prove $(iii)$ of Theorem \ref{theorem:asymptotics-efficient} on the asymptotic normality of the efficient score estimator $\tilde\bma_n$. The proofs of existence and consistency of $\tilde\bma_n$, given in $(i)$ and $(ii)$ of Theorem \ref{theorem:asymptotics-efficient} follow the same lines as the corresponding proofs for the simple score estimator $\hat \bma_n$ given in Sections \ref{subsec:Appendix-existence1} and \ref{subsec:Appendix-existence1} and are omitted.

\textbf{Proof of asymptotic normality:}
%We introduce the notation
%$$ \hat\psi_{n\bmb} = \hat\psi_{n\mbS(\bmb)} \quad \text{ and } \psi_{\bmb} = \psi_{\mbS(\bmb)}$$
Let $\t_i$ denote the sequence of jump points of the monotone LSE  $\hat\psi_{n\bma}$. We introduce the piecewise constant function $\bar{\rho}_{n, \bmb}$ defined  for $u \in [\tau_i, \tau_{i+1})$  as
\begin{eqnarray*}
	\bar {\rho}_{n, \bmb}(u)  =  \left \{
	\begin{array}{lll}
		\E[\bm X| \mbS(\bmb)^T\bm X= \t_i]\psi_{\bma}'(\t_i) \ \  \ \ \ \ \ \ \textrm{ if $\psi_{\bma}(u)  > \hat\psi_{n\bma}(\tau_i)$  \ for all $u \in (\tau_i, \tau_{i+1})$}, \\
		\E[\bm X| \mbS(\bmb)^T\bm X= s]\psi_{\bma}'(s) \ \ \ \  \ \  \  \ \ \ \textrm{ if $\psi_{\bma}(s)  = \hat\psi_{n\bma}(s)$ \ for some $s \in (\tau_i, \tau_{i+1})$}, \\
		\E[\bm X| \mbS(\bmb)^T\bm X= \t_{i+1}]\psi_{\bma}'(\t_{i+1})\ \ \ \textrm{if $\psi_{\bma}(u) < \hat\psi_{n\bma}(\tau_i)$  \ for all $u \in (\tau_i, \tau_{i+1})$}. 
	\end{array}
	\right.
\end{eqnarray*}
We can write,
\begin{align}
\label{proof2-relation1}
&\xi_{nh}(\tilde\bmb_n) \nonumber\\
&=  \bm J_{\mbS}(\tilde\bmb_n) ^T \int \left\{\bm x \tilde{\psi}_{nh,\bma}'\left(\mbS(\tilde \bmb_n)^T\bm x \right)-\E\left(\bm X|\mbS(\tilde \bmb_n)^T\bm x\right)\psi_{\tilde \bma_n}'\left(\mbS(\tilde \bmb_n)^T\bm x \right)\right\}\left\{y-\hat \psi_{n\tilde \bma_n}\left(\mbS(\tilde \bmb_n)^T\bm x\right)\right\}\,d\P_n(\bm x, y) \nonumber\\
& +  \bm J_{\mbS}(\tilde\bmb_n) ^T \int\left\{ \E\left(\bm X|\mbS(\tilde \bmb_n)^T\bm x\right)\psi_{\tilde \bma_n}'\left(\mbS(\tilde \bmb_n)^T\bm x \right) - \bar \rho_{n, \tilde \bmb_n}\left(\mbS(\tilde \bmb_n)^T\bm  x\right) \right\}\left\{y-\hat \psi_{n\tilde \bma_n}\left(\mbS(\tilde \bmb_n)^T\bm x\right)\right\}\,d\P_n(\bm x, y) \nonumber\\
& = J + JJ,
\end{align}
using,
\begin{align*}
\int \bar {\rho}_{n, \tilde \bmb_n}\left(\mbS(\tilde \bmb_n)^T\bm x \right)\left\{y-\hat \psi_{n\tilde \bma_n}\left(\mbS(\tilde \bmb_n)^T\bm x\right)\right\}\,d\P_n(\bm x, y) = \bm 0.
\end{align*}

The term $JJ$ can be written as
\begin{align}
\label{proof2-relation1b}
JJ &=\bm J_{\mbS}(\tilde\bmb_n) ^T \int\left\{ \E\left(\bm X|\mbS(\tilde \bmb_n)^T\bm x\right)\psi_{\tilde \bma_n}'\left(\mbS(\tilde \bmb_n)^T\bm x \right) - \bar \rho_{n, \tilde \bmb_n}\left(\mbS(\tilde \bmb_n)^T\bm  x\right) \right\}\nonumber\\
&\qquad\qquad\qquad\qquad\qquad\qquad\qquad\qquad\qquad\cdot\left\{y-\hat \psi_{n\tilde \bma_n}\left(\mbS(\tilde \bmb_n)^T\bm x\right)\right\}\,d(\P_n-P_0)(\bm x, y) \nonumber\\
&\qquad+\bm J_{\mbS}(\tilde\bmb_n) ^T \int\left\{ \E\left(\bm X|\mbS(\tilde \bmb_n)^T\bm x\right)\psi_{\tilde \bma_n}'\left(\mbS(\tilde \bmb_n)^T\bm x \right) - \bar \rho_{n, \tilde \bmb_n}\left(\mbS(\tilde \bmb_n)^T\bm  x\right) \right\}\nonumber\\
&\qquad\qquad\qquad\qquad\qquad\qquad\qquad\qquad\qquad\cdot\left\{y- \psi_{\tilde \bma_n}\left(\mbS(\tilde \bmb_n)^T\bm x\right)\right\}\,dP_0(\bm x, y) \nonumber\\
&\qquad+\bm J_{\mbS}(\tilde\bmb_n) ^T \int\left\{ \E\left(\bm X|\mbS(\tilde \bmb_n)^T\bm x\right)\psi_{\tilde \bma_n}'\left(\mbS(\tilde \bmb_n)^T\bm x \right) - \bar \rho_{n, \tilde \bmb_n}\left(\mbS(\tilde \bmb_n)^T\bm  x\right) \right\}\nonumber\\
&\qquad\qquad\qquad\qquad\qquad\qquad\qquad\qquad\qquad\cdot\left\{\psi_{\tilde \bma_n}\left(\mbS(\tilde \bmb_n)^T\bm x\right)- \hat \psi_{n\tilde \bma_n}\left(\mbS(\tilde \bmb_n)^T\bm x\right)\right\}\,dP_0(\bm x, y) \nonumber\\
& = JJ_a + JJ_b +JJ_c,
\end{align}
We first note that by Assumption A10, the functions $u\mapsto \psi_{\bma}'(u):= \psi_{\mbS(\bmb)}'(u)$ are uniformly bounded and have a total variation that is uniformly bounded for all $\bmb \in \mathcal{C}$. This also implies, using Lemma \ref{lemma:BoundedVar}, that the functions $u \mapsto \E\left( X_i| \mbS( \bmb)^T\bm X =u\right)\psi_{\bma}'(u)$ have a bounded variation for all $\bmb \in \mathcal{C}$. Using the same arguments as those for term $II_a$ defined in (\ref{proof1-relation1b}) in the proof of Theorem \ref{theorem:asymptotics}, it easily follows that,
$$ JJ_a = o_p(n^{-1/2}).$$
We next consider the term $JJ_b$. By Lemma \ref{lemma:DerBoundedBelow} we know that $\psi_{\bma}'$ stays away from zero for all $\mbS(\bmb)$ in a neighborhood of $\mbS(\bmb_0)$. Using the same techniques as in \cite{piet_geurt:14}, we can find a constant $K>0$ such that for all $i = 1, \ldots, d $ and $u \in \mathcal{I}_{\bma}$,
\begin{align}
\label{diff:E-Ebar}
\left| \E\left( X_i| \mbS(\bmb)^T\bm X =u\right)\psi_{\bma}'(u) - \bar \rho_{ni, \bmb}(u)\right | \le K\left|\psi_{\bma}(u) -\hat \psi_{n\bma}(u)\right|,
\end{align}
where $\bar\rho_{ni, \bmb}$ denotes the $i$th component of $\rho_{n, \bmb}$. This implies that the difference $\E\left( X_i| \mbS(\bmb)^T\bm X =u\right)\psi_{\bma}'(u) - \bar \rho_{ni, \bmb}(u)$ converges to zero for all  $u \in \mathcal{I}_{\bma}$. Using Lemma \ref{lemma:derivative_psi_a} and a Taylor expansion of $\bmb \mapsto \psi_{\bma}\left(\mbS(\bmb)^T\bm x\right)$ we get,
\begin{align*}
%\label{taylor-expansion}
\psi_{\bma}\left(\mbS(\bmb)^T\bm x\right) &= \psi_0\left(\mbS(\bmb_0)^T\bm x\right) + (\bmb - \bmb_0)^T\left[\bm J_{\mbS}(\bmb_0)^T\left( \bm x - \E(\bm X |\mbS(\bmb_0)^T\bm X =\mbS(\bmb_0)^T\bm x )\right)\psi_0'\left(\mbS(\bmb_0)^T\bm x\right)\right]\nonumber\\
& \qquad+ o(\bmb - \bmb_0),
\end{align*}
such that
\begin{align*}
JJ_b &= \bm J_{\mbS}(\tilde\bmb_n) ^T \int\left\{ \E\left(\bm X|\mbS(\tilde \bmb_n)^T\bm x\right)\psi_{\tilde \bma_n}'\left(\mbS(\tilde \bmb_n)^T\bm x \right) - \bar \rho_{n, \tilde \bmb_n}\left(\mbS(\tilde \bmb_n)^T\bm  x\right) \right\}\nonumber\\
&\qquad\qquad\qquad\qquad\qquad\qquad\qquad\cdot\left\{\psi_0\left(\mbS( \bmb_0)^T\bm x\right)- \psi_{\tilde \bma_n}\left(\mbS(\tilde \bmb_n)^T\bm x\right)\right\}\,dP_0(\bm x, y)\\
& =o_p\left(\tilde\bmb_n - \bmb_0\right).
\end{align*} 
For the therm $JJ_c$, we get by an application of the Cauchy-Schwarz inequality together with the uniform boundedness of $\bm J_{\mbS}$, Proposition \ref{prop:L_2-psi-psi_n-alpha} and (\ref{diff:E-Ebar}) that,
\begin{align*}
JJ_c &\le \bm J_{\mbS}(\tilde\bmb_n) ^T \Biggl( \int\left\{ \E\left(\bm X|\mbS(\tilde \bmb_n)^T\bm x\right)\psi_{\tilde \bma_n}'\left(\mbS(\tilde \bmb_n)^T\bm x \right) - \bar \rho_{n, \tilde \bmb_n}\left(\mbS(\tilde \bmb_n)^T\bm  x\right) \right\}^2\,dP_0(\bm x, y)\Biggr)^{1/2}\\
&\qquad\qquad\qquad \cdot \Biggl(\int  \left\{\psi_{\tilde \bma_n}\left(\mbS(\tilde \bmb_n)^T\bm x\right)- \hat \psi_{n\tilde \bma_n}\left(\mbS(\tilde \bmb_n)^T\bm x\right)\right\}^2\,dP_0(\bm x, y)\Biggr)^{1/2} \\
& \lesssim  \int \left\{\psi_{\tilde \bma_n}\left(\tilde \bma_n^T\bm x\right)- \hat \psi_{n\tilde \bma_n}\left(\tilde \bma_n^T\bm x\right)\right\}^2dG(\bm x) = O_p\left((\log n)^2 n^{-2/3}\right) =o_p(n^{-1/2}).
\end{align*}
We conclude that (\ref{proof2-relation1}) can be written as
\begin{align}
\label{proof2-relation2}
&\xi_{nh}(\tilde\bmb_n) \nonumber\\
&=  \bm J_{\mbS}(\tilde\bmb_n) ^T \int \left\{\bm x \tilde{\psi}_{nh,\tilde\bma_n}'\left(\mbS(\tilde \bmb_n)^T\bm x \right)-\E\left(\bm X|\mbS(\tilde \bmb_n)^T\bm x\right)\psi_{\tilde \bma_n}'\left(\mbS(\tilde \bmb_n)^T\bm x \right)\right\}\nonumber\\
&\qquad\qquad\qquad\qquad\qquad\qquad\qquad\qquad\qquad\qquad\qquad\qquad\cdot\left\{y-\hat \psi_{n\tilde \bma_n}\left(\mbS(\tilde \bmb_n)^T\bm x\right)\right\}\,d\P_n(\bm x, y) \nonumber\\
& \qquad +  o_p\left(n^{-1/2}+ (\tilde \bmb_n- \bmb_0)\right)\nonumber
\end{align}
\begin{align}
&=  \bm J_{\mbS}(\tilde\bmb_n) ^T \int \left\{\bm x \tilde{\psi}_{nh,\tilde\bma_n}'\left(\mbS(\tilde \bmb_n)^T\bm x \right)-\E\left(\bm X|\mbS(\tilde \bmb_n)^T\bm x\right)\psi_{\tilde \bma_n}'\left(\mbS(\tilde \bmb_n)^T\bm x \right)\right\}\nonumber\\
&\qquad\qquad\qquad\qquad\qquad\qquad\qquad\qquad\qquad\qquad\qquad\qquad\cdot\left\{y- \psi_{\tilde \bma_n}\left(\mbS(\tilde \bmb_n)^T\bm x\right)\right\}\,d\P_n(\bm x, y) \nonumber\\
&\qquad  +   \bm J_{\mbS}(\tilde\bmb_n) ^T \int \left\{\bm x \tilde{\psi}_{nh,\tilde\bma_n}'\left(\mbS(\tilde \bmb_n)^T\bm x \right)-\E\left(\bm X|\mbS(\tilde \bmb_n)^T\bm x\right)\psi_{\tilde \bma_n}'\left(\mbS(\tilde \bmb_n)^T\bm x \right)\right\}\nonumber\\
&\qquad\qquad\qquad\qquad\qquad\qquad\qquad\qquad\cdot\left\{\psi_{\tilde \bma_n}\left(\mbS(\tilde \bmb_n)^T\bm x\right)-\hat \psi_{n\tilde \bma_n}\left(\mbS(\tilde \bmb_n)^T\bm x\right)\right\}\,d(\P_n-P_0)(\bm x, y) \nonumber\\
&\qquad  +   \bm J_{\mbS}(\tilde\bmb_n) ^T \int \left\{\bm x \tilde{\psi}_{nh,\tilde\bma_n}'\left(\mbS(\tilde \bmb_n)^T\bm x \right)-\E\left(\bm X|\mbS(\tilde \bmb_n)^T\bm x\right)\psi_{\tilde \bma_n}'\left(\mbS(\tilde \bmb_n)^T\bm x \right)\right\}\nonumber\\
&\qquad\qquad\qquad\qquad\qquad\qquad\qquad\qquad\cdot\left\{\psi_{\tilde \bma_n}\left(\mbS(\tilde \bmb_n)^T\bm x\right)-\hat \psi_{n\tilde \bma_n}\left(\mbS(\tilde \bmb_n)^T\bm x\right)\right\}\,dP_0(\bm x, y) \nonumber\\
&\qquad +  o_p\left(n^{-1/2}+ (\tilde \bmb_n- \bmb_0)\right)\nonumber\\
&= J_a + J_b + J_c +  o_p\left(n^{-1/2}+ (\tilde \bmb_n- \bmb_0)\right).
\end{align}
We first consider the term $J_b$. By Assumption A10, Lemma \ref{lemma:BoundedVar} and Lemma \ref{lemma:derivative_kernel} we get that the functions $u\mapsto \E\left(\bm X|\mbS(\bmb)^T\bm x = u\right)\psi_{\tilde \bma_n}'\left(u \right)$ and $u\mapsto \tilde{\psi}_{nh,\tilde\bma_n}'(u) $ have a uniformly bounded total variation for all $\bmb \in \mathcal{C}$. 
%Using again that $x_i$ is a fixed and deterministic function we can show that the class of functions involved with $J_b$ is contained in the class $x_i\mathcal{G}_{RCv}+ \mathcal{G}_{RCv}$. 
Using similar arguments as for the term $I_b$ defined in (\ref{proof1-relation2}) we get for $A>0$ and $\nu >0$ that
$$ P(|J_b| \ge  An^{-1/2}) \le \nu,$$
for $n$ large enough and we conclude that $J_b = o_p(n^{-1/2})$.
For the term $J_c$ we get,
\begin{align*}
J_c &=\bm J_{\mbS}(\tilde\bmb_n) ^T \int \left\{\bm x -\E\left(\bm X|\mbS(\tilde \bmb_n)^T\bm x\right)\right\}\tilde{\psi}_{nh,\tilde\bma_n}'\left(\mbS(\tilde \bmb_n)^T\bm x \right)\nonumber\\
&\qquad\qquad\qquad\qquad\qquad\qquad\qquad\qquad\cdot\left\{\psi_{\tilde \bma_n}\left(\mbS(\tilde \bmb_n)^T\bm x\right)-\hat \psi_{n\tilde \bma_n}\left(\mbS(\tilde \bmb_n)^T\bm x\right)\right\}\,dP_0(\bm x, y) \\
&+\bm J_{\mbS}(\tilde\bmb_n) ^T \int \left\{ \tilde{\psi}_{nh,\tilde\bma_n}'\left(\mbS(\tilde \bmb_n)^T\bm x \right)- \psi_{\tilde \bma_n}'\left(\mbS(\tilde \bmb_n)^T\bm x \right) \right\}\E\left(\bm X|\mbS(\tilde \bmb_n)^T\bm x\right)\nonumber\\
&\qquad\qquad\qquad\qquad\qquad\qquad\qquad\qquad\cdot\left\{\psi_{\tilde \bma_n}\left(\mbS(\tilde \bmb_n)^T\bm x\right)-\hat \psi_{n\tilde \bma_n}\left(\mbS(\tilde \bmb_n)^T\bm x\right)\right\}\,dP_0(\bm x, y) \\
&=\bm J_{\mbS}(\tilde\bmb_n) ^T \int \left\{ \tilde{\psi}_{nh,\tilde\bma_n}'\left(\mbS(\tilde \bmb_n)^T\bm x \right)- \psi_{\tilde \bma_n}'\left(\mbS(\tilde \bmb_n)^T\bm x \right) \right\}\E\left(\bm X|\mbS(\tilde \bmb_n)^T\bm x\right)\nonumber\\
&\qquad\qquad\qquad\qquad\qquad\qquad\qquad\qquad\cdot\left\{\psi_{\tilde \bma_n}\left(\mbS(\tilde \bmb_n)^T\bm x\right)-\hat \psi_{n\tilde \bma_n}\left(\mbS(\tilde \bmb_n)^T\bm x\right)\right\}\,dP_0(\bm x, y) 
\end{align*}
Furthermore, let  $H_{\bmb}$ be the distribution function of the random variable $\mbS( \bmb)^T\bm X$ and let $\E(\bm X|u)$ denote the conditional expectation of $\bm X$ given $\mbS(\bmb)^T\bm X = u$, then 
\begin{align*}
&\int \left\{ \tilde{\psi}_{nh,\tilde\bma_n}'\left(u \right)- \psi_{\tilde \bma_n}'\left(u \right) \right\}\E\left(\bm X|u\right)\left\{\psi_{\tilde \bma_n}(u)-\hat \psi_{n\tilde \bma_n}\left(u\right)\right\}\,dH_{\tilde \bmb_n}(u)\\
& =\int \left\{ \frac1h\int K\left(\{u-v\}/h\right)\,d \hat\psi_{n\tilde\bma_n}(v)- \psi_{\tilde \bma_n}'\left(u \right) \right\}\E\left(\bm X|u\right)\left\{\psi_{\tilde \bma_n}(u)-\hat \psi_{n\tilde \bma_n}\left(u\right)\right\}\,dH_{\tilde \bmb_n}(u)\\
& =\int \left( \frac1{h^2}\int K'\left(\{u-v\}/h\right)\left\{  \hat\psi_{n\tilde\bma_n}(v)- \psi_{\tilde \bma_n}\left(v \right)\right\}dv  \right)\E\left(\bm X|u\right)\left\{\psi_{\tilde \bma_n}(u)-\hat \psi_{n\tilde \bma_n}\left(u\right)\right\}\,dH_{\tilde \bmb_n}(u)\\
& \qquad+\int \left( \frac1{h}\int K\left(\{u-v\}/h\right)\psi_{\tilde \bma_n}'\left(v \right)dv - \psi_{\tilde \bma_n}'\left(u \right) \right)\E\left(\bm X|u\right)\left\{\psi_{\tilde \bma_n}(u)-\hat \psi_{n\tilde \bma_n}\left(u\right)\right\}\,dH_{\tilde \bmb_n}(u).
\end{align*}
The last term on the right hand side is $O_p\left(n^{-2/7-1/3}\right)= o_p\left(n^{-1/2}\right)$. This follows by an application of the Cauchy-Schwarz inequality since
\begin{align*}
\left\{\int \left( \frac1{h}\int K\left(\{u-v\}/h\right)\psi_{\tilde\bma_n}'\left(v \right)dv - \psi_{\tilde\bma_n}'\left(u \right) \right)^2 \,dH_{\tilde \bmb_n}(u)\right\}^{1/2} = O_p\left(n^{-2/7}\right),
\end{align*}
and
\begin{align*}
\left\{ \int \left(\psi_{\tilde \bma_n}(u)-\hat \psi_{n\tilde \bma_n}\left(u\right)\right)^2 \,dH_{\tilde \bmb_n}(u)\right\}^{1/2} = O_p\left(n^{-1/3}\right).
\end{align*}
The first term on the right hand side is $O_p\left(n^{1/7-2/3}\right)= o_p\left(n^{-1/2}\right)$ using that for small $h$
\begin{align*}
&\int \left( \frac1{h^2}\int K'\left(\{u-v\}/h\right)\left\{  \hat\psi_{n\tilde\bma_n}(v)- \psi_{\tilde\bma_n}\left(v \right)\right\}dv  \right)\E\left(\bm X|u\right)\left\{\psi_{\tilde \bma_n}(u)-\hat \psi_{n\tilde \bma_n}\left(u\right)\right\}\,dH_{\tilde \bmb_n}(u)\\
&\lesssim \frac 1h \int \left(\psi_{\tilde \bma_n}(u)-\hat \psi_{n\tilde \bma_n}\left(u\right)\right)^2 \,dH_{\tilde \bmb_n}(u).
\end{align*}
We conclude that (\ref{proof2-relation2}) can be written as,
\begin{align}
\label{proof2-relation3}
&\xi_{nh}(\tilde\bmb_n) =  \bm J_{\mbS}(\tilde\bmb_n) ^T \int \left\{\bm x \tilde{\psi}_{nh,\tilde\bma_n}'\left(\mbS(\tilde \bmb_n)^T\bm x \right)-\E\left(\bm X|\mbS(\tilde \bmb_n)^T\bm x\right)\psi_{\tilde \bma_n}'\left(\mbS(\tilde \bmb_n)^T\bm x \right)\right\}\nonumber\\
&\qquad\qquad\qquad\qquad\qquad\cdot\left\{y- \psi_{\tilde \bma_n}\left(\mbS(\tilde \bmb_n)^T\bm x\right)\right\}\,d\P_n(\bm x, y) +o_p\left(n^{-1/2} +(\tilde \bmb_n -\bmb_0)\right) \nonumber\\
&=  \bm J_{\mbS}(\tilde\bmb_n) ^T \int\bm x \left\{ \tilde{\psi}_{nh,\tilde\bma_n}'\left(\mbS(\tilde \bmb_n)^T\bm x \right)-\psi_{\tilde \bma_n}'\left(\mbS(\tilde \bmb_n)^T\bm x \right)\right\}\left\{y- \psi_{\tilde \bma_n}\left(\mbS(\tilde \bmb_n)^T\bm x\right)\right\}\,d(\P_n-P_0)(\bm x, y) \nonumber\\
&\quad + \bm J_{\mbS}(\tilde\bmb_n) ^T \int\bm x \left\{ \tilde{\psi}_{nh,\tilde\bma_n}'\left(\mbS(\tilde \bmb_n)^T\bm x \right)-\psi_{\tilde \bma_n}'\left(\mbS(\tilde \bmb_n)^T\bm x \right)\right\}\left\{y- \psi_{\tilde \bma_n}\left(\mbS(\tilde \bmb_n)^T\bm x\right)\right\}\,dP_0(\bm x, y) \nonumber\\
&\quad+\bm J_{\mbS}(\tilde\bmb_n) ^T \int \left\{\bm x -\E\left(\bm X|\mbS(\tilde \bmb_n)^T\bm x\right)\right\}\psi_{\tilde \bma_n}'\left(\mbS(\tilde \bmb_n)^T\bm x \right)\left\{y- \psi_{\tilde \bma_n}\left(\mbS(\tilde \bmb_n)^T\bm x\right)\right\}\,d(\P_n-P_0)(\bm x, y) \nonumber\\
&\quad + \bm J_{\mbS}(\tilde\bmb_n) ^T \int \left\{\bm x -\E\left(\bm X|\mbS(\tilde \bmb_n)^T\bm x\right)\right\}\psi_{\tilde \bma_n}'\left(\mbS(\tilde \bmb_n)^T\bm x \right)\left\{y- \psi_{\tilde \bma_n}\left(\mbS(\tilde \bmb_n)^T\bm x\right)\right\}\,dP_0(\bm x, y)\nonumber\\
&\quad +o_p\left(n^{-1/2} +(\tilde \bmb_n -\bmb_0)\right)\nonumber\\
&= JJJ_a + JJJ_b+JJJ_c + JJJ_d +o_p\left(n^{-1/2} +(\tilde \bmb_n -\bmb_0)\right).
\end{align}
We consider  $JJJ_a$ first and note that by Assumption A10 and Lemma \ref{lemma:derivative_kernel}, the functions $\psi_{\bma}'$ and $\tilde \psi_{nh,\bma}'$ have a uniformly bounded total variation. By an application of Lemma \ref{lemma:BoundedVar2} we can write the difference $\tilde \psi_{nh,\bma}' - \psi_{\bma}'$ as the difference of two monotone functions, say $f_1,f_2 \in \mathcal{M}_{RC_1}$ for some constant $C_1 >0$. This implies that the class of functions 
$${\cal F}_1 = \Biggl\{ f(\bm x, y ) :=\{ \tilde{\psi}_{nh,\bma}'\left(\mbS( \bmb)^T\bm x \right)-\psi_{ \bma}'\left(\mbS( \bmb)^T\bm x \right)\}\{y- \psi_{ \bma}\left(\mbS( \bmb)^T\bm x\right)\}, (\bm x, y, \bmb) \in \mathcal{X}\times \R \times \mathcal{C}  \Biggr\},$$ 
is contained in the class $\mathcal{H}_{RC_1v}$ where $v \asymp h^{-1}\log n n^{-1/3}$ (See the proof of Lemma \ref{lemma:derivative_kernel}). By Lemma \ref{lemma: BernEntropyGeneral} and the fact that the order bracketing entropy of a class does not get altered after multiplication with the fixed and bounded function $\bm x \mapsto x_i$ we get that the class of functions involved with the term $JJJ_a$, say ${\cal F}_a$, satisfies
\begin{align*}
H_B\big(\epsilon,{\mathcal{F}_a}, \Vert \cdot \Vert_{B,P_0} \big)   \lesssim \frac{1}{\epsilon} \qquad \text{ and } \qquad \|f \|_{_{B,P_0}} \lesssim v
\end{align*}
Using again an application of Markov's inequality, together with Lemma 3.4.3 of \cite{vdvwe:96} we conclude that for $A>0$
\begin{align*}
P(|JJJ_a| > An^{-1/2}) \lesssim  v^{1/2}  = h^{-1/2}(\log n)^{1/2} n^{-1/6},
\end{align*}
which can be made arbitrarily small for $n$ large enough and $h\asymp n^{-1/7}$. We conclude that
$$JJJ_a = o_p(n^{-1/2}).$$
Using similar arguments as for the term $JJ_b$ defined in (\ref{proof2-relation1b}) we also get, 
$$ JJJ_b = o_p\left(\tilde \bmb_n -\bmb_0\right). $$
The result of Theorem \ref{theorem:asymptotics-efficient} follows by noting that, using the same techniques as for the term $I_a$ in (\ref{proof1-relation2_Ia2}), we get
\begin{align*}
JJJ_c &= \left(\bm J_{\mbS}(\bmb_0) \right)^T \int \left\{\bm x -\E\left(\bm X|\mbS(\bmb_0)^T\bm x\right)\right\}\psi_{0}'\left(\mbS(\bmb_0)^T\bm x \right)\left\{y- \psi_{0}\left(\mbS(\bmb_0)^T\bm x\right)\right\}\,d(\P_n-P_0)(\bm x, y)\\
&\quad+ o_p(n^{-1/2})+ o_p(\hat \bmb_n -\bmb_0),
\end{align*}
and that by a Taylor expansion  of $\bmb \mapsto \psi_{\bma}\left(\mbS(\bmb)^T\bm x\right)$ we get,
\begin{align*}
JJJ_d &= -\Biggl\{\left(\bm J_{\mbS}(\bmb_0) \right)^T \Biggl(\int \left(\psi_{0}'\left(\mbS(\bmb_0)^T\bm x \right)\right)^2
\cdot \left\{\bm x -\E\left(\bm X|\mbS(\bmb_0)^T\bm x\right)\right\} \left\{\bm x -\E\left(\bm X|\mbS(\bmb_0)^T\bm x\right)\right\}^T\,dP_0(\bm x, y)\Biggr)\\ 
&\qquad\qquad\qquad\qquad\qquad\qquad\bm \times J_{\mbS}(\bmb_0)\Biggr\}(\tilde \bmb_n -\bmb_0)  + o_p(\tilde \bmb_n -\bmb_0).
\end{align*}
The rest of the proof follows the same line as the proof of asymptotic normality of the simple score estimator defined in Theorem \ref{theorem:asymptotics} and is omitted.

\section{Entropy results}
\label{supD:entropy}

\begin{lemma}\label{EntropyF1F2}
	Fix $\epsilon > 0$, and consider $\mathcal{F}_1$  a class of functions defined on $\mathcal{X} \times \RR$ bounded by some constant $A> 0$  and equipped by the $L_2$ norm $\Vert \cdot \Vert_{P_0}$ with respect to $P_0$.  Also, let $\mathcal{F}_2$ be another class of continuous functions defined on a bounded set $\mathcal{C} \subset \RR^{d-1}$  such that $\mathcal{F}_2$ is equipped by the supremum norm $\Vert \cdot \Vert_{\infty}$, and bounded by some constant $B > 0$.  Moreover assume that $H_B(\epsilon, \mathcal{F}_1, \Vert \cdot \Vert_{P_0}) < \infty$ and $H_B(\epsilon, \mathcal{F}_2, \Vert \cdot \Vert_{\infty}) < \infty$. Consider 
	\begin{eqnarray*}
		\mathcal{F}  = \mathcal{F}_1 \mathcal{F}_2  = \Big \{ f(\bm x)  =  f_{\bmb}(\bm x, y)= f_1(\bm x, y)  f_2(\bmb): (\bm x, y, \bmb)  \in  \mathcal{X} \times \mathbb{R} \times \mathcal{C} \Big\}.
	\end{eqnarray*}
	Then there exists some constant $B > 0$ such that 
	\begin{eqnarray*}
		H_B(\epsilon, \mathcal{F}, \Vert \cdot \Vert_{P_0}) \le H_B(B \epsilon, \mathcal{F}_1, \Vert \cdot \Vert_{P_0})    +   H_B(B \epsilon, \mathcal{F}_2, \Vert \cdot \Vert_{\infty}).
	\end{eqnarray*}
	
\end{lemma}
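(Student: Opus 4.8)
The plan is to build a bracketing cover of the product class $\mathcal{F}$ out of products of brackets for $\mathcal{F}_1$ and $\mathcal{F}_2$, and to control the width of each product bracket through the elementary Lipschitz estimate for the bilinear map $(x,y)\mapsto xy$ on a bounded rectangle. The two uniform bounds ($|f_1|\le A$ on $\mathcal{F}_1$ and $|f_2|\le B$ on $\mathcal{F}_2$) are what make this estimate available with explicit constants.

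First I would fix $\epsilon>0$ and pick (these exist by the finiteness hypotheses) an $\epsilon_1$-bracketing cover $\{[\ell_p,u_p]\}_{p=1}^{N_1}$ of $\mathcal{F}_1$ in $\Vert\cdot\Vert_{P_0}$ and an $\epsilon_2$-bracketing cover $\{[a_q,b_q]\}_{q=1}^{N_2}$ of $\mathcal{F}_2$ in $\Vert\cdot\Vert_\infty$, where $N_1=N_B(\epsilon_1,\mathcal{F}_1,\Vert\cdot\Vert_{P_0})$, $N_2=N_B(\epsilon_2,\mathcal{F}_2,\Vert\cdot\Vert_\infty)$ and the scales $\epsilon_1,\epsilon_2$ are chosen at the end. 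Replacing each bracket endpoint by its truncation to $[-A,A]$ (resp.\ $[-B,B]$) preserves the bracketing property and does not increase any width, so I may assume $|\ell_p|,|u_p|\le A$ and $|a_q|,|b_q|\le B$.

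Next, for each pair $(p,q)$ I would set, pointwise,
\begin{align*}
L_{pq}=\min\{\ell_p a_q,\ell_p b_q,u_p a_q,u_p b_q\},\qquad U_{pq}=\max\{\ell_p a_q,\ell_p b_q,u_p a_q,u_p b_q\}.
\end{align*}
If $f=f_1 f_2\in\mathcal{F}$ with $f_1\in[\ell_p,u_p]$ and $f_2\in[a_q,b_q]$, then, since a bilinear function attains its extremes over a rectangle at its vertices, $L_{pq}\le f\le U_{pq}$; hence the $N_1 N_2$ pairs $[L_{pq},U_{pq}]$ bracket $\mathcal{F}$. To estimate widths I would use that for $x_1,x_2\in[-A,A]$ and $y_1,y_2\in[-B,B]$,
\begin{align*}
|x_1y_1-x_2y_2|\le|x_1|\,|y_1-y_2|+|y_2|\,|x_1-x_2|\le A\,|y_1-y_2|+B\,|x_1-x_2|,
\end{align*}
so that $U_{pq}-L_{pq}\le A\,(b_q-a_q)+B\,(u_p-\ell_p)$ pointwise. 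Taking $\Vert\cdot\Vert_{P_0}$-norms and using that $b_q-a_q$ depends only on the index and satisfies $\Vert b_q-a_q\Vert_\infty\le\epsilon_2$, while $\Vert u_p-\ell_p\Vert_{P_0}\le\epsilon_1$, gives $\Vert U_{pq}-L_{pq}\Vert_{P_0}\le A\epsilon_2+B\epsilon_1$.

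Choosing $\epsilon_1=\epsilon/(2B)$ and $\epsilon_2=\epsilon/(2A)$ makes each product bracket of width at most $\epsilon$, whence $N_B(\epsilon,\mathcal{F},\Vert\cdot\Vert_{P_0})\le N_1 N_2$. Taking logarithms, and using that $H_B$ is nonincreasing in its scale argument so both scales may be replaced by the common smaller value $c\epsilon$ with $c=\min\{1/(2A),1/(2B)\}$, yields the claimed bound with the stated constant equal to $c$. The point demanding the most care, and the main obstacle, is the bookkeeping around the two different norms: the factor $b_q-a_q$ must be dominated uniformly in the index $\bmb$ when integrating over $(\bm x,y)$, which is exactly why $\mathcal{F}_2$ is covered in the supremum norm rather than in $L_2(P_0)$, and why the resulting product brackets (which depend on $\bmb$) have width controlled uniformly over $\bmb$.
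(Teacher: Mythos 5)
Your proposal is correct and follows essentially the same route as the paper's proof: brackets for $\mathcal{F}$ are built as products of brackets for $\mathcal{F}_1$ and $\mathcal{F}_2$, and their width is controlled by the bilinear estimate $|x_1y_1-x_2y_2|\le A|y_1-y_2|+B|x_1-x_2|$ made available by the uniform bounds, with the $\Vert\cdot\Vert_\infty$ bracketing of $\mathcal{F}_2$ giving the needed uniformity in $\bmb$ under the $P_0$-integral. The only differences are cosmetic: your pointwise min/max over the four vertex products replaces the paper's nine-case sign analysis (and your explicit truncation step replaces the paper's ``we may assume the brackets are bounded by $M$''), yielding the same entropy inequality up to the unspecified constant.
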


\medskip

\begin{proof}
	Let $f  = f_1 f_2  \in \mathcal{F}$ for some pair $(f_1, f_2) \in \mathcal{F}_1 \times \mathcal{F}_2$. For $\epsilon > 0$  consider the $(f^L_1,f^U_1)$ and  $(f^L_2,f^U_2)$ $\epsilon$-brackets with respect to $\Vert \cdot \Vert_{P_0}$  for $f_1$ and $f_2$. Note that since $\mathcal{F}_1$ and $\mathcal{F}_2$ are bounded by $M  = \max(A, B)$  we can always assume that $ -M \le f^L_i  \le  f^U_i \le M$ for $i \in \{1, 2\}$.   As we deal with a product of two functions, construction of a bracket for $f$ requires considering different sign cases for a given pair $(\bm x, \bmb)$:
	\begin{enumerate}
		\item $0 \le f^L_1(\bm x)$ and $0 \le f^L_2(\bmb)$,
		\item  $0 \le f^L_1(\bm x)$, $f^L_2(\bmb)  < 0$ and $f^U_2(\bmb)  \ge 0$,
		\item $ f^L_1(\bm x) \le 0$, $f^U_1(\bm x) \ge 0$ and $0 \le f^L_2(\bmb)$,
		\item $f^U(\bm x) \le 0$, $f^L(\bmb) \ge 0$,
		\item $f^L(\bm x) \ge 0$, $f^U(\bmb) \ge 0$,
		\item  $ f^L_1(\bm x) \le 0$, $f^U(\bm x) \ge 0$, $f^L_2(\bmb)  \le 0$ and $f^U_2(\bmb)  \ge 0$,
		\item $ f^L_1(\bm x) \le 0$, $f^U(\bm x) \ge 0$  and $f^U_2(\bmb)  \le 0$,
		\item $f^U(\bm x) \le 0$, $f^L(\bmb) \le 0$ and $f^U(\bmb) \ge 0$,
		\item  $f^U_1(\bm x) \le 0$ and $f^U_2(\bmb) \le 0$.
		
	\end{enumerate}
	We can assume without loss of generality that each one these cases occur for all $x \in \mcX$ and $\bmb \in \mathcal{C}$ since the general case can be handled by considering the $9$ different subsets of $\mathcal{X} \times \mathcal{C}$. In the proof, we will restrict ourselves to making the calculations explicit for cases 1 and 2 since the remaining cases can be handled very similarly. Then,    $f^L_1 f^L_2  \le f \le  f^U_1 f^U_2 $. Also, we have that 
	\begin{eqnarray*}
		f^U_1 f^U_2   -  f^L_1 f^L_2  = \Big(f^U_1  - f^L_1\Big)  f^U_2  +  f^L_1  \Big(  f^U_2  - f^L_2 \Big).
	\end{eqnarray*}
	Recall that $M  = \max(A, B)$. Then,  it follows that 
	\begin{eqnarray*}
		\int_{\mathcal{X}} \Big(f^U_1 f^U_2   -  f^L_1 f^L_2\Big)^2  dP_0&  \le   & 2  M \  \Big( \int_{\mathcal{X}} \Big(f^U_1  - f^L_1\Big)^2 dP_0(\bm x)  +  \Vert f^U_2  - f^L_2  \Vert^2_\infty  \Big)  \\
		& \le &  4M \epsilon^2.
	\end{eqnarray*}
	This in turn implies that $H_B(\epsilon, \mathcal{F}, \Vert \cdot \Vert_{P_0}) \le H_B(C \epsilon, \mathcal{F}_1, \Vert \cdot \Vert_{P_0})   + H_B(C \epsilon, \mathcal{F}_2, \Vert \cdot \Vert_{\infty})$ with $C = (2M)^{-1}$.   Now we consider case 2.   It is not difficult to show that  
	\begin{eqnarray*}
		f^L_2  f^U_1   \le  f  \le  f^U_1  f^U_2. 
	\end{eqnarray*}
	Hence,
	\begin{eqnarray*}
		\int_{\mathcal{X}}  \Big(f^U_1  f^U_2  -  f^L_2  f^U_1  \Big)^2  dP_0  \le A^2  \Vert  f^U_2   -  f^L_2 \Vert^2_\infty \le A^2 \epsilon^2
	\end{eqnarray*}
	and we can take $C =  A^{-1}$. 
\end{proof}

\medskip

\begin{lemma}\label{lemma:EntropyDiff}
	Let $\mathcal{F}$ be a class of functions satisfying $H_B(\epsilon, \mathcal{F}, \Vert \cdot \Vert_{P_0}) < \infty$ for every $\epsilon \in (0, \epsilon_0) $  for some given $\epsilon_0 > 0$. If $\mathcal{D}  =  \mathcal{F} - \mathcal{F}$ the class of all differences of elements of $\mathcal{F}$, then
	\begin{eqnarray*}
		H_B(\epsilon, \mathcal{D}, \Vert \cdot \Vert_{P_0})   \le 2 H_B(\epsilon/2, \mathcal{F}, \Vert \cdot \Vert_{P_0}).
	\end{eqnarray*}
	
\end{lemma}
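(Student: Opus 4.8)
The plan is to build an $\epsilon$-bracketing set for $\mathcal{D}=\mathcal{F}-\mathcal{F}$ directly out of an $(\epsilon/2)$-bracketing set for $\mathcal{F}$, by taking pairwise differences of the brackets. Fix $\epsilon\in(0,\epsilon_0)$ and let $\{[g_j^L,g_j^U]:j=1,\ldots,N\}$ be a minimal $(\epsilon/2)$-bracketing set for $\mathcal{F}$ with respect to $\Vert\cdot\Vert_{P_0}$, so that $N=N_B(\epsilon/2,\mathcal{F},\Vert\cdot\Vert_{P_0})$ and $\Vert g_j^U-g_j^L\Vert_{P_0}\le\epsilon/2$ for every $j$. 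Any element of $\mathcal{D}$ has the form $f=f_1-f_2$ with $f_1,f_2\in\mathcal{F}$, and by definition there are indices $j$ and $k$ with $g_j^L\le f_1\le g_j^U$ and $g_k^L\le f_2\le g_k^U$.

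The key observation I would use is the order reversal under subtraction: from the two chains of inequalities one gets
\begin{align*}
g_j^L-g_k^U \;\le\; f_1-f_2 \;\le\; g_j^U-g_k^L,
\end{align*}
so the pair $\bigl[g_j^L-g_k^U,\;g_j^U-g_k^L\bigr]$ is a valid bracket containing $f$. This shows that the family of all such difference-brackets, indexed by pairs $(j,k)\in\{1,\ldots,N\}^2$, covers $\mathcal{D}$; in particular there are at most $N^2$ of them.

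It then remains to control the width of each difference-bracket in the $\Vert\cdot\Vert_{P_0}$ norm. The width is
\begin{align*}
(g_j^U-g_k^L)-(g_j^L-g_k^U)=(g_j^U-g_j^L)+(g_k^U-g_k^L),
\end{align*}
and since $\Vert\cdot\Vert_{P_0}$ is a genuine (sub-additive) $L_2$ norm, the triangle inequality gives
\begin{align*}
\bigl\Vert (g_j^U-g_j^L)+(g_k^U-g_k^L)\bigr\Vert_{P_0}
\le \Vert g_j^U-g_j^L\Vert_{P_0}+\Vert g_k^U-g_k^L\Vert_{P_0}
\le \tfrac{\epsilon}{2}+\tfrac{\epsilon}{2}=\epsilon.
\end{align*}
Hence these at most $N^2$ difference-brackets form an $\epsilon$-bracketing set for $\mathcal{D}$, which yields $N_B(\epsilon,\mathcal{D},\Vert\cdot\Vert_{P_0})\le N^2=N_B(\epsilon/2,\mathcal{F},\Vert\cdot\Vert_{P_0})^2$. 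Taking logarithms converts the square into the factor $2$ and gives the claim $H_B(\epsilon,\mathcal{D},\Vert\cdot\Vert_{P_0})\le 2H_B(\epsilon/2,\mathcal{F},\Vert\cdot\Vert_{P_0})$.

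There is no genuinely hard step here; the argument is essentially bookkeeping. The only two points that require a little care are getting the orientation of the difference-bracket right (subtracting $g_k^U$, not $g_k^L$, to form the lower endpoint) and invoking sub-additivity of the norm to bound the combined width, which is exactly where the two $\epsilon/2$ tolerances add up to $\epsilon$; the doubling of the entropy is simply the logarithmic cost of ranging over the two independent choices of bracket index.
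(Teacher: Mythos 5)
Your proof is correct and follows essentially the same route as the paper: both construct brackets for a difference $f_1-f_2$ by the order-reversed pair $[g_j^L-g_k^U,\,g_j^U-g_k^L]$, bound the width by the sum of the two individual widths, and obtain the factor $2$ in the entropy from squaring the number of brackets. The only cosmetic difference is that you start from $(\epsilon/2)$-brackets and use the triangle inequality directly, whereas the paper starts from $\epsilon$-brackets and bounds the squared width via $(a+b)^2\le 2(a^2+b^2)$, then rescales; the two are equivalent.
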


\medskip

\begin{proof}
	Let $\epsilon \in (0, \epsilon_0)$ and $d  = f_2  - f_1 $ denote an element in $\mathcal{D}$  with $(f_1, f_2) \in \mathcal{F}^2$.  Also, let $(f^L_1, f^U_1)$ and $(f^L_2, f^U_2)$  $\epsilon$-brackets for $f_1$ and $f_2$.   Define $d^L =  f^L_2  - f^U_1$ and $ d^U =  f^U_2  - f^L_1$. It is clear that $(d^L, d^U)$ is a bracket for $d$. Furthermore, we have that   
	\begin{align*}
	&\int_{\mcX} \left(  d^U(\bm x, y)  -  d^L(\bm x, y)  \right)  ^2  dP_0(\bm x, y) \\
	& \qquad\le   2 \left \{  \int_{\mcX} \left( f^U_1(\bm x, y)  -  f^L_1 (\bm x, y) \right)^2  dP_0(\bm x, y)  +   \int_{\mcX} \left( f^U_2(\bm x, y)  -  f^L_2(\bm x, y) \right)^2  dP_0(\bm x, y) \right \}   \\
	&\qquad\le  4 \epsilon^2.
	\end{align*}
	Thus, 
	\begin{eqnarray*}
		\exp\Big(H_B(2 \epsilon, \mathcal{D}, \Vert \cdot \Vert_{P_0}) \Big) \le \exp \Big(H_B(\epsilon, \mathcal{F}, \Vert \cdot \Vert_{P_0})\Big)^2,
	\end{eqnarray*}
	which is equivalent to the statement of the lemma. 
\end{proof}

\medskip

Consider the class  $\mathcal{G}_{RK}$ defined as 
\begin{eqnarray}\label{GRK}
\mathcal{G}_{RK}   = \Big \{ g:  g(\bm x) = g_{\bma}(\bm x)= \psi(\bma^T \bm x), \bm x \in \mathcal{X}, (\psi, \bma) \in \mathcal{M}_{RK} \times \mathcal{B}(\bma_0, \delta_0)   \Big \}.
\end{eqnarray}
where $\mathcal{M}_{RK}$ is the same class defined in (\ref{MRK}).

\medskip

\begin{lemma}\label{lemma:EntropyGRK}
	There exists $A > 0$ such that for $\epsilon \in  (0, K)$ we have that 
	\begin{eqnarray*}
		H_B(\epsilon, \mathcal{G}_{RK}, \Vert \cdot \Vert_{P_0}  )  \le \frac{A K}{\epsilon}. 
	\end{eqnarray*}
\end{lemma}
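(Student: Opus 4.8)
The plan is to reduce the composite class $\mathcal{G}_{RK}$ to the classical class of uniformly bounded monotone functions, by discretizing the index set $\mathcal{B}(\bma_0,\delta_0)$ and absorbing the variation in the direction $\bma$ through the monotonicity of $\psi$. First I would recall the standard bound on the bracketing entropy of monotone functions (e.g.\ Theorem 2.7.5 of \cite{vdvwe:96}): for any probability measure $Q$ on $\R$, the class $\mathcal{M}_{RK}$ of monotone functions on $[-R,R]$ bounded by $K$ satisfies $H_B(s,\mathcal{M}_{RK},\|\cdot\|_{L_2(Q)}) \le c K/s$ for a universal constant $c$. For a \emph{fixed} $\bma$, writing $Q_{\bma}$ for the law of $\bma^T\bm X$ under $P_0$, we have $\int_{\mcX}(\psi^U-\psi^L)^2(\bma^T\bm x)\,g(\bm x)\,d\bm x = \int (\psi^U-\psi^L)^2\,dQ_{\bma}$, so any $L_2(Q_{\bma})$-bracket for $\mathcal{M}_{RK}$ composed with $\bm x\mapsto\bma^T\bm x$ is a $\|\cdot\|_{P_0}$-bracket of the same width for the fixed-$\bma$ subfamily.

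The main step is the discretization in $\bma$. Choose an $\eta$-net $\{\bma_1,\dots,\bma_{N_\eta}\}$ of $\mathcal{B}(\bma_0,\delta_0)$ with $N_\eta\lesssim(\delta_0/\eta)^{d-1}$. Given $\bma$ with $\|\bma-\bma_j\|\le\eta$, boundedness of $\mcX$ (Assumption A1) gives $|\bma^T\bm x-\bma_j^T\bm x|\le \eta R$ for all $\bm x\in\mcX$; since $\psi$ is non-decreasing, monotonicity yields
\begin{align*}
\psi^L(\bma_j^T\bm x-\eta R)\ \le\ \psi(\bma^T\bm x)\ \le\ \psi^U(\bma_j^T\bm x+\eta R),
\end{align*}
where $[\psi^L,\psi^U]$ is an $(\epsilon/2)$-bracket for $\psi$ in $\mathcal{M}_{RK}$ with respect to $Q_{\bma_j}$ (extending $\psi,\psi^L,\psi^U$ constantly outside $[-R,R]$). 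Thus $G^L(\bm x):=\psi^L(\bma_j^T\bm x-\eta R)$ and $G^U(\bm x):=\psi^U(\bma_j^T\bm x+\eta R)$ bracket $g_{\bma}$. Decomposing $G^U-G^L$ into the genuine bracket difference $\psi^U-\psi^L$ plus two pure shift differences of a monotone function, and using that the projected density of $Q_{\bma_j}$ is bounded uniformly in $j$ (Assumptions A1 and A5), the elementary inequality $(m(u+\delta)-m(u))^2\le 2K\,(m(u+\delta)-m(u))$ for $m$ monotone and bounded by $K$ gives $\int(m(\cdot+\delta)-m)^2\,dQ_{\bma_j}\lesssim K^2\delta$. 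Hence $\|G^U-G^L\|_{P_0}\lesssim \epsilon/2 + K(\eta R)^{1/2}$, and choosing $\eta\asymp \epsilon^2/K^2$ makes this at most $\epsilon$.

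Finally I would count: the total number of brackets is at most $N_\eta$ times the number of $(\epsilon/2)$-brackets for $\mathcal{M}_{RK}$, so
\begin{align*}
H_B(\epsilon,\mathcal{G}_{RK},\|\cdot\|_{P_0}) \ \lesssim\ (d-1)\log\!\big(\delta_0/\eta\big) + \frac{K}{\epsilon}\ \asymp\ (d-1)\log\!\Big(\tfrac{K}{\epsilon}\Big) + \frac{K}{\epsilon}.
\end{align*}
Writing $t=K/\epsilon\ge 1$ (valid since $\epsilon\in(0,K)$) and using $\log t\le t$, every term is $O(K/\epsilon)$, which yields $H_B(\epsilon,\mathcal{G}_{RK},\|\cdot\|_{P_0})\le AK/\epsilon$ for a constant $A$ depending only on $d$, $R$, $\delta_0$ and $\|g\|_\infty$. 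The main obstacle is precisely the replacement of $\bma$ by its net point: because the monotone $\psi$ may jump, one cannot control $|\psi(\bma^T\bm x)-\psi(\bma_j^T\bm x)|$ by a Lipschitz argument and must instead convert the argument shift of size $\eta R$ into an $L_2$ error of order $K(\eta R)^{1/2}$ via monotonicity and the bounded projected density. Balancing this against the monotone-class entropy forces the scaling $\eta\asymp\epsilon^2/K^2$, and one must then check that the resulting logarithmic net term $(d-1)\log(K/\epsilon)$ is dominated by the leading $K/\epsilon$ on the whole range $\epsilon\in(0,K)$.
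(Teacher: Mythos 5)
Your proposal is correct and follows essentially the same route as the paper's proof, which consists of a citation to Lemma 4.9 of \cite{balabdaoui2016}: that cited argument likewise combines a net over the directions $\bma$ with the classical $cK/\epsilon$ bracketing bound for bounded monotone functions, and uses monotonicity of $\psi$ plus the uniform boundedness of the projected densities to convert the argument shift $\eta R$ into an $L_2$ bracket-width contribution of order $K(\eta R)^{1/2}$, balanced by $\eta \asymp \epsilon^2/K^2$. Your write-up is thus a valid self-contained reconstruction of the proof the paper defers to (the only detail worth recording is that the brackets for $\mathcal{M}_{RK}$ may be taken monotone and bounded by $K$, e.g.\ by replacing a bracket $[l,u]$ with $[\max(\sup_{y\le \cdot}l(y),-K),\min(\inf_{y\ge \cdot}u(y),K)]$, which justifies the shift-difference step).
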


\medskip

\begin{proof}
	See the proof of Lemma 4.9 in \cite{balabdaoui2016}. 
\end{proof}

\medskip

\begin{lemma}\label{lemma: BernEntropyGeneral}
	For some constants $C > 0$ and $\delta > 0$ consider the class of functions 
	\begin{eqnarray*}
		\mathcal{D}_{RC\delta}  = \Big \{ d:  d= f_{1,\bma}  - f_{2,\bma},  \ (f_{1, \bma}, f_{2, \bma})  \in \mathcal{G}^2_{RC}, \Vert d(\alpha^T \cdot )\Vert_{P_0}   \le \delta \ \textrm{for all $\bma \in \mathcal{B}(\bma_0, \delta_0)$} \Big \}.  
	\end{eqnarray*}
	Let $\mathcal{H}_{RCv}$ be a class of functions  such that 
	\begin{eqnarray}\label{HRCv}
	\mathcal{H}_{RCv}  =  \Big \{ h: h(\bm x, y)  =  y d_1(\bma^T  \bm x )  - d_2(\bma^T  \bm x),  \  (\bm x, y, \bma) \in \mathcal{X} \times \RR \times  \mathcal{B}(\bma_0, \delta_0) , (d_1, d_2)  \in \mathcal{D}^2_{RC v}   \Big \}  
	\end{eqnarray}
	where $C \ge K_0 \vee 1$. 
	\medskip
	Then,  for all $\epsilon \in (0, C)$  we have that 
	\begin{eqnarray*}
		H_B\Big(\epsilon,  \widetilde{\mathcal{H}},  \Vert \cdot \Vert_{B, P_0} \Big)  \le  H_B\Big(\epsilon \tilde{C}^{-1}, \mathcal{H}_{RCv},  \Vert \cdot \Vert_{P_0}\Big) \le \frac{\tilde{C} C}{\epsilon}   \asymp \frac{1}{\epsilon},
	\end{eqnarray*}
	and that
	\begin{eqnarray*}
		\Vert \tilde{h} \Vert_{B,P_0}  \lesssim \tilde{D}^{-1} v  ,
	\end{eqnarray*}
	where 
	\begin{eqnarray}\label{constants}
	A'  = A\Big(2(a_0M_0  +1)\Big)^{-1/2},  \ \ \tilde{D} = 16 M_0 C  \  \ \textrm{and} \ \ \tilde{C}  = \frac{1}{8M_0} \left( 2 a_0   +  \frac{1}{2}  e^{(2M_0)^{-1}}  \right)^{1/2} \ \frac{1}{C},
	\end{eqnarray}
	with $a_0, M_0$ the same constants from Assumption A6, $A$  the same constant in Lemma \ref{lemma:EntropyGRK}, and $\widetilde{\mathcal{H}}   {=}  \mathcal{H}_{RCv} \tilde{D}^{-1}$. 
	
\end{lemma}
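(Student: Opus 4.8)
The plan is to deduce all three assertions from a single pointwise device, namely the elementary inequality $e^{t}-t-1\le \tfrac12 t^2 e^{t}$ for $t\ge 0$, which gives for any measurable $f$
\begin{equation*}
\|f\|_{B,P_0}^2 = 2P_0\bigl(e^{|f|}-|f|-1\bigr)\le P_0\bigl(f^2 e^{|f|}\bigr).
\end{equation*}
The whole difficulty is that an element $h(\bm x,y)=y\,d_1(\bma^T\bm x)-d_2(\bma^T\bm x)$ of $\mathcal H_{RCv}$ carries the unbounded response $y$, so $e^{|h|}$ is not integrable for free; the role of the normalisation $\widetilde{\mathcal H}=\tilde D^{-1}\mathcal H_{RCv}$ with $\tilde D=16M_0C$ is precisely to shrink the exponent. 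Since $d_1,d_2\in\mathcal D_{RCv}$ are differences of elements of $\mathcal G_{RC}$, each is bounded by $2C$, so $|h|\le 2C(|y|+1)$ and, after rescaling, $\tilde D^{-1}|h|$ is bounded by a small multiple of $|y|/M_0$. Hence $e^{\tilde D^{-1}|h|}$ is dominated by $e^{|y|/(2M_0)}$ up to constants, which is exactly the exponential weight whose conditional moments are supplied by Assumption A6 through $a_0$ and $M_0$.

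First I would establish the comparison that yields the left inequality. For a bracket difference $g=h^U-h^L$ of $\mathcal H_{RCv}$, taken with bracketing functions clipped to the envelope $\lesssim C(|y|+1)$, I condition on $\bma^T\bm X=u$ and write $g^2\lesssim y^2 w_1(u)^2+w_2(u)^2$, where $w_1,w_2$ are the $\|\cdot\|_{P_0}$-widths of the brackets of the two $\mathcal D_{RCv}$-components. Plugging into $\|\tilde D^{-1}g\|_{B,P_0}^2\le \tilde D^{-2}P_0(g^2 e^{\tilde D^{-1}|g|})$ and bounding the inner conditional expectations by $\E[Y^2 e^{|Y|/(2M_0)}\mid u]\le a_0$ and $\E[e^{|Y|/(2M_0)}\mid u]\lesssim e^{1/(2M_0)}$ from Assumption A6 produces $\|\tilde D^{-1}g\|_{B,P_0}\le \tilde C\,\|g\|_{P_0}$ with $\tilde C$ as in (\ref{constants}). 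Consequently any $(\tilde C^{-1}\epsilon)$-bracketing of $\mathcal H_{RCv}$ in $\|\cdot\|_{P_0}$ rescales, via $\tilde D^{-1}$, into an $\epsilon$-bracketing of $\widetilde{\mathcal H}$ in $\|\cdot\|_{B,P_0}$, which is exactly the first inequality.

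Next I would bound the $\|\cdot\|_{P_0}$-entropy of $\mathcal H_{RCv}$ itself. By Lemma \ref{lemma:EntropyGRK} the class $\mathcal G_{RC}$ satisfies $H_B(\epsilon,\mathcal G_{RC},\|\cdot\|_{P_0})\le AC/\epsilon$, and by Lemma \ref{lemma:EntropyDiff} the difference class $\mathcal D_{RCv}$ inherits a bound of the same order $\lesssim C/\epsilon$. The only new feature is the factor $y$ multiplying $d_1$, handled again by conditioning, since $\|y(d_1^U-d_1^L)\|_{P_0}^2=\E[Y^2(d_1^U-d_1^L)^2]\le \sup_u\E[Y^2\mid u]\,\|d_1^U-d_1^L\|_{P_0}^2$, with the supremum finite by Assumption A6; thus an $L_2$-bracket for $d_1$ gives one for $y\,d_1$ with width inflated only by a constant. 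Adding the contribution of $d_2$, and using that multiplication by the fixed factor $y$ does not alter the order of the entropy in the spirit of Lemma \ref{EntropyF1F2}, yields $H_B(\tilde C^{-1}\epsilon,\mathcal H_{RCv},\|\cdot\|_{P_0})\le \tilde C C/\epsilon\asymp 1/\epsilon$, the right inequality.

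Finally, the norm bound follows from the same conditioning: $\|h\|_{P_0}^2=\E[(Y d_1-d_2)^2]\le 2\,\E[Y^2 d_1^2]+2\,\E[d_2^2]\lesssim v^2$ because $\|d_1\|_{P_0},\|d_2\|_{P_0}\le v$ and $\sup_u\E[Y^2\mid u]<\infty$, and then the comparison of the second paragraph gives $\|\tilde h\|_{B,P_0}=\|\tilde D^{-1}h\|_{B,P_0}\lesssim \tilde C\|h\|_{P_0}\lesssim \tilde D^{-1}v$ after noting $\tilde C\asymp \tilde D^{-1}$. The main obstacle throughout is the interplay between the unbounded $Y$ and the Bernstein norm: the argument closes only because $\tilde D$ is chosen so that the exponent $\tilde D^{-1}|h|$ is bounded by a multiple of $|Y|/(2M_0)$ matching exactly the exponential moment furnished by Assumption A6, and the careful bookkeeping of these conditional moments is what produces the explicit values of $\tilde C$ and $\tilde D$.
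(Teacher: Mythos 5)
Your proof is correct and structurally parallel to the paper's: you build brackets for $\mathcal{H}_{RCv}$ out of $\Vert\cdot\Vert_{P_0}$-brackets for the $\mathcal{D}_{RCv}$-components (Lemma \ref{lemma:EntropyGRK} combined with Lemma \ref{lemma:EntropyDiff}), absorb the factor $y$ by conditioning on $\bma^T\bm X$ via Assumption A6, and let the rescaling $\tilde D^{-1}=(16M_0C)^{-1}$ tame the Bernstein norm. The one genuine difference is the device used for the Bernstein-norm computation. The paper expands $\Vert f\Vert_{B,P_0}^2=2\sum_{k\ge 2}\frac{1}{k!}P_0(|f|^k)$ and estimates term by term with the factorial moment bound $\E[|Y|^k\mid \bm X]\le a_0\,k!\,M_0^{k-2}$, then sums the resulting geometric and exponential series after choosing $D=16M_0C$; you instead apply the pointwise inequality $e^t-t-1\le \tfrac12 t^2e^t$ to get $\Vert f\Vert_{B,P_0}^2\le P_0(f^2e^{|f|})$ and close the argument with conditional exponential moments of $Y$. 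The two routes are equivalent in content — your exponential moment bounds are obtained from A6 by summing exactly the series the paper handles term by term — but yours compresses the bookkeeping into a single inequality and makes transparent why $\tilde D\asymp M_0C$ is the right normalization (it pushes the exponent below the scale $1/(2M_0)$ at which A6 yields exponential integrability), at the price of not reproducing the paper's explicit constants $\tilde C$ and $\tilde D$ in (\ref{constants}). Two minor imprecisions, neither fatal and both of a kind the paper's own proof tolerates: the bound $\E[Y^2e^{|Y|/(2M_0)}\mid u]\le a_0$ should read $\lesssim a_0$ (the series gives an extra absolute factor), and your comparison $\Vert \tilde D^{-1}g\Vert_{B,P_0}\le \tilde C\Vert g\Vert_{P_0}$ is actually established only for functions of the structured form $|y|\,w_1(\bma^T\bm x)+w_2(\bma^T\bm x)$ — i.e., for the sign-split brackets you leave implicit — not for arbitrary $g$; this is all the entropy transfer requires, since the brackets counted on the right-hand side are precisely of that form.
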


\medskip

\begin{proof}
	Consider $(d^L_1, d^U_1)$ and  $(d^L_2, d^U_2)$ to be  $\epsilon$-brackets of the functions $\bm x \mapsto d_1(\bma^T  \bm x)$ and $\bm x \mapsto d_2(\bma^T  \bm x )$ and some $\bma \in \mathcal{B}(\bma_0, \delta_0)$.  It follows from Lemma  4.9 of \cite{balabdaoui2016} and Lemma \ref{lemma:EntropyDiff} that there exists some constant $A > 0$ such that 
	\begin{eqnarray*}
		H_B\Big(\epsilon,  \mathcal{D}_{RC}, \Vert \cdot \Vert_{P_0} \Big)  \le \frac{A C}{\epsilon}.
	\end{eqnarray*}
	Define now
	\begin{eqnarray*}
		h^L(\bm x, y) =  
		\begin{cases}
			y d^L_1(\bm x)  - d^U_2(\bm x), \  \ \textrm{if $y \ge 0 $}  \\
			y d^U_1(\bm x)  - d^U_2(\bm x), \  \ \textrm{if $y < 0 $} 
		\end{cases}
	\end{eqnarray*}
	and 
	\begin{eqnarray*}
		h^U(\bm x, y) =  
		\begin{cases}
			y d^U_1(\bm x)  - d^L_2(\bm x), \  \ \textrm{if $y \ge 0 $}  \\
			y d^L_1(\bm x)  - d^L_2(\bm x), \  \ \textrm{if $y < 0 $}. 
		\end{cases}
	\end{eqnarray*}
	Note first that $(h^L, h^U)$ is a bracket for $h(\bm x, y)  =  y    d_1(\bma^T  \bm x )  - d_2(\bma^T  \bm x)$. Next we compute the size of this bracket with respect to $\Vert \cdot \Vert_{P_0}$.  We have that 
	\begin{eqnarray*}
		\int_{\mathcal{X}  \times \RR} \Big( h^U(\bm x, y)  -  h^L(\bm x,y)  \Big)^2 dP_0(\bm x, y)  & \le  &  2 \ \Big \{ \int_{\mathcal{X}  \times \RR}    y^2 \Big(d^U_1(\bm x)   -  d^L_1(\bm x) \Big)^2 dP_0(\bm x, y)  +  \int_{\mathcal{X}}  \Big(d^U_2(\bm x)   -  d^L_2\bm x) \Big)^2    dG(\bm x)  \Big \}   \\
		& = & 2    \Big \{ 2 a_0   \  \int_{\mathcal{X}}  \Big(d^U_1(\bm x)   -  d^L_1(\bm x) \Big)^2    dG(\bm x)   +  \int_{\mathcal{X}}  \Big(d^U_2(\bm x)   -  d^L_2\bm x) \Big)^2    dG(\bm x)  \Big \}  \\
		& \le &  2 \big( 2 a_0  +1 \big)   \epsilon^2  ,
	\end{eqnarray*}
	where $a_0$ is the same constant of Assumption A6.   It follows that  
	\begin{eqnarray*}
		H_B\Big(\epsilon,\mathcal{H}, \Vert \cdot \Vert_{P_0}  \Big)  \le \frac{\tilde{A} C}{\epsilon},
	\end{eqnarray*}
	with $\tilde{A} = A \Big(2( 2a_0  +1)\Big)^{-1/2} $ and $A$ is the same constant of Lemma \ref{lemma:EntropyGRK}.    Let now $D > 0$ be some constant to be determined later.  For a given $h \in \mathcal{H}_{RK^2v}$, we consider $\tilde{h}   =  D^{-1}  h$ which admits $[D^{-1} h^L, D^{-1}  h^U]$ as bracket. We will compute the size  of this bracket with respect to the Bernstein norm. By definition of the latter we can write for any function $h$ such that $h^k$ is $P_0$ integrable that 
	\begin{eqnarray*}
		\Vert h \Vert^2_{B, P_0}     =  2 \sum_{k=2}^\infty  \frac{1}{k!}  \vert h \vert^k dP_0.
	\end{eqnarray*}
	Thus, using this and convexity of the function $x \mapsto \vert x \vert^{k}$  for all $k \ge 2$ it follows that
	\begin{align*}
	&\Vert  D^{-1} h^U   - D^{-1}  h^L  \Vert^2_{B, P_0}  =  2 \sum_{k=2}^\infty \frac{1}{k! D^k}    \int_{\mathcal{X} \times \RR}  \Big \vert y \big(d^U_1(\bm x)  -  d^L_1(\bm x)\big)  + d^U_2(\bm x)  -  d^L_2(\bm x)  \Big \vert^k dP_0(\bm x, y)\\
	&\qquad\le   2 \sum_{k=2}^\infty \frac{2^{k-1}}{k! D^k}   \bigg \{  \int_{\mathcal{X} \times \RR}   \vert y \vert^k \big(d^U_1(\bm x)  -  d^L_1(\bm x)\big)^k  dP_0(\bm x, y) + \int_{\mathcal{X} \times \RR}  \big (d^U_2(\bm x)  -  d^L_2(\bm x)  \big)^k dP_0(\bm x, y) \bigg \}.
	\end{align*}
	Using Assumption A7  and the fact that $\vert d^L_i \vert \le K^2$ and  $\vert d^U_i \vert \le 2 C$  for $ i \in \{1, 2\}$ (an assumption that one can always make in  constructing brackets for a bounded class)  we can write 
	\begin{eqnarray*}
		\Vert  D^{-1} h^U   - D^{-1}  h^L  \Vert^2_{B, P_0}  &  \le &   \sum_{k=2}^\infty \frac{1}{k!} \left(\frac{2}{D} \right)^k  \bigg \{ a_0 M^{k-2}_0  k!  (4C)^{k-2}  \int_{\mathcal{X}} \big(d^U_1(\bm x)  -  d^L_1(\bm x)\big)^2  dP_0(\bm x, y)   \\
		&& \hspace{1.8cm} +  (4C)^{k-2}   \int_{\mathcal{X}} \big(d^U_1(\bm x)  -  d^L_1(\bm x)\big)^2  dP_0(\bm x, y) \bigg \}  \\
		&  = & \left(\frac{2}{D} \right)^2  \bigg \{ a_0  \sum_{k=2}^\infty \left(\frac{8M_0 C}{D}\right)^{k-2}  +   \sum_{k=2}^\infty  \frac{1}{k!} \left(\frac{8C}{D}\right)^{k-2} \bigg \} \ \epsilon^2  \\
		& \le &  \left(\frac{2}{D} \right)^2  \bigg \{ a_0  \sum_{k=0}^\infty \left(\frac{8M_0 C}{D}\right)^{k}  +  \frac{1}{2} \sum_{k=0}^\infty  \frac{1}{k!} \left(\frac{8C}{D}\right)^{k} \bigg \} \ \epsilon^2,  \  \ \textrm{using $k! \ge 2 (k-2)!$}.
	\end{eqnarray*}
	Taking $D =\tilde{D} =  16 M_0 C$ yields
	\begin{eqnarray*}
		\Vert  \tilde{D}^{-1} h^U   - \tilde{D}^{-1}  h^L  \Vert^2_{B, P_0}   \le \left(\frac{2}{\tilde{D}} \right)^2  \left( 2 a_0   +  \frac{1}{2}  e^{(2M_0)^{-1}}  \right)\ \epsilon^2,
	\end{eqnarray*}
	which in turn implies that
	\begin{eqnarray*}
		\Vert  \tilde{D}^{-1} h^U   - \tilde{D}^{-1}  h^L  \Vert_{B, P_0}   \le  \frac{1}{8M_0} \left( 2 a_0   +  \frac{1}{2}  e^{(2M_0)^{-1}}  \right)^{1/2} \ \frac{1}{C}  \ \epsilon.
	\end{eqnarray*}
	This completes the proof of the first claim about the entropy bound of the class $\widetilde{\mathcal{H}}$ with $\tilde{D}$ defined as above. Now for a given element $\tilde{h} \in \widetilde{\mathcal{H}}$ we calculate
	\begin{eqnarray*}
		\Vert \tilde h \Vert^2_{B, P_0} & = & 2 \sum_{k=2}^\infty \frac{1}{\tilde{D}^k} \frac{1}{k!} \int_{\mathcal{X} \times \RR}  \big \vert y d_1(\bma^T \bm x)  - d_2(\bma^T \bm x) \big \vert^k  dP_0(\bm x, y)\\
		& \le &  2 \sum_{k=2}^\infty \frac{2^{k-1}}{\tilde{D}^k} \frac{1}{k!} \int_{\mathcal{X} \times \RR}  \left \{ \big \vert y  \big \vert^k  \big \vert d_1(\bma^T \bm x)  \big \vert^k + \big \vert d_2(\bma^T \bm x)  \big \vert^k  dP_0(\bm x, y)\right \}  \\
		& \le  &  2 \sum_{k=2}^\infty \frac{2^{k-1}}{\tilde{D}^k} \frac{1}{k!} (2C)^{k-2}  \left \{ a_0 M^{k-2}_0  k!   \int_{\mathcal{X} \times \RR}  \big \vert d_1(\bma^T \bm x)  \big \vert^2   dP_0(\bm x, y)+  \int_{\mathcal{X} \times \RR}   \big \vert d_2(\bma^T \bm x)  \big \vert^2  dP_0(\bm x, y)   \right \} \\
		& \le & \left(\frac{2}{\tilde{D}}\right)^2 \left\{  a_0 \sum_{k=2}^\infty \left(\frac{8M_0 C}{\tilde{D}}\right)^{k-2}   +   \sum_{k=2}^\infty \frac{1}{k!} \left(\frac{8C}{\tilde{D}} \right)^{k-2}\right \}  \  v^2 \ \  \textrm{using the definition of the class}\\
		& \le & \left(\frac{2}{\tilde{D}} \right)^2  \left( 2 a_0   +  \frac{1}{2}  e^{(2M_0)^{-1}}  \right) \ v^2 \  \ \textrm{using arguments as above,} 
	\end{eqnarray*}
	implying that 
	\begin{eqnarray*}
		\Vert \tilde h \Vert_{B, P_0}  \le 2 \left( 2 a_0   +  \frac{1}{2}  e^{(2M_0)^{-1}}  \right)^{1/2}   \frac{1}{\tilde{D}} \ v \lesssim \tilde{D}^{-1} v,
	\end{eqnarray*} 
	as claimed.
\end{proof}

\bigskip

Recall that $\mcX$ is the support of the covariates $X_i, \ i=1, \ldots, n$. Let us denote by $\mcX_j, \ j =1, \ldots, d$ the set of the $j$-th projection of $\bm x \in \mcX$. Also, consider some function $s$ that $d-1$ times continuously differentiable on a convex and bounded set $\mathcal{C} \in \RR^{d-1}$ with a nonempty interior such that there exists $M > 0$ satisfying 
\begin{eqnarray}\label{DiffAssump2}
\max_{k. \le d-1} \sup_{\bmb \in \mathcal{C} } \vert D^{k} s(\bmb)  \vert  \le M
%+  \max_{1 \le k \le d-1}  \sup_{\bmb \in \mathcal{C}}  \left \vert \frac{\partial s(\bmb)}{\partial \beta_k}\right \vert \le M.
\end{eqnarray}
where $k = (k_1, \ldots, k_d)$  with $k_j$ an integer $\in \{0, \ldots, d-1 \}$, $k. = \sum_{i=1}^{d-1} k_i$ and 
\begin{eqnarray*}
	D^k \equiv \frac{\partial^{k.} s(\bmb)}{\partial \beta_{k_1} \ldots  \partial \beta_{k_d}}.
\end{eqnarray*}
Consider now the class 
\begin{eqnarray}\label{QjRC}
\mathcal{Q}_{jRC}  =\Big \{q_j(\bm x, y)  = s(\bm \beta)  x_j (y - \psi(\bma^T \bm x)), \  (\bma , \bmb, \psi) \in \mathcal{B}(\alpha_0, \delta_0)  \times  \mathcal{C} \times \mathcal{M}_{RC}  \ \textrm{and} \  (x_j, y) \in \mcX_j \times \in \RR    \Big \}. 
\end{eqnarray}
Define    
\begin{eqnarray*}
	\widetilde{\mathcal{Q}}_{jRC}  =  \Big \{\tilde{q}_j: \tilde{q}_j= q_j \tilde{D}^{-1},  q_j \in \mathcal{Q}_{RC}   \Big\},
\end{eqnarray*}
where $\tilde{D}   > 0$ is some appropriate constant.

\medskip
\begin{lemma}\label{lemma: EntropyNew}
	Let $\epsilon \in (0,1)  $ and  $C  \ge \max(1, 2M_0,  M e^{-1/4} 2^{-1/2} R^{-1}, 2a^{1/2}_0 e^{-1/2})  $. Then, there exist some constant $B_1 > 0$  and $B_2$  depending on $a_0$, $M_0$,  and $R$   such that 
	\begin{eqnarray*}
		H_B\Big(\epsilon, \widetilde{\mathcal{Q}}_{jRC}, \Vert \cdot \Vert_{B, P_0} \Big)  \le \frac{B_1  C}{  \epsilon}, \ \  \Vert \tilde{q}_j \Vert_{B, P_0}  \le B_2,
	\end{eqnarray*}
	if $\tilde{D} = 8 MR C$ where $a_0$ and $M_0$ are the same positive constants in Assumption A6, and $M$ is from (\ref{DiffAssump2}).
\end{lemma}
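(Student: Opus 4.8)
The plan is to follow the strategy of Lemma~\ref{lemma: BernEntropyGeneral}, viewing $q_j$ as a function that is affine in $y$ with bounded coefficients, and exploiting that the scalar $s(\bmb)$ and the fixed coordinate $x_j$ enter only through the multiplicative bounds $M$ and $R$. First I would write
\begin{align*}
q_j(\bm x, y) = \bigl(s(\bmb)\, x_j\bigr)\, y \;-\; s(\bmb)\, x_j\, \psi(\bma^T\bm x),
\end{align*}
which has exactly the shape $y\,u_1(\bm x) - u_2(\bm x)$ of the members of $\mathcal{H}_{RCv}$ in (\ref{HRCv}), now with $u_1 = s(\bmb) x_j$ and $u_2 = s(\bmb) x_j \psi(\bma^T\bm x)$. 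By Assumption A1 we have $\vert x_j\vert \le R$, and by (\ref{DiffAssump2}) we have $\vert s(\bmb)\vert \le M$, so both coefficient classes are uniformly bounded, $\vert u_1\vert \le MR$ and $\vert u_2\vert \le MRC$, and $u_2$ lies in a fixed multiple of the class $\mathcal{G}_{RC}$ of (\ref{GRK}).

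The first step is to bound the $L_2(P_0)$ bracketing entropy of the classes of coefficients $u_1$ and $u_2$ as $(\bma,\bmb,\psi)$ range over $\mathcal{B}(\bma_0,\delta_0)\times\mathcal{C}\times\mathcal{M}_{RC}$. The $\psi$-dependence is handled by Lemma~\ref{lemma:EntropyGRK}, which gives $H_B(\epsilon,\mathcal{G}_{RC},\Vert\cdot\Vert_{P_0})\le AC/\epsilon$; multiplication by the fixed bounded functions $x_j$ and $s(\bmb)$ only rescales brackets and preserves this rate. The $\bmb$-dependence enters only through the scalar $s(\bmb)$, which by (\ref{DiffAssump2}) is Lipschitz on the bounded convex set $\mathcal{C}\subset\R^{d-1}$, so partitioning $\mathcal{C}$ into cells on which $s$ varies by at most $\epsilon$ costs only an entropy of order $(d-1)\log(1/\epsilon)$, dominated by $C/\epsilon$. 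Combining the two factors through the product Lemma~\ref{EntropyF1F2} and the difference Lemma~\ref{lemma:EntropyDiff} gives an $L_2(P_0)$ bracketing entropy of order $C/\epsilon$; since $u_1$ and $u_2$ share the factor $s(\bmb)x_j$, I would bracket $s(\bmb)$ and $\psi$ once each and assemble both coefficient brackets from the same pieces.

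Next I would construct brackets for $q_j$ itself by the sign-of-$y$ splitting used verbatim in Lemma~\ref{lemma: BernEntropyGeneral}, setting $h^L = y u_1^L - u_2^U$ and $h^U = y u_1^U - u_2^L$ on $\{y\ge0\}$ and swapping the roles of $u_1^L,u_1^U$ on $\{y<0\}$; Assumption A6, in the form $\int y^2 f^2\,dP_0 \le 2a_0\int f^2\,dG$, turns the $L_2(P_0)$ sizes of the factor brackets into an $L_2(P_0)$ size of order $\epsilon$ for the $q_j$-bracket. To pass to the Bernstein norm I would rescale by $\tilde D = 8MRC$ and expand
\begin{align*}
\bigl\Vert \tilde D^{-1}(h^U - h^L)\bigr\Vert_{B,P_0}^2 = 2\sum_{k\ge2} \frac{1}{k!\,\tilde D^k}\int \bigl\vert h^U - h^L\bigr\vert^k\,dP_0,
\end{align*}
bounding $\int \vert y\vert^k f^2\,dP_0$ by the factorial moment bound of Assumption A7 and summing the resulting geometric and exponential series. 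The purpose of the choice $\tilde D = 8MRC$ together with $C \ge \max(1, 2M_0, Me^{-1/4}2^{-1/2}R^{-1}, 2a_0^{1/2}e^{-1/2})$ is precisely to keep every ratio below one: the term coming from $y\,u_1$ produces the ratio $4MRM_0/\tilde D = M_0/(2C)\le 1/4$ once $C\ge 2M_0$, and the term coming from $u_2$ produces $4MRC/\tilde D = 1/2$. This yields $H_B(\epsilon, \widetilde{\mathcal{Q}}_{jRC}, \Vert\cdot\Vert_{B,P_0}) \le B_1 C/\epsilon$, and the envelope bound $\Vert \tilde q_j\Vert_{B,P_0}\le B_2$ follows from the same expansion applied to a single $q_j$, where the remaining conditions on $C$ make the corresponding series bounded by a constant depending only on $a_0, M_0, R$.

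The main obstacle is the Bernstein-norm bookkeeping in this last step: one must verify that the single normalizing constant $\tilde D$ simultaneously tames the series generated by the sub-exponential moments of $Y$ (the source, through Assumption A7, of the $M_0$ and $a_0$ in the lower bounds on $C$) and the series generated by the bounded term $u_2$, while tracking the dependence of $B_1$ and $B_2$ on $a_0, M_0, R$. Care is also needed in the sign-splitting, which should be carried out on the nine-case partition of $\mathcal{X}\times\R$ as in Lemma~\ref{EntropyF1F2}, and in respecting the shared factor $s(\bmb) x_j$ so that the bracket sizes for $u_1$ and $u_2$ are not double-counted.
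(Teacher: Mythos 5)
Your proposal is correct and reaches the stated bounds, but it deviates from the paper's proof in two technical choices worth recording. First, the assembly of the brackets: you write $q_j = y\,u_1 - u_2$ with $u_1 = s(\bmb)x_j$, $u_2 = s(\bmb)x_j\psi(\bma^T\bm x)$ and split on the sign of $y$, importing the template of Lemma~\ref{lemma: BernEntropyGeneral}; this forces the moment bounds of Assumptions A6--A7 to enter already at the $L_2(P_0)$-bracketing stage, to convert $\int y^2\bigl(u_1^U-u_1^L\bigr)^2\,dP_0$ into an $\epsilon^2$-bound. The paper instead brackets $k_j = x_j\bigl(y-\psi(\bma^T\bm x)\bigr)$ by splitting on the sign of $x_j$ (see (\ref{kLkU})): since the term $x_j y$ is a fixed function common to both bracket endpoints, it cancels in the width, so the $L_2$ entropy bound (\ref{EntropyP0}) needs no moment assumptions at all, and $Y$'s tails are confronted only once, at the Bernstein rescaling. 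Second, the $s(\bmb)$ factor: you cover the bounded set $\mathcal{C}$ by cells on which the Lipschitz function $s$ varies by at most $\epsilon$, paying only $O\bigl((d-1)\log(1/\epsilon)\bigr)$ in entropy, whereas the paper treats $s$ as a member of the smooth class $C^{d-1}_{\tilde{M}}$ and invokes Theorem 2.7.1 of \cite{vdvwe:96} (entropy of order $1/\epsilon$), then multiplies the two classes via Lemma~\ref{EntropyF1F2}; your covering argument is more elementary and gives a sharper (though here immaterial, since both are dominated by $C/\epsilon$) bound. The final Bernstein computation is the same in both: with $\tilde D = 8MRC$ the $u_1$-series has ratio $M_0/(2C)\le 1/4$ under $C\ge 2M_0$, the $u_2$-series is factorially damped, and the remaining lower bounds on $C$ make the envelope constant $B_2$ depend only on $a_0$, $M_0$, $R$ (the factor $M$ cancelling against $\tilde D$), exactly as in the paper.
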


\medskip

\begin{proof}
	Fix $ j \in \{1, \ldots, d \}$. The proof  of this lemma uses similar techniques as in showing Lemma \ref{lemma: BernEntropyGeneral}.  Let $(g^L, g^U)$ be $\epsilon$-brackets for the class $\mathcal{G}_{RC}$.   Using the result of  Lemma \ref{lemma:EntropyGRK}  we know that there are at most $N \le \exp(A C/\epsilon)$ such brackets covering $\mathcal{G}_{RC}$ for some constant $A > 0$. Define 
	\begin{eqnarray}\label{kLkU}
	\Big(k^L(\bm x, y), k^U(\bm x, y)  \Big)  =  \left \{
	\begin{array}{ll}
	\Big( x_j  (y -  g^L(\bm x) ),  x_j  (y -  g^U(\bm x) )  \Big), \  \textrm{if $x_j \ge 0 $  }  \\
	\Big( x_j  (y -  g^U(\bm x) ),  x_j  (y -  g^L(\bm x) )  \Big), \  \textrm{if $x_j < 0 $  }.
	\end{array}
	\right. 
	\end{eqnarray}
	Then, the collection of all possible pairs $(q^L, q^U)$ form brackets for the class of functions 
	\begin{eqnarray*}
		\mathcal{K}_{jRC} \equiv \Big \{k_j(\bm x, y)  =  x_j (y - \psi(\bma^T \bm x)), \  (\bma , \psi) \in \mathcal{B}(\alpha_0, \delta_0) \times \mathcal{M}_{RC}  \ \textrm{and} \  (x_j,\bm x,  y) \in \mcX_j \times \mcX \times \RR    \Big \}.
	\end{eqnarray*}
	Furthermore we have that 
	\begin{eqnarray*}
		\Vert k^U   - k^L \Vert^2_{P_0}  &=  &  \int_{\mcX }  x^2_j  \big(g^U(\bm x)   - g^L(\bm x) \big)^2 dG(\bm x) \\
		& \le   &  \Vert \bm x \Vert^2_2   \  \int_{\mcX } \big(g^U(\bm x)   - g^L(\bm x) \big)^2 dG(\bm x) \le R^2 \epsilon^2.
	\end{eqnarray*}
	This implies that 
	\begin{align*}
	H_B\Big(\epsilon, \mathcal{K}_{jRC}, \Vert \cdot \Vert_{P_0} \Big)  \le \frac{ARC}{\epsilon},
	\end{align*}
	where $A$ is the same constant of  Lemma \ref{lemma:EntropyGRK}.  Furthermore, the assumption in (\ref{DiffAssump})  implies that the function $s$ belongs to $C^{d-1}_{\tilde{M}}$ as defined in Section 2.7 in \cite{vdvwe:96},  with $\tilde{M} = 2M$. Using now Theorem 2.7.1 of \cite{vdvwe:96} it follows that there exists some constant $B > 0$ such that 
	\begin{eqnarray*}
		\log N\Big(\epsilon, C^{d-1}_{\tilde{M}}, \Vert \cdot \Vert_\infty \Big)  \le B  \left(\frac{1}{\epsilon}\right)^{d/(d-1)} \le    \frac{B}{\epsilon}.
	\end{eqnarray*}
	This also implies that 
	\begin{eqnarray*}
		H_B\Big(\epsilon, C^{d-1}_{\tilde{M}}, \Vert \cdot \Vert_\infty \Big) = \log N\Big(\epsilon/2, C^{d-1}_{\tilde{M}}, \Vert \cdot \Vert_\infty \Big)  \le  \frac{2 B}{\epsilon}.
	\end{eqnarray*}
	Indeed, for an arbitrary $s \in  C^{d-1}_{\tilde{M}}$ there exists $s_i, i \in \{1, \ldots, N \}$,  with $ N = N\big(\epsilon/2, C^{d-1}_{\tilde{M}}, \Vert \cdot \Vert_\infty \big)$, such that $ \Vert s - s_i \Vert_\infty \le \epsilon/2$. The claim follows from noting that $(s_i - \epsilon/2, s_i + \epsilon/2)$ is an $\epsilon$-bracket for  $ C^{d-1}_{\tilde{M}}$ with respect to $\Vert \cdot \Vert_\infty$.  Using Lemma \ref{EntropyF1F2} it follows that there exists some constant $L > 0$ such that 
	\begin{align}\label{EntropyP0}
	H_B\Big(\epsilon, \mathcal{Q}_{jRC}, \Vert \cdot \Vert_{P_0} \Big)  \le     L \left(\frac{1}{\epsilon}  +\frac{C}{\epsilon} \right)  \le    \frac{2L C}{\epsilon},
	\end{align}
	using that $C \ge 1$, $d-1 \ge 1$ and $\epsilon \in (0,1)$.  Consider now a constant $D > 0$, and $(q^L, q^U)$ and $\epsilon$-bracket.   From the proof of Lemma \ref{EntropyF1F2} we know that we can restrict attention to the case for example to case 1 assumed to occur for all $(\bm x, \bmb) \in \mcX \times \mathcal{C}$.  In such that we have $q^L = s^L  k^L $ and $q^U = s^U k^U$ where $(s^L, s^U)$ is an $\epsilon$-bracket for $C^1_{\tilde{M}}$equipped with $\Vert \cdot \Vert_\infty$, where the expression of $(k^L, k^U)$ is given in (\ref{kLkU}).  We can now write 
	\begin{eqnarray*}
		\Vert D^{-1}  q^U - D^{-1}  q^L \Vert^2_{B, P_0} &= & 2 \sum_{k=2}^\infty \frac{1}{k!} \frac{1}{D^k} \int_{\mcX \times \RR} \big \vert  s^U k^U   -  s^L k^L \big \vert^k   dP_0 \\
		& \le &   \sum_{k=2}^\infty \frac{2^k}{k!} \frac{1}{D^k} \int_{\mcX \times \RR}  \bigg \{ \big \vert s^U  \big (k^U -  k^L \big) \vert^k    +  \big  \vert k^L \big( s^U - s^L \big) \big \vert^k \bigg \} dP_0 \\
	\end{eqnarray*}
	with 
	\begin{eqnarray*}
		\int_{\mcX \times \RR}  \big \vert s^U  \big (k^U -  k^L \big) \big \vert^k dP_0  \le M^k (2RC)^{k-2}  \int_{\mcX \times \RR} \big (k^U -  k^L \big)^2 dP_0   = M^2 (2MC R)^{k-2}  \epsilon^2 
	\end{eqnarray*}
	where we used the fact that $\vert s \vert \le M $ by assumption of the lemma  (implying that we can constructs brackets $(s^L, s^U)$ satisfying the same property), and $k^U - k^U =  x_j (g^U- g^L) \le 2R C$. Also, if we assume without loss of generality that $x_j \ge 0$ is satisfied for all $\bm x \in \mcX$ we have that 
	\begin{eqnarray*}
		\int_{\mcX \times \RR}  \big  \vert k^L \big( s^U - s^L \big) \big \vert^k  dP_0  &\le &  (2M)^{k-2}  \int_{\mcX \times \RR}  \big  \vert x_j (y - g^L(\bm x))  \vert^k dP_0(\bm x, y)  \times \epsilon^2\\
		& \le &  (2M)^{k-2}  R^k 2^{k-1}  \int_{\mcX \times \RR}  \Big \{ \vert y \vert^k + \left \vert g^L(\bm x ) \right \vert ^k \Big \}  dP_0(\bm x, y)  \times \epsilon^2 \\
		& \le &  (2M)^{k-2}  R^k 2^{k-1}  \Big (  a_0  M^{k-2}_0  k!  +  C^k \Big)  \epsilon^2.
	\end{eqnarray*}
	Putting these inequalities together and after some algebra we get  
	\begin{align*}
	&\Vert D^{-1}  q^U - D^{-1}  q^L \Vert^2_{B, P_0}\\
	&\le  \left(\frac{1}{2} \left( \frac{2M}{D}\right)^{2} e^{4MCR/D}  +  \left(\frac{2RC}{D}\right)^2  e^{8MCR/D}   + 2 a_0  \left( \frac{2R}{D}\right)^{2} \frac{1}{1 - 8MM_0R/D}) \right) \epsilon^2.
	\end{align*}
	Now let us choose $\tilde{D} = D \ge \max(16 M M_0 R, 8 MR C)$. In particular, we can assume that $C$ is large enough so that $\max(16 M M_0 R, 8 MR C)  = 8 MR C  = \tilde{D}$ (or equivalently $C \ge 2 M_0$).   Then, $4MCR/\tilde{D} = 1/2$, $8MCR/\tilde{D} = 1/4 $, and $8MM_0R/\tilde{D} = M_0/C \le 1/2$. Therefore,  
	\begin{eqnarray*}
		\Vert \tilde{D}^{-1}  q^U - \tilde{D}^{-1}  q^L \Vert^2_{B, P_0}  &\le  & \left(\frac{1}{2} \left( \frac{2M}{\tilde{D}}\right)^{2} e^{1/2}  +  \left(\frac{2RC}{\tilde{D}}\right)^2  e   + 4 a_0  \left( \frac{2R}{\tilde{D}}\right)^{2}  \right)  \ \epsilon^2\\
		& = &  \left(2  M^2 e^{1/2}  + 4 R^2 e  \ C^2   + 16 a_0   R^2 \right)  \  \frac{1}{\tilde{D}^2}  \ \epsilon^2  \\
		&  \le &  \frac{\tilde{A} C^2}{\tilde{D}^2}  \  \epsilon^2 =  \frac{\tilde{A}}{64 M^2 R^2} \epsilon^2, 
	\end{eqnarray*}
	if $C$ is large enough, where $\tilde{A} = 2M^2 e^{1/2} + 4 R^2 e  + 16 a_0 R^2$.   It follows that we can find some constant $\tilde{L} > 0$ such that 
	\begin{align*}
	\Vert \tilde{D}^{-1}  q^U -\tilde{D}^{-1}  q^L \Vert_{B, P_0} &\le  \tilde{L} \epsilon.
	\end{align*}
	This in turn implies that
	\begin{align*}
	H_B\big(\tilde{L} \epsilon, \widetilde{Q}_{jRC}, \Vert \cdot \Vert_{B, P_0}\big)  &  \le    H_B\big(\epsilon, {Q}_{jRC}, \Vert \cdot \Vert_{P_0}\big)  \lesssim   \frac{ 2MC}{\epsilon}  ,
	\end{align*}
	using (\ref{EntropyP0}). Hence,  we can find a constant $B_1 > 0$ such that 
	$$H_B\big( \epsilon, \widetilde{Q}_{jRC}, \Vert \cdot \Vert_{B, P_0}\big)  \le \frac{B_1 C}{  \epsilon}.  $$
	Now we turn to computing an upper bound for $\Vert \tilde{q}_j \Vert_{B, P_0}$. We have 
	\begin{eqnarray*}
		\Vert \tilde{q}_j\Vert^2_{B, P_0}  &= & 2 \sum_{k=2}^\infty \frac{1}{k!}  D^{-k} \int_{\mcX \times \RR}  \vert s(\bmb) \vert^k \big \vert x_j \big( y - \psi(\bma^T \bm x) \big)  \big \vert^k dP_0(\bm x, y) \\
		& \le &  \sum_{k=2}^\infty \frac{1}{k!}  2^k D^{-k} (RM)^k \int_{\mcX \times \RR}  \Big \{\big \vert y \big \vert^k  +  \big \vert \psi(\bma^T \bm x)  \big \vert^k \Big \} dP_0(\bm x, y) \\
		& \le &  \sum_{k=2}^\infty \frac{1}{k!} 2^k D^{-k} (RM)^k \Big(a_0 M^{k-2}_0  k!  +  C^k \Big)  \\
		& \le &  a_0 \left(\frac{2MR}{D}\right)^2 \sum_{k=2}^\infty \left(\frac{2RMM_0}{D}\right)^{k-2}  + \frac{1}{2} \left(\frac{2MRC}{D}\right)^2 \sum_{k=2}^\infty \frac{1}{(k-2)!}  \left( \frac{2RMC}{D}  \right)^{k-2}\\
		&\le  &a_0\left(\frac1{2C}\right)^2 \sum_{k=0}^\infty \left(\frac1{4}\right)^k + \frac 12 \left(\frac1{2}\right)^2 \sum_{k=0}^\infty \frac1 {k!}\left(\frac1{2}\right)^k   \\
		&\le &  a_0\left(\frac1{2C}\right)^2 \frac 34 + \frac 12 \left(\frac1{2}\right)^2 e^{1/2} \qquad 
		\textrm{if $D= 4MR C$ and $C \ge \max(1,2 M_0)$}.
	\end{eqnarray*} 
	The proof of the lemma is complete if we write $B_2 = (3a_0/16+  e^{1/2}/8)^{1/2}$. 
	
\end{proof}

\bigskip

In the next lemma, we consider a given a class of functions $\mathcal{F}$ which admits a bounded bracketing entropy with respect to  $\Vert \cdot \Vert_{P_0}$ for $\epsilon \in (0, 1]$. Suppose also that there exists $D > 0$ such that $\Vert f \Vert_\infty \le D$  and $\delta > 0$ such that $\Vert f \Vert_{P_0}   \le \delta$  for all $f \in \mathcal{F}$. Then we can derive an upper bound for the bracketing entropy for the class
\begin{eqnarray}\label{tildeF}
\widetilde{\mathcal{F}}  & = & \Big \{\tilde{f}:  \tilde{f}(\bm x,y)  =  (4M_0 D)^{-1} f(\bm x)  \Big (y - \lambda \psi_0(\bma^T_0  \bm x)  \Big),  (\bm x, y) \in \mathcal{X} \times \RR \  \textrm{and}  \  f \in \mathcal{F}  \Big \}  \notag \\
&&
\end{eqnarray}
with respect to the Bernstein norm. Here, $M_0$ is the same constant from Assumption A6 and $ \tilde{D}$ is a positive constant that will be determined below.

\medskip

\begin{lemma}\label{Bernsteinfixed}
	Let $\mathcal{F}$ be a class of functions satisfying the conditions above. Then,
	\begin{eqnarray*}
		H_B(\epsilon, \widetilde{\mathcal{F}},  \Vert \cdot \Vert_{B, P_0})  \le  H_B(\epsilon \tilde{D}^{-1}, \mathcal{F},  \Vert \cdot \Vert_{P_0}), \  \ \textrm{and} \  \  \Vert \tilde{f} \Vert_{B, P_0}  \le \tilde{D} \delta
	\end{eqnarray*}
	where 
	\begin{eqnarray}\label{tildeD}
	\tilde{D}  =\left(\frac{ a_0}{2 M^2_0}   +  \frac{\lambda^2 K^2_0}{8M^2_0} e^{\lambda K_0  (2M_0)^{-1}} \right)  ^{1/2}  D^{-1} ,
	\end{eqnarray}
	and $a_0, M_0$ are the same constants from Assumption A6.
\end{lemma}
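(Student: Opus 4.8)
The plan is to derive both assertions from a single Bernstein-norm estimate, exactly as in the proofs of Lemma \ref{lemma: BernEntropyGeneral} and Lemma \ref{lemma: EntropyNew}. Writing the Bernstein norm through its series representation,
\begin{eqnarray*}
\Vert h \Vert_{B, P_0}^2 = 2\sum_{k=2}^\infty \frac{1}{k!}\int_{\mathcal{X}\times\R} |h|^k\, dP_0,
\end{eqnarray*}
the whole argument reduces to the following Lipschitz-type bound: for any measurable $\phi$ on $\mathcal{X}$ with $\Vert \phi\Vert_\infty \le D$, the function $(\bm x, y)\mapsto (4M_0 D)^{-1}\phi(\bm x)\bigl(y-\lambda\psi_0(\bma_0^T\bm x)\bigr)$ has Bernstein norm at most $\tilde D\,\Vert\phi\Vert_{P_0}$, with $\tilde D$ as in (\ref{tildeD}). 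Taking $\phi=f$ and using $\Vert f\Vert_{P_0}\le\delta$ immediately yields the second claim $\Vert\tilde f\Vert_{B,P_0}\le\tilde D\delta$.

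To prove this core bound I would expand the $k$-th power using $|a+b|^k\le 2^{k-1}(|a|^k+|b|^k)$ to separate the $y$-part from the $\psi_0$-part, and factor $|\phi|^k=|\phi|^2|\phi|^{k-2}\le|\phi|^2 D^{k-2}$. The sub-exponential moment condition of Assumption A6, in the form $\E\bigl[|Y|^k\mid\bm X=\bm x\bigr]\le a_0 M_0^{k-2}k!$, together with $|\psi_0|\le K_0$, then controls the two resulting families of integrals by $a_0 M_0^{k-2}k!\,\Vert\phi\Vert_{P_0}^2$ and $(\lambda K_0)^k\Vert\phi\Vert_{P_0}^2$ respectively. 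After dividing by $k!$ and inserting the normalisation $(4M_0D)^{-k}$, the first family becomes a geometric series of ratio $1/2$ summing to $\tfrac12 a_0 M_0^{-2}D^{-2}$, while the second, after the bound $\sum_{k\ge 2}x^k/k!\le\tfrac12 x^2 e^{x}$ with $x=\lambda K_0/(2M_0)$, contributes $\tfrac{\lambda^2K_0^2}{8M_0^2 D^2}e^{\lambda K_0(2M_0)^{-1}}$. Their sum is exactly $\tilde D^2$, which proves the estimate; the purpose of the factor $4M_0D$ in the definition of $\widetilde{\mathcal F}$ is precisely to force both geometric ratios strictly below one.

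For the entropy bound I would start from a minimal family of $\epsilon\tilde D^{-1}$-brackets $[f^L_j,f^U_j]$ for $\mathcal F$ with respect to $\Vert\cdot\Vert_{P_0}$, taken within the range of the class, and turn each into one bracket for $\widetilde{\mathcal F}$ by the sign-splitting device of Lemma \ref{lemma: BernEntropyGeneral}: set $\tilde f^U$ equal to $(4M_0D)^{-1}f^U(y-\lambda\psi_0)$ where $y-\lambda\psi_0\ge0$ and to $(4M_0D)^{-1}f^L(y-\lambda\psi_0)$ where $y-\lambda\psi_0<0$, and symmetrically for $\tilde f^L$, so that $\tilde f^L\le\tilde f\le\tilde f^U$ and $\tilde f^U-\tilde f^L=(4M_0D)^{-1}|y-\lambda\psi_0|\,(f^U-f^L)$. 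Applying the core estimate with $\phi=f^U-f^L$ then gives $\Vert\tilde f^U-\tilde f^L\Vert_{B,P_0}\le\tilde D\,\Vert f^U-f^L\Vert_{P_0}\le\epsilon$, so that $N_B(\epsilon,\widetilde{\mathcal F},\Vert\cdot\Vert_{B,P_0})\le N_B(\epsilon\tilde D^{-1},\mathcal F,\Vert\cdot\Vert_{P_0})$, which is the asserted inequality. I expect the delicate point to be this last step: the core estimate was derived under $\Vert\phi\Vert_\infty\le D$, whereas a bracket difference only satisfies $\Vert f^U-f^L\Vert_\infty\le 2D$ a priori, so one must keep the pointwise size of $f^U-f^L$ entering $|\phi|^{k-2}$ compatible with the ratio-$1/2$ geometric series on which the constant $\tilde D$ rests. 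This interplay between the normalising factor $4M_0D$, the sign bookkeeping in the bracket construction, and the sub-exponential tail of Assumption A6 is the real crux of the proof.
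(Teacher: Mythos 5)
Your proposal is correct and takes essentially the same route as the paper's proof: the same sign-splitting construction of the brackets according to the sign of $y-\lambda\psi_0(\bma_0^T\bm x)$, the same series expansion of the Bernstein norm with the $2^{k-1}$ separation into the $y$-part and the $\psi_0$-part, Assumption A6 for the $y$-moments, and the same geometric/exponential summations producing exactly the constant $\tilde{D}$ in (\ref{tildeD}); packaging both conclusions as corollaries of a single core estimate is only an organizational difference. The crux you flag at the end (a bracket difference is only bounded by $2D$, not $D$) is in fact not resolved in the paper either: its key display tacitly uses $(U-L)^{k-2}\le D^{k-2}$, i.e.\ assumes $\Vert U-L\Vert_\infty\le D$, and this discrepancy costs only a constant factor (e.g.\ replace the normalization $4M_0D$ by $8M_0D$, or adjust $\tilde D$), which is immaterial for the downstream uses of the lemma, where only the $O(1/\epsilon)$ order of the entropy and the order of $\Vert\tilde f\Vert_{B,P_0}$ matter.
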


\bigskip

\begin{proof}
	Let $(L, U)$ be an $\epsilon$-bracket for $\mathcal{F}$ with respect to $\Vert \cdot \Vert_{P_0}$. Consider the class 
	$$
	\mathcal{F}' = \Big \{ f': f'(\bm x, y)=f(\bm x)  \Big (y - \lambda \psi_0(\bma_0^T\bm x)  \Big),  (\bm x, y) \in \mathcal{X} \times \RR \  \textrm{and}  \  f \in \mathcal{F}  \Big \}. 
	$$
	Then for $f' \in \mathcal{F}'$ we have 
	\begin{eqnarray*}
		L(\bm x)   (y - \lambda \psi_0(\bma^T_0 \bm x) )  \le f'(\bm x, y)  \le   U(\bm x)   (y - \lambda \psi_0(\bma^T_0  \bm x) ),  \  \textrm{if   $y - \lambda \psi_0(\bma^T_0\bm   x)   \ge 0 $ \  or }  \\
		U(\bm x)   (y - \lambda \psi_0(\bma^T_0  \bm x) )  \le f'(\bm x, y) \le   L(\bm x)   (y - \lambda \psi_0(\bma^T_0 \bm x) ),  \  \textrm{if   $y - \lambda \psi_0(\bma^T_0 \bm x)   < 0 $ }.
	\end{eqnarray*}
	Let $(L', U')$ denote the new bracket.  Using the definition of the Bernstein norm, convexity of  $x \mapsto x^k,  \ k \ge 2$ and $\Vert \psi_0 \Vert_\infty \le K_0$  we have that  
	\begin{eqnarray*}
		&& \big \Vert( U'  -  L')  (4M_0 D)^{-1} \big \Vert^2_{B, P_0} \\
		&& =   2  \sum_{k=2}^\infty  \frac{(4M_0 D)^{-k}}{k!}  \int_{\mathcal{X} \times \RR}  (U(\bm x)   - L(\bm x))^k   \vert y - \lambda \psi_0(\bma_0^T\bm x)  \vert^k  dP_0(\bm x, y) \\
		&& \le 2  \sum_{k=2}^\infty  \frac{(4M_0 D)^{-k}}{k!}  \int_{\mathcal{X} \times \RR}  (U(\bm x)   - L(\bm x))^k   2^{k-1} \Big( \vert y  \vert^k + \lambda^k  \vert  \psi_0(\bma_0^T\bm x) \vert^k  \Big) dP_0(\bm x, y) \\
		&& \le   \sum_{k=2}^\infty  \frac{1}{2^k D^k M^k _0k!}   \int_{\mathcal{X} \times \RR}  (U(\bm x)   - L(\bm x))^k   \big (\vert y  \vert^k +  \lambda^k K^k_0  \big)  dP_0(\bm x, y) \\
		&&  \le    \sum_{k=2}^\infty  \frac{1}{ 2^k D^k M^k_0 k!}   \int_{\mathcal{X} \times \RR}  (U(\bm x)   - L(\bm x))^k   (a_0  k!  M^{k-2}_0 + \lambda^k  K^k_0 )  d P_0(\bm x, y) \\
		&& \le   \frac{a_0}{4M^2_0  D^2}  \sum_{k=2}^\infty  \frac{1}{2^{k-2}}  \int_{\mathcal{X} \times \RR} (U(\bm x)   - L(\bm x))^2   g(\bm x) dx     + \frac{\lambda^2 K^2_0}{4D^2M^2_0} \sum_{k=2}^\infty \left(\frac{\lambda K_0}{2M_0}\right)^{k-2} \frac{1}{k!}  \int_{\mathcal{X} \times \RR} (U(\bm x)   - L(\bm x))^2   g(\bm x) d\bm x      \\
		&& \le \frac{a_0}{2M^2_0  D^2}   \epsilon^2  +  \frac{\lambda^2 K^2_0}{8D^2M^2_0} \sum_{k=2}^\infty \frac{1}{(k-2)!} \left(\frac{\lambda K_0}{2M_0}\right)^{k-2}   \ \epsilon^2 \\
		&&  \le   \left(\frac{ a_0}{2 M^2_0  D^2}   +  \frac{\lambda^2 K^2_0}{8D^2M^2_0} e^{\lambda K_0  (2M_0)^{-1}} \right)  \epsilon^2 =   \tilde{D}^2  \ \epsilon^2.
	\end{eqnarray*}
	This implies that 
	\begin{align*}
	H_B\Big(\epsilon \tilde{D}, \widetilde{\mathcal{F}}, \Vert \cdot  \Vert_{B, P_0}  \Big) \le H_B\Big(\epsilon , \mathcal{F}, \Vert \cdot  \Vert_{P_0}  \Big),
	\end{align*}
	or equivalently
	\begin{align*}
	H_B\Big(\epsilon , \mathcal{F}', \Vert \cdot  \Vert_{B, P_0}  \Big) \le H_B\Big(\epsilon \tilde{D}^{-1}, \mathcal{F}, \Vert \cdot  \Vert_{P_0}  \Big).
	\end{align*}
	Using similar calculations we can write  
	\begin{eqnarray*}
		\Vert  \tilde{f} \Vert^2_{B, \PP}   &=  & 2  \sum_{k=2}^\infty  \frac{1}{(4M_0D)^k}  \frac{1}{k!}    \int_{\mathcal{X} \times \RR}   \vert f(\bm x) \vert^k  \vert y - \lambda \psi_0(\bma_0^T\bm x)  \vert^k  dP_(\bm x, y)  \\
		& \le &  \frac{1}{D^2}  \sum_{k=2}^\infty  \frac{1}{(2M_0)^k}  \frac{1}{k!}    \int_{\mathcal{X} \times \RR}   f(\bm x)^2 (a_0  k!  M_0^{k-2} + \lambda^k  K^k_0 )   g(\bm x) d \bmx  \\
		&  \le &    \left(  \frac{a_0}{4M_0^2 D^2}  \sum_{k=2}^\infty  \frac{1}{2^{k-2}}      +   \frac{\lambda^2 K^2_0}{8D^2  M_0^2}  \sum_{k=2}^\infty \left(\frac{\lambda K_0}{2M_0} \right)^{k-2} \frac{1}{(k-2)!}  \right)  \int_{\mathcal{X} \times \RR}  f(\bm x)^2   g(\bm x)  d\bm x \\
		& \le &  \tilde{D}^2  \delta^2,
	\end{eqnarray*}
	which completes the proof. 
	
\end{proof}

\bigskip

In the next corollary, we consider the class 
\begin{eqnarray*}
	\mathcal{F} = \Big \{x \mapsto f_{\bma}(\bm x)  = E_{i,\bma_0}(\bma^T_0 \bm x) -  E_{i,\bma}(\bma^T\bm x), \  \bm x \in \mathcal{X}, \bma \in \mathcal{B}(\bma_0, \delta)   \Big \},
\end{eqnarray*}
where $E_{i,\bma}(u) =\E\left\{X_i | \bma^T\bmX = u \right\}$ for $i \in \{1, \ldots, d \}$ and $\delta \in (0, \delta_0)$.  Using the same arguments in the proof of Lemma \ref{lem: continuity} with $f(\bm x)= x_i$ it follows that for all $x \in \mathcal{X}$  and $\bma, \bma' \in \mathcal{B}(\bma_0, \delta)$
\begin{eqnarray*}
	\vert f_{\bma'}(\bm x)  -   f_{\bma}(\bm x)  \vert \le  M \Vert \bma' - \bma \Vert ,
\end{eqnarray*} 
for the same constant $M$ of that lemma. Now, we can apply Theorem 2.7.11 of \cite{vdvwe:96} to conclude that 
\begin{eqnarray*}
	N_B\big(2 \epsilon , \mathcal{F}, \Vert \cdot \Vert_{P_0} \big)  \le N\big(\epsilon, \mathcal{B}(\bma_0, \delta), \Vert \cdot \Vert \big),
\end{eqnarray*}  
where $N\big(\epsilon,\mathcal{B}(\bma_0, \delta), \Vert \cdot \Vert \big)$ is the $\epsilon$-covering number for  $\mathcal{B}(\bma_0, \delta)$ with respect to the norm $\Vert \cdot \Vert$ which is of order $(\delta / \epsilon)^d$  for $\epsilon \in (0, \delta)$. Hence, using the inequality $\log(\bm x) \le x$ for $x > 0$ we can find a constant $M' > 0$  depending on $d$  such that 
\begin{eqnarray*}
	H_B\big(\epsilon, \mathcal{F}, \Vert \cdot \Vert_{P_0} \big)  \le \frac{M' \delta } {\epsilon}. 
\end{eqnarray*}  
Furthermore, there exists $\tilde{M} >0$ such that $\Vert f \Vert_\infty \le \tilde{M} \delta$ and $\Vert f \Vert_{P_0} \le \tilde{M} \delta$.

\medskip

\begin{lemma}\label{CorBernsteinfixed}
	Let $\mathcal{F}$ be the class of functions as above and consider the related class 
	\begin{eqnarray}\label{F'}
	\mathcal{F}' = \Big \{f': f'(\bm x, y)= f(\bm x)   \Big (y - \lambda \psi_0(\bma^T_0  \bm x)  \Big), (\bm x, y)  \in \mcX \times \RR, f \in \mathcal{F}  \Big\}.
	\end{eqnarray}
	Then, 
	\begin{eqnarray*}
		E[\Vert \mathbb{G}_n \Vert_{\mathcal{F}'  } ]  \lesssim  \delta.
	\end{eqnarray*}
	
\end{lemma}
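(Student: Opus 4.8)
The plan is to reduce the statement to a single application of Lemma 3.4.3 of \cite{vdvwe:96} after passing to the normalized class $\widetilde{\mathcal{F}} = (4M_0 D)^{-1}\mathcal{F}'$ introduced in (\ref{tildeF}), whose Bernstein radius and Bernstein-bracketing entropy have already been controlled in Lemma \ref{Bernsteinfixed}. The crucial structural observation is that, once we rescale, \emph{all} the $\delta$-dependence is carried by one scaling factor $4M_0 D \asymp \delta$, while the normalized class itself has quantities bounded uniformly in $\delta$.

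First I would record the two facts established in the discussion preceding the statement: the bracketing entropy bound $H_B(\epsilon, \mathcal{F}, \Vert\cdot\Vert_{P_0}) \le M'\delta/\epsilon$ and the uniform bounds $\Vert f\Vert_\infty \le \tilde{M}\delta$ and $\Vert f\Vert_{P_0} \le \tilde{M}\delta$ for all $f \in \mathcal{F}$. Applying Lemma \ref{Bernsteinfixed} with sup-norm bound $D = \tilde{M}\delta$ and $L_2$-bound $\tilde{M}\delta$, and with $\tilde{D}$ as in (\ref{tildeD}), I note that $\tilde{D} = \Theta(\delta^{-1})$ precisely because $D \asymp \delta$. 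Consequently the Bernstein radius is
\begin{align*}
\Vert\tilde{f}\Vert_{B,P_0} \le \tilde{D}\,\tilde{M}\delta =: \delta_*,
\end{align*}
a constant independent of $\delta$ (the $\delta$'s cancel), and the Bernstein-bracketing entropy obeys
\begin{align*}
H_B(\epsilon, \widetilde{\mathcal{F}}, \Vert\cdot\Vert_{B,P_0}) \le H_B(\epsilon\tilde{D}^{-1}, \mathcal{F}, \Vert\cdot\Vert_{P_0}) \le \frac{M'\delta\,\tilde{D}}{\epsilon} = \frac{A_*}{\epsilon},
\end{align*}
with $A_*$ again independent of $\delta$ since the factor $\delta$ cancels against $\tilde{D}^{-1} = \Theta(\delta)$.

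With these two uniform bounds in hand I would apply Lemma 3.4.3 of \cite{vdvwe:96} to $\widetilde{\mathcal{F}}$. As the entropy is of the form $A_*/\epsilon$, the quantity $J_n$ of (\ref{Jn}) (with $C = A_*$) satisfies, by (\ref{IneqJn}), $J_n(\delta_*) \le \delta_* + 2A_*^{1/2}\delta_*^{1/2}$, a constant. Hence
\begin{align*}
\E\Big[\Vert\mathbb{G}_n\Vert_{\widetilde{\mathcal{F}}}\Big] \lesssim J_n(\delta_*)\left(1 + \frac{J_n(\delta_*)}{\sqrt{n}\,\delta_*^2}\right) \lesssim 1,
\end{align*}
uniformly in $n$, since $\delta_*$ and $A_*$ do not depend on $\delta$. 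Finally, using the linearity of $\mathbb{G}_n$ and the definition $\widetilde{\mathcal{F}} = (4M_0 D)^{-1}\mathcal{F}'$ from (\ref{tildeF}), one has the pointwise identity $\Vert\mathbb{G}_n\Vert_{\mathcal{F}'} = 4M_0 D\,\Vert\mathbb{G}_n\Vert_{\widetilde{\mathcal{F}}} = 4M_0\tilde{M}\delta\,\Vert\mathbb{G}_n\Vert_{\widetilde{\mathcal{F}}}$, so taking expectations yields $\E[\Vert\mathbb{G}_n\Vert_{\mathcal{F}'}] \lesssim \delta$, as claimed.

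The step I expect to require the most care is the bookkeeping of the $\delta$-dependence: one must check that after rescaling by $(4M_0 D)^{-1}$ with $D \asymp \delta$, both the Bernstein radius $\delta_*$ and the entropy constant $A_*$ of $\widetilde{\mathcal{F}}$ become independent of $\delta$, so that the one surviving factor $4M_0\tilde{M}\delta$ produces exactly the linear-in-$\delta$ rate. This cancellation is precisely what the specific normalization in (\ref{tildeD}) is designed to deliver, which is why Lemma \ref{Bernsteinfixed} is stated with that particular $\tilde{D}$ rather than a generic constant.
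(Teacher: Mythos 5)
Your proposal is correct and follows essentially the same route as the paper's own proof: rescale $\mathcal{F}'$ by the factor $4M_0\tilde{M}\delta$, invoke Lemma \ref{Bernsteinfixed} (with $D=\tilde M\delta$, hence $\tilde D\asymp\delta^{-1}$) to get a $\delta$-free Bernstein radius and entropy bound $\lesssim 1/\epsilon$ for the normalized class, apply Lemma 3.4.3 of \cite{vdvwe:96}, and multiply back by the scaling factor using linearity of $\mathbb{G}_n$. The only difference is that you spell out the cancellation of the $\delta$-dependence explicitly, which the paper dismisses with ``we can show easily''; this is a point in your write-up's favor, not a deviation.
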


\medskip

\begin{proof} Note that  for any function $f' \in \mathcal{F}'$ and constant $C > 0$ we have that $\mathbb{G}_n(f' C^{-1})  = C^{-1}  \mathbb{G}_n f'$ implying that  $ \Vert \mathbb{G}_n \Vert_{\mathcal{F}' } = 4 M_0 \tilde{M} \delta   \Vert \mathbb{G}_n \Vert_{\widetilde{\mathcal{F}}}$, where 
	\begin{eqnarray*}
		\widetilde{\mathcal{F}}  =\Big \{\tilde{f}: \tilde{f}(\bm x, y)=  (4M_0 \tilde{M}\delta)^{-1} f'(\bm x, y), f' \in \mathcal{F}'  \Big\}.
	\end{eqnarray*}
	Note also that the constant $\tilde{D}$ in  Lemma \ref{Bernsteinfixed} is given by $\tilde{D}  \asymp \delta^{-1}$, where $\tilde{D}$  depends on $\tilde{M}$, $a_0$, $M_0$ and $K_0$. Also, using the entropy calculations along with Lemma \ref{Bernsteinfixed} we can show easily that 
	\begin{eqnarray*}
		H_B(\epsilon, \widetilde{\mathcal{F}}, \Vert \cdot \Vert_{B, P_0}  )  \lesssim  \frac{1}{\epsilon},
	\end{eqnarray*}
	and that  $\Vert \tilde{f} \Vert_{B, P_0}  \lesssim 1$. Using Lemma 3.4.3 of \cite{vdvwe:96} it follows that   there exists some constant $B > 0$ such that 
	\begin{eqnarray*}
		E[\Vert \mathbb{G}_n \Vert_{\widetilde{\mathcal{F}}  } ] \lesssim J_n  \left(1 +   \frac{J_n}{\sqrt{n}  B^2} \right),
	\end{eqnarray*}
	with $J_n = \int_0^B \sqrt{1  +  B/\epsilon} d\epsilon$. Hence, $E[\Vert \mathbb{G}_n \Vert_{\widetilde{\mathcal{F}}}] \lesssim 1$ and   $E[\Vert \mathbb{G}_n \Vert_{\mathcal{F}'  } ]  \lesssim \delta$ as claimed.
\end{proof}

\section{Auxiliary results}
\label{supE:AuxiliaryResults}
\begin{proof}[Proof of Lemma \ref{lemma:Moore-Penrose}]
	We have:
	\begin{align*}
	\left(\bm J_{\mbS}(\bmb_0)\right)^T{\bm A}\bm J_{\mbS}(\bmb_0)\left\{\left(\bm J_{\mbS}(\bmb_0)\right)^T{\bm A}\bm J_{\mbS}(\bmb_0)\right\}^{-1}\left(\bm J_{\mbS}(\bmb_0)\right)^T{\bm A}\bm J_{\mbS}(\bmb_0)=\left(\bm J_{\mbS}(\bmb_0)\right)^T{\bm A}\bm J_{\mbS}(\bmb_0).
	\end{align*}
	In the parametrization that we consider, the columns of $\bm J_{\mbS}(\bmb_0)$ are orthogonal to $\bm\a_0$. We can therefore extend the matrix $\bm J_{\mbS}(\bmb_0)$ with a last column $\bma_0$ to a square nonsingular matrix $\bar{\bm J}_{\mbS}(\bmb_0)$. This leads to the equality
	\begin{align*}
	\left(\bm {\bar J}_{\mbS}(\bmb_0)\right)^T{\bm A}\bm J_{\mbS}(\bmb_0)\left\{\left(\bm J_{\mbS}(\bmb_0)\right)^T{\bm A}\bm J_{\mbS}(\bmb_0)\right\}^{-1}\left(\bm J_{\mbS}(\bmb_0)\right)^T{\bm A}\bar{\bm J}_{\mbS}(\bmb_0)=\left(\bar{\bm J}_{\mbS}(\bmb_0)\right)^T\bm A\bar{\bm J}_{\mbS}(\bmb_0).
	\end{align*}
	Multiplying on the left by $\left(\left(\bar{\bm J}_{\mbS}(\bmb_0)\right)^T\right)^{-1}$ and on the right by  $\bar{\bm J}_{\mbS}(\bmb_0)^{-1}$, we get:
	\begin{align}
	\label{property1}
	{\bm A}\bm J_{\mbS}(\bmb_0)\left\{\left(\bm J_{\mbS}(\bmb_0)\right)^T{\bm A}\bm J_{\mbS}(\bmb_0)\right\}^{-1}\left(\bm J_{\mbS}(\bmb_0)\right)^T{\bm A}=\bm A.
	\end{align}
	This shows that $\bm J_{\mbS}(\bmb_0)\left\{\left(\bm J_{\mbS}(\bmb_0)\right)^T{\bm A}\bm J_{\mbS}(\bmb_0)\right\}^{-1}\left(\bm J_{\mbS}(\bmb_0)\right)^T$ is a generalized inverse of $\bm A$.
	
	To complete the proof and show that it is indeed the Moore-Penrose inverse of $\bm A$, we first note that
	\begin{align}
	\label{property2}
	&\bm J_{\mbS}(\bmb_0)\left\{\left(\bm J_{\mbS}(\bmb_0)\right)^T{\bm A}\bm J_{\mbS}(\bmb_0)\right\}^{-1}\left(\bm J_{\mbS}(\bmb_0)\right)^T{\bm A} \bm J_{\mbS}(\bmb_0)\left\{\left(\bm J_{\mbS}(\bmb_0)\right)^T{\bm A}\bm J_{\mbS}(\bmb_0)\right\}^{-1}\left(\bm J_{\mbS}(\bmb_0)\right)^T \nonumber \\
	&=\bm J_{\mbS}(\bmb_0)\left\{\left(\bm J_{\mbS}(\bmb_0)\right)^T{\bm A}\bm J_{\mbS}(\bmb_0)\right\}^{-1}\left(\bm J_{\mbS}(\bmb_0)\right)^T.
	\end{align}
	Furthermore, 
	\begin{align*}
	&\left(\bm A \bm J_{\mbS}(\bmb_0)\left\{\left(\bm J_{\mbS}(\bmb_0)\right)^T{\bm A}\bm J_{\mbS}(\bmb_0)\right\}^{-1}\left(\bm J_{\mbS}(\bmb_0)\right)^T\right)^T\\
	&=\bm J_{\mbS}(\bmb_0)\left\{\left(\bm J_{\mbS}(\bmb_0)\right)^T{\bm A}\bm J_{\mbS}(\bmb_0)\right\}^{-1}
	\left(\bm J_{\mbS}(\bmb_0)\right)^T\bm A^T\\
	&=\bm J_{\mbS}(\bmb_0)\left\{\left(\bm J_{\mbS}(\bmb_0)\right)^T{\bm A}\bm J_{\mbS}(\bmb_0)\right\}^{-1}
	\left(\bm J_{\mbS}(\bmb_0)\right)^T\bm A,
	\end{align*}
	where the last equality holds since $\bm A$ is symmetric, being a covariance matrix. We have to show that
	\begin{align}
	\label{Hermitian1}
	&\bm J_{\mbS}(\bmb_0)\left\{\left(\bm J_{\mbS}(\bmb_0)\right)^T{\bm A}\bm J_{\mbS}(\bmb_0)\right\}^{-1}
	\left(\bm J_{\mbS}(\bmb_0)\right)^T\bm A\nonumber\\
	&=\bm A	\bm J_{\mbS}(\bmb_0)\left\{\left(\bm J_{\mbS}(\bmb_0)\right)^T{\bm A}\bm J_{\mbS}(\bmb_0)\right\}^{-1}
	\left(\bm J_{\mbS}(\bmb_0)\right)^T.
	\end{align}
	Multiplying on the left by $(\bm J_{\mbS}(\bmb_0))^T$ and on the right by $\bm J_{\mbS}(\bmb_0)$, we get:
	\begin{align*}
	&(\bm J_{\mbS}(\bmb_0))^T\bm J_{\mbS}(\bmb_0)\left\{\left(\bm J_{\mbS}(\bmb_0)\right)^T{\bm A}\bm J_{\mbS}(\bmb_0)\right\}^{-1}
	\left(\bm J_{\mbS}(\bmb_0)\right)^T\bm A\bm J_{\mbS}(\bmb_0)\\
	&=(\bm J_{\mbS}(\bmb_0))^T\bm J_{\mbS}(\bmb_0)\\
	&=(\bm J_{\mbS}(\bmb_0))^T\bm A	\bm J_{\mbS}(\bmb_0)\left\{\left(\bm J_{\mbS}(\bmb_0)\right)^T{\bm A}\bm J_{\mbS}(\bmb_0)\right\}^{-1}
	\left(\bm J_{\mbS}(\bmb_0)\right)^T\bm J_{\mbS}(\bmb_0),
	\end{align*}
	and (\ref{Hermitian1}) follows by the orthogonality relation of the columns of $\bm J_{\mbS}(\bmb_0)$ with $\bma_0$ in the same way as before, replacing the matrix $\bm J_{\mbS}(\bmb_0)$ by $\bm{\bar J}_{\mbS}(\bmb_0)$ in the outer factors of the equality relation.
	
	In a similar way we obtain:
	\begin{align}
	\label{Hermitian2}
	&\left(\bm J_{\mbS}(\bmb_0)\left\{\left(\bm J_{\mbS}(\bmb_0)\right)^T{\bm A}\bm J_{\mbS}(\bmb_0)\right\}^{-1}\left(\bm J_{\mbS}(\bmb_0)\right)^T \bm A\right)^T\nonumber\\
	&=\bm J_{\mbS}(\bmb_0)\left\{\left(\bm J_{\mbS}(\bmb_0)\right)^T{\bm A}\bm J_{\mbS}(\bmb_0)\right\}^{-1}
	\left(\bm J_{\mbS}(\bmb_0)\right)^T\bm A.
	\end{align}	
	Since the matrix $\bm J_{\mbS}(\bmb_0)\left\{\left(\bm J_{\mbS}(\bmb_0)\right)^T{\bm A}\bm J_{\mbS}(\bmb_0)\right\}^{-1}\left(\bm J_{\mbS}(\bmb_0)\right)^T$ satisfies properties (\ref{property1}), (\ref{property2}),(\ref{Hermitian1}) and (\ref{Hermitian2}), the matrix satisfies the four properties which define the Moore-Penrose pseudo-inverse matrix of $\bm A$. This completes the proof of Lemma \ref{lemma:Moore-Penrose}.
\end{proof}
\begin{remark}
{ The same proof holds for showing that the Moore-Penrose inverse $\tilde {\bm A}$ is given by 
		\begin{align*}
		\bm J_{\mbS}(\bmb_0) \left\{\left(\bm J_{\mbS}(\bmb_0)\right)^T{\tilde {\bm A}}\bm J_{\mbS}(\bmb_0)\right\}^{-1}\left(\bm J_{\mbS}(\bmb_0)\right)^T.
		\end{align*}
}
\end{remark}

\begin{lemma}[Derivative $\bma \mapsto \psi_{\bma}(\bma^T\bm x)$]
	\label{lemma:derivative_psi_a}
	$$\frac{\partial }{\partial  \a_j}\psi_{\bma}(\bma ^T\bm x)\Bigm|_{\bma =\bma_0} = \left( x_j - E( X_j |\bm\alpha_0^T\bm X=\bm \a_0^T\bm x)\right)\psi_0'(\bm \a_0^T\bm x),$$
	and
	\begin{align*}
	\frac{\partial }{\partial  \b_j}\psi_{\bma}(\bma ^T\bm x)\Bigm|_{\bma =\bma_0} &= \frac{\partial }{\partial  \b_j}\psi_{\mbS(\bmb)}(\mbS(\bmb)^T\bm x)\Bigm|_{\bmb =\bmb_0} 
	\\
	&= \left(\bm J_{\mbS}(\bmb_0)^T\right)_j\left( \bm x - E(\bm X |\mbS(\bmb_0)^T\bm X =\mbS(\bmb_0)^T\bm x )\right)\psi_0'\left(\mbS(\bmb_0)^T\bm x\right),
	\end{align*}	
	where $\left(\bm J_{\mbS}(\bmb_0)^T\right)_j$ denotes the $j$th row of $\bm J_{\mbS}(\bmb_0)^T$.
\end{lemma}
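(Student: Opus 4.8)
The plan is to reduce the $\bmb$-derivative to the $\bma$-derivative and concentrate all the work on the first identity. Since $\bma=\mbS(\bmb)$, the chain rule gives $\frac{\partial}{\partial\b_j}\psi_{\mbS(\bmb)}(\mbS(\bmb)^T\bm x)\big|_{\bmb_0}=\sum_{k}\frac{\partial}{\partial\a_k}\psi_{\bma}(\bma^T\bm x)\big|_{\bma_0}\,(\bm J_{\mbS}(\bmb_0))_{kj}$, which is exactly the $j$th component of $(\bm J_{\mbS}(\bmb_0)^T)_j(\bm x-\E(\bm X\,|\,\mbS(\bmb_0)^T\bm X=\mbS(\bmb_0)^T\bm x))\psi_0'(\mbS(\bmb_0)^T\bm x)$ once the first identity is known (recall $\bma_0=\mbS(\bmb_0)$). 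So it suffices to prove the first display, viewing $\bma\mapsto\psi_{\bma}(u)$ as an unconstrained map on a neighbourhood of $\bma_0$ in $\R^d$ and splitting the total derivative of the composite $\bma\mapsto\psi_{\bma}(\bma^T\bm x)$ into the contribution through the argument $\bma^T\bm x$ and the contribution through the index direction.

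For the argument contribution, note $\psi_{\bma_0}=\psi_0$ because $\psi_0(\bma_0^T\bm X)$ is $\sigma(\bma_0^T\bm X)$-measurable, so differentiating $\bma^T\bm x$ produces $x_j\,\psi_0'(\bma_0^T\bm x)$. The crux is the direction contribution $\frac{\partial}{\partial\a_j}\psi_{\bma}(u)\big|_{\bma_0}$ at fixed $u$, where the difficulty is that the conditioning event $\{\bma^T\bm X=u\}$ itself moves with $\bma$. I would handle this in weak form. Writing $q(\bma,u)=\E(\phi(\bm X)\,|\,\bma^T\bm X=u)$ for a generic fixed $\phi$, $p_{\bma}$ for the density of $\bma^T\bm X$, and $N=q\,p_{\bma}$, I would test against an arbitrary smooth $\chi$: from $\E[\phi(\bm X)\chi(\bma^T\bm X)]=\int N(\bma,u)\chi(u)\,du=\int_{\mcX}\phi(\bm x)\chi(\bma^T\bm x)g(\bm x)\,d\bm x$, differentiating in $\a_j$ and integrating by parts in $u$ on the right-hand side yields $\frac{\partial}{\partial\a_j}N(\bma,u)=-\frac{\partial}{\partial u}\big(\E(\phi(\bm X)X_j\,|\,\bma^T\bm X=u)\,p_{\bma}(u)\big)$; the choice $\phi\equiv1$ gives the same identity with $p_{\bma}$ in place of $N$.

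I would then specialise to $\phi(\bm x)=\psi_0(\bma_0^T\bm x)$ and evaluate at $\bma_0$. There $\E(\phi(\bm X)\,|\,\bma_0^T\bm X=u)=\psi_0(u)$ and $\E(\phi(\bm X)X_j\,|\,\bma_0^T\bm X=u)=\psi_0(u)\,e_j(u)$ with $e_j(u)=\E(X_j\,|\,\bma_0^T\bm X=u)$, again by $\sigma(\bma_0^T\bm X)$-measurability. Feeding the two weak identities into the quotient rule $\frac{\partial}{\partial\a_j}q=\frac{1}{p_{\bma}}\frac{\partial}{\partial\a_j}N-\frac{q}{p_{\bma}}\frac{\partial}{\partial\a_j}p_{\bma}$ and expanding the $u$-derivatives of the products $p_0\psi_0 e_j$ and $p_0 e_j$ (with $p_0=p_{\bma_0}$), the terms carrying $p_0'/p_0$ and the terms carrying $e_j'$ cancel in pairs, leaving the clean answer $\frac{\partial}{\partial\a_j}\psi_{\bma}(u)\big|_{\bma_0}=-\psi_0'(u)\,e_j(u)$. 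Adding the argument contribution and setting $u=\bma_0^T\bm x$ gives $(x_j-\E(X_j\,|\,\bma_0^T\bm X=\bma_0^T\bm x))\psi_0'(\bma_0^T\bm x)$, as claimed.

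The main obstacle is precisely the differentiation through the moving conditioning event, which the test-function and integration-by-parts device converts into an honest computation; the attendant differentiation under the integral sign and the vanishing of boundary terms are justified by the compactness of $\mcX$ (Assumption A1) together with the smoothness of $g$ and of $\psi_0$. As a consistency check I would verify that the radial derivative vanishes: contracting the first identity with $\bma_0$ gives $(\bma_0^T\bm x-\E(\bma_0^T\bm X\,|\,\bma_0^T\bm X=\bma_0^T\bm x))\psi_0'=0$, matching the scale invariance $\psi_{c\bma}((c\bma)^T\bm x)=\psi_{\bma}(\bma^T\bm x)$ of the composite.
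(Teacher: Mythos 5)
Your proof is correct, and it takes a genuinely different route from the paper's. The paper proves the first identity by an explicit change of variables: assuming without loss of generality $\alpha_1\neq 0$, it writes $\psi_{\bma}(\bma^T\bm x)$ as an integral of $\psi_0$ against the explicit conditional density $h_{\bma}(\cdot\,|\,\bma^T\bm x)$ of $(X_2,\dots,X_d)$ given $\bma^T\bm X$, and differentiates under the integral sign; at $\bma=\bma_0$ the argument of $\psi_0$ becomes constant in the integration variables, so the term involving $\partial h_{\bma}/\partial\a_j$ is killed by the normalization $\int h_{\bma}=1$, and the surviving term is exactly $\left(x_j-\E(X_j\,|\,\bma_0^T\bm X=\bma_0^T\bm x)\right)\psi_0'(\bma_0^T\bm x)$. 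You instead never write down a conditional density: you encode the moving conditioning event in the weak identity $\partial_{\a_j}\bigl(q\,p_{\bma}\bigr)(u)=-\partial_u\bigl(\E(\phi(\bm X)X_j\,|\,\bma^T\bm X=u)\,p_{\bma}(u)\bigr)$, obtained by testing against smooth $\chi$ and integrating by parts, and the quotient rule then produces the cancellation of the $e_j'$ and $p_0'/p_0$ terms (I checked the cancellation; it is exactly as you state, leaving $-\psi_0'(u)e_j(u)$). The paper's route buys a self-contained pointwise computation; yours buys coordinate symmetry (no privileged nonzero component $\alpha_1$) and a structural explanation of the answer via the standard density-weighted derivative identity from index-model theory. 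Your chain-rule reduction of the $\bmb$-statement to the $\bma$-statement matches the paper, which simply states that this part follows similarly and omits it.

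One point to tighten: the vanishing of the boundary terms in your integration by parts does not follow from compactness of $\mcX$ and smoothness of $g$ alone, since the marginal density $p_{\bma}$ of $\bma^T\bm X$ need not vanish at the endpoints of $\mathcal{I}_{\bma}$ (take $\mcX$ a cube and $\bma$ a coordinate vector). The clean fix, already implicit in your setup, is to take $\chi$ smooth and compactly supported in the interior of $\mathcal{I}_{\bma_0}$, which makes the boundary terms vanish trivially and identifies $\partial_{\a_j}N$ and $\partial_{\a_j}p_{\bma}$ at all interior points; the formula at boundary points of $\mathcal{I}_{\bma_0}$ is then recovered by continuity of both sides. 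This is a minor repair, not a gap; the paper's own differentiation under the integral sign requires justification of the same nature.
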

\begin{proof}
	We assume without loss of generality that the first component $\a_1$ of $\bma$ is not equal to zero. Denote the conditional density of $(X_2,\ldots,X_d)^T$ given $\bm\alpha^T\bm X= u$ by $h_{\bma}(\cdot|u)$	
	Using the change of variables $t_1 = \bm\a^T\bm x$, $t_j = x_j$ for $j=2,\ldots,d$, the function $\psi_{\bm \a}$ can be written as
	\begin{align*}
	&\psi_{\bm\alpha}(\bma^T\bm x)  = \E[\psi_0(\bm\alpha^T_0 \bm X) | \bma^T\bm X = \bma^T\bm x] \\
	&= \int \psi_0\left(\frac{\a_{01}}{\a_1}(\bma^T\bm x -\a_2\tilde x_2 - \ldots - \a_d \tilde x_d) + \sum_{j=2}^{d}\a_{0j}\tilde x_j\right) h_{\bma}(\tilde x_2,\ldots, \tilde x_d|\bma^T\bm x)\prod_{j=2}^{d}d\tilde x_j,
	\end{align*}
	with partial derivatives w.r.t. $\a_j$ for$j=2,\ldots,d$ given by,
	\begin{align*} 
	&\frac{\partial }{\partial  \a_j}\psi_{\bm\alpha}(\bma^T\bm x)=\frac{\partial }{\partial \a_j} \E[\psi_0(\bm\alpha^T_0 \bm X) | \bm\alpha^T\bm X = \bma^T\bm x]\\
	&= \int \frac{\a_{01}}{\a_1} (x_j -\tilde x_j)\psi_0'\left(\frac{\a_{01}}{\a_1}(\bma^T\bm x -\a_2\tilde x_2 - \ldots - \a_d \tilde x_d) + \sum_{j=2}^{d}\a_{0j}\tilde x_j\right) h_{\bma}(\tilde x_2,\ldots, \tilde x_d|\bma^T\bm x)\prod_{j=2}^{d}d\tilde x_j\\
	&\qquad + \int \psi_0\left(\frac{\a_{01}}{\a_1}(\bma^T\bm x -\a_2\tilde x_2 - \ldots - \a_d \tilde x_d) + \sum_{j=2}^{d}\a_{0j}\tilde x_j\right) \frac{\partial }{\partial  \a_j} h_{\bma}(\tilde x_2,\ldots, \tilde x_d|\bma^T\bm x)\prod_{j=2}^{d}d\tilde x_j,
	\end{align*}	
	which is at $\bm \a =\bm \a_0$ equal to
	\begin{align*}
	\frac{\partial }{\partial \a_j}\psi_{\bm\alpha}(\bma^T\bm x)\Bigm |_{\bm \a =\bm \a_0} 
	&= \int (x_j-\tilde x_j)\psi_0'\left(\bma_0^T\bm x \right)  h_{\bma_0}(\tilde x_2,\ldots, \tilde x_d|\bma_0^T\bm x)\prod_{j=2}^{d}d\tilde x_j\\
	&= \psi_0'(\bma_0^T\bm x) \left\{ x_j -\E(X_j |\bma_0^T\bm X=\bma_0^T\bm x )\right\}.
	\end{align*}
	For the  partial derivatives w.r.t. $\a_1$ we have,
	\begin{align*}
	&\frac{\partial }{\partial  \a_1}\psi_{\bm\alpha}(\bma^T\bm x)\\
	&\quad = \int \left\{\frac{\a_{01}}{\a_1}x_1-\frac{\a_{01}}{\a_1^2}(\bma^T\bm x -\a_2 \tilde x_2 - \ldots - \a_d \tilde x_d) \right\}\psi_0'\left(\frac{\a_{01}}{\a_1}(\bma^T\bm x -\a_2 \tilde x_2 - \ldots - \a_d \tilde x_d) + \sum_{j=2}^{d}\a_{0j} \tilde x_j\right)\\
	&\qquad\qquad\qquad\qquad\qquad\qquad\qquad\qquad\qquad\qquad\qquad\qquad\qquad\qquad\qquad\qquad\qquad  h_{\bma}(\tilde x_2,\ldots, \tilde x_d|\bma^T\bm x)\prod_{j=2}^{d}d \tilde x_j\\
	& + \int \psi_0\left(\bma^T\bm x + (\a_{01} -\a_1)\frac{\bma^T\bm x- \a_2 \tilde x_2 - \ldots - \a_d  \tilde x_d}{\a_1} + \sum_{j=2}^{d}(\a_{0j}- \a_j) \tilde x_j\right) \frac{\partial }{\partial  \a_1}h( \tilde x_2,\ldots,  \tilde x_d | \bma^T\bm x)\prod_{j=2}^{d}d \tilde x_j,
	\end{align*}	
	and,
	\begin{align*}
	\frac{\partial }{\partial \a_1}\psi_{\bm\alpha}(\bma^T\bm x)\Bigm |_{\bm \a =\bm \a_0} 
	= \psi_0'(\bma_0^T\bm x) \left\{ x_1 -E(X_1 |\bma_0^T\bm X=\bma_0^T\bm x )\right\}.
	\end{align*}
	This proves the first result of Lemma \ref{lemma:derivative_psi_a}. The proof for the second results follows similarly and is omitted.
\end{proof}	

\begin{lemma}
	\label{lemma:positive-lagrange-covariance}
	Let $\bar\f$ be defined by 
	\begin{align}
	\label{def:phi0}
	\bar \f(\bma) &=\int \bm x\left\{y -\psi_{\bm\a}(\bm\a^T\bm x)\right\}\,dP_0(\bm x,y)
	= \int \bm x\left\{\psi_0(\bma_0^T\bm x)-\psi_{\bm\a}(\bm\a^T\bm x)\right\}\,dG(\bm x),
	\end{align}	
	then we have for each $\bma \in \B(\bma_0, \delta_0)$,
	\begin{align*}
	\bar\f(\bma) = \E \left[\text{\rm Cov}\left[\bm X  ,  \psi_0(\bma^T\bm X + (\bma_0-\bma)^T\bm X)|\bma^T\bm X \right]\right].
	\end{align*}
	Moreover, 
	\begin{align*}
	\bma^T \bar\f(\bma) = 0,
	\end{align*}
	and,
	\begin{align*}
	(\bma_0-\bma)^T\bar\f(\bma) =\E \left[\text{\rm Cov}\left[(\bma_0-\bma)^T\bm X  ,  \psi_0(\bma^T\bm X + (\bma_0-\bma)^T\bm X)|\bma^T\bm X \right]\right] \ge 0,
	\end{align*}
	and $\bma_0$ is the only value such that the above equation holds uniform in $\bma \in \B(\bma_0, \delta_0)$.
\end{lemma}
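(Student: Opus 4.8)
The plan is to establish a single conditional-covariance representation for $\bar\f$ and then read off all four assertions from it, handling the positivity and its equality case through the classical covariance inequality for comonotone functions applied conditionally on $\bma^T\bm X$.

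First I would start from the second expression for $\bar\f$ in \eqref{def:phi0}, namely $\bar\f(\bma)=\E[\bm X\{\psi_0(\bma_0^T\bm X)-\psi_{\bma}(\bma^T\bm X)\}]$, and condition on $\bma^T\bm X$. Since $\psi_{\bma}(\bma^T\bm X)=\E[\psi_0(\bma_0^T\bm X)\mid \bma^T\bm X]$ is $\sigma(\bma^T\bm X)$-measurable, the tower property gives
\[
\E\big[\bm X\{\psi_0(\bma_0^T\bm X)-\psi_{\bma}(\bma^T\bm X)\}\mid \bma^T\bm X\big]
=\E[\bm X\psi_0(\bma_0^T\bm X)\mid\bma^T\bm X]-\E[\bm X\mid\bma^T\bm X]\,\E[\psi_0(\bma_0^T\bm X)\mid\bma^T\bm X],
\]
which is precisely $\text{Cov}(\bm X,\psi_0(\bma_0^T\bm X)\mid\bma^T\bm X)$. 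Taking the outer expectation and rewriting $\bma_0^T\bm X=\bma^T\bm X+(\bma_0-\bma)^T\bm X$ yields the first claim. The orthogonality relation $\bma^T\bar\f(\bma)=0$ is then immediate, because $\bma^T\bm X$ is measurable with respect to the conditioning $\sigma$-field and hence has vanishing conditional covariance with $\psi_0(\bma_0^T\bm X)$; the representation of $(\bma_0-\bma)^T\bar\f(\bma)$ follows by linearity of the covariance in its first argument.

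For the non-negativity I would fix a value $u$ and argue conditionally on $\{\bma^T\bm X=u\}$. On this event $\bma_0^T\bm X=u+Z$ with $Z:=(\bma_0-\bma)^T\bm X$, so $\psi_0(\bma_0^T\bm X)=\psi_0(u+Z)$ is a nondecreasing function of $Z$ since $\psi_0$ is monotone. Hence
\[
\text{Cov}\big((\bma_0-\bma)^T\bm X,\psi_0(\bma_0^T\bm X)\mid\bma^T\bm X=u\big)=\text{Cov}\big(Z,\psi_0(u+Z)\mid\bma^T\bm X=u\big)\ge 0,
\]
which I would prove by introducing a conditionally independent copy $Z'$ of $Z$ and observing that the coupling identity represents the covariance as $\tfrac12\E[(Z-Z')(\psi_0(u+Z)-\psi_0(u+Z'))\mid u]$, whose integrand is pointwise nonnegative. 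Integrating over the law of $\bma^T\bm X$ delivers $(\bma_0-\bma)^T\bar\f(\bma)\ge 0$.

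The main obstacle is the equality/uniqueness statement, i.e.\ identifying $\bma_0$ as the only zero of $(\bma_0-\bma)^T\bar\f(\bma)$ in $\B(\bma_0,\delta_0)$. Because each conditional covariance is nonnegative, the integral vanishes only if the conditional covariance is zero for almost every $u$. From the coupling identity, and since $z\mapsto z$ is strictly increasing, this forces, for almost every $u$, that $\psi_0(u+\cdot)$ be almost surely constant on the conditional support of $Z$ given $\bma^T\bm X=u$, unless $Z$ is itself conditionally degenerate. Invoking the hypothesis that $\psi_0$ is strictly increasing on a subinterval (the same assumption used in the elliptically symmetric case) rules out the first alternative, so $Z=(\bma_0-\bma)^T\bm X$ must be a deterministic function of $\bma^T\bm X$. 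For a non-degenerate covariate law this can occur only when $\bma_0-\bma$ is collinear with $\bma$, i.e.\ $\bma_0\in\mathrm{span}(\bma)$; combined with $\|\bma\|=\|\bma_0\|=1$ this leaves $\bma=\pm\bma_0$, and restricting to $\B(\bma_0,\delta_0)$ with $\delta_0$ small excludes $-\bma_0$. The delicate point, where I expect the most care to be needed, is the passage from conditional degeneracy of $(\bma_0-\bma)^T\bm X$ to collinearity; this relies on the non-degeneracy of the conditional law of $(\bma_0-\bma)^T\bm X$ given $\bma^T\bm X$, which is guaranteed by the boundedness and positivity of the density $g$ on its convex support imposed in the standing assumptions.
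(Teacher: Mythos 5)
Your proposal is correct. For the conditional-covariance representation, the orthogonality $\bma^T\bar\f(\bma)=0$, and the nonnegativity, it is essentially the paper's own proof: the only difference is that you prove the conditional covariance inequality by symmetrization with a conditionally independent copy, $\text{Cov}(Z,\psi_0(u+Z))=\tfrac12\E\bigl[(Z-Z')\{\psi_0(u+Z)-\psi_0(u+Z')\}\bigr]\ge 0$, whereas the paper uses Hoeffding's formula $\text{Cov}(X,Y)=\int\{\P(X\ge s,Y\ge t)-\P(X\ge s)\P(Y\ge t)\}\,ds\,dt$ and verifies positive quadrant dependence of $\bigl(Z_1,\psi_0(u+Z_1)\bigr)$ directly; the two are interchangeable, yours being slightly more elementary. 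The genuine divergence is in the uniqueness claim. The paper reads it as uniqueness of the \emph{reference vector} and argues by contradiction: if some $\bma_1\ne\bma_0$ also satisfied $(\bma_1-\bma)^T\bar\f(\bma)\ge 0$ for all $\bma\in\B(\bma_0,\delta_0)$, then at the componentwise midpoint the two inner products are negatives of each other, ``which is not possible since both terms should be positive.'' You instead characterize the \emph{equality case}: vanishing of $(\bma_0-\bma)^T\bar\f(\bma)$ forces the conditional covariances to vanish, hence (using strict increase of $\psi_0$ on a subinterval and the full-dimensional convex support of $g$) conditional degeneracy of $(\bma_0-\bma)^T\bm X$ given $\bma^T\bm X$, and thus collinearity of $\bma$ and $\bma_0$. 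Your route buys two things: it is the form in which the lemma is actually invoked in the consistency proof (zero covariance only at multiples of $\bma_0$), and it supplies exactly the strictness that the paper's midpoint argument tacitly assumes --- with only ``$\ge 0$'' in hand that argument is incomplete, since both midpoint terms may vanish, as indeed happens when $\bma_1$ is a positive multiple of $\bma_0$, which also satisfies the uniform inequality because $\bma^T\bar\f(\bma)=0$. Two small additions would close your argument: (i) run the midpoint step once, so that your equality-case characterization yields the paper's literal uniqueness statement among unit vectors in $\B(\bma_0,\delta_0)$; (ii) justify ruling out local constancy of $\psi_0$ --- since $\mathcal{X}\subset\mathcal{B}(0,R)$ and $\|\bma-\bma_0\|\le\delta_0$, the conditional support of $\bma_0^T\bm X$ given $\bma^T\bm X=u$ is an interval within distance $\delta_0 R$ of $u$, so it meets the interval of strict increase for a positive-measure set of $u$, and degeneracy on that set already forces collinearity.
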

\begin{proof}
	We have,
	\begin{align}
	\label{proof:cov-term1}
	\bar\f(\bma) &=\int \bm x\left\{y -\psi_{\bm\a}(\bm\a^T\bm x)\right\}\,dP_0(\bm x,y)
	= \int \bm x\left\{\psi_0(\bma_0^T\bm x)-\psi_{\bm\a}(\bm\a^T\bm x)\right\}\,dG(\bm x)\nonumber\\
	&= \int \bm x\left[\psi_0(\bma_0^T\bm x)- \E\left\{ \psi_0(\bma_0^T\bm X)|\bma^T\bm X=\bma^T\bm x\right\}\right]\,dG(\bm x)\nonumber\\
	&= \E \left[\text{\rm Cov}\left[\bm X  ,  \psi_0(\bma_0^T\bm X)|\bma^T\bm X \right]\right],
	\end{align}	
	and
	\begin{align*}
	\bma^T \int \bm x\left[\psi_0(\bma_0^T\bm x)- \E\left\{ \psi_0(\bma_0^T\bm X)|\bma^T\bm X=\bma^T\bm x\right\}\right]\,dG(\bm x) = \E \left[\text{\rm Cov}\left[\bma^T\bm X  ,  \psi_0(\bma_0^T\bm X)|\bma^T\bm X \right]\right] =\bm 0.
	\end{align*}
	We next note that,
	\begin{align*}
	(\bma_0 -\bma)^T\bar\f(\bma)&= \E \left[\text{\rm Cov}\left[(\bma_0 -\bma)^T\bm X  ,  \psi_0(\bma_0^T\bm X)|\bma^T\bm X \right]\right]\\&
	=\E \left[\text{\rm Cov}\left[(\bma_0 -\bma)^T\bm X  ,  \psi_0(\bma^T\bm X + (\bma_0-\bma)^T\bm X)|\bma^T\bm X \right]\right],
	\end{align*}
	which is positive by the monotonicity of $\psi_0$. This can be seen as follows. 		Using Fubini's theorem, one can prove that for any random variables $X$ and $Y$ such that $XY,X$ and $Y$ are integrable, we have
	\begin{align*}
	\text{\rm Cov}\left\{X,Y \right\} = \E XY-\E X\E Y=\int\{\P(X\ge s,Y\ge t)-\P(X\ge s)\P(Y\ge t)\}\,ds\,dt.
	\end{align*}
	Denote $Z_1 =(\bma_0 -\bma)^T\bm X$ and $Z_2 =\psi_0( u + (\bma_0 -\bma)^T\bm X) = \psi_0(u+Z_1)$, then, using monotonicity of the function $\psi_0$, we have
	\begin{align*}
	\P(Z_1\ge z_1,Z_2\ge z_2) &= \P(Z_1 \ge \max\{z_1, \tilde z_2 \}) \ge \P(Z_1 \ge \max\{z_1, \tilde z_2 \})\P(Z_1 \ge \min\{z_1, \tilde z_2 \})\\
	& = \P(Z_1 \ge z_1)\P(Z_2 \ge z_2 \},
	\end{align*}
	where 
	\begin{align*}
	\tilde z_2 = \psi_0^{-1}(z_2) - u = \inf\{t\in \mathbb{R}: \psi_0(t)\ge z_2\}-u. 
	\end{align*}
	We conclude that,
	\begin{align*}
	&\text{\rm Cov}\left\{(\bma_0 -\bma)^T\bm X,\psi_0(\bm\a^T\bm X + (\bm\a_0-\bm \a )'\bm X)|\bm\a^T\bm X = u \right\} \\
	&\qquad = \int\{\P(Z_1\ge z_1,Z_2\ge z_2)-\P(Z_1\ge z_1)\P(Z_2\ge z_2)\}\,ds\,dt \ge 0,
	\end{align*}
	and hence the first part of the Lemma follows. We next prove the uniqueness of the parameter $\bma_0$. We start by assuming that, on the contrary, there exists $\bma_1 \ne \bma_0 $ in $\B(\bma_0, \delta_0)$ such that
	$$(\bma_0 - \bma)^T\bar\f(\bma)\ge 0 \qquad \text{ and } (\bma_1 - \bma)^T\bar\f(\bma)\ge 0 \qquad \text{for all } \bma \in \B(\bma_0, \delta_0),$$
	and we consider the point $\bma \in \B(\bma_0, \delta_0)$ such that 
	$$
	|\a_j - \a_{j0}| = |\a_j - \a_{j1}| \qquad \qquad \text{for }j= 1,\ldots,d.
	$$
	For this point, we have,
	$$(\bma_0 - \bma)^T\bar\f(\bma)=-(\bma_1 - \bma)^T\bar\f(\bma) \qquad \text{for all } \bma \in \B(\bma_0, \delta_0),$$
	which is not possible since both terms should be positive. This completes the proof of Lemma \ref{lemma:positive-lagrange-covariance}.
\end{proof}

\bigskip

\begin{lemma}\label{lem: continuity}
	Let $f  : \mcX  \to  \RR^k, \ k \le d$  be a differentiable function on $\mathcal{X}$  such that there exists a constant $M > 0$ satisfying  $\Vert f \Vert_\infty \le M$.     Then, under the assumptions A1 and A5 we can find  a constant $\tilde{M} > 0$ such that for all $\bma \in \mathcal{B}(\bma_0, \delta_0)$ we have that 
	\begin{eqnarray*}
		\sup_{ \bm x \in \mcX}  \left\| \E[f(\bm X) |  \bma^T \bm X= \bma^T \bm x ]  -  \E[f(\bm X) |  \bma^T _0 \bm X= \bma^T_0\bm x ] \right\|  \le \tilde{M} \Vert \bma - \bma_0  \Vert.
	\end{eqnarray*}
	
\end{lemma}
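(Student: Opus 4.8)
The plan is to fix $\bm x \in \mathcal{X}$ and study the map $\bma \mapsto \Phi_{\bm x}(\bma) := \E[f(\bm X) \mid \bma^T\bm X = \bma^T\bm x]$ as a function of the direction $\bma$ on the Euclidean ball $\mathcal{B}(\bma_0,\delta_0)$. Since this ball is convex, the segment joining $\bma_0$ to any $\bma \in \mathcal{B}(\bma_0,\delta_0)$ stays inside it, so by the mean value inequality it suffices to show that $\Phi_{\bm x}$ is continuously differentiable in $\bma$ and that its Jacobian is bounded in operator norm by a constant $\tilde M$ that does not depend on $\bm x \in \mathcal{X}$ nor on $\bma \in \mathcal{B}(\bma_0,\delta_0)$; the claimed Lipschitz bound then follows from
$$\left\| \Phi_{\bm x}(\bma) - \Phi_{\bm x}(\bma_0) \right\| \le \sup_{0 \le t \le 1} \left\| D_{\bma}\Phi_{\bm x}\bigl(\bma_0 + t(\bma-\bma_0)\bigr) \right\| \, \Vert \bma - \bma_0 \Vert \le \tilde M \Vert \bma - \bma_0 \Vert.$$

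First I would make the conditional expectation explicit by the same change of variables used in the proof of Lemma \ref{lemma:derivative_psi_a}. Shrinking $\delta_0$ if necessary and permuting coordinates, we may assume that the component $\alpha_1$ is bounded away from zero, uniformly for $\bma \in \mathcal{B}(\bma_0,\delta_0)$. Writing $u = \bma^T\bm x$ and denoting by $h_{\bma}(\cdot \mid u)$ the conditional density of $(X_2,\dots,X_d)$ given $\bma^T\bm X = u$, we obtain $\Phi_{\bm x}(\bma) = \int f\bigl(x_1(u,\tilde{\bm x}),\tilde{\bm x}\bigr)\,h_{\bma}(\tilde{\bm x}\mid u)\,d\tilde{\bm x}$, with $x_1(u,\tilde{\bm x}) = (u - \alpha_2\tilde x_2 - \cdots - \alpha_d \tilde x_d)/\alpha_1$. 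Differentiating in $\bma$ produces two groups of terms: a transport term in which $\nabla f$ is integrated against $h_{\bma}$, coming from the dependence of the evaluation point of $f$ and of $u = \bma^T\bm x$ on $\bma$, and a term in which $f$ is integrated against the $\bma$-derivative $\partial_{\alpha_j} h_{\bma}(\cdot\mid u)$ of the conditional density.

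The transport term is bounded uniformly using Assumption A1: the coordinates $x_j,\tilde x_j$ and the factor $1/\alpha_1$ are bounded, and $\nabla f$ is bounded on the compact set $\mathcal{X}$ (by continuity of the derivative of $f$ on $\mathcal{X}$), so this group contributes a constant multiple of $\Vert \nabla f \Vert_\infty$. The density term is controlled by $\Vert f \Vert_\infty \le M$ together with a uniform bound on $\int \lvert \partial_{\alpha_j} h_{\bma}(\tilde{\bm x}\mid u)\rvert\,d\tilde{\bm x}$; writing $h_{\bma}(\tilde{\bm x}\mid u) = g\bigl(x_1(u,\tilde{\bm x}),\tilde{\bm x}\bigr)/\bigl(\lvert\alpha_1\rvert\, g_{\bma}(u)\bigr)$, where $g_{\bma}$ is the marginal density of $\bma^T\bm X$, this reduces to the smoothness and strict positivity of $g$ on $\mathcal{X}$ (Assumption A5) and to a uniform positive lower bound on $g_{\bma}(u)$ for $\bma \in \mathcal{B}(\bma_0,\delta_0)$ and $u$ in the relevant compact range. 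Collecting the two bounds gives the uniform Jacobian bound $\tilde M$ and hence the lemma.

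The main obstacle is precisely the uniform control of the $\bma$-derivative of the conditional density $h_{\bma}$. One must keep the marginal $g_{\bma}(u)$ away from zero uniformly in $\bma$ and in $\bm x$, and carefully handle the fact that both the domain of integration in $\tilde{\bm x}$ and the support of $u \mapsto g_{\bma}(u)$ depend on $\bma$, so that differentiation under the integral sign is legitimate and the boundary contributions are harmless; this is where the boundedness of $\mathcal{X}$ and the regularity and positivity of $g$ supplied by Assumptions A1 and A5 are essential.
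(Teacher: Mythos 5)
Your proposal is correct and follows essentially the same route as the paper's proof: the same change of variables and conditional density $h_{\bma}(\cdot\mid u)$, differentiation under the integral sign (justified by dominated convergence via Assumptions A1 and A5) splitting the $\bma$-derivative into a transport term involving the derivative of $f$ and a term involving $\partial_{\alpha_i} h_{\bma}$, uniform bounds on both, and the mean value theorem to conclude. The only difference is cosmetic: you make explicit the uniform lower bound on the marginal density and the boundedness of $\nabla f$, points the paper's proof uses implicitly.
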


\medskip

\begin{proof}
	We can assume without loss of generality that $\alpha_{0,1} \ne 0$ where $\alpha_{0,1}$  is the first component of $\bma_0$.  At the cost of taking a smaller $\delta_0$, we can further assume that $\tilde{\bma}_1  \ne 0$  for all $\bma \in \mathcal{B}(\bma_0, \delta_0)$.  Consider the change of variables  $t_1 = \bma^T \bm X$, $t_i = x_i$ for $i=1, \ldots, d$. Then, the density of $(\bma^T \bm X, X_2, \ldots, X_d)$ is given by 
	\begin{eqnarray*}
		g_{(\bma^T \bm X, X_2, \ldots, X_d)} (t_1, \ldots, t_d)  = g\left(\frac{1}{\alpha_1} \Big(t_1 - \alpha_2 t_2 - \ldots - \alpha_d t_d), t_2, \ldots, t_d\right)  \frac{1}{\alpha_1}.
	\end{eqnarray*}
	Then, for $i=2, \ldots, d$, the conditional density $ h_{\bma}(\cdot | u)$  of  the $(d-1)$-dimensional vector $(X_2, \ldots, X_d)$ given that $\bma^T  \bm X = u$  is equal to  
	\begin{eqnarray}\label{halpha}
	\frac{g\left( \frac{u - \alpha_2 x_2 - \ldots - \alpha_d x_d}{\alpha_1}, x_2, \ldots, x_d\right) }{\int g\Big(\frac{u- \alpha_2 t_2 - \ldots - \alpha_d t_d}{\alpha_1}, t_2, \ldots, t_d\Big) \prod_{j=2}^d dt_j}   &:=  &  h_{\bma}( x_2, \ldots, x_d|u) ,
	\end{eqnarray}
	where the domain of integration in the denominator is the set $\{(x_2, \ldots, x_d):  (\bm x, \mathcal{X} )\}$. Note that $X_1  =  (\bma^T\bm X - \alpha_2  X_2  - \ldots - \alpha_d X_d)/\alpha_1$. Thus,  for $(\bm x, \mcX)$ we have that
	\begin{eqnarray}\label{ExpExpec}
	&& \E[f(\bm X) | \bma^T\bm X  = \bma^T\bm x ]   =\E[f(X_1,X_2, \ldots, X_d)  | \bma^T\bm X = \bma^T\bm x]  \notag\\
	& &=  \E\left[f\left(\frac{\bma^T\bm X  - \alpha_2  X_2  - \ldots - \alpha_d X_d}{\alpha_1}, X_2, \ldots, X_d \right) |  \  \bma^T\bm X  =\bma^T\bm x \right]  \notag \\
	& & =  \int f\left(\frac{\bma^T\bm x  - \alpha_2  \bar x_2  - \ldots - \alpha_d \bar x_d}{\alpha_1}, \bar x_2, \ldots, \bar x_d \right) h_{\bma}(\bar x_2, \ldots, \bar x_d|\bma^T\bm x)  \prod_{j=2}^d d\bar x_j   \notag.
	\end{eqnarray}
	Note now that function
	\begin{eqnarray*}
		\bma \mapsto  h_{\bma}(\bar x_2, \ldots, \bar x_d| \bma^T\bm x)  =  \frac{g\left(\frac{\bma^T\bm x - \alpha_2 \bar x_2 - \ldots - \alpha_d \bar x_d}{\alpha_1}, \bar x_2, \ldots, \bar x_d\right) }{\int g\Big(\frac{\bma^T\bm x- \alpha_2 t_2 - \ldots - \alpha_d t_d}{\alpha_1}, t_2, \ldots, t_d\Big) \prod_{j=2}^d dt_j}, 
	\end{eqnarray*}
	is continuously  differentiable on $\mathcal{B}(\bma_0, \delta_0)$. This follows from assumptions A1 and A5 together with Lebesgue dominated convergence theorem which allows us to differentiate the density $g$  under the integral sign.
Also, for $i>1$,
\begin{align*}
&\frac{\partial}{\partial\a_i}h_{\bma}(\bar x_2,\ldots,\bar x_d|\bma^T\bmx)
=\frac{\partial}{\partial\a_i}\frac{g\left(\frac{\bma^T\bmx-\sum_{j=2}^d \a_j\bar x_j}{\a_1},\bar x_2,\dots,\bar x_d\right)}{\int g\left(\frac{\bma^T\bmx-\sum_{j=2}^d \a_j t_j}{\a_1},t_2,\dots,t_d\right)\,t_2\dots dt_d}\\
&=\frac{\left(x_i-\bar x_i\right)\partial_1g\left(\frac{\bma^T\bmx-\sum_{j=2}^d \a_j\bar x_j}{\a_1},\bar x_2,\dots,\bar x_d\right)}{\a_1\int g\left(\frac{\bma^T\bmx-\sum_{j=2}^d \a_j t_j}{\a_1},t_2,\dots,t_d\right)\,t_2\dots dt_d}\\
&\qquad-\frac{g\left(\frac{\bma^T\bmx-\sum_{j=2}^d \a_j\bar x_j}{\a_1},\bar x_2,\dots,\bar x_d\right)\int \{x_i-t_i\}\partial_1g\left(\frac{\bma^T\bmx-\sum_{j=2}^d \a_jt_j}{\a_1},t_2,\dots,t_d\right)\,dt_2\dots dt_d}
{\a_1\left\{\int g\left(\frac{\bma^T\bmx-\sum_{j=2}^d \a_jt_j}{\a_1},t_2,\dots,t_d\right)\,dt_2\dots dt_d\right\}^2}
\end{align*}
where $\partial_1$ denotes differentiation w.r.t.\ the first argument of the function. Assumptions A1 and A5 now allow us to find a constant $D > 0$ depending on $R$, $\underline{c}_0$, $\bar{c}_0$ and $\bar{c}_1$ such that 
	\begin{eqnarray*}
		\Big \Vert  \frac{\partial h_{\bma}}{\partial \alpha_i} \Big  \Vert_\infty \le D,
	\end{eqnarray*}
	for $i=2, \ldots, d$.

Consider now the function $\alpha \mapsto  E[f(X) | \bma^T\bm X  = \bma^T\bm x ] $. Using the assumptions of the lemma we conclude for $i>1$:
\begin{align*}
&\frac{\partial  \E[f(X) | \bma^T\bm X  = \bma^T\bm x ] }{\partial \alpha_i}  \\
&=\frac{\partial}{\partial\a_i}\int f\left(\frac{\bma^T\bm x  - \alpha_2  \bar x_2  - \ldots - \alpha_d \bar x_d}{\alpha_1}, \bar x_2, \ldots, \bar x_d \right) h_{\bma}(\bar x_2, \ldots, \bar x_d|\bma^T\bm x)  \prod_{j=2}^d d\bar x_j \\
&=\int \frac{\partial}{\partial\a_i}f\left(\frac{\bma^T\bm x  - \alpha_2  \bar x_2  - \ldots - \alpha_d \bar x_d}{\alpha_1}, \bar x_2, \ldots, \bar x_d \right) h_{\bma}(\bar x_2, \ldots, \bar x_d|\bma^T\bm x)  \prod_{j=2}^d d\bar x_j\\
&\qquad+\int f\left(\frac{\bma^T\bm x  - \alpha_2  \bar x_2  - \ldots - \alpha_d \bar x_d}{\alpha_1}, \bar x_2, \ldots, \bar x_d \right) \frac{\partial}{\partial\a_i}h_{\bma}(\bar x_2, \ldots, \bar x_d|\bma^T\bm x)  \prod_{j=2}^d d\bar x_j.
\end{align*}
Furthermore, we have that 
	\begin{eqnarray*}
		\sup_{(\bm x, \mcX)}  \Big \vert  \frac{\partial  \E[f(\bm X) | \bma^T\bm X  = \bma^T\bm x ] }{\partial \alpha_i}  \Big \vert   \le  MD \int  \prod_{j=2}^d dx_j = M',
	\end{eqnarray*}
for all $i>1$  and $(\bm x, \mathcal{X})$ and $\bma \in \mathcal{B}(\bma_0, \delta)$. By a symmetry argument, the result also holds for $i=1$.
The results now follow using  the mean value theorem:
	\begin{eqnarray*}
		\Big \vert \E[f(\bm X) | \bma^T\bm X  = \bma^T\bm x ]- \E[f(\bm X) | \bma^T_0 \bm X  = \bma^T_0 \bm x ]  \Big  \vert
		\le \sup_{\|\tilde\bma-\bma_0\|\le\|\bma-\bma_0\|} \left\| \frac{\partial  \E[f(\bm X) | \widetilde{\bma}^T \bm X  = \widetilde{\bma}^T \bm x ] }{\partial\tilde\bma} \right\| \|\bma-\bma_0\|. 
	\end{eqnarray*}
This gives the result. 
\end{proof}

\medskip

\begin{lemma}
	\label{lemma:BoundedVar}
	Denote for $i \in \{1, \ldots, d \}$ the $i$th component of the function $u\mapsto   \E[\bm X  |  \bma^T\bm X = u]$ by  $E_{i,\bma}$. Then  $E_{i,\bma}$ has a total bounded variation. Furthermore, there exists a constant $B > 0$ such that for all $\bma \in \B(\bma_0, \delta_0)$ 
	\begin{align*}
	\Vert E_{i,\bma} \Vert_\infty \le B, \ \ \textrm{and} \ \   \int_{\mathcal{I}_{\bma}}  \vert E'_{i,\bma}(u)  \vert du  \le B,
	\end{align*}
	where  $\mathcal{I}_{\bma} =  \{ \bma^T\bm x: \bm x \in \mcX\}$.
\end{lemma}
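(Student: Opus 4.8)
The plan is to treat the two bounds separately, using boundedness of the support (Assumption A1), the regularity of the density (Assumption A5), and the representation and estimates already obtained in the proof of Lemma \ref{lem: continuity}. The uniform sup-norm bound is immediate: since $\mcX$ is bounded, say $\|\bm x\|\le R$, we have $|X_i|\le R$, hence $|E_{i,\bma}(u)| = |\E[X_i\mid \bma^T\bm X=u]|\le R$ for all $u$ and all $\bma\in\B(\bma_0,\delta_0)$, so the first inequality holds with any $B\ge R$.

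For the total-variation bound I would start from the explicit form of $E_{i,\bma}$ used in Lemma \ref{lem: continuity}: after the change of variables $t_1=\bma^T\bm x$, $t_j=x_j$ ($j\ge2$), $E_{i,\bma}(u)$ is a ratio of two integrals of $g$ over the cross-section of $\mcX$ at level $u$, and A1 and A5 justify differentiating this ratio in $u$ by the Lebesgue dominated convergence argument used there. The main tool is the scaling identity: for $c>0$, conditioning on $(c\bma)^T\bm X=cu$ is the same as conditioning on $\bma^T\bm X=u$, so $E_{i,c\bma}(cu)=E_{i,\bma}(u)$; differentiating at $c=1$ gives
\begin{equation*}
u\,E'_{i,\bma}(u) = -\sum_{j=1}^{d}\alpha_j\,\frac{\partial}{\partial\alpha_j}E_{i,\bma}(u).
\end{equation*}
The proof of Lemma \ref{lem: continuity} shows that each $\partial_{\alpha_j}E_{i,\bma}$ is bounded by a constant $M'$ uniformly in $\bm x\in\mcX$ (hence in $u\in\mathcal{I}_\bma$) and in $\bma$, so the right-hand side is at most $(1+\delta_0)\sqrt d\,M'=:C$, giving $|E'_{i,\bma}(u)|\le C/|u|$ uniformly in $\bma$.

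This controls the derivative away from the origin. To conclude I would split $\mathcal{I}_\bma$ at a fixed small $\epsilon>0$. On $\{|u|>\epsilon\}$ the scaling bound gives $|E'_{i,\bma}(u)|\le C/\epsilon$, and since $|\mathcal{I}_\bma|\le 2R$ this contributes at most $2RC/\epsilon$. If $0\notin\mathcal{I}_\bma$ we are done; otherwise, on $\{|u|\le\epsilon\}$ we are in the interior of $\mathcal{I}_\bma$, where the marginal density of $\bma^T\bm X$ is bounded below uniformly in $\bma$ (since $g\ge\underline{c}_0>0$ near the relevant region and the cross-sections do not degenerate there), so differentiating the ratio directly and inserting the bounds on $g$ and $\nabla g$ from A5 yields $|E'_{i,\bma}(u)|\le C'$, contributing at most $2\epsilon C'$. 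Summing the two pieces bounds $\int_{\mathcal{I}_\bma}|E'_{i,\bma}(u)|\,du$ uniformly in $\bma$, and taking $B$ larger than $R$ and this bound finishes the proof.

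The main obstacle is the near-boundary behaviour of $\mathcal{I}_\bma$: there the marginal density of $\bma^T\bm X$ vanishes, so a crude estimate of the form $|E'|\le(|\text{numerator}'|+R|\text{denominator}'|)/(\text{denominator})$ diverges, and one must exploit the cancellation between the numerator and the denominator. The scaling identity encodes this cancellation cleanly and, importantly, delivers the estimate uniformly over $\bma\in\B(\bma_0,\delta_0)$; the only point requiring a small additional check is that $0$ is never pathologically close to an endpoint of $\mathcal{I}_\bma$, which holds because $\|\bma\|$ is close to $1$ and the support has extent of order $R$, forcing $\mathcal{I}_\bma$ to have length bounded below.
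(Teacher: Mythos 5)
Your sup-norm bound is exactly the paper's and is fine. The total-variation argument, however, breaks down at its central step: the quantity you import from Lemma \ref{lem: continuity} is not the quantity your scaling identity needs. Lemma \ref{lem: continuity} and its proof control $\frac{\partial}{\partial \alpha_j}\,\E[f(\bm X)\mid \bma^T\bm X=\bma^T\bm x]$ with the \emph{point} $\bm x$ held fixed --- a total derivative, which by the chain rule equals $\bigl(\partial_{\alpha_j}E_{i,\bma}\bigr)(u)\big|_{u=\bma^T\bm x}+x_j\,E'_{i,\bma}(\bma^T\bm x)$; this is visible in the paper's formula for $\partial h_{\bma}/\partial\alpha_i$, whose $(x_i-\bar x_i)$ factors come from differentiating through $u=\bma^T\bm x$. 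Your identity requires the partial derivatives $\bigl(\partial_{\alpha_j}E_{i,\bma}\bigr)(u)$ with the \emph{level} $u$ held fixed, and the two notions differ by exactly the term $x_j E'_{i,\bma}(u)$ you are trying to bound. Indeed, the scaling invariance rewritten in terms of total derivatives is $\sum_j\alpha_j\frac{\partial}{\partial\alpha_j}\E[X_i\mid\bma^T\bm X=\bma^T\bm x]=0$ (it just says that $c\mapsto\E[X_i\mid (c\bma)^T\bm X=(c\bma)^T\bm x]$ is constant), so feeding Lemma \ref{lem: continuity}'s bound into your identity produces the tautology $0=0$ and no control whatsoever on $E'_{i,\bma}$. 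As written the argument is circular; to repair it you would have to bound the $u$-fixed partials directly, and that is the same quotient-rule computation on $h_{\bma}(\cdot\mid u)$ that the paper performs anyway, so the scaling identity buys nothing.

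The second, smaller gap is the treatment of $u$ near $0$: you need the marginal density of $\bma^T\bm X$ bounded below there, but $0$ can perfectly well be an endpoint of $\mathcal{I}_{\bma}$ (take $\mcX=[0,1]^d$ and $\bma$ with nonnegative entries); a lower bound on the length of $\mathcal{I}_{\bma}$ does not rule this out. In fact the whole splitting is unnecessary, because the boundary ``divergence'' you set out to avoid is not real. In the quotient rule for $E'_{i,\bma}(u)$, the differentiated numerator and the denominator are integrals of $\partial g/\partial x_1$ and of $g$ over the \emph{same} cross-section of $\mcX$ at level $u$: the numerator is bounded above by a constant times the Lebesgue measure of that cross-section (using A1 and A5, $\vert\partial g/\partial x_1\vert\le\bar{c}_1$ and $\vert x_i\vert\le R$), while the denominator is bounded below by $\underline{c}_0$ times the same measure. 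The measure cancels in the ratio, giving $\vert E'_{i,\bma}(u)\vert\le C$ uniformly over $u\in\mathcal{I}_{\bma}$ and $\bma\in\B(\bma_0,\delta_0)$, and the total-variation bound follows since $\mathcal{I}_{\bma}$ has length at most of order $R$; the case $i=1$ then follows from $E_{1,\bma}(u)=\alpha_1^{-1}\bigl(u-\sum_{j\ge2}\alpha_jE_{j,\bma}(u)\bigr)$. This is precisely the paper's proof: differentiate $h_{\bma}(\cdot\mid u)$ in $u$ under the integral sign (justified by A1, A5 and dominated convergence) and read off the uniform bound from the quotient-rule expression. Your worry that a crude estimate diverges near the boundary comes from treating the numerator's derivative as a pointwise constant instead of an integral that shrinks with the cross-section; once that is observed, no cancellation device is needed.
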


\begin{proof}
	Since $\mcX  \subset \mathcal{B}(0, R)$, it is clear that $\Vert E_{i,\bma} \Vert_\infty \le R$.   As above let us assume without loss of generality that the first component of $\bma_0$ is not equal to $0$. At the cost of taking a smaller $\delta_0$, we can further assume that $\tilde{\alpha}_1  \ne 0$  for all $\bma \in \B(\bma_0, \delta_0)$. We known that for $i=2, \ldots, d$
	\begin{align*}
	E_{i, \bma}(u)  = \int  x_i   h_{\bma}(x_2, \ldots, x_d|u)  dx_2  \ldots dx_d, 
	\end{align*}
	where integration is done over the set $\{(x_2, \ldots, x_d): (\bm x, \mcX)\}$ and $u \in \mathcal{I}_{\bma} \subset (a_0 - \delta_0 R,  b_0 + \delta_0 R)$ and where $h_{\bma}$ denotes conditional density of $(X_2,\ldots,X_d)'$ given $\bma^T\bm X= u$, defined in (\ref{halpha}).  Using assumptions A1 and A5 along with the Lebesgue dominated convergence theorem we are allowed to write
	\begin{align*}
	E'_{i, \bma}(u)   = \int  x_i    \frac{\partial}{\partial u} h_{\bma}(x_2, \ldots, x_d|u)  \  dx_2  \ldots dx_d.
	\end{align*}
	Straightforward calculations yield that
	\begin{align*}
	&\frac{\partial}{\partial u} h_{\bma}(x_2, \ldots, x_d|u) \\
	&   =  \frac{ \frac{\partial g}{\partial x_1} \left(\frac{u - \alpha_2 x_2 - \ldots - \alpha_d x_d}{\alpha_1}, x_2, \ldots, x_d\right) }{\alpha_1 \left(\int g\Big(\frac{u- \alpha_2 t_2 - \ldots - \alpha_d t_d}{\alpha_1}, t_2, \ldots, t_d\Big) \prod_{j=2}^d dt_j\right)}  \\
	&  \ -   \frac{g\left(\frac{1}{\alpha_1} \Big(u - \alpha_2 x_2 - \ldots - \alpha_d x_d), x_2, \ldots, x_d\right) \int \frac{\partial g}{\partial x_1} \Big(\frac{u- \alpha_2 t_2 - \ldots - \alpha_d t_d}{\alpha_1}, t_2, \ldots, t_d\Big) \prod_{j=2}^d dt_j }{\alpha_1 \left(\int g\Big(\frac{u- \alpha_2 t_2 - \ldots - \alpha_d t_d}{\alpha_1}, t_2, \ldots, t_d\Big) \prod_{j=2}^d dt_j\right)^2}.
	\end{align*}
	Thus, we can find constant $C > 0$ depending only on $\vert \alpha_{0,1} \vert$, $\underline{c}_0$, $\underline{c}_1$, $\bar{c}_1$  and $R$ such that   $\int \vert E'_{i, \bma}(u)    \vert   du \le C$  for all $\alpha   \in \B(\bma_0, \delta_0) $.   Now $B = \max(R, C)$ gives the claimed inequalities. If  $i =1$, then  
	\begin{align*}
	E_{1, \bma}(u)  =  \frac{1}{\alpha_1}  \Big(u - \alpha_j \sum_{j=2}^d   E_{j, \bma}(u)  \Big), \ \ \textrm{and} \ \  e'_{1, \bma}(u)  =  \frac{1}{\alpha_1}  \Big(1 - \alpha_j \sum_{j=2}^d    e'_{j, \bma}(u)  \Big).
	\end{align*}
	for $u \in I_{\bma} $. We conclude again that the claimed inequalities are true at the cost of increasing the constant $B$ obtained above.   
\end{proof}

\bigskip

\begin{lemma}\label{lemma:BoundedVar2}
	Let $f $ be a function defined on some interval $[a, b]$ such that  
	\begin{eqnarray*}
		\Vert f \Vert_\infty \le M, \ \  V(f, [a, b])  : = \sup_{a =x_0< x_1 \ldots < x_n = b} \sum_{j=1}^n  \vert f(x_j)  - f(x_{j-1}) \vert  \le M,
	\end{eqnarray*}
	for some finite constant $M > 0$.  Then,  there exist  two non-decreasing functions $f_1$ and $f_2$  on $[a, b]$   such that   $\Vert f_1 \Vert_\infty, \Vert f_2  \Vert_\infty  \le2 M$  and  $ f = f_2  -f_1$.  
\end{lemma}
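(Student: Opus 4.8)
The plan is to invoke the classical Jordan decomposition of a function of bounded variation, realized explicitly through the total variation function. I would define, for $x \in [a,b]$, the function $V(x) = V(f,[a,x])$, the total variation of $f$ over the subinterval $[a,x]$, with the convention $V(a)=0$. The first step is to record the two basic properties of $V$: it is non-decreasing, and by the additivity of total variation over adjacent intervals one has, for $a \le x < y \le b$,
\[
V(y) - V(x) = V(f,[x,y]) \ge |f(y) - f(x)|,
\]
where the inequality follows directly from the definition of total variation applied to the two-point partition $\{x,y\}$. In particular $V(b) = V(f,[a,b]) \le M$, so that $0 \le V(x) \le M$ for all $x \in [a,b]$.

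Next I would use this inequality to show that both $V + f$ and $V - f$ are non-decreasing. Indeed, for $x < y$ the displayed inequality gives simultaneously $V(y) - V(x) \ge f(y) - f(x)$ and $V(y) - V(x) \ge f(x) - f(y)$, which rearrange to $(V+f)(y) \ge (V+f)(x)$ and $(V-f)(y) \ge (V-f)(x)$ respectively. I would then set
\[
f_2 = \tfrac12\left(V + f\right), \qquad f_1 = \tfrac12\left(V - f\right),
\]
so that $f_1$ and $f_2$ are non-decreasing by the previous step, while $f_2 - f_1 = f$ holds by construction.

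Finally, the boundedness claim follows from the triangle inequality together with $0 \le V(x) \le M$ and $\Vert f \Vert_\infty \le M$: for each $x \in [a,b]$ and $i \in \{1,2\}$,
\[
|f_i(x)| \le \tfrac12\left(V(x) + |f(x)|\right) \le \tfrac12(M + M) = M \le 2M,
\]
which in fact yields the stronger uniform bound $M$, and a fortiori the stated bound $2M$. Since the decomposition is completely explicit and each step is an elementary consequence of the definition of total variation, I do not expect any serious obstacle; the only point requiring a moment of care is the additivity identity $V(f,[a,y]) = V(f,[a,x]) + V(f,[x,y])$, which underlies both the monotonicity of $V$ and the key inequality above, and which I would state and use without detailed proof as a standard fact about functions of bounded variation.
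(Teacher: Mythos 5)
Your proof is correct and takes essentially the same approach as the paper: both realize Jordan's decomposition explicitly through the total variation function $x \mapsto V(f,[a,x])$, using the key inequality $V(y)-V(x) \ge |f(y)-f(x)|$. The only difference is cosmetic---you use the symmetric variant $f_1 = \tfrac12(V-f)$, $f_2 = \tfrac12(V+f)$, which in fact yields the stronger uniform bound $M$, while the paper takes $f_1 = V$ and $f_2 = f + f_1$ (its proof writes $f_2 = f - f_1$, an evident sign slip), giving the stated bound $2M$.
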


\medskip

\begin{proof}  The fact that   $f= f_2  -f_1$   with $f_1$  and $f_2$ non-decreasing on $[a, b]$ follows from the well-known Jordan's decomposition. Furthermore, we can take $f_1(\bm x)  = V(f, [a, x])$ and $f_2(\bm x) =  f(\bm x)-  f_1(\bm x)$for $(\bm x, [a, b]$.   By assumption, $\Vert f_1  \Vert_\infty \le M \le 2M$ and $\Vert f_2 \Vert \le \Vert f \Vert_\infty + \Vert f_1 \Vert_\infty \le 2 M$.  
\end{proof}

\bigskip

\begin{lemma}
	\label{lemma:DerBoundedBelow}
	Suppose that  Assumptions A4-A5 hold, then we can find a constant $C > 0$ such that for all $\bma$  close enough to $\bma_0$ we have that 
	\begin{align*}
	\psi_{\bma}'(u)  >  C,
	\end{align*}
	for all $ u \in \mathcal{I}_{\bma}$. 
\end{lemma}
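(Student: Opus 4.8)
The plan is to compute the derivative $\psi_{\bma}'(u)$ explicitly by writing the conditional expectation $\psi_{\bma}(u)=\E[\psi_0(\bma_0^T\bm X)\mid \bma^T\bm X=u]$ through the conditional density $h_{\bma}(\cdot\mid u)$ of $(X_2,\dots,X_d)$ given $\bma^T\bm X=u$ introduced in the proof of Lemma \ref{lem: continuity}. Assuming without loss of generality that $\alpha_{0,1}\neq 0$ and shrinking $\delta_0$ so that $\alpha_1$ keeps the same sign on $\B(\bma_0,\delta_0)$, on the fibre $\{\bma^T\bm x=u\}$ one has $x_1=(u-\sum_{j\ge2}\alpha_j x_j)/\alpha_1$ and hence $\bma_0^T\bm x=\tfrac{\alpha_{0,1}}{\alpha_1}(u-\sum_{j\ge2}\alpha_j x_j)+\sum_{j\ge2}\alpha_{0,j}x_j$. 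Differentiation under the integral sign, justified exactly as in Lemma \ref{lem: continuity} and Lemma \ref{lemma:BoundedVar} by Assumptions A1 and A5, then yields
\begin{align*}
\psi_{\bma}'(u)=\frac{\alpha_{0,1}}{\alpha_1}\int \psi_0'(\bma_0^T\bm x)\,h_{\bma}(x_2,\dots,x_d\mid u)\,dx_2\cdots dx_d+\int \psi_0(\bma_0^T\bm x)\,\frac{\partial}{\partial u}h_{\bma}(x_2,\dots,x_d\mid u)\,dx_2\cdots dx_d=:T_1+T_2.
\end{align*}

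First I would handle the main term $T_1=\tfrac{\alpha_{0,1}}{\alpha_1}\E[\psi_0'(\bma_0^T\bm X)\mid\bma^T\bm X=u]$. By Assumption A4 the derivative $\psi_0'$ is bounded below by a positive constant $c_0$ on a neighbourhood of $\mathcal{I}_{\bma_0}$, which contains $\mathcal{I}_{\bma}$ for $\bma$ close to $\bma_0$; hence the conditional expectation is $\ge c_0$, giving $T_1\ge \tfrac{\alpha_{0,1}}{\alpha_1}c_0$, where $\alpha_{0,1}/\alpha_1\to 1$ as $\bma\to\bma_0$. This bound is uniform in $u$.

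The crux is to show the remainder $T_2$ is negligible. Here the decisive observation is that $\int h_{\bma}(\cdot\mid u)\,dx=1$ for every $u$, so $\int \tfrac{\partial}{\partial u}h_{\bma}(\cdot\mid u)\,dx=0$, which lets me recentre and write $T_2=\int[\psi_0(\bma_0^T\bm x)-\psi_0(u)]\,\tfrac{\partial}{\partial u}h_{\bma}(\cdot\mid u)\,dx$. On the fibre one has $\bma_0^T\bm x-u=(\bma_0-\bma)^T\bm x$, so by the mean value theorem together with Assumption A1 (boundedness of $\mcX$ by $R$) and the boundedness of $\psi_0'$, $|\psi_0(\bma_0^T\bm x)-\psi_0(u)|\le \|\psi_0'\|_\infty R\,\|\bma-\bma_0\|$. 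Combining this with the uniform bound $\int|\tfrac{\partial}{\partial u}h_{\bma}(\cdot\mid u)|\,dx\le B$ established in the proof of Lemma \ref{lemma:BoundedVar} gives $|T_2|\le C'\|\bma-\bma_0\|$, uniformly in $u\in\mathcal{I}_{\bma}$.

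Putting the two pieces together, $\psi_{\bma}'(u)\ge \tfrac{\alpha_{0,1}}{\alpha_1}c_0-C'\|\bma-\bma_0\|$, which exceeds $C:=c_0/2$ once $\bma$ is close enough to $\bma_0$, uniformly in $u\in\mathcal{I}_{\bma}$. The main obstacle is precisely the control of $T_2$: without the recentring identity $\int \partial_u h_{\bma}=0$ the term need not be small, and it is the Lipschitz-in-$u$ estimate on $\psi_0$ along the shifted fibre, paired with the bounded-variation bound on the conditional density from Lemma \ref{lemma:BoundedVar}, that forces it to vanish as $\bma\to\bma_0$.
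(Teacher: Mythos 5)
Your decomposition $\psi_{\bma}'(u)=T_1+T_2$ is exactly the one the paper uses, and your control of $T_2$ is in fact a cleaner rendering of the paper's step: the paper Taylor-expands $\psi_0$ around $u$ and then invokes "the fact that $h_{\bma}(\cdot\mid u)$ is a conditional density", which is precisely your recentring identity $\int \partial_u h_{\bma}(\cdot\mid u)\,dx=0$; your mean-value bound along the fibre, combined with the uniform bound on $\int\vert\partial_u h_{\bma}\vert$ coming from the computations in Lemma \ref{lemma:BoundedVar}, gives $\vert T_2\vert\lesssim\Vert\bma-\bma_0\Vert$ uniformly in $u$, which is all that is needed and avoids the paper's somewhat loose handling of the cross terms in its expansion.

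The genuine gap is in $T_1$. You assert that Assumption A4 makes $\psi_0'$ bounded below by a positive constant $c_0$ on a neighbourhood of $\mathcal{I}_{\bma_0}$. It does not: A4 gives monotonicity and differentiability of the link, and the paper's proof is explicit that the lower bound $\psi_0'\ge C_0$ on $(a_0-\delta_0 R,\,b_0+\delta_0 R)$ is only assumed \emph{without loss of generality}, ``following the argument at the end of the proof''; the entire second half of the paper's proof is devoted to justifying that reduction. The point is not cosmetic: without such a device the stated bound can fail already at $\bma=\bma_0$, where $\psi_{\bma_0}'=\psi_0'$, so if the monotone link has $\inf_{\mathcal{I}_{\bma_0}}\psi_0'=0$ (e.g.\ behaviour like $t^3$ near an interior point) no constant $C>0$ exists. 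The missing idea is the paper's model transformation: replace $Y$ by $Y+a\bma_0^T\bm X$ and $\psi_0$ by $\phi_0(t)=\psi_0(t)+at$, so the transformed link satisfies $\phi_0'\ge a>0$, and then verify that the remaining assumptions survive — in particular the Bernstein-type moment condition A6, which the paper rechecks for the new response with constants $\tilde c_0=2c_0+a^2R^2$ and $\tilde M_0=2M\vee 2aR$ (with a separate bounded transformation such as $(e^{\bma_0^T\bm X}+1)/(e^{\bma_0^T\bm X}+2)$ when $\mathcal{X}$ is unbounded). Your bound on $T_1$, and hence the whole proof, rests on an assumption the paper deliberately does not make; everything else in your argument is sound.
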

\begin{proof}
	Without loss of generality we can assume (following the argument at the end of the proof) that there exist a constant $C_0>0$ such that $\psi'_0  \ge C_0 $ on $(a_0 - \delta_0 R, b_0 + \delta_0  R)$ and  we assume again that $a_1\ne 0$ for all $\bma \in \mathcal{B}(\bma_0, \delta_0)$. By calculations similar to the calculations made in the proof of Lemma \ref{lemma:derivative_psi_a}, we get
	\begin{align*}
	\psi_{\bm\alpha}(u)
	&= \frac{\a_{01}}{\a_1}\int \psi_0'\left(\frac{\a_{01}}{\a_1}(u -\a_2 \tilde x_2 - \ldots - \a_d \tilde x_d) + \sum_{j=2}^{d}\a_{0j} \tilde x_j\right) h_{\bma} ( \tilde x_2,\ldots, \tilde x_d|u)\prod_{j=2}^{d}d \tilde x_j\\
	& + \int \psi_0\left(\frac{\a_{01}}{\a_1}(u -\a_2 \tilde x_2 - \ldots - \a_d \tilde x_d) + \sum_{j=2}^{d}\a_{0j} \tilde x_j\right) \frac{\partial }{\partial  u}h(\tilde x_2,\ldots,  \tilde x_d|u)\prod_{j=2}^{d}d \tilde x_j.
	\end{align*}	
	Now, a Taylor expansion of $\alpha_{i}$ in the neighborhood of $\alpha_{0, i}$ and using that $\alpha_{0,1}/\alpha_1 = 1-\epsilon_1/\alpha_{0,1} + o(\epsilon_1)$ yields
	\begin{eqnarray*}
		&&\psi_0\left(\frac{\alpha_{0,1}}{\alpha_1} \left(u - \alpha_2 x_2  - \cdots - \alpha_d x_d  \right)  +\alpha_{0,2} x_2  + \cdots  + \alpha_{0,d} x_d  \right)  \\
		&& = \psi_0\left (u  - \frac{\epsilon_1}{\alpha_{0,1}}  (u - \epsilon_2 x_2 - \ldots - \epsilon_d x_d)    + o(\epsilon_1) \right)\\
		&& = \psi_0(u)  - \frac{\epsilon_1}{\alpha_{0,1}}(u - \epsilon_2 x_2 - \ldots - \epsilon_d x_d)   \psi'_0(u)   +  o(\epsilon_1)  \\
		&&  = \psi_0(u)  -  \frac{\epsilon_1}{\alpha_{0,1}} u  \psi'_0(u)  +  o(\Vert \bma - \bma_0 \Vert). 
	\end{eqnarray*}
	Using the Lebesgue dominated convergence theorem and the fact that $h_{\bma}(\tilde x_2, \ldots, \tilde x_d|  u)$ is a conditional density it follows that 
	\begin{align*}
	\int \psi_0\left(\frac{\a_{01}}{\a_1}(u -\a_2 \tilde x_2 - \ldots - \a_d \tilde x_d) + \sum_{j=2}^{d}\a_{0j} \tilde x_j\right) \frac{\partial }{\partial  u}h(\tilde x_2,\ldots,  \tilde x_d|u)\prod_{j=2}^{d}d \tilde x_j,
	= o(\bma -\bma_0),
	\end{align*}
	such that there exist positive constants $C_1>0$ and $C_2>0$ such that
	\begin{align*}
	\psi_{\bma}'(u)   \ge  C_1   - {C_2\epsilon_1}    +  o(\bma -\bma_0)  \ge C > 0,
	\end{align*}
	provided that $\Vert \bma -\bma_0\Vert $ is small enough. 
	
	We now argue why the condition $\psi'_0  \ge C_0 $ on $(a_0 - \delta_0 R, b_0 + \delta_0  R)$ can be made: 
	
	Suppose first that there exists $R > 0$ such that $\mathcal{X} \subset \mathcal{B}(0, R)$ is bounded. For any given $a > 0$ we can consider the following monotone single index model   
	\begin{align}
	\label{model2}
	Y +  a \bma^T_0  \bm X = \psi_0(\bma^T_0  \bm X) + a \bma^T_0  \bm X  + \epsilon.
	\end{align}
	If $\psi_0$ is differentiable then the related link function, $\phi_0(t) = \psi_0(t) + a t$ is clearly monotone, with a first derivative satisfying $\phi'_0 \ge a > 0$. Replacing $Y$ by $Y +  a \bma^T_0  \bm X$ and $\psi_0$ by $\phi_0$  does not effect the arguments for establishing the limit distribution of the score estimator since the added terms are filtered throughout. Furthermore,  Assumption A6 can be shown to remain  valid for the new response variable $ Y +  a \bma^T_0  \bm X$. Using convexity of the function $t \mapsto t^m, m \ge 2$ we have that
	\begin{eqnarray*}
		E[\vert Y +  a \bma^T_0  \bm X\vert ^m | X=x]  &\le  & 2^{m-1} \left( m! c_0 M^{m-2}_0  + a^m \vert \bma^T_0  \bm x\vert)^m \right) \\
		&\le &   2^{m-1} \left( m! c_0 M^{m-2}_0  + a^m R^m  \right)  \\
		&= & m!  \left( 2 c_0 (2M)^{m-2}  + a^2R^2  (2 a R)^{m-2} \frac{2}{m!} \right) \\
		&\le  &   m!  \ (2  M \vee 2 aR)^{m-2}  (2c_0 + a^2R^2), \ \ \text{for $m \ge 2$} \\
		& =  & m!  \tilde{c}_0 \tilde{M}^{m-2}_0  
	\end{eqnarray*}
	with   $\tilde{c}_0   = 2c_0 + a^2R^2$ and $\tilde{M}_0 = 2  M \vee 2aR$, where $R, c_0$ and $M_0$ are specified in Assumption A1 and A6. So all assumptions remain valid under model (\ref{model2}) and the theory stated in Section \ref{section:Asymptotics} will still follow.   In the case $\mathcal{X}$ is not bounded, then we can take  any differentiable, bounded and monotone transformation of $a \bma^T_0X$ which gives a strictly positive first derivative. For example, we can easily check that the choice $ (e^{\bma^T_0  \bm X} +1)/(e^{\bma^T_0  \bm X} +2)$ for example works. 
	
\end{proof}

\begin{lemma}
	\label{lemma:derivative_kernel}
	If $h\asymp n^{-1/7}$, then there exists a constant $B>0$ such that for all $\bma \in \B(\bma_0,\d)$
	\begin{align*}
	\|\psi_{nh,\bma}'\|_\infty\le B \qquad\text{ and } \qquad \int_{\mathcal{I}_{\bma}} |\psi_{nh,\bma}''(u)|du \le B,
	\end{align*}
	where $\mathcal{I}_{\bma} = \{\bma^T\bmx : \bmx \in \mathcal{X} \}$
\end{lemma}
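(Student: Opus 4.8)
The plan is to express the derivatives of the smoothed estimator as kernel convolutions against the jump measure of the monotone LSE, and then to compare with the population link $\psi_{\bma}$, for which Assumption~A10 already provides uniform control. Recall that $\psi_{nh,\bma}(u)=\int K_h(u-v)\hat\psi_{n\bma}(v)\,dv$ with $K_h(x)=h^{-1}K(x/h)$. Since $K$ has compact support $[-1,1]$ and $\hat\psi_{n\bma}$ is a nondecreasing step function, integration by parts gives, for $u$ in the interior of $\mathcal I_{\bma}$,
\[
\psi_{nh,\bma}'(u)=\int K_h(u-v)\,d\hat\psi_{n\bma}(v),\qquad
\psi_{nh,\bma}''(u)=\int K_h'(u-v)\,d\hat\psi_{n\bma}(v),
\]
where $K_h'(x)=h^{-2}K'(x/h)$. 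In each case I would split $d\hat\psi_{n\bma}=d\psi_{\bma}+d(\hat\psi_{n\bma}-\psi_{\bma})$, bound the population part by constants depending only on the uniform bounds on $\|\psi_{\bma}'\|_\infty$ and on the total variation of $\psi_{\bma}'$ furnished by Assumption~A10, and show that the remainder is $o_p(1)$ using the $L_2$-rate of Proposition~\ref{prop:L_2-psi-psi_n-alpha}.

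For the sup-norm bound, the population part satisfies $|\int K_h(u-v)\psi_{\bma}'(v)\,dv|\le \|K\|_1\sup_{\bma}\|\psi_{\bma}'\|_\infty=:B_1$, uniformly in $u$ and $\bma$. For the remainder I would integrate by parts once more to move the $v$-derivative onto $S_{n\bma}:=\hat\psi_{n\bma}-\psi_{\bma}$, obtaining $h^{-2}\int K'((u-v)/h)S_{n\bma}(v)\,dv$, and apply Cauchy--Schwarz:
\[
\Bigl|h^{-2}\!\int K'\bigl((u-v)/h\bigr)S_{n\bma}(v)\,dv\Bigr|
\le h^{-3/2}\|K'\|_2\Bigl(\int_{\mathcal I_{\bma}}S_{n\bma}^2\Bigr)^{1/2}.
\]
Since the density of $\bma^T\bm X$ is bounded below on $\mathcal I_{\bma}$, one has $\int_{\mathcal I_{\bma}}S_{n\bma}^2\lesssim d_{\bma}^2(\hat\psi_{n\bma},\psi_{\bma})$, which by Proposition~\ref{prop:L_2-psi-psi_n-alpha} is $O_p((\log n)^2 n^{-2/3})$ uniformly in $\bma$; hence the remainder is $O_p(h^{-3/2}(\log n)n^{-1/3})=O_p((\log n)n^{-5/42})=o_p(1)$ for $h\asymp n^{-1/7}$, giving $\|\psi_{nh,\bma}'\|_\infty\le B_1+o_p(1)$.

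For the total-variation bound I would argue analogously on $\int_{\mathcal I_{\bma}}|\psi_{nh,\bma}''(u)|\,du$. The population part equals $\int K_h(u-v)\,d\psi_{\bma}'(v)$ after one integration by parts, so by Fubini $\int_{\mathcal I_{\bma}}|\cdot|\,du\le \|K\|_1\,\sup_{\bma}\mathrm{TV}(\psi_{\bma}')=:B_2$, finite by Assumption~A10. For the remainder, two integrations by parts yield $\int K_h''(u-v)S_{n\bma}(v)\,dv$ with $K_h''(x)=h^{-3}K''(x/h)$; Cauchy--Schwarz first in $v$ and then in $u$ (using that $|\mathcal I_{\bma}|$ is bounded) gives $\int_{\mathcal I_{\bma}}|\cdot|\,du\lesssim h^{-2}(\int_{\mathcal I_{\bma}}S_{n\bma}^2)^{1/2}\lesssim h^{-2}(\log n)n^{-1/3}=O_p((\log n)n^{-1/21})=o_p(1)$ for $h\asymp n^{-1/7}$. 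Taking $B=\max(B_1,B_2)+1$ then proves both claims, uniformly over $\bma\in\B(\bma_0,\d)$, since every rate and every bound invoked is uniform in $\bma$.

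The main obstacle is twofold. First, the whole argument rests on the bandwidth being \emph{large enough}: differentiating the kernel once or twice produces the factors $h^{-3/2}$ and $h^{-2}$, and only because $h\asymp n^{-1/7}$ dominates the binding threshold $n^{-1/6}$ do both remainders vanish against the $n^{-1/3}$ rate of Proposition~\ref{prop:L_2-psi-psi_n-alpha}. Second, the genuinely delicate point is the behaviour near $\partial\mathcal I_{\bma}$, where neither the boundary terms produced by the integrations by parts nor the support of $K$ cancel; I would handle this through the boundary correction built into the definition of the smoothed estimator (equivalently, by carrying out the estimates in the interior and extending $\psi_{\bma}'$ via its uniformly bounded, finite-variation extension guaranteed by Assumption~A10), so that the endpoint contributions are absorbed into the constant $B$.
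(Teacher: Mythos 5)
Your proposal follows essentially the same route as the paper's proof: write $\psi_{nh,\bma}'$ and $\psi_{nh,\bma}''$ as kernel integrals against $d\hat\psi_{n\bma}$, split off the population part $\psi_{\bma}$ (controlled uniformly by Assumption A10), and kill the remainder $\hat\psi_{n\bma}-\psi_{\bma}$ using the uniform $L_2$-rate of Proposition \ref{prop:L_2-psi-psi_n-alpha} together with $h\asymp n^{-1/7}$, giving $o_p(1)$ corrections in both the sup-norm and the total-variation bounds. Your bookkeeping is if anything more careful than the paper's: the paper states the first-derivative remainder as $O_p(h^{-1}\log n\, n^{-1/3})$ where your Cauchy--Schwarz argument honestly yields $O_p(h^{-3/2}\log n\, n^{-1/3})$, and it leaves implicit both the conversion from the $dG$-norm to the Lebesgue norm on $\mathcal{I}_{\bma}$ (requiring the index density to be bounded below) and the boundary terms from integration by parts, all of which you address explicitly; in every case the conclusion is the same.
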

\begin{proof}
	Using integration by parts and Proposition \ref{prop:L_2-psi-psi_n-alpha}, we have for all $u\in \mathcal{I}_{\bma}$ 
	\begin{align*}
	\psi_{nh,\bma}'(u) &= \frac1h \int K\left(\frac{u-x}{h}\right)d\hat\psi_{n\bma}(x)\\
	&= \frac1h \int K\left(\frac{u-x}{h}\right)\psi_{\bma}'(x)dx + \frac1{h^2} \int K'\left(\frac{u-x}{h}\right)(\hat\psi_{n\bma}(x) - \psi_{\bma}(x)) dx \\
	&= \frac1h \int K\left(\frac{u-x}{h}\right)d\psi_{\bma}(x) + \frac1{h} \int K'\left(w\right)(\hat\psi_{n\bma}(u+hw) - \psi_{\bma}(u+hw)) dw
	\\
	& = \psi_{\bma}'(u) + O(h^2) + O_p(h^{-1}\log n n^{-1/3}) = \psi_{\bma}'(u)  + o_p(1).
	\end{align*}
	This proves the first part of Lemma \ref{lemma:derivative_kernel}. For the second part, we get by a similar calculation that,
	\begin{align*}
	\psi_{nh,\bma}''(u) &= \frac1h \int K\left(\frac{u-x}{h}\right)\psi_{\bma}''(x)dx + \frac1{h^2} \int K''\left(w\right)(\hat\psi_{n\bma}(u+hw) - \psi_{\bma}(u+hw)) dw
	\\
	& = \frac1h \int K\left(\frac{u-x}{h}\right)\psi_{\bma}''(x)dx  + O_p(h^{-2}\log n n^{-1/3}).
	\end{align*}
	Since $h^{-2}\log n n^{-1/3} = o(1)$ for $h\asymp n^{-1/7}$, the second part follows by Assumption A10.
\end{proof}